\title[Model completions for universal classes of algebras]
{Model completions for universal classes of algebras: necessary and sufficient conditions}
\author{George Metcalfe}
\address{Mathematical Institute\\
University of Bern\\
Sidlerstrasse 5\\
3012 Bern, Switzerland}
\email{george.metcalfe@math.unibe.ch}
\author{Luca Reggio}
\address{Department of Computer Science\\
University of Oxford\\
15 Parks Rd\\
Oxford OX1 3QD, UK}
\email{luca.reggio@cs.ox.ac.uk}
\thanks{Supported by Swiss National Science Foundation grant 200021\textunderscore 184693 and the European Union's Horizon 2020 research and innovation programme under the Marie Sk{\l}odowska-Curie grant agreement 837724.}
\newcommand{\N}{\mathbb{N}}
\newcommand{\R}{\mathbb{R}}
\newcommand{\Th}{\mathrm{Th}}
\newcommand{\into}{\hookrightarrow}
\newcommand{\onto}{\twoheadrightarrow}
\renewcommand{\hat}{\widehat}
\newcommand{\trimp}{\rhd}
\newcommand{\trneg}{\nabla}
\newcommand{\tremp}{\Lambda}
\theoremstyle{definition}
\newtheorem{theorem}{Theorem}
\newtheorem{lemma}[theorem]{Lemma}
\newtheorem*{claim*}{Claim}
\newtheorem{corollary}[theorem]{Corollary}
\newtheorem{proposition}[theorem]{Proposition}
\newtheorem{example}[theorem]{Example}
\newtheorem{remark}[theorem]{Remark}
\numberwithin{theorem}{section}
\newcommand{\defiff}{\: :\Longleftrightarrow\:}
\newcommand{\lan}{\langle}
\newcommand{\ran}{\rangle}
\newcommand{\e}{{\rm e}}
\newcommand{\zr}{{\rm 0}}
\newcommand{\jn}{\vee}
\newcommand{\mt}{\wedge}
\newcommand{\rd}{\slash}
\newcommand{\ld}{\backslash}
\newcommand{\eq}{\approx}
\newcommand{\eqv}{\equiv}
\newcommand{\m}[1]{{\bf {#1} }}
\newcommand{\cop}[1]{\mathbb{#1}}
\newcommand{\f}{\ensuremath{\varphi}}
\newcommand{\ps}{\ensuremath{\psi}}
\newcommand{\ga}{\ensuremath{\gamma}}
\newcommand{\si}{\ensuremath{\sigma}}
\newcommand{\de}{\ensuremath{\delta}}
\DeclareMathOperator{\Con}{\mathrm{Con}}
\newcommand{\cg}[1]{{\rm Cg}_{{#1}}}
\newcommand{\cls}[1]{\mathsf{#1}}
\newcommand{\mdl}[1]{\models_{#1}}
\newcommand{\nmdl}[1]{\not\models_{#1}}
\newcommand{\De}{\Delta}
\newcommand{\Ga}{\Gamma}
\newcommand{\Si}{\Sigma}
\newcommand{\ep}{\varepsilon}
\newcommand{\Diag}{\mathfrak{D}^+}
\newcommand{\F}{\m{F}}
\newcommand{\Fc}{\mathrm{F}}
\newcommand{\Tm}{\m{Tm}}
\newcommand{\Tmc}{\mathrm{Tm}}
\newcommand{\lang}{\mathcal{L}}
\newcommand{\langext}{\mathcal{L}_\rhd}
\newcommand{\xbar}{\overline{x}}
\newcommand{\ybar}{\overline{y}}
\newcommand{\wbar}{\overline{w}}
\newcommand{\zbar}{\overline{z}}
\newcommand{\abar}{\overline{a}}
\newcommand{\bbar}{\overline{b}}
\newcommand{\V}{\cls{V}}
\newcommand{\Vext}{\cls{V_\rhd}}
\newcommand{\Vtot}{\cls{V^c}}
\newcommand{\Vtotext}{\cls{V^c_\rhd}}
\newcommand{\K}{\cls{K}}
\newcommand{\MV}{\cls{MV}}
\newcommand{\MVext}{\cls{MV_\rhd}}
\newcommand{\HA}{\cls{HA}}
\newcommand{\HAC}{\cls{HA^c}}
\newcommand{\MVC}{\cls{MV^c}}
\newcommand{\LA}{\cls{LA}}
\newcommand{\LAext}{\cls{LA_\rhd}}
\newcommand{\OG}{\cls{LA^c}}
\newcommand{\MVD}{\cls{MV}_{\!\Delta}}
\newcommand{\CPRL}{\cls{CPRL}}
\newcommand{\DL}{\cls{DL}}
\newcommand{\DLC}{\cls{DL^c}}
\newcommand{\AND}{\mathbin{\&}}
\newcommand{\OR}{\curlyvee}
\newcommand{\IMP}{\to}
\newcommand{\BIMP}{\leftrightarrow}
\newcommand{\himp}{\supset} 
\DeclareMathOperator*{\bigAND}{\scalerel*{\AND}{\textstyle\sum}}
\DeclareMathOperator*{\bigOR}{\scalerel*{\OR}{\textstyle\sum}}
\begin{document}

\begin{abstract}
Necessary and sufficient conditions are presented for the (first-order) theory of a universal class of algebraic structures (algebras) to have a model completion, extending a characterization provided by Wheeler. For varieties of algebras that have equationally definable principal congruences and the compact intersection property, these conditions yield a more elegant characterization obtained (in a slightly more restricted setting) by Ghilardi and Zawadowski. Moreover, it is shown that under certain further assumptions on congruence lattices, the existence of a model completion implies that the variety has equationally definable principal congruences. This result is then used to provide necessary and sufficient conditions for the existence of a model completion for theories of Hamiltonian varieties of pointed residuated lattices, a broad family of varieties that includes lattice-ordered abelian groups and MV-algebras. Notably, if the theory of a Hamiltonian variety of pointed residuated lattices has a model completion, it must have equationally definable principal congruences. In particular, the theories of  lattice-ordered abelian groups and MV-algebras do not have a model completion, as first proved by Glass and Pierce, and Lacava, respectively. Finally, it is shown that certain varieties of pointed residuated lattices generated by their linearly ordered members, including lattice-ordered abelian groups and MV-algebras, can be extended with a binary operation to obtain theories that do have a model completion.
\end{abstract}


\maketitle


\section{Introduction}

The main aim of this paper is to understand what it means in algebraic terms for the (first-order) theory of a universal class of algebraic structures (algebras) to have a model completion. For classes that have finite presentations --- including all quasivarieties, but not, for example, ordered abelian groups --- a complete characterization was provided by Wheeler in~\cite{Whe76} (see also~\cite{Whe78}) using the well-studied  properties of amalgamation and coherence together with a more complicated property referred to as the conservative congruence extension property. However, as we show in Section~\ref{s:model-completions}, replacing coherence and the conservative congruence extension property with a variable projection property and conservative model extension property yields necessary and sufficient conditions for all universal classes of algebras (Theorem~\ref{t:charact-model-completion}).

Although the mentioned properties can be used to confirm that the theories of ordered abelian groups and linearly ordered MV-algebras have a model completion~\cite{Rob77,LS77}, the conservative model extension property is, in general, rather difficult to prove or refute. We therefore also provide, in Section~\ref{s:model-completion-edpc}, a more elegant characterization for varieties (equational classes) of algebras that have equationally definable principal congruences and the compact intersection property, where the conservative model extension property is replaced by a more amenable equational variable restriction property (Theorem~\ref{t:GZ-charact}). This result generalizes slightly a characterization given by Ghilardi and Zawadowski in~\cite{GZ97} (see also~\cite{GZ02}) by covering varieties such as lattice-ordered abelian groups for which there exists no equation that entails all other equations.

In Section~\ref{s:pdpc}, we prove that for any congruence distributive variety $\V$ that has both the congruence extension property and a ``guarded'' deduction theorem, if the theory of $\V$ has a model completion, then $\V$ has equationally definable principal congruences (Theorem~\ref{t:mc-implies-EDPC-guarded}). Our approach is inspired by the work of Glass and Pierce on lattice-ordered abelian groups in~\cite{GP80} and indeed yields both their result that the theory of this variety does not have a model completion, and the same result  for MV-algebras, first proved by Lacava in~\cite{Lac79}. More generally, in Section~\ref{s:prls}, we use this theorem to show that the theory of a Hamiltonian variety of pointed residuated lattices --- spanning varieties of algebras for substructural logics as well as lattice-ordered abelian groups and MV-algebras (see, e.g.,~\cite{BT03,GJKO07,MPT10}) --- has a model completion if, and only if, the variety is coherent and has equationally definable principal congruences, the amalgamation property, and the equational variable restriction property (Theorem~\ref{t:HamRL-charact}).

Finally, in Section~\ref{s:extension}, we associate with any variety $\V$ generated by a class of linearly ordered pointed residuated lattices, a variety $\Vext$ of algebras with an additional binary operation that has equationally definable principal congruences and the same universal theory as $\V$ in the original language. We then show that if $\V$ satisfies a certain syntactic condition, the theory of $\Vext$ has a model completion (Theorem~\ref{t:two-sided-mc}). Notably, this is the case for lattice-ordered abelian groups and MV-algebras, the second case yielding an alternative proof of the fact that the theory of MV$_\De$-algebras has a model completion, first announced by X.~Caicedo at a conference in 2008.


\section{Algebraic properties}\label{s:algebraic-properties}

Let us first recall some elementary material on universal algebra, referring to~\cite{BS81} for proofs and references. For convenience, we will assume throughout this paper that $\lang$ is an algebraic language (i.e., a first-order language with no relation symbols) containing at least one constant symbol $c$ and that an {\em $\lang$-algebra} $\m{A}$ is an  $\lang$-structure with universe $A$, calling $\m{A}$ {\em trivial} if $|A| = 1$. We denote by $\Con{\m{A}}$ the congruence lattice of $\m{A}$, and  by $\cg{\m{A}}(S)$ the congruence of $\m{A}$ generated by $S\subseteq A^2$.

The {\em term algebra}  $\Tm_\lang(\xbar)$ for $\lang$ over a set $\xbar$ is an $\lang$-algebra with universe $\Tmc_\lang(\xbar)$ consisting of $\lang$-terms with variables in $\xbar$, such that for any $\lang$-algebra $\m{A}$ and map $f\colon\xbar\to A$, there exists a unique homomorphism $\tilde{f}\colon\Tm_\lang(\xbar)\to\m{A}$ extending $f$. Atomic $\lang$-formulas are {\em $\lang$-equations}, written $s \eq t$, and $\lang$-equations and their negations are {\em $\lang$-literals}. Conjunctions, disjunctions, negations, implications, and bi-implications of $\lang$-formulas are built using the symbols $\AND$, $\OR$, $\neg$, $\IMP$, and $\BIMP$, respectively, with $\top\coloneqq c\eq c$ and $\bot\coloneqq \neg \top$. An {\em $\lang$-quasiequation} is an $\lang$-formula $\pi \IMP \ep$, where $\pi$ is a conjunction of $\lang$-equations and $\ep$ is an $\lang$-equation, assuming that the empty conjunction is $\top$. For an $\lang$-term $t$ or $\lang$-formula $\alpha$, we denote by $t(\xbar)$ or $\alpha(\xbar)$ that its free variables belong to the set $\xbar$, and for a conjunction of $\lang$-literals $\xi$, we write $\xi^+$ and $\xi^-$ for the conjunctions of $\lang$-equations occurring in $\xi$ positively and negatively, respectively.

Let $\cop{H}$, $\cop{I}$, $\cop{S}$, $\cop{P}$, and $\cop{P}_U$ denote the class operators of taking homomorphic images, isomorphic images, subalgebras, products, and ultraproducts, respectively. A class of $\lang$-algebras $\K$ is called a {\em variety} if it is closed under $\cop{H}$, $\cop{S}$, and $\cop{P}$, and a {\em quasivariety} if is closed under $\cop{I}$, $\cop{S}$, $\cop{P}$, and $\cop{P}_U$. The class $\K$ is a variety if, and only if, it is an equational class, and a quasivariety if, and only if, it is a quasiequational class. Moreover, $\K$ is a universal class if, and only if, it is closed under  $\cop{I}$, $\cop{S}$, and $\cop{P}_U$, and a positive universal class if, and only if, it is closed under $\cop{H}$, $\cop{S}$, and $\cop{P}_U$.

The variety and quasivariety generated by a class of $\lang$-algebras $\K$ are, respectively, the smallest variety $\cop{HSP}(\K)$  and quasivariety $\cop{ISPP}_U(\K)$ of $\lang$-algebras containing $\K$. An $\lang$-equation is valid in $\K$ if, and only if, it is valid in $\cop{HSP}(\K)$, and an $\lang$-quasiequation is valid in $\K$ if, and only if, it is valid in $\cop{ISPP}_U(\K)$. For future reference, let us note also that any class of $\lang$-algebras $\K$ that is closed under taking finite products satisfies the following {\em disjunction property}: for any conjunctions of $\lang$-equations $\f,\pi_1,\dots,\pi_n$, 
\[
\K\models \f\IMP\!\!\bigOR_{1\leq i\leq n}\pi_i \iff \K\models \f\IMP \pi_i\,\text{ for some $i\in\{1,\dots,n\}$.}
\]
For a variety of $\lang$-algebras $\V$, the {\em $\V$-free algebra} $\F_\V(\xbar)$ over a set $\xbar$ may be identified with the quotient $\Tm_{\lang}(\xbar) / \theta_\V(\xbar)$, where $\theta_\V(\xbar)\coloneqq \bigcap{\{\theta\in\Con{\Tm_{\lang}(\xbar)} \mid \Tm_{\lang}(\xbar) / \theta\in\V\}}$. For any $\lang$-equation $\ep\in\Tmc_{\lang}(\xbar)^2$, we let $\hat{\ep}$ denote its image under the natural surjection $\Tm_{\lang}(\xbar)^2\onto \F_\V(\xbar)^2$. The following useful lemma shows that the valid $\lang$-quasiequations of $\V$ can be described in terms of congruences of $\V$-free algebras.

\begin{lemma}[{cf.~\cite[Lemma~2]{MMT14}}]\label{l:compact-congr-and-consequence-rel}
For any variety of $\lang$-algebras $\V$, conjunction of $\lang$-equations $\pi(\xbar)$, and $\lang$-equation $\ep(\xbar)$,
\[
\V\models \pi\IMP \ep \iff \hat{\ep}\in\cg{\F_{\V}(\xbar)}(\{\hat{\si}\mid \si \text{ is an equation of } \pi\}).
\]
\end{lemma}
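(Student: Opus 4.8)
The plan is to prove both implications by translating back and forth between satisfaction of quasiequations in $\V$, validity in the free algebra $\F_\V(\xbar)$, and membership of pairs in congruences.

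For the forward direction, suppose $\V\models\pi\IMP\ep$ and let $\pi$ be $\bigAND_{i}(s_i\eq t_i)$. In $\F_\V(\xbar)$, form the quotient by $\theta\coloneqq\cg{\F_\V(\xbar)}(\{(\hat{s_i},\hat{t_i})\})$. Since $\V$ is a variety, $\F_\V(\xbar)/\theta$ is again in $\V$ (it is a homomorphic image of a $\V$-free algebra, hence in $\HH(\V)=\V$). In this quotient the interpretations of the $s_i$ and $t_i$ agree, so by assumption the interpretations of the two sides of $\ep$ agree as well; unwinding the definition of the quotient congruence, this says exactly that $(\hat{\ep}_1,\hat{\ep}_2)\in\theta$, i.e. $\hat\ep\in\theta$. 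First I would set this up carefully using the universal property of $\F_\V(\xbar)$: the identity assignment $\xbar\to\F_\V(\xbar)/\theta$ lifts to a homomorphism under which each $\hat{s_i}$ and $\hat{t_i}$ have equal images, witnessing that $\F_\V(\xbar)/\theta$ together with this assignment satisfies $\pi$, hence satisfies $\ep$.

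For the converse, suppose $\hat\ep\in\cg{\F_\V(\xbar)}(\{\hat\si\mid\si\text{ an equation of }\pi\})$, and let $\m{A}\in\V$ with an assignment $a\colon\xbar\to A$ satisfying $\pi$; I must show it satisfies $\ep$. The assignment lifts to a homomorphism $h\colon\F_\V(\xbar)\to\m A$. Since $\m A$ satisfies each equation $\si$ of $\pi$ under $a$, the kernel $\ker h$ contains every pair $\hat\si$, hence contains the congruence they generate, hence contains $\hat\ep$; therefore $h$ identifies the two sides of $\hat\ep$, which says $\m A$ satisfies $\ep$ under $a$. As this holds for all $\m A\in\V$ and all satisfying assignments, $\V\models\pi\IMP\ep$.

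I do not expect a genuine obstacle here; the only points requiring care are the bookkeeping between $\lang$-terms and their images $\hat{(\cdot)}$ under $\Tm_\lang(\xbar)\onto\F_\V(\xbar)$, and the fact that a homomorphism $h\colon\m B\to\m A$ identifies a pair $(u,v)$ iff $(u,v)\in\ker h$, so that $\cg{\m B}(S)\subseteq\ker h$ whenever $S\subseteq\ker h$. One should also note explicitly that the empty conjunction case ($\pi=\top$) is covered, since then the generating set is empty and $\cg{\F_\V(\xbar)}(\emptyset)$ is the identity congruence, matching $\V\models\ep$ iff $\hat\ep$ is a diagonal pair. The mild subtlety worth flagging is that this uses closure of $\V$ under $\HH$ in the forward direction (to keep the quotient inside $\V$) and the universal property of the free algebra in both directions; for a mere quasivariety the forward direction would fail, which is why the statement is phrased for varieties.
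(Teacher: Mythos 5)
Your argument is correct and is exactly the standard proof that the paper implicitly relies on by citing~\cite[Lemma~2]{MMT14}: the forward direction quotients $\F_\V(\xbar)$ by the generated congruence and uses closure of $\V$ under homomorphic images, and the converse uses the universal property of the free algebra together with the fact that the kernel of a homomorphism is a congruence containing the generators, hence the generated congruence. Your side remarks (the empty-conjunction case and the failure of the forward direction for quasivarieties, which is why relative congruences would be needed there) are also accurate, so there is nothing to fix.
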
 

A variety $\V$ has the {\em congruence extension property} if for all $\m{A}\in \V$, any congruence of a subalgebra of $\m A$ extends to a congruence of $\m A$. The following lemma provides a useful equivalent characterization of this property in terms of congruences of $\V$-free algebras.

\begin{lemma}[{cf.~\cite[Lemma~17]{MMT14}}]\label{l:cepequiv}
A variety $\V$ has the congruence extension property if, and only if, for any $\theta\in\Con{\F_{\V}(\xbar)}$ and $\theta'\in\Con{\F_{\V}(\xbar,\ybar)}$,
\[
(\theta'\jn\cg{\F_\V(\xbar,\ybar)}(\theta))\cap\Fc_{\V}(\xbar)^2
=
(\theta'\cap\Fc_{\V}(\xbar)^2)\jn\theta.
\]
\end{lemma}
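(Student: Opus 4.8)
The plan is to prove both directions by translating everything into statements about congruences generated by equations in free algebras, using Lemma~\ref{l:compact-congr-and-consequence-rel} as the dictionary between valid quasiequations and compact congruences of free algebras.

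First I would establish a convenient reformulation of the congruence extension property at the level of free algebras. Recall that for any variety $\V$ and any $\m{B}\in\V$ with subalgebra $\m{A}$, the congruence extension property says that every $\theta\in\Con{\m A}$ is of the form $\Phi\cap A^2$ for some $\Phi\in\Con{\m B}$; equivalently, $\cg{\m B}(\theta)\cap A^2=\theta$. The key reduction is that it suffices to check this for $\m{A}=\F_\V(\xbar)$ a subalgebra of $\m{B}=\F_\V(\xbar,\ybar)$, with $\xbar,\ybar$ finite: since every algebra in $\V$ is a quotient of a free algebra, and congruences and subalgebras are determined on generators, the general case follows by a standard pullback-along-the-quotient argument together with a directed-colimit argument to reduce from arbitrary generating sets to finite ones. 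I would carry out this reduction first, so that the remaining task is purely about the two displayed sides of the claimed identity.

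Next, with $\m{A}=\F_\V(\xbar)\le\m{B}=\F_\V(\xbar,\ybar)$ fixed, I would unwind the claimed identity. The inclusion from right to left is routine: $\theta\subseteq\cg{\m B}(\theta)$ and $\theta\subseteq\Fc_\V(\xbar)^2$, so $\theta\subseteq(\theta'\jn\cg{\m B}(\theta))\cap\Fc_\V(\xbar)^2$, and likewise $\theta'\cap\Fc_\V(\xbar)^2$ is contained in the left-hand side; since the left-hand side is a congruence of $\m A$ containing both, it contains their join. The substantive content is the left-to-right inclusion, and here is where the congruence extension property is used (resp. forced). Assuming the congruence extension property, apply it to the subalgebra $\m{A}'=\F_\V(\xbar,\ybar)/\theta'$-style construction — more precisely, pass to the quotient $\m{B}/\theta'$, let $\m{A}''$ be the image of $\m A$ in it, and let $\eta$ be the image of $\theta$ under $\m A\onto\m{A}''$; then $\cg{\m B/\theta'}(\text{image of }\theta)\cap (A'')^2=\eta$ by the congruence extension property, and translating this equality back through the quotient $\m B\onto\m B/\theta'$ yields exactly that any pair in $(\theta'\jn\cg{\m B}(\theta))\cap\Fc_\V(\xbar)^2$ already lies in $(\theta'\cap\Fc_\V(\xbar)^2)\jn\theta$. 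For the converse, suppose the displayed identity holds for all finite $\xbar,\ybar$; given $\m{A}\le\m{B}$ in $\V$ and $\theta\in\Con{\m A}$, present $\m B$ as a quotient $q\colon\F_\V(\xbar,\ybar)\onto\m B$ with the $\xbar$-generators mapping into $A$ and generating $\m A$, let $\theta'=\ker q$ and let $\theta_0$ be the preimage under $q\restriction_{\Fc_\V(\xbar)}$ of $\theta$; a short computation using the identity shows $\cg{\m B}(\theta)\cap A^2=\theta$, reducing as before to the finitely generated case by directedness.

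The main obstacle will be the bookkeeping in the reduction to finitely generated free algebras and the careful identification of images of congruences under the various quotient maps — in particular, verifying that $\cg{\m B}(\cdot)$ commutes appropriately with surjections (which it does: $q[\cg{\m A}(S)]=\cg{q[\m A]}(q[S])$ for surjective $q$) and that joins of congruences behave well under these translations. None of these steps is deep, but getting the quantifier order right (``for all $\theta,\theta'$'' on the free-algebra side versus ``for all $\m A\le\m B$, all $\theta$'' on the general side) is where care is needed; I would isolate the surjection/join compatibility as a small preliminary observation and then the two implications become essentially diagram chases.
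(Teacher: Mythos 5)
The paper itself gives no proof of this lemma --- it is quoted from~\cite[Lemma~17]{MMT14} --- so there is nothing in-text to compare against; judged on its own terms, your plan is essentially the standard argument and it does work: the right-to-left inclusion is trivial, the left-to-right inclusion follows by applying the congruence extension property inside the quotient $\F_\V(\xbar,\ybar)/\theta'$ and translating back via the correspondence theorem, and the converse follows by presenting an arbitrary pair $\m A\le\m B$ as images of $\F_\V(\xbar)\le\F_\V(\xbar,\ybar)$ with $\theta'=\ker q$ and $\theta_0=(q{\restriction}_{\Fc_\V(\xbar)})^{-1}[\theta]$, for which the displayed identity collapses to $\theta_0$ and hence gives $\cg{\m B}(\theta)\cap A^2=\theta$. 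Note also that since the lemma quantifies over arbitrary (not necessarily finite) sets $\xbar,\ybar$, your preliminary reduction to finitely generated free algebras via directed colimits is unnecessary: in the converse direction you may simply take $\xbar$ and $\ybar$ large enough to generate $\m A$ and $\m B$.

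Two points need repair, though neither is fatal. First, your ``small preliminary observation'' $q[\cg{\m A}(S)]=\cg{q[\m A]}(q[S])$ for surjective $q$ is false in general: the image of a congruence under a surjective homomorphism need not be transitive, and even its generated congruence can be strictly larger than the image (a Maltsev chain downstairs need not lift). What the argument actually needs, and what your diagram chase implicitly uses, is the correspondence-theorem fact for congruences \emph{above the kernel}: $q^{-1}\bigl[\cg{q[\m A]}(q[S])\bigr]=\ker q\jn\cg{\m A}(S)$, equivalently $q[\Psi]$ is a congruence whenever $\ker q\subseteq\Psi$. State and use that version instead. Second, in the forward direction ``let $\eta$ be the image of $\theta$ under $\m A\onto\m A''$'' must be replaced by ``let $\eta$ be the congruence of $\m A''$ \emph{generated by} the image of $\theta$'' (equivalently, the congruence corresponding to $(\theta'\cap\Fc_\V(\xbar)^2)\jn\theta$ under the correspondence theorem); the congruence extension property applies to congruences of $\m A''$, not to arbitrary reflexive symmetric subsets, and it is this generated congruence whose pullback is exactly the right-hand side of the identity. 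With these corrections your outline is a complete proof.
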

 
In what follows, we will omit mention of the language $\lang$, assuming throughout that a class of algebras $\K$ is a class of $\lang$-algebras, and that terms, equations, formulas, etc.\ are defined over this language.


\subsection{The variable projection property and coherence}

Let us say that a class of algebras $\K$ has the \emph{variable projection property} if for any finite set $\xbar,y$ and conjunction of equations $\f(\xbar,y)$, there exists a quantifier-free formula $\xi(\xbar)$ such that $\K\models\f\IMP\xi$ and for any equation~$\ep(\xbar)$,
\[
\K\models \f\IMP \ep \:\Longrightarrow\:\K\models \xi\IMP\ep.
\]
If $\xi(\xbar)$ is required to be a conjunction of equations for each $\f(\xbar,y)$, we say that  $\K$ has the \emph{equational variable projection property}.

\begin{remark}\label{r:quasiequationalrestriction}
Since $\K$ satisfies the same quasiequations as the quasivariety $\cop{ISPP}_U(\K)$ that it generates, $\K$ has the equational variable projection property if, and only if, $\cop{ISPP}_U(\K)$ has this property. 
\end{remark}

For varieties, the variable projection property is equivalent both to the  equational variable projection property and to the widely studied algebraic property of coherence. A variety $\V$ is said to be \emph{coherent} if every finitely generated subalgebra of a finitely presented member of $\V$ is finitely presented.  

\begin{proposition}\label{p:coherence-eq-conseq}
The following statements are equivalent for any variety~$\V$:
\begin{enumerate}
\item[(1)] $\V$ is coherent.
\item[(2)] $\V$ has the equational variable projection property.
\item[(3)] $\V$ has the variable projection property.
\end{enumerate}
\end{proposition}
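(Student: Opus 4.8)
The plan is to prove the cycle of implications $(1)\Rightarrow(2)\Rightarrow(3)\Rightarrow(1)$, with the conceptual heart being the translation between the syntactic variable projection property and the structural notion of coherence via presentations of finitely generated algebras. Throughout I will freely use Lemma~\ref{l:compact-congr-and-consequence-rel} to pass between valid quasiequations of $\V$ and compact congruences of free algebras, and the fact that a finitely presented algebra in $\V$ is exactly a quotient $\F_\V(\xbar)/\theta$ with $\xbar$ finite and $\theta$ a compact (i.e., finitely generated) congruence.

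\medskip\noindent\textbf{Plan for $(1)\Rightarrow(2)$.} Assume $\V$ is coherent and let $\f(\xbar,y)$ be a conjunction of equations, with $\xbar,y$ finite. Consider the finitely presented algebra $\m A \coloneqq \F_\V(\xbar,y)/\theta$, where $\theta$ is the compact congruence generated by the pairs $\hat\si$ for $\si$ an equation of $\f$; inside it sits the finitely generated subalgebra $\m B$ generated by the images of $\xbar$. By coherence, $\m B$ is finitely presented, so there is a finite set of equations $\xi(\xbar)$ and a surjection $\F_\V(\xbar)\onto\m B$ whose kernel is $\cg{\F_\V(\xbar)}(\{\hat\delta \mid \delta \text{ an equation of } \xi\})$. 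The claim is that this $\xi$ witnesses the equational variable projection property: on the one hand $\V\models\f\IMP\xi$, since each $\hat\delta$ is collapsed in $\m A$ (as $\m B$ embeds in $\m A$); on the other hand, if $\V\models\f\IMP\ep$ for an equation $\ep(\xbar)$, then $\hat\ep$ (computed in $\F_\V(\xbar,y)$) lies in $\theta$ by Lemma~\ref{l:compact-congr-and-consequence-rel}, hence the two sides of $\ep$ are identified in $\m A$, hence already in $\m B$ since $\ep$ uses only the variables $\xbar$; pulling this back along $\F_\V(\xbar)\onto\m B$ and using Lemma~\ref{l:compact-congr-and-consequence-rel} again gives $\V\models\xi\IMP\ep$. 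The one point requiring care is the interaction of the natural maps $\F_\V(\xbar)\to\F_\V(\xbar,y)\onto\m A$ with the subalgebra $\m B$, i.e., checking that the kernel of $\F_\V(\xbar)\to\m A$ is precisely the preimage under $\F_\V(\xbar)\into\F_\V(\xbar,y)$ of $\theta$, and that this coincides with the presentation kernel of $\m B$.

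\medskip\noindent\textbf{Plan for $(2)\Rightarrow(3)$.} This is immediate: a conjunction of equations is in particular a quantifier-free formula, so any witness for the equational variable projection property is a witness for the variable projection property. I will state it in one line.

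\medskip\noindent\textbf{Plan for $(3)\Rightarrow(1)$.} Assume $\V$ has the variable projection property and let $\m B$ be a finitely generated subalgebra of a finitely presented $\m A\in\V$. Write $\m A = \F_\V(\xbar,\ybar)/\theta$ with $\xbar,\ybar$ finite and $\theta$ compact, arranged so that the generators of $\m B$ are the images of $\xbar$; let $\f(\xbar,\ybar)$ be a conjunction of equations generating $\theta$ in the sense of Lemma~\ref{l:compact-congr-and-consequence-rel}. Iterating the variable projection property once for each variable in $\ybar$ (or, more slickly, noting that finitely many applications suffice), we obtain a quantifier-free $\xi(\xbar)$ with $\V\models\f\IMP\xi$ such that $\V\models\f\IMP\ep \iff \V\models\xi\IMP\ep$ for every equation $\ep(\xbar)$. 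Since $\V$ is closed under finite products, the disjunction property lets us replace $\xi$ (a quantifier-free formula, hence a disjunction of conjunctions of literals) by one of its disjuncts — here I expect the argument to go through at the level of equational consequence, using that we only care about which equations $\ep(\xbar)$ follow, so the negative literals of $\xi$ are irrelevant and we may take $\xi$ to be a conjunction of equations. Then, again by Lemma~\ref{l:compact-congr-and-consequence-rel}, the compact congruence $\Psi\coloneqq\cg{\F_\V(\xbar)}(\{\hat\delta\mid\delta\text{ an equation of }\xi\})$ is exactly the set of equations $\ep(\xbar)$ with $\V\models\f\IMP\ep$, which is precisely the kernel of the composite $\F_\V(\xbar)\to\F_\V(\xbar,\ybar)\onto\m A$; its image is $\m B$, so $\m B\cong\F_\V(\xbar)/\Psi$ is finitely presented.

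\medskip\noindent\textbf{Main obstacle.} The delicate step is the reduction from a quantifier-free $\xi$ to a conjunction of equations in $(3)\Rightarrow(1)$ (equivalently, seeing that $(3)$ already yields $(2)$): one must argue that for the purpose of capturing the equational consequences $\ep(\xbar)$ of $\f$, negated equations and disjunctions in $\xi$ carry no information. The cleanest route is to observe that $\{\ep(\xbar) \mid \V\models\f\IMP\ep\}$, viewed inside $\F_\V(\xbar)^2$, is the compact congruence $\Psi$ above (finitely generated because $\theta$ is compact and the natural map $\F_\V(\xbar)\to\m A$ has finitely generated kernel over a finitely presented target — this itself needs a small argument, essentially that the congruence extension behaviour of free algebras makes the relevant intersection compact), and then any finite generating set of equations for $\Psi$ serves as the desired conjunction $\xi$; the variable projection property is then invoked only to guarantee that such a $\Psi$ is indeed compact. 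I would present this as a self-contained lemma to the effect that, for finite $\xbar,\ybar$ and compact $\theta\in\Con\F_\V(\xbar,\ybar)$, the contraction $\theta\cap\Fc_\V(\xbar)^2$ is compact if and only if the variable projection property holds for the presenting formula, which makes the equivalence with coherence transparent.
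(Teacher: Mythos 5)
Your overall architecture — proving the cycle $(1)\Rightarrow(2)\Rightarrow(3)\Rightarrow(1)$ and re-deriving the equivalence of coherence with the equational variable projection property by hand via Lemma~\ref{l:compact-congr-and-consequence-rel}, rather than citing \cite{KM18} as the paper does for $(1)\Leftrightarrow(2)$ — is viable, and your $(1)\Rightarrow(2)$ outline is essentially correct (modulo the standard fact, used silently there and again in $(3)\Rightarrow(1)$, that finite presentability does not depend on the chosen finite generating set). The genuine gap is exactly the step you label the ``main obstacle'', which is where the paper's proof of $(3)\Rightarrow(2)$ does all its work: passing from a quantifier-free witness $\xi$ to a conjunction of equations. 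You assert that ``the negative literals of $\xi$ are irrelevant'', but that is the statement to be proved, and your proposed ``cleanest route'' is circular: the compactness of $\Psi=\{\hat{\ep}\mid \V\models\f\IMP\ep\}$, equivalently of $\theta\cap\Fc_\V(\xbar)^2$, is \emph{not} a consequence of $\theta$ being compact or of any congruence-extension behaviour of free algebras (its failure is precisely why groups, lattices, and modal algebras are not coherent), and deriving it from the variable projection property requires precisely the equational reduction you are deferring. The missing argument, as in the paper, is: since $\f$ holds in trivial algebras and $\V\models\f\IMP\xi$, one may discard disjuncts unsatisfiable in $\V$ and write $\xi=\xi_1\OR\cdots\OR\xi_m$ with each $\xi_i$ satisfiable in $\V$; the disjunction property applied to $\V\models\f\IMP\bigOR_i\xi_i^+$ gives $\V\models\f\IMP\xi_i^+$ for some $i$; and for any $\ep(\xbar)$ with $\V\models\f\IMP\ep$, the hypothesis yields $\V\models\xi_i\IMP\ep$, i.e.\ $\V\models\xi_i^+\IMP\ep\OR\si_1\OR\cdots\OR\si_n$ with $\si_1,\dots,\si_n$ the equations of $\xi_i^-$; a second application of the disjunction property forces either $\V\models\xi_i^+\IMP\ep$ (done) or $\V\models\xi_i^+\IMP\si_j$, which would make $\xi_i$ unsatisfiable. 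The satisfiability bookkeeping is indispensable and entirely absent from your plan.

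A second, smaller defect: as written, your $(3)\Rightarrow(1)$ ``iterates the variable projection property once for each variable in $\ybar$'', but the property takes a \emph{conjunction of equations} as input and returns only a quantifier-free formula, so the second application is not licensed. The fix is to perform the equational reduction above after each single-variable elimination — equivalently, prove $(3)\Rightarrow(2)$ first (as the paper does) and then iterate $(2)$ to eliminate $\ybar$ one variable at a time. With that reordering, your concluding step is fine: by Lemma~\ref{l:compact-congr-and-consequence-rel} the kernel of $\F_\V(\xbar)\to\m{A}$ is generated by the finitely many pairs $\hat{\de}$ coming from the equational witness, so its image $\m{B}$ is finitely presented, which is coherence.
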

\begin{proof}
The equivalence of (1) and (2) is established in~\cite[Theorem 2.3]{KM18}, and (3) is an immediate consequence of (2). To show that (3) implies (2), we fix a finite set $\xbar,y$ and a conjunction of equations $\f(\xbar,y)$, and let $\xi(\xbar)$ be a quantifier-free formula such that $\V\models\f\IMP\xi$ and for any equation $\ep(\xbar)$, we have $\V\models \f\IMP \ep \: \Longrightarrow \: \V\models \xi\IMP\ep$. Since $\f$ is satisfiable in $\V$ (e.g., in any trivial algebra), we can assume without loss of generality that $\xi=\xi_1\OR\cdots \OR\xi_m$, where $\xi_1,\dots,\xi_m$ are conjunctions of literals satisfiable in $\V$. 

Since $\V\models\f\IMP\xi$, also $\V\models\f\IMP\!\bigOR_{1\leq i\leq m}\xi_i^+$. By the disjunction property for varieties, $\V\models \f\IMP \xi^+_i$ for some $i\in\{1,\dots,m\}$. Let $\si_1,\dots,\si_n$ be the equations of $\xi^-_i$ and consider any equation $\ep(\xbar)$ such that $\V\models \f\IMP \ep$. By assumption, $\V\models \xi\IMP\ep$, so $\V\models \xi_i\IMP\ep$. Hence $\V\models \xi^+_i\IMP\ep\OR\!\bigOR_{1\leq j\leq n}{\si_j}$ and, by the disjunction property for varieties, either $\V\models \xi^+_i\IMP\ep$ or $\V\models \xi_i\IMP\bot$. But $\xi_i$ is satisfiable in $\V$, so $\V\models \xi^+_i\IMP\ep$.
\end{proof}

Following this last proposition, we will refer to a variety throughout this paper as coherent whenever it has the (equational) variable projection property.

\begin{remark}\label{r:locally-finite-coherent}
An algebra $\m{A}$ is {\em locally finite} if every finitely generated subalgebra of $\m{A}$ is finite, and a class of algebras $\K$ is locally finite if each $\m{A}\in\K$ is locally finite. Since any finitely presented algebra of a locally finite variety is finite and any finite algebra of a locally finite variety is finitely presented, every locally finite variety is coherent.
\end{remark}

Below we introduce some well-known classes of algebras that will be employed as running examples throughout the paper. These algebras all possess definable binary operations $\mt$ and $\jn$ such that $a \le b \defiff a\mt b = a$ defines a lattice order with binary meets and joins given by $\mt$ and $\jn$, respectively.  

\begin{example}\label{ex:linear-orders-evrp}
{\em Linear orders with endpoints} may be considered as bounded lattices $\lan L,\mt,\jn,0,1\ran$ where the defined lattice order is linear. The class $\DLC$ of linear orders with endpoints is then a positive universal class of algebras that generates the variety $\DL$ of bounded distributive lattices as a quasivariety. Since $\DL$ is locally finite, it is coherent, and, by Remark~\ref{r:quasiequationalrestriction}, the class $\DLC$ has the equational variable projection property. 
\end{example}

\begin{example}\label{ex:Heyting-algebras-and-chains}
A {\em Heyting algebra} is an algebra $\lan H,\mt,\jn,\himp,0,1\ran$ such that the reduct $\lan H,\mt,\jn,0,1 \ran$ is a bounded distributive lattice and $\himp$ is the right residual of $\mt$; that is, $a \le b\himp c$ if, and only if, $a \mt b \le c$ for all $a,b,c\in H$. The fact that the variety $\HA$ of Heyting algebras is coherent is a direct consequence of Pitts' uniform interpolation theorem for intuitionistic propositional logic~\cite{Pit92}. 
\end{example}

\begin{example}\label{ex:ordered-groups-alm-coh}
A {\em lattice-ordered abelian group} is an algebra $\lan L,\mt,\jn,+,-,0 \ran$ such that $\lan L,+,-,0 \ran$ is an abelian group, $\lan L,\mt,\jn \ran$ is a lattice, and $a\leq b$ implies $a+c\leq b+c$ for all $a,b,c\in L$. Lattice-ordered abelian groups form a variety $\LA$ that is coherent (see~\cite{KM18}) and generated as a quasivariety by the positive universal class $\OG$ of \emph{ordered abelian groups}, i.e., the class of linearly ordered members of $\LA$  (see, e.g.,~\cite{AF88}). It follows from Remark~\ref{r:quasiequationalrestriction} that $\OG$ has the equational variable projection property.
\end{example}

\begin{example}\label{ex:mv-algebras}
An {\em MV-algebra} is an algebra $\lan M,\oplus,\lnot,0\ran$ satisfying the equations
\[
\begin{array}{rlcrl}
{\rm (MV1)} &  x\oplus(y\oplus z)\eq(x\oplus y)\oplus z & & {\rm (MV4)} &  \lnot\lnot x\eq x\\
{\rm (MV2)} &  x\oplus y\eq y\oplus x & & {\rm (MV5)}  &  x\oplus\lnot0 \eq\lnot 0\\
{\rm (MV3)} &  x\oplus0\eq x & & {\rm (MV6)} &  \lnot(\lnot x\oplus y)\oplus y\eq\lnot(\lnot y\oplus x)\oplus x.
\end{array}
\]
The variety $\MV$ of MV-algebras is coherent (see~\cite{KM18}), and generated as a quasivariety by the positive universal class $\MVC$ of MV-algebras that are linearly ordered with respect to the defined lattice operations $x\mt y\coloneqq\lnot(\lnot x \oplus \lnot (\lnot x \oplus y))$ and $x\jn y\coloneqq \lnot(\lnot x \oplus y)\oplus y$ (see,~e.g.,~\cite{COM99}). Again, it follows from Remark~\ref{r:quasiequationalrestriction} that $\MVC$ has the equational variable projection property.
\end{example}

Notable varieties that are not coherent include the varieties of lattices, semigroups, groups, and modal algebras (see~\cite{KM18} for proofs and references).


\subsection{The conservative model extension property}

Let us say that a class of algebras $\K$ has the {\it conservative model extension property} if for any finite set $\xbar,y$ and conjunction of literals $\ps(\xbar,y)$, there exists a quantifier-free formula $\chi(\xbar)$ satisfying 
 \begin{enumerate}
 \item [(i)] $\K\models \ps\IMP\chi$
 \item [(ii)] for any $\m A \in \K$ generated by $\abar\in A^{\xbar}$ such that $\m A\models\chi(\abar)$ and for any equation $\ep(\xbar)$,
 \[
 \K\models\ps^+\IMP\ep \:\Longrightarrow\: \m A\models\ep(\abar),
 \]
 there exist an algebra $\m B\in\K$ extending $\m A$ and $b \in B$ such that $\m B\models \ps(\abar,b)$.
  \end{enumerate}

\begin{remark}\label{r:satisfiable-and-non-empty}
In the previous definition, we may assume without loss of generality that $\ps$ is satisfiable in $\K$. Just observe that if this is not the case, then $\K\models \ps\BIMP  \bot$ and we can let $\chi\coloneqq \bot$. Moreover, if there is precisely one definable constant in $\K$ (which is the case, e.g., for lattice-ordered abelian groups), we can assume that $\xbar$ is non-empty. To see this, suppose that $\xbar=\emptyset$ and let $\chi\coloneqq \top$. Then (i) is clearly satisfied and for (ii), any $\m{A}\in\K$ generated by $\emptyset$ is trivial and satisfies all equations, so, since $\ps$ is satisfiable in $\K$, we can choose an algebra $\m B\in \K$ extending $\m{A}$ and $b\in B$ such that $\m B\models \ps(b)$.
\end{remark}

In the case where $\K$ is a universal class of algebras admitting finite presentations (in particular, any quasivariety), the conservative model extension property is implied by the conservative congruence extension property introduced by Wheeler in~\cite{Whe76} (see Proposition~\ref{p:CMEP-CCEP} of Appendix~\ref{app:wheeler}). The following proposition, proved here for the sake of completeness, is then a direct consequence of~\cite[Corollary~1, p.~319]{Whe76}.

\begin{proposition}\label{p:locally-finite}
Every locally finite variety has the conservative model extension property.
\end{proposition}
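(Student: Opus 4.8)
The plan is to exploit the fact that a locally finite variety $\V$ has, up to isomorphism, only finitely many algebras generated by a fixed finite set of generators, so the relevant extension problems become finitary and the required quantifier-free formula $\chi$ can be assembled by brute force from this finite supply of data. Fix a finite set $\xbar,y$ and a conjunction of literals $\ps(\xbar,y)$; by Remark~\ref{r:satisfiable-and-non-empty} we may assume $\ps$ is satisfiable in $\V$. First I would consider the $\V$-free algebra $\F_\V(\xbar)$, which is finite since $\V$ is locally finite, and enumerate its finitely many homomorphic images (equivalently, the finitely many algebras in $\V$ generated by $\xbar$), say $\m A_1,\dots,\m A_k$, each presented as $\F_\V(\xbar)/\theta_j$ for the appropriate congruence $\theta_j\in\Con{\F_\V(\xbar)}$. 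For each $j$, ask whether the extension condition in clause~(ii) can be met: is there $\m B_j\in\V$ extending $\m A_j$ and $b\in B_j$ with $\m B_j\models\ps(\abar_j,b)$, where $\abar_j$ is the image of $\xbar$? Call $j$ \emph{good} if such a $\m B_j$ exists.

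Next I would observe that each $\m A_j$ is finitely presented (a finite algebra in a locally finite variety is finitely presented, as noted in Remark~\ref{r:locally-finite-coherent}), so there is a finite conjunction of equations $\delta_j(\xbar)$ with $\F_\V(\xbar)/\theta_j$ presented by $\delta_j$; concretely, $\delta_j$ lists a finite generating set of $\theta_j$ pulled back to terms. The key point is that for an algebra $\m A\in\V$ generated by $\abar\in A^{\xbar}$, the map $\xbar\mapsto\abar$ extends to a surjection $\F_\V(\xbar)\onto\m A$ whose kernel is some $\theta_j$, and $\m A\models\delta_j(\abar)$ holds precisely for that $j$ (and, more weakly, $\m A\models\delta_i(\abar)$ holds iff $\theta_i\subseteq\ker$). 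I would then set
\[
\chi(\xbar)\coloneqq\bigOR_{j\text{ good}}\delta_j(\xbar).
\]
To verify clause~(i), $\V\models\ps\IMP\chi$: if $\m C\in\V$ and $\m C\models\ps(\abar,c)$ for some $c$, let $\m A$ be the subalgebra of $\m C$ generated by $\abar$; then $\m A=\F_\V(\xbar)/\theta_j$ for some $j$, and witnessing $\m C$ together with $c$ shows $j$ is good (note $\m C$ is an extension of $\m A$, and the hypothesis in clause~(ii) is vacuously compatible since we only need \emph{some} extension), whence $\m C\models\delta_j(\abar)$ and so $\m C\models\chi(\abar)$.

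For clause~(ii): suppose $\m A\in\V$ is generated by $\abar$, $\m A\models\chi(\abar)$, and for every equation $\ep(\xbar)$ with $\V\models\ps^+\IMP\ep$ we have $\m A\models\ep(\abar)$. From $\m A\models\chi(\abar)$ we get $\m A\models\delta_j(\abar)$ for some good $j$; this forces $\theta_j\subseteq\ker(\F_\V(\xbar)\onto\m A)$, so $\m A$ is a quotient $\m A_j/\eta$ for some congruence $\eta$. Now for the good witness $\m B_j\supseteq\m A_j$ with $\m B_j\models\ps(\abar_j,b)$, I want to push $\m B_j$ forward along the quotient $\m A_j\onto\m A$ and obtain an extension of $\m A$ still satisfying $\ps$. \textbf{This amalgamation-style step is the main obstacle}: one must produce, from $\m A_j\onto\m A$ and $\m A_j\hookrightarrow\m B_j$, an algebra $\m B\in\V$ with $\m A\hookrightarrow\m B$ and a map $\m B_j\to\m B$ compatible with both, sending $b$ to a suitable $b'$ with $\m B\models\ps(\abar,b')$. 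Here is where the extra hypothesis of clause~(ii) earns its keep and where Wheeler's argument is really invoked. One natural route: the hypothesis on $\ep$ with $\V\models\ps^+\IMP\ep$ exactly ensures that the congruence on $\m B_j$ generated by the kernel of $\m A_j\onto\m A$ does not collapse any disequation of $\ps$ (the positive part $\ps^+$ being responsible for the quasiequational consequences $\ep$, whose $\m A$-validity we assumed, and the congruence extension property of a locally finite variety — every locally finite variety has CEP — controlling the negative part); quotienting $\m B_j$ by that congruence gives $\m B$, with $\m A\hookrightarrow\m B$ because $\m A_j\to\m B$ has kernel exactly $\ker(\m A_j\onto\m A)$, and $\m B\models\ps(\abar,b')$ for $b'$ the image of $b$. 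Rather than reconstruct this in full, I would cite Wheeler~\cite[Corollary~1, p.~319]{Whe76} for the precise bookkeeping, the locally finite hypothesis guaranteeing that all the objects involved are finite and that the requisite finite-presentation and congruence-extension properties hold; the contribution of the proof is the explicit construction of $\chi$ as the disjunction of the defining conjunctions $\delta_j$ over good $j$.
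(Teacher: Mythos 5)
There is a genuine gap, and it sits exactly where you flagged it, but it is worse than a missing bookkeeping step: your choice of $\chi$ is too weak for clause (ii) to hold at all. Since $\delta_j$ records only the \emph{positive} presentation of $\F_\V(\xbar)/\theta_j$, the condition $\m A\models\delta_j(\abar)$ merely says $\theta_j\subseteq\ker(\F_\V(\xbar)\onto\m A)$, so every further quotient of a good algebra satisfies your $\chi\coloneqq\bigOR_{j \text{ good}}\delta_j$. Concretely, let $\V$ be the (locally finite) variety of Boolean algebras and $\ps(x,y)\coloneqq\neg(x\eq 0)\AND\neg(x\eq 1)\AND (y\eq y)$. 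The identity congruence on $\F_\V(x)$ is good, since the free algebra itself realizes $\ps$; its presentation is empty, so $\top$ is a disjunct of your $\chi$. Now take $\m A$ the two-element algebra with $a=0$: it satisfies $\chi(a)$, and it satisfies the extra hypothesis of (ii) vacuously (here $\ps^+$ has only trivial consequences), yet no extension of $\m A$ can satisfy $\neg(a\eq 0)$, because $0$ is a constant. Note that the extra hypothesis in (ii) only bounds the kernel from below by the consequences of $\ps^+$; it does not prevent $\m A$ from collapsing strictly more than $\theta_j$, which is precisely what your amalgamation step would have to handle. The proposed repair also rests on a false claim: locally finite varieties need not have the congruence extension property (e.g., the variety generated by the alternating group $A_4$ lacks it), and the Boolean example above shows that even full CEP and amalgamation cannot rescue this $\chi$, since the congruence of $\m B_j$ generated by $\ker(\m A_j\onto\m A)$ can identify the terms occurring in the negated equations of $\ps$. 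Finally, ``cite Wheeler for the bookkeeping'' is not available as a patch here, because the statement being proved is exactly the ingredient one would need to extract from that argument.

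The paper's proof dissolves the obstacle by strengthening the disjuncts: for each congruence $\theta_j$ of the finite free algebra it takes $\ps_j(\xbar)$ to be the conjunction of all literals in $\{\ep_w\mid w\in\theta_j\}\cup\{\neg\ep_w\mid w\notin\theta_j\}$, i.e., the complete quantifier-free diagram of $\F_\V(\xbar)/\theta_j$ on the generators, and sets $\chi\coloneqq\bigOR_{k\in S}\ps_k$ over the good indices. Then for an algebra generated by $\abar$, satisfying $\ps_k(\abar)$ forces the kernel to be exactly $\theta_k$, so $\m A\cong\F_\V(\xbar)/\theta_k$ over $\abar$ and the witness extension for $\theta_k$ is, up to isomorphism, already an extension of $\m A$: no pushout, no CEP, and in fact no use of the equational hypothesis in (ii) at all. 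If you replace your $\delta_j$ by these full diagrams, the rest of your plan (finiteness of $\F_\V(\xbar)$, the set of good indices, and your verification of (i)) goes through essentially as in the paper.
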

\begin{proof}
Let $\V$ be a locally finite variety and consider a finite set $\xbar,y$ and conjunction of literals $\ps(\xbar,y)$. We can assume that $\ps$ is satisfiable in $\V$. Since $\V$ is locally finite, the finitely generated free algebra $\F_\V(\xbar)$ is finite and has finitely many congruences $\theta_1,\dots,\theta_m$. For each $w\in\Fc_\V(\xbar)^2$, choose an equation $\ep_{w}$ such that $\hat{\ep}_w=w$. Now, for each $j\in\{1,\dots,m\}$, let $\ps_j(\xbar)$ be the conjunction of literals in the set $\{\ep_{w}\mid w\in \theta_j\}\cup\{\neg\ep_{w}\mid w\notin \theta_j\}$. Then, for any algebra $\m A\in \V$ generated by a tuple $\abar\in A^{\xbar}$, we have $\m A\models \ps_j(\abar)$ if, and only if, the homomorphism $\F_\V(\xbar)/\theta_j\to\m A$ mapping $\xbar_j$ to $\abar$, where $\xbar_j$ is the image of the tuple $\xbar$ under the composite map $\Tm(\xbar)\onto \F_{\V}(\xbar)\onto \F_\V(\xbar)/\theta_j$, is an isomorphism.

Let $S$ be the set of all $j\in\{1,\dots,m\}$ such that there exist $\m B\in \V$ extending $\F_\V(\xbar)/\theta_j$ and $b\in B$ satisfying $\m B\models \ps(\xbar_j,b)$. Since $\ps$ is satisfiable in $\V$, there exist $\m B\in \V$ and $(\abar,b)\in B^{\xbar,y}$ such that $\m B\models \ps(\abar,b)$. The following claim then implies that $S\neq\emptyset$.
\begin{claim*}
Suppose that $\m B\in \V$ and $\m B\models \ps(\abar,b)$ for some $(\abar,b)\in B^{\xbar,y}$. Then there exist $j\in\{1,\dots,m\}$ and an embedding $\F_\V(\xbar)/\theta_j\into \m B$ mapping $\xbar_j$ to $\abar$, and hence $j\in S$.
\end{claim*}
\begin{proof}[Proof of Claim]
By the homomorphism theorem for universal algebra (see, e.g.,~\cite{BS81}), it suffices to observe that the image of the homomorphism $\F_\V(\xbar)\to \m B$ mapping the equivalence class of $\xbar$ to $\abar$ is isomorphic to $\F_\V(\xbar)/\theta_j$ for some  $j\in\{1,\dots,m\}$.
\end{proof}
We now prove that the quantifier-free formula
\[
\chi(\xbar)\coloneqq \bigOR_{k\in S}{\ps_k}
\]
satisfies conditions (i) and (ii) in the definition of the conservative model extension property. For (i), we must show $\V\models \ps\IMP \chi$. By the Claim, if $\m B\in \V$ and $(\abar,b)\in B^{\xbar,y}$ satisfy $\m B\models \ps(\abar,b)$, then there exist $j\in\{1,\dots,m\}$ and an embedding $\F_\V(\xbar)/\theta_j\into \m B$ mapping $\xbar_j$ to $\abar$. So $\m B\models \ps_j(\abar)$, which implies $\m B\models \chi(\abar)$. For (ii), suppose that $\m A\in \V$ is generated by a tuple $\abar\in A^{\xbar}$ satisfying $\m A\models\chi(\abar)$. Let $k\in S$ be such that $\m A\models \ps_k(\abar)$, and recall that there is an isomorphism $\F_\V(\xbar)/\theta_k\to \m A$ mapping $\xbar_k$ to $\abar$. By the definition of $S$, there exist $\m B\in \V$ extending $\m A$ and $b\in B$ such that $\m B\models \ps(\abar,b)$.
\end{proof}

\begin{example}\label{ex:ordered-groups-cmep}
The class of ordered abelian groups $\OG$ has the conservative model extension property. Consider a finite set $\xbar,y$ and a conjunction of literals $\ps(\xbar,y)$. We first assume that $y$ appears in each literal of $\ps$ and then settle the general case. For any two terms $s,t$, write $s<t$ for the formula $(s\leq t)\AND \neg(s\eq t)$, where $\leq$ is the definable lattice order. In view of Remark~\ref{r:satisfiable-and-non-empty}, we can assume that $\xbar\neq\emptyset$. Moreover, without loss of generality (because the members of $\OG$ are linearly ordered), we can assume that $\ps=\ps_1\OR\cdots\OR\ps_m$ and that each disjunct is a conjunction of formulas of the form
\[
py\leq  t,\: py\geq  t, \: py< t, \: py> t, 
\] 
where each $t$ is a group term with variables in $\xbar=x_1,\dots,x_n$, and $p$ is a \emph{fixed} non-zero natural number (e.g., the least common multiple of the coefficients of $y$ in the conjuncts). Let $t_{j_1}(\xbar),\dots,t_{j_u}(\xbar)$ be the terms appearing in $\ps_j$ and let $\chi(\xbar)$ be the formula 
\begin{align*}
&\bigAND{\{\bigOR_{1\leq j\leq m} t_{j_i}{\vartriangle} t_{j_k}\mid j_i,j_k\in\{j_1,\dots,j_u\}, {\vartriangle}\in\{<,\leq\}, \text{ and } \OG\models\ps\IMP \bigOR_{1\leq j\leq m} t_{j_i}{\vartriangle}t_{j_k}\}} \: \AND \\
&\bigAND{\{\bigOR_{i\in I}\neg(x_i\eq0)\mid I\subseteq \{1,\dots,n\} \text{ and } \: \OG\models\ps\IMP \bigOR_{i\in I}\neg(x_i\eq 0)\}}.
\end{align*}
Clearly, $\OG\models \ps\IMP\chi$, so condition (i) of the conservative model extension property is satisfied. For (ii), consider $\m A\in \OG$ together with a tuple $\abar\in A^{\xbar}$ such that $\m A\models \chi(\abar)$. If $\m A$ is the one-element group and there is no $\m B\in\OG$ satisfying (ii), then $\OG\models \ps\IMP \bigOR_{i=1}^n{\neg (x_i\eq 0)}$. So $\OG\models \chi\IMP \bigOR_{i=1}^n{\neg (x_i\eq 0)}$ by the definition of $\chi$, contradicting the fact that $\m A\models \chi(\abar)$. If $\m A$ is non-trivial, let $\m B$ be the divisible hull of $\m A$ and note that $\m B$ is an infinite member of $\OG$. We claim that there is a $b\in B$ such that $\m B\models \ps(\abar,b)$. If no such $b$ exists, then, for each $1\leq j\leq m$, a pair of inequations $t_{j_i}(\abar){\vartriangle} py$ and $py{\vartriangle} t_{j_k}(\abar)$ of $\ps_j(\abar,y)$ is unsatisfiable, for ${\vartriangle}\in\{<,\leq\}$. We settle the case where all these inequations are of the form $t_{j_i}(\abar)< py$ and $py< t_{j_k}(\abar)$, the other cases being very similar. Since $\m B$ is divisible and every divisible ordered abelian group is densely ordered, we get $t_{j_k}(\abar)\leq t_{j_i}(\abar)$. Moreover, $t_{j_i}<py$ and $py< t_{j_k}$ entail $t_{j_i}<t_{j_k}$, and hence $\OG\models\chi\IMP \bigOR_{1\leq j\leq m}t_{j_i}<t_{j_k}$. But then $\m A\models \chi(\abar)$ implies $t_{j_i}(\abar)<t_{j_k}(\abar)$ for some $1\leq j\leq m$, a contradiction. 

Finally, if we have a conjunction of literals of the form $\ps(\xbar,y)\AND \ps'(\xbar)$, where $\ps'$ is any conjunction of literals, the quantifier-free formula $\chi\AND \ps'$ satisfies the conditions for the conservative model extension property.
\end{example}

\begin{example}\label{ex:linear-orders-cmep}
The positive universal class $\DLC$ of linear orders with endpoints, in the language of bounded lattices, has the conservative model extension property. Consider a finite set $\xbar,y$ and a conjunction of literals $\ps(\xbar,y)$ satisfiable in $\DLC$.  Suppose that $\xbar=\emptyset$. If there is a non-trivial member of $\DLC$ satisfying $\ps$, then the formula $\chi\coloneqq \neg (0\eq 1)$ satisfies the conditions for the conservative model extension property. On the other hand, if $\ps$ is satisfied only by the trivial algebra, we can set $\chi\coloneqq 0\eq 1$.
 
Hence, let $\xbar\neq\emptyset$. We assume that $y$ appears in all literals of $\ps$; the general case then follows by reasoning as in Example~\ref{ex:ordered-groups-cmep}. Since the members of $\DLC$ are linearly ordered, $\ps$ is equivalent to a formula $\ps_1\OR\cdots\OR \ps_m$, where each $\ps_j$ is a conjunction of formulas of the form $y\leq  x$, $y\geq  x$, $ y< x$, or $y> x$ with $x\in\xbar=x_1,\dots,x_n$. 

Assume first that the trivial algebra satisfies $\ps$ and let $\chi(\xbar)$ be the formula 
\begin{align*}
&\bigAND{\{\bigOR_{(i,j)\in J} x_i{\vartriangle}x_j\mid J\subseteq \{1,\dots,n\}^2, {\vartriangle}\in\{<,\leq\}, \text{ and } \DLC\models\ps\IMP \bigOR_{(i,j)\in J}x_i{\vartriangle}x_j\}}.
\end{align*}
Reasoning as in Example~\ref{ex:ordered-groups-cmep}, it is not difficult to see that $\chi$ satisfies the conditions for the conservative model extension property. Just replace the divisible hull of an ordered abelian group with any dense linear order with endpoints that extends the given linear order with endpoints $\m A\in \DLC$ (e.g., there is an embedding of $\m A$ into the lexicographic product of $\m A$ and $[0,1]$ which preserves the bounds).

In the case where $\ps$ is not satisfied in the trivial algebra, we replace $\chi$ by $\chi \AND \neg(0\eq 1)$.
\end{example}


\section{Model completions}\label{s:model-completions}

Let us first recall some relevant model-theoretic notions, referring to~\cite[Section~3.5]{CK90} for further details. By a {\em theory} we will always mean a  first-order theory, i.e., a set of sentences over some first-order language $\lang$.  We let $\Th(\K)$ denote the theory of a class $\K$ of $\lang$-structures, i.e., the set of $\lang$-sentences that are satisfied by all members of $\K$. Two theories $T$ and $T'$ are called \emph{co-theories} if they entail the same universal sentences. Semantically, $T'$ is a co-theory of $T$ if, and only if, every model of $T$ embeds into a model of $T'$ and vice versa. A theory $T^*$ is \emph{model complete} if every formula is equivalent over $T^*$ to an existential formula; that is, model complete theories are those in which alternations of quantifiers can be eliminated. Semantically, a theory $T^*$ is model complete if, and only if, every embedding between models of $T^*$ is elementary. A theory $T^*$ is a \emph{model companion} of a theory $T$ if it is a model complete co-theory of $T$. A {\em model completion} of a theory $T$ is a model companion $T^*$ of $T$ such that for any model $M$ of $T$, the theory of $T^*$ together with the diagram of $M$ is complete. 

Let us also recall that a class $\K$ of $\lang$-structures has the {\em amalgamation property} if given any $\m{A},\m{B},\m{C}\in\K$ and embeddings $f\colon\m{A}\to\m{B}$ and $g\colon\m{A}\to\m{C}$, there exist $\m{D}\in\K$ and embeddings $h\colon\m{B}\to\m{D}$ and $k\colon\m{C}\to\m{D}$ satisfying $hf = kg$.

Below, we collect some useful facts related to model completions.

\begin{proposition}[{cf.~\cite[Propositions~3.5.13,~3.5.15,~3.5.18, and~3.5.19]{CK90}}]\label{p:modcomp}\
\begin{enumerate}
\item[\rm (a)] A theory has at most one model companion up to logical equivalence.
\item[\rm (b)] If a $\forall\exists$-theory $T$ has a model companion $T^*$, then $T^*$ is logically equivalent to the theory of the existentially closed models of $T$.
\item[\rm (c)] If $T^*$ is a model companion of a theory $T$, then $T^*$ is a model completion of $T$ if, and only if, the class of models of $T$ has the amalgamation property. 
\item[\rm (d)] A theory $T^*$ is a model completion of a universal theory $T$ if, and only if, $T^*$ is a co-theory of $T$ that admits quantifier elimination.
\end{enumerate}
\end{proposition}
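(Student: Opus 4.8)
The plan is to prove the four parts in turn, using three standard facts from first-order model theory: a theory is model complete if, and only if, every embedding between its models is elementary; $T'$ is a co-theory of $T$ if, and only if, every model of $T$ embeds into a model of $T'$ and vice versa; and the elementary chain theorem. I will also use that a model-complete theory is preserved under unions of chains, and hence is $\forall\exists$-axiomatizable, together with the following sufficient condition for quantifier elimination: a theory $T^*$ admits quantifier elimination provided that any two of its models with a common substructure are elementarily equivalent over that substructure.

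For (a), since being a co-theory is an equivalence relation, two model companions $T_1^*$ and $T_2^*$ of $T$ are co-theories of each other. Starting from any $\m{M}_0 \models T_1^*$ and alternately embedding into models of $T_2^*$ and $T_1^*$, I build a chain $\m{M}_0 \into \m{N}_0 \into \m{M}_1 \into \m{N}_1 \into \cdots$; by model completeness the composite embeddings $\m{M}_i \into \m{M}_{i+1}$ and $\m{N}_i \into \m{N}_{i+1}$ are elementary, so the common union of the chain is an elementary extension of both $\m{M}_0$ and $\m{N}_0$ and hence a model of $T_1^* \cup T_2^*$; thus $\m{M}_0 \models T_2^*$, and by symmetry $T_1^*$ and $T_2^*$ have the same models. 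For (b), a similar chain argument --- now alternating models of $T^*$ with models of the $\forall\exists$-theory $T$, which is preserved under unions of chains --- first shows that every model of $T^*$ is a model of $T$. Any $\m{M} \models T^*$ is then existentially closed in $T$: if an existential formula with parameters from $\m{M}$ holds in some $\m{N} \models T$ extending $\m{M}$, embed $\m{N}$ into $\m{M}' \models T^*$; then $\m{M} \into \m{M}'$ is elementary and the formula is preserved along $\m{N} \into \m{M}'$, so it already holds in $\m{M}$. Conversely, if $\m{M}$ is an existentially closed model of $T$, embed it into some $\m{M}^* \models T^*$, which is also a model of $T$ by the first step; writing $T^*$ by $\forall\exists$-sentences, each such sentence evaluated at a tuple from $\m{M}$ becomes an existential formula holding in the extension $\m{M}^*$ of $\m{M}$, hence holding in $\m{M}$; so $\m{M} \models T^*$, and $T^*$ axiomatizes the existentially closed models of $T$.

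For (c)$(\Rightarrow)$, given embeddings $f\colon\m{A}\to\m{B}$ and $g\colon\m{A}\to\m{C}$ in $\mathrm{Mod}(T)$, I embed $\m{B}$ and $\m{C}$ into models $\m{B}^*, \m{C}^* \models T^*$ and identify $\m{A}$ with its two images; since $\m{A}\models T$, the theory of $T^*$ together with the diagram of $\m{A}$ is complete, so $\m{B}^*$ and $\m{C}^*$ are elementarily equivalent over $\m{A}$, and a standard compactness argument amalgamates them over $\m{A}$ into some $\m{D}$ with elementary embeddings of $\m{B}^*$ and $\m{C}^*$ fixing $\m{A}$; then $\m{D}\models T^*$, so $\m{D}$ embeds into some $\m{D}'\models T$, and composing gives the required amalgam of $\m{B}$ and $\m{C}$ inside $\m{D}'\in\mathrm{Mod}(T)$. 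For (c)$(\Leftarrow)$, given $\m{M}\models T$ and models $\m{D}_1,\m{D}_2\models T^*$ extending $\m{M}$, I embed each $\m{D}_i$ into a model $\m{E}_i\models T$ still extending $\m{M}$, amalgamate $\m{E}_1$ and $\m{E}_2$ over $\m{M}$ inside some $\m{F}\models T$ using the amalgamation property, and embed $\m{F}$ into some $\m{G}\models T^*$; model completeness makes the embeddings $\m{D}_i\into\m{G}$ elementary, so $\m{D}_1 \equiv_{\m{M}} \m{G} \equiv_{\m{M}} \m{D}_2$ and the theory of $T^*$ together with the diagram of $\m{M}$ is complete. For (d), quantifier elimination implies model completeness, and given quantifier elimination, for every $\m{M}\models T$ each sentence with parameters from $\m{M}$ is equivalent over $T^*$ to a Boolean combination of atomic ones, all decided by the diagram of $\m{M}$, so $T^*$ is a model completion of $T$; for the converse, if $T^*$ is a model completion of the \emph{universal} theory $T$, then every model of $T^*$ embeds into a model of $T$ and hence (by universality) is itself a model of $T$, so any common substructure $\m{A}$ of two models $\m{D}_1,\m{D}_2\models T^*$ is a model of $T$, whence completeness of the theory of $T^*$ together with the diagram of $\m{A}$ forces $\m{D}_1\equiv_{\m{A}}\m{D}_2$, and the quantifier-elimination test yields the claim.

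The genuine content lies in (b) and in the amalgamation direction of (c), and the point to watch throughout is that a model of $T^*$ need not be a model of $T$ --- this inclusion holds only in the universal setting of (d). Consequently, in (c) the amalgam produced by the model-theoretic machinery lives in $\mathrm{Mod}(T^*)$ and must be transferred back into $\mathrm{Mod}(T)$ along the co-theory relation, and in (b) one has to establish separately that $\mathrm{Mod}(T^*)\subseteq\mathrm{Mod}(T)$, using that $T$ is $\forall\exists$ and hence preserved under unions of chains, together with the $\forall\exists$-axiomatizability of the model-complete theory $T^*$. These are precisely the steps where a naive argument would go wrong.
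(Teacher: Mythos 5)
The paper does not actually prove this proposition; it simply cites the corresponding results in Chang and Keisler \cite{CK90}, and your sketch is a correct reconstruction of exactly those standard arguments: the alternating-chain argument with the elementary chain theorem for (a) and (b), elementary amalgamation over a common substructure followed by transfer back into $\mathrm{Mod}(T)$ along the co-theory relation for (c), and substructure completeness as the quantifier-elimination test for (d), with the inclusion $\mathrm{Mod}(T^*)\subseteq\mathrm{Mod}(T)$ correctly identified as the step needing $T$ to be $\forall\exists$ (resp.\ universal). The auxiliary facts you invoke --- that a model complete theory is preserved under unions of chains and hence $\forall\exists$-axiomatizable (Chang--\L{}o\'{s}--Suszko), and the substructure-completeness criterion for quantifier elimination --- are applied correctly, and the only detail left tacit (consistency of $T^*$ together with the diagram of a model of $T$, which is immediate from the co-theory hypothesis) is harmless.
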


Our aim in this section is to prove the following characterization of universal classes of algebras whose theories have a model completion.

\begin{theorem}\label{t:charact-model-completion}
Let $\K$ be a universal class of algebras. The theory of $\K$ has a model completion if, and only if, $\K$ has the amalgamation property, variable projection property, and conservative model extension property.
\end{theorem}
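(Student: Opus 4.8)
The strategy is to characterize the model completion, when it exists, as the theory $T^*$ of the existentially closed members of $\K$, and to use Proposition~\ref{p:modcomp}(c),(d) to reduce the problem to quantifier elimination. Since $\Th(\K)$ is universal, hence $\forall\exists$, the model companion (if any) is $\Th(\K_{ec})$ by Proposition~\ref{p:modcomp}(b), and it is a model completion iff $\K$ has the amalgamation property by Proposition~\ref{p:modcomp}(c). So the real content is: $\Th(\K_{ec})$ admits quantifier elimination if, and only if, $\K$ has the variable projection property and the conservative model extension property. By a standard back-and-forth/compactness argument, quantifier elimination for a co-theory of a universal theory is equivalent to eliminating a single existential quantifier from a conjunction of literals; that is, for each finite $\xbar, y$ and conjunction of literals $\ps(\xbar,y)$, one needs a quantifier-free $\theta(\xbar)$ with $\K_{ec}\models \exists y\, \ps(\xbar,y) \BIMP \theta(\xbar)$, equivalently $\Th(\K) \cup \{\text{some axiom scheme for ec-models}\}$ proves this.

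\textbf{From the two properties to quantifier elimination.} Given $\ps(\xbar,y)$, write $\ps^+$ for its positive part. First apply the variable projection property to the conjunction of equations $\ps^+(\xbar,y)$ to obtain a quantifier-free $\xi(\xbar)$ capturing exactly the equational consequences in $\xbar$; by Proposition~\ref{p:coherence-eq-conseq} we may even take $\xi$ to be a conjunction of equations when $\K$ generates a variety, but in general we just use the stated form. Then apply the conservative model extension property to $\ps(\xbar,y)$ to get a quantifier-free $\chi(\xbar)$ satisfying (i) and (ii). The candidate elimination is (essentially) the conjunction of $\chi$ with the statement that all equational consequences of $\ps^+$ hold --- which is exactly what $\xi$ or $\chi$'s clause (ii) encodes. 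The forward implication $\K_{ec}\models \exists y\,\ps \IMP \theta$ is immediate from (i) and from $\K\models \ps^+ \IMP (\text{its consequences})$. The converse direction is where existential closedness is used: if $\m A \in \K_{ec}$ and $\abar$ from $\m A$ satisfies $\theta(\abar)$, one passes to the subalgebra $\m A'$ generated by $\abar$; since $\m A' \models \chi(\abar)$ and $\m A'$ satisfies all equational consequences of $\ps^+$ in the variables $\xbar$ (here is where the $\xi$-part, i.e. the variable projection property, is needed to guarantee clause (ii)'s hypothesis), condition (ii) produces $\m B \in \K$ extending $\m A'$ with a witness $b$, $\m B \models \ps(\abar, b)$; now $\m A'$ embeds into $\m A$, and amalgamating (or just using that $\m A$ is existentially closed in the class of $\K$-extensions of $\m A'$) yields a witness in $\m A$ itself, so $\m A \models \exists y\, \ps(\abar, y)$.

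\textbf{From quantifier elimination back to the two properties.} For the other direction, suppose $\Th(\K)$ has a model completion $T^*$; then $T^*$ admits quantifier elimination and is a co-theory of $\K$, and $\K$ has amalgamation by Proposition~\ref{p:modcomp}(c). Given $\ps(\xbar,y)$, let $\theta(\xbar)$ be a quantifier-free formula equivalent over $T^*$ to $\exists y\,\ps(\xbar,y)$; replacing $\theta$ by a disjunction of conjunctions of literals and discarding clauses unsatisfiable in $\K$, one extracts from $\theta$ both the quantifier-free formula witnessing the conservative model extension property (using that every $\m A\in\K$ embeds into a model of $T^*$, and models of $T^*$ are existentially closed, to verify clause (ii)) and --- restricting attention to equational consequences and invoking the disjunction property --- the formula witnessing the (equational) variable projection property. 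The bookkeeping here mirrors the argument in the proof of Proposition~\ref{p:coherence-eq-conseq}: push disjunctions through using that $\K$ is closed under finite products (disjunction property), and use satisfiability of individual clauses to strip off negative literals.

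\textbf{Main obstacle.} The delicate point is the converse direction of quantifier elimination, namely extracting a genuine witness $b$ \emph{inside} the existentially closed algebra $\m A$ from the witness $b$ living in some extension $\m B \supseteq \m A'$. This requires carefully matching the hypotheses of clause (ii) of the conservative model extension property --- in particular verifying that the finitely generated algebra $\m A'$ satisfies precisely the equations forced by $\ps^+$, which is exactly the role of the variable projection property --- and then invoking existential closedness of $\m A$ relative to the class of its $\K$-extensions. Getting the interplay of these two properties right, and handling the degenerate cases (trivial algebras, empty $\xbar$, unsatisfiable $\ps$) as flagged in Remarks~\ref{r:satisfiable-and-non-empty} and following, is the technical heart of the proof.
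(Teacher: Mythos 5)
Your proposal is correct in substance and its combinatorial core coincides with the paper's: the same two formulas are used (a $\xi(\xbar)$ obtained by applying the variable projection property to $\ps^+$, and a $\chi(\xbar)$ from the conservative model extension property), the witness-extension step passes through the subalgebra generated by $\abar$ and uses amalgamation in exactly the same place, and the necessity direction reads both properties off quantifier elimination for the model completion, just as in Propositions~\ref{p:almost-coh-nec} and~\ref{p:strong-CMEP-iff}. The genuine difference is in how you package sufficiency: you take $T^*\coloneqq\Th(\K_{ec})$, the theory of the existentially closed members of $\K$, invoke Proposition~\ref{p:modcomp}(b), and prove quantifier elimination for this theory directly (pulling the witness from the amalgam back into $\m A$ by existential closedness), whereas the paper writes down an explicit axiomatization $T^*=\Th(\K)\cup\{\forall\xbar\,[(\xi_j\AND\chi_j)\IMP\exists y.\ps]\}$ and verifies quantifier elimination and the co-theory property by a compactness argument, never mentioning existentially closed models. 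Your route buys a shorter co-theory argument at the price of the standard (chain-construction) fact that every member of a universal class embeds into an existentially closed one; the paper's route is more self-contained and produces explicit axioms for the model completion. Both are legitimate.

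Two points should be repaired. First, in the necessity direction you propose to obtain the \emph{equational} variable projection property by ``invoking the disjunction property'' for $\K$; but the disjunction property requires closure under finite products, which a general universal class need not have (e.g., $\OG$ and $\DLC$ are not closed under products), so that step is invalid in this generality. It is also unnecessary: the theorem only asserts the plain variable projection property, and the quantifier-free formula produced by quantifier elimination for $\exists y.\f$ already witnesses it, exactly as in Proposition~\ref{p:almost-coh-nec}; simply drop the detour. Second, your parenthetical alternative ``or just using that $\m A$ is existentially closed in the class of $\K$-extensions of $\m A'$'' does not by itself transfer the witness from $\m B\supseteq\m A'$ into $\m A$; amalgamation of $\m A$ and $\m B$ over $\m A'$ (followed by existential closedness of $\m A$ in $\K$) is genuinely needed there, so keep the amalgamation version and discard the parenthetical.
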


For universal classes of algebras with finite presentations, Theorem~\ref{t:charact-model-completion}  specializes to~\cite[Theorem~5]{Whe76} (see Appendix~\ref{app:wheeler}). Observe also that for  locally finite varieties, Remark~\ref{r:locally-finite-coherent} and Proposition~\ref{p:locally-finite} yield the following simpler characterization. 

\begin{corollary}[{\cite[Corollary~1 p.~319]{Whe76}}]\label{c:varieties-charact-mc}
Let $\V$ be a locally finite variety. Then the theory of $\V$ has a model completion if, and only if, $\V$ has the amalgamation property.
\end{corollary}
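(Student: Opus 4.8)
The plan is to obtain Corollary~\ref{c:varieties-charact-mc} as a direct application of Theorem~\ref{t:charact-model-completion}, using the special features of locally finite varieties recorded earlier. First I would observe that a variety $\V$, being in particular a universal class of algebras, falls within the scope of Theorem~\ref{t:charact-model-completion}. That theorem tells us that the theory of $\V$ has a model completion precisely when $\V$ has the amalgamation property, the variable projection property, and the conservative model extension property. So it suffices to show that for a \emph{locally finite} variety the latter two conditions are automatic, leaving the amalgamation property as the only substantive hypothesis.

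Next I would invoke the two facts already established for locally finite varieties. By Remark~\ref{r:locally-finite-coherent}, every locally finite variety is coherent, hence (by Proposition~\ref{p:coherence-eq-conseq}, or simply by the definition adopted after it) has the variable projection property. By Proposition~\ref{p:locally-finite}, every locally finite variety has the conservative model extension property. Plugging these two observations into the biconditional of Theorem~\ref{t:charact-model-completion}, the right-hand side collapses to the single requirement that $\V$ have the amalgamation property: if $\V$ is a locally finite variety with the amalgamation property, then it has all three properties and so its theory has a model completion; conversely, if the theory of $\V$ has a model completion, then by Theorem~\ref{t:charact-model-completion} $\V$ in particular has the amalgamation property.

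There is essentially no obstacle here — the corollary is a pure specialization, and the only thing to be careful about is citing the correct earlier results (Remark~\ref{r:locally-finite-coherent} for coherence/variable projection, Proposition~\ref{p:locally-finite} for the conservative model extension property, and Theorem~\ref{t:charact-model-completion} for the main equivalence). The attribution to Wheeler is justified because in the locally finite setting $\V$ admits finite presentations (again by Remark~\ref{r:locally-finite-coherent}'s underlying observation that finite algebras of a locally finite variety are finitely presented), so Theorem~\ref{t:charact-model-completion} reduces to Wheeler's~\cite[Corollary~1, p.~319]{Whe76}. Thus the whole proof amounts to one short paragraph chaining these three citations together.
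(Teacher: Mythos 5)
Your proposal is correct and coincides with the paper's own argument: the corollary is derived by specializing Theorem~\ref{t:charact-model-completion}, using Remark~\ref{r:locally-finite-coherent} for coherence (hence the variable projection property) and Proposition~\ref{p:locally-finite} for the conservative model extension property, leaving only amalgamation as the substantive condition.
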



We first settle the `if' part of Theorem~\ref{t:charact-model-completion}.

\begin{proposition}\label{p:model-completion-univ}
Let $\K$ be a universal class of algebras. If $\K$ has the amalgamation property, variable projection property, and conservative model extension property, then the theory of $\K$ has a model completion.
\end{proposition}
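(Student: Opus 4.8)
The plan is to produce an explicit candidate theory $T^*$ and verify it is a model completion of $T\coloneqq\Th(\K)$ by appealing to Proposition~\ref{p:modcomp}(c) and (d). Since $\K$ is universal, $T$ is a universal theory, so by part (d) it suffices to exhibit a co-theory $T^*$ of $T$ admitting quantifier elimination; the amalgamation property then upgrades this to a model completion via part (c) (or directly via the definition). Concretely, I would take $T^*$ to be the theory of the existentially closed models of $T$, or equivalently construct it syntactically: for each conjunction of literals $\ps(\xbar,y)$, let $\chi_\ps(\xbar)$ be the quantifier-free formula supplied by the conservative model extension property, and let $T^*$ be $T$ together with all sentences $\forall\xbar\,(\chi_\ps(\xbar)\IMP\exists y\,\ps(\xbar,y))$, suitably combined with the equational consequences extracted via the variable projection property so that the hypothesis ``$\K\models\ps^+\IMP\ep$ implies $\m A\models\ep(\abar)$'' in clause (ii) is automatically witnessed.

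The key steps, in order, would be: (1) Check that $T^*$ is a co-theory of $T$, i.e. every model of $T$ embeds in a model of $T^*$ and conversely. The converse is immediate since $T^*\supseteq T$. For the forward direction, given $\m A\models T$ I would build, by a standard chain argument, an increasing union of models of $T$ in which every appropriate existential formula that ``ought'' to hold (according to the conservative model extension property, applied over a generating-enough set of parameters) does hold; the limit is a model of $T^*$. Here the variable projection property is what guarantees that the side condition in clause (ii) can be met after passing to the subalgebra generated by the parameters, so that the conservative model extension property is actually applicable. (2) Show $T^*$ admits quantifier elimination. I would prove this by the usual back-and-forth / substructure-completeness criterion: it suffices to eliminate a single existential quantifier from a conjunction of literals, i.e. to show that $\exists y\,\ps(\xbar,y)$ is $T^*$-equivalent to a quantifier-free formula. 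The ``$\Leftarrow$'' direction of the equivalence with $\chi_\ps$ is clause (i). For ``$\Rightarrow$'', suppose $\m M\models T^*$ and $\m M\models\chi_\ps(\abar)$; let $\m A\le\m M$ be the subalgebra generated by $\abar$. The variable projection property ensures $\m A$ satisfies exactly the equations forced by $\ps^+$, so clause (ii) gives an extension $\m B\in\K$ of $\m A$ with a witness $b$; since $\m M$ is existentially closed over $\m A$ (being a model of $T^*$), the existential statement $\exists y\,\ps(\abar,y)$, which holds in $\m B$, transfers back to $\m M$. (3) Conclude: by Proposition~\ref{p:modcomp}(d), $T^*$ is a model completion of $T$ once we also know amalgamation — which we have by hypothesis — so by part (c) we are done; alternatively, verify directly that adding the diagram of any model of $T$ to $T^*$ yields a complete theory, using amalgamation.

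The main obstacle I expect is step (2), specifically making precise the claim that a model $\m M$ of $T^*$ is ``existentially closed enough'' to transfer the witness $b$ back from the extension $\m B$. This requires knowing that the defining axioms of $T^*$ are not merely about the fixed tuple of parameters but are genuinely universally quantified over all tuples, and that $\m M$ — being a model of $T$, hence with $\m A$ a substructure also in the universal class $\K$ — does satisfy the relevant instance $\chi_\ps(\abar)\IMP\exists y\,\ps(\abar,y)$. There is also a subtlety in reconciling the ``generated by $\abar$'' hypothesis in clause (ii) with the situation in $\m M$, where $\abar$ generates a proper subalgebra: one passes to $\m A=\langle\abar\rangle$, applies the property there, and then uses that $\m M$ is existentially closed over $\m A$. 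Handling the interaction between the variable projection property (to secure the side condition in (ii)) and the conservative model extension property cleanly — rather than as a tangle of quantifiers — is the delicate part; the amalgamation property, by contrast, enters only at the very end and routinely.
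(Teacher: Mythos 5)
Your overall skeleton matches the paper's: take $T^*$ to be $T$ together with axioms of the form $\forall\xbar\,[(\xi_\ps\AND\chi_\ps)\IMP\exists y.\ps]$, prove that $T^*$ admits quantifier elimination and is a co-theory of $T$, and conclude via Proposition~\ref{p:modcomp}.(d). However, there is a genuine gap in where the hypotheses are used. You claim that amalgamation ``enters only at the very end and routinely'', to upgrade a model companion to a model completion. In fact, for a universal theory $T$, Proposition~\ref{p:modcomp}.(d) already delivers a model completion once co-theory and quantifier elimination are established, so nothing is needed at the end; the essential use of amalgamation is \emph{inside} the co-theory step, and your sketch of that step omits it. Concretely: given $\m A\in\K$ and $\abar\in A^{\xbar}$ with $\m A\models(\xi_\ps\AND\chi_\ps)(\abar)$, clause (ii) of the conservative model extension property applies only to the subalgebra $\m A'$ generated by $\abar$ (which lies in $\K$ because $\K$ is universal, and satisfies the side condition thanks to $\xi_\ps$); it yields an extension $\m B'\in\K$ of $\m A'$ containing a witness, \emph{not} an extension of $\m A$. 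To continue the chain construction you need an extension of $\m A$ itself, and this is obtained precisely by amalgamating $\m A$ and $\m B'$ over $\m A'$ inside $\K$ (the commuting square in the paper's proof). Without invoking amalgamation at this point, your chain argument for the co-theory direction does not go through.

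A second problem is your justification, in the quantifier-elimination step, of the passage from $\chi_\ps(\abar)$ to $\exists y.\ps(\abar,y)$ in a model $\m M\models T^*$ by asserting that $\m M$ is ``existentially closed over $\m A$, being a model of $T^*$''. If $T^*$ is taken to be the theory of the existentially closed models of $T$, this is circular: models of that theory are not known to be existentially closed --- that is tantamount to the existentially closed models forming an axiomatizable class, which is essentially what is being proved (Proposition~\ref{p:modcomp}.(b) presupposes that a model companion exists). If instead $T^*$ is the syntactic theory, then this implication is simply an instance of an axiom of $T^*$, and the detour through $\m A'$, clause (ii), $\m B$ and existential closedness is unnecessary; the only substantive half of quantifier elimination is $T\vdash\forall\xbar,y\,[\ps\IMP(\xi_\ps\AND\chi_\ps)]$, which follows from clause (i) together with $\K\models\ps^+\IMP\xi_\ps$. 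So you should commit to the syntactic definition (and make explicit that $\xi_\ps$ is conjoined to $\chi_\ps$ in the axioms, since otherwise the side condition in clause (ii) is not secured), and relocate the semantic work --- passing to the generated subalgebra, applying clause (ii), and amalgamating --- to the co-theory direction, which is where the paper places it.
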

\begin{proof}
Fix a countably infinite set of variables $\zbar$ and let 
\[
J \coloneqq \{(\ps,y) \mid  \ps \text{ is a conjunction of literals with variables in $\zbar$, and } y \in \zbar\}.
\] 
Consider $j = (\ps,y) \in J$. Let $\xbar$ be the set of variables occurring in $\ps$ different from $y$. Since $\K$ has the variable projection property, there exists a quantifier-free formula $\xi_j(\xbar)$ such that $\K\models \ps^+\IMP \xi_j$ and for every equation $\ep(\xbar)$,
\begin{equation}\label{eq:restriction-of-cg-BC}
\K\models \ps^+\IMP \ep \:\Longrightarrow\: \K\models \xi_j\IMP \ep.
\end{equation}
Since $\K$ has the conservative model extension property, there also exists a quantifier-free formula $\chi_j(\xbar)$ satisfying the following conditions:
 \begin{enumerate}
 \item[\rm (i)] $\K\models\ps\IMP\chi_j$
 \item [(ii)] for any  $\m A \in \K$ generated by $\abar\in A^{\xbar}$ such that $\m A\models\chi_j(\abar)$ and for any equation $\ep(\xbar)$,
 \[
 \K\models\ps^+\IMP\ep \:\Longrightarrow\: \m A\models\ep(\abar),
 \]
 there exist an algebra $\m B\in\K$ extending $\m A$ and $b \in B$ such that $\m B\models \ps(\abar,b)$.
\end{enumerate}
We define the first-order sentence 
\[
\tau_j\coloneqq\forall\xbar \, [(\xi_j\AND \chi_j) \IMP \exists y.\ps].
\]
Let $T\coloneqq \Th(\K)$ be the theory of $\K$ and set $T^* \coloneqq T \cup \{\tau_j \mid j \in J\}$. We claim that $T^*$ is a model completion of $T$. In view of Proposition~\ref{p:modcomp}.(d), it suffices to show that $T^*$ has quantifier elimination and is a co-theory of $T$.

\medskip
\noindent{\em $T^*$ has quantifier elimination.}
We prove that for any $ (\ps,y) \in J$,
\begin{equation}\label{eq:quantelim-positive-univ}
T \vdash \forall \xbar \, [(\exists y.\ps )\IMP (\xi_j\AND \chi_j)].
\end{equation}
It  follows then from the definition of $T^*$ and~\eqref{eq:quantelim-positive-univ} that $T^*$ entails that any formula $\exists y.\ps$, where $\ps$ is a conjunction of literals with variables in $\zbar$ and $y\in\zbar$, is equivalent to a quantifier-free formula. So $T^*$ has quantifier elimination (see, e.g.,~\cite[Lemma 3.2.4]{TZ2012}).

For the proof of~\eqref{eq:quantelim-positive-univ}, fix an arbitrary $j = (\ps,y) \in J$. It suffices to show that for any algebra $\m A\in\K$ and map $g \colon \xbar \to A$,
\[
\m A, g \models \exists y. \ps \:\Longrightarrow\: \m A, g \models \xi_j\AND \chi_j.
\]
Suppose that $\m A, g \models \exists y. \ps$. Then $\m{A}, f \models \ps$ for some map $f \colon \xbar,y \to A$ extending $g$. Moreover, since $\K\models\ps^+\IMP \xi_j$ and $\K\models\ps\IMP\chi_j$, it follows that $\m A, f \models \xi_j$ and $\m A, f\models \chi_j$. But $\xi_j$ and $\chi_j$ have variables in $\xbar$, so $\m A, g\models \xi_j$ and  $\m A, g\models \chi_j$, yielding $\m A, g \models \xi_j\AND \chi_j$.

\medskip
\noindent{\em $T^*$ is a co-theory of $T$.}
Since $T \subseteq T^*$, it suffices to show that any universal sentence entailed by $T^*$ is entailed by $T$. First we show that for any $j = (\ps,y) \in J$, algebra $\m A\in\K$, and map $g \colon \xbar \to A$, 
\begin{multline}\label{eq:cotheory-univ}
\m A, g \models \xi_j\AND \chi_j \:\Longrightarrow\: \text{there exist }  \m B\in \K \text{ and }\iota \colon \m A \into \m B \text{ such that } \m B, \iota g \models \exists y. \ps.
\end{multline}
Suppose that $\m A, g \models \xi_j\AND \chi_j$. We will assume first that the unique homomorphism $\tilde{g} \colon \Tm(\xbar) \to \m A$ extending $g$ is surjective, and hence that $\m A$ is generated by the image $\abar$ of $\xbar$ under $g$. Note that $\m A \models \chi_j(\abar)$.  Moreover, if $\ep(\xbar)$ is any equation such that $\K\models\ps^+\IMP\ep$, then \eqref{eq:restriction-of-cg-BC} yields $\K\models\xi_j\IMP\ep$ and, since $\m A \models \xi_j(\abar)$, also $\m A\models \ep(\abar)$. Hence, by (ii), there exist an algebra $\m B$ in $\K$ extending $\m A$ and $b\in B$ such that $\m B \models \ps(\abar,b)$. So $\m B, \iota g \models \exists y. \ps$, where $\iota \colon \m A\into \m B$ is the inclusion map. 

For the general case of~\eqref{eq:cotheory-univ}, let $\m A'$ be the image of $\Tm(\xbar)$ under $\tilde{g}$ in $\m A$. Since $\K$ is a universal class and $\m A'$ embeds into $\m A$, also $\m A'\in \K$. By the previous argument, there exist $\m B'\in \K$ and  $\iota' \colon \m A' \into \m B'$ such that $\m B', \iota'g \models \exists y. \ps$, witnessed by $b \in B'$, say. Since $\K$ has the amalgamation property, we obtain for the injection $\iota'$ and the inclusion $\m A' \into \m A$, an extension $\iota \colon \m A \into \m B$ with $\m B\in \K$ and an embedding $\lambda \colon \m B' \into \m B$ such that the following diagram commutes:
\[\begin{tikzcd}
{\m A'} \arrow[hookrightarrow,xshift=-1pt]{d} \arrow[hookrightarrow]{r}{\iota'} & {\m B'} \arrow[hookrightarrow,xshift=-1pt]{d}{\lambda} \\
{\m A} \arrow[hookrightarrow]{r}{\iota} & {\m B}
\end{tikzcd}\]
Since $\ps$ is quantifier-free, we have $\m B, \iota g \models \exists y. \ps$ witnessed by $\lambda(b)$.

To conclude the proof, let $\alpha$ be a universal sentence such that $T^* \vdash \alpha$. By the compactness theorem of first-order logic, there exists a finite subset $F \subseteq J$ such that $T \cup \{\tau_j \mid j \in F\} \vdash \alpha$. We prove that, in fact, $T \vdash \alpha$. Consider any $\m A\in\K$. Let $\wbar$ be the set of variables appearing in the scope of the universal quantifier in one of the sentences $\{\tau_j \mid j \in F\}$ and let $g \colon \wbar \to A$ be any map. By repeatedly applying~\eqref{eq:cotheory-univ}, we obtain $\m B\in\K$ and $\iota \colon \m A \into \m B$ such that $\m B, \iota g \models \tau_j$ for each $j \in F$. Now, since $T \cup \{\tau_j \mid j \in F\} \vdash \alpha$, we have $\m B, \iota g \models \alpha$. But $\m A$ is a subalgebra of $\m B$ and $\alpha$ is universal, so $\m A, g\models \alpha$. Hence $T \vdash \alpha$ as required.
\end{proof}

\begin{example}\label{ex:ordered-groups-mc}
The positive universal class $\OG$ of ordered abelian groups, defined over the algebraic language with operation symbols $\mt,\jn,+,-,0$, has the amalgamation property~\cite{Pie72}, variable projection property (see Example~\ref{ex:ordered-groups-alm-coh}), and conservative model extension property (see Example~\ref{ex:ordered-groups-cmep}). Hence Proposition~\ref{p:model-completion-univ} yields the well-known fact that the theory of $\OG$ has a model completion~\cite{Rob77}.\footnote{Note that although the class of ordered abelian groups is often defined using other first-order languages (e.g., with $+$ and the relation $\le$), this difference is immaterial for the existence of a model completion.} Similarly, the class $\MVC$ of linearly ordered MV-algebras satisfies the conditions of  Proposition~\ref{p:model-completion-univ} and its theory therefore has a model completion~\cite{LS77}.
\end{example}

\begin{example}
The positive universal class $\DLC$ of linear orders with endpoints, defined over the algebraic language of bounded lattices, has the amalgamation property, variable projection property (see Example~\ref{ex:linear-orders-evrp}), and conservative model extension property (see Example~\ref{ex:linear-orders-cmep}). Hence Proposition~\ref{p:model-completion-univ} yields the well-known fact that the theory of $\DLC$ has a model completion (see,~e.g.,~\cite{CK90}). It also follows easily that the theory of the positive universal class $\HAC$ of linearly ordered Heyting algebras has a model completion. Note that any linear order with endpoints $\m A\in \DLC$ can be expanded to a linearly ordered Heyting algebra by defining $a \himp b \coloneqq 1$ if $a \le b$ and  $a \himp b \coloneqq b$ otherwise. Indeed, the theory of $\HAC$ is a definitional extension of the theory of $\DLC$ (in the sense of, e.g.,~\cite{Hod97}), where the binary function symbol $\himp$ is defined by the formula 
\[
\xi(x,y,z)\coloneqq (x\leq y \AND z\eq 1) \, \OR \, (y<x \AND z\eq y).
\]
Using the fact that $\xi$ is quantifier-free, the existence of a model completion for the theory of $\HAC$ follows directly from the existence of a model completion for the theory of $\DLC$. 
\end{example}


We now prove the `only if' part of Theorem~\ref{t:charact-model-completion}. Note first that if $\K$ is a universal class of algebras whose theory has a model completion, then $\K$ has the amalgamation property by Proposition~\ref{p:modcomp}.(c). Next, we consider the variable projection property.

\begin{proposition}\label{p:almost-coh-nec}
Let $\K$ be a universal class of algebras. If the theory of $\K$ has a model completion, then $\K$ has the variable projection property.
\end{proposition}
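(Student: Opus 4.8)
The plan is to read off the witnessing formula $\xi$ directly from quantifier elimination for the model completion. Write $T \coloneqq \Th(\K)$ and let $T^*$ be a model completion of $T$. Since $\K$ is a universal class, $T$ is a universal theory, so Proposition~\ref{p:modcomp}.(d) tells us that $T^*$ is a co-theory of $T$ and admits quantifier elimination; in particular, $T$ and $T^*$ entail the same universal sentences, and every model of $T$ embeds into a model of $T^*$. Given a finite set $\xbar,y$ and a conjunction of equations $\f(\xbar,y)$, I would apply quantifier elimination to the formula $\exists y.\,\f$ to obtain a quantifier-free formula $\xi(\xbar)$ with $T^* \vdash \forall\xbar\,[(\exists y.\,\f) \BIMP \xi]$, and then argue that this $\xi$ witnesses the variable projection property for $\f$.

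To check that $\K \models \f \IMP \xi$: if $\m A \in \K$ and $\abar \in A^{\xbar}$, $b \in A$ satisfy $\m A \models \f(\abar,b)$, then, since $\m A$ is a model of $T$, there is an embedding $\m A \into \m B$, which we may take to be an inclusion, with $\m B \models T^*$. As $\f$ is quantifier-free it is preserved by the embedding, so $\m B \models \exists y.\,\f(\abar,y)$, whence $\m B \models \xi(\abar)$; and finally $\m A \models \xi(\abar)$, since $\xi$ is quantifier-free and embeddings both preserve and reflect quantifier-free formulas.

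For the restriction clause, let $\ep(\xbar)$ be an equation with $\K \models \f \IMP \ep$. Because $\ep$ does not contain $y$, the sentence $\forall\xbar\forall y\,(\f \IMP \ep)$ is logically equivalent to $\forall\xbar\,[(\exists y.\,\f) \IMP \ep]$; this is a universal sentence valid in $\K$, hence entailed by $T$ and therefore --- $T$ and $T^*$ being co-theories --- also by $T^*$. Combining it with $T^* \vdash \forall\xbar\,[\xi \BIMP \exists y.\,\f]$ yields $T^* \vdash \forall\xbar\,[\xi \IMP \ep]$; but $\xi$ and $\ep$ are quantifier-free, so this last sentence is again universal, and invoking the co-theory relation once more gives $T \vdash \forall\xbar\,[\xi \IMP \ep]$, that is, $\K \models \xi \IMP \ep$, as required.

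I do not expect a genuine obstacle here: the argument is essentially an unwinding of the definitions of model completion and co-theory. The two points that need care are the bookkeeping of which sentences are universal --- the equivalence $(\exists y.\,\f) \IMP \ep \equiv \forall y\,(\neg\f \OR \ep)$ is exactly what makes the relevant implications universal, so that the co-theory relation can be used to shuttle consequences between $T$ and $T^*$ --- and the standard fact that quantifier-free formulas are preserved and reflected by embeddings. Notably, neither the amalgamation property nor the conservative model extension property plays any role in this direction.
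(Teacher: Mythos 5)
Your proof is correct and follows essentially the same route as the paper: extract $\xi$ from quantifier elimination applied to $\exists y.\f$ in the model completion $T^*$, verify $\K\models\f\IMP\xi$, and use the fact that $T$ and $T^*$ share their universal consequences to transfer $\xi\IMP\ep$ back to $T$. The only (immaterial) difference is stylistic: for the restriction clause the paper argues contrapositively, embedding a counterexample $\m A\in\K$ into a model of $T^*$ lying in $\K$, whereas you shuttle the relevant universal sentences syntactically between $T$ and $T^*$; both hinge on exactly the same co-theory facts.
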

\begin{proof}
Let $T\coloneqq \Th(\K)$ be the theory of $\K$, and let $T^*$ be a model completion of $T$. Fix a finite set $\xbar,y$ and conjunction of equations $\f(\xbar,y)$. Since $T^*$ has quantifier elimination, there exists a quantifier-free formula $\xi(\xbar)$ such that
\begin{equation}\label{eq:quant-free-xi-y}
T^*\vdash \forall \xbar \, [(\exists y.\f)\BIMP \xi].
\end{equation}
It follows from~\eqref{eq:quant-free-xi-y} that $T^*\vdash \forall \xbar,y \, (\f\IMP\xi)$ and, since $T$ and $T^*$ have the same universal consequences, $T\vdash \forall \xbar,y \, (\f\IMP\xi)$. That is, $\K\models\f\IMP\xi$. It remains to show that for any equation $\ep(\xbar)$ satisfying $\K\models\f\IMP\ep$, also $\K\models\xi\IMP\ep$.\footnote{As pointed out by the referee, this implication is satisfied in fact for any quantifier-free formula $\ep(\xbar)$.}

Suppose that $\K\not\models\xi\IMP\ep$. Then there is an algebra $\m A$ in $\K$ and a tuple $\abar\in A^{\xbar}$ such that $\m A\models \xi(\abar)$ and $\m A\not\models \ep(\abar)$. Since $T$ and $T^*$ are co-theories and $T$ is universal, there exists a model $\m B$ of $T^*$ extending $\m A$ such that $\m B\in\K$. Moreover,  $\m A\models \xi(\abar)$ implies $\m B\models \xi(\abar)$, and hence $\m B\models \exists y. \f(\abar,y)$ by~\eqref{eq:quant-free-xi-y}. Pick $b\in B$ such that $\m B\models \f(\abar,b)$. Since $\m A\not\models \ep(\abar)$, we get $\m B\not\models \ep(\abar)$. Hence $\K\not\models\f\IMP\ep$ as required.
\end{proof}

To complete the proof of Theorem~\ref{t:charact-model-completion}, it remains to prove that whenever the theory of a universal class of algebras has a model completion, this class has the conservative model extension property. In fact, we will show below that the existence of a model completion is equivalent to a stronger property that directly implies the conservative model extension property. This stronger property is difficult to check in concrete cases, but will be useful in Section~\ref{s:pdpc} for establishing consequences of the existence of a model completion for the definability of principal congruences.

\begin{proposition}\label{p:strong-CMEP-iff}
Let $\K$ be a universal class of algebras. The theory of $\K$ has a model completion if, and only if, for any finite sets $\xbar,\ybar$ and conjunction of literals $\ps(\xbar,\ybar)$, there is a quantifier-free formula $\chi(\xbar)$ satisfying the following conditions:
 \begin{enumerate}
 \item [(i)] $\K\models \ps\IMP\chi$
  \item [(ii)] for any $\m A\in\K$ and tuple $\abar\in A^{\xbar}$ satisfying $\m A\models\chi(\abar)$, there exist $\m B\in\K$ extending $\m A$ and $\bbar \in B^{\ybar}$ such that $\m B\models \ps(\abar,\bbar)$.
 \end{enumerate}
In particular, if the theory of $\K$ has a model completion, then $\K$ has the conservative model extension property.
\end{proposition}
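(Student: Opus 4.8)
The plan is to prove the stated equivalence, from which the "in particular" clause follows immediately: the strong property with $\ybar$ a single variable $y$ and the defining formula $\ps^+\IMP\ep$ hypotheses vacuously discharged (take $\xbar$ together with the generators) yields condition (ii) of the conservative model extension property, since any $\m A$ satisfying $\chi$ — a fortiori any $\m A$ generated by $\abar$ satisfying $\chi$ and the relevant equations — already meets the hypothesis of (ii). So the real content is the two directions of the "if and only if".

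For the "only if" direction, I would argue exactly as in the proof of Proposition~\ref{p:almost-coh-nec}, but now for a conjunction of \emph{literals} $\ps(\xbar,\ybar)$ rather than a conjunction of equations in a single extra variable. Let $T := \Th(\K)$ and let $T^*$ be a model completion of $T$. By quantifier elimination for $T^*$ there is a quantifier-free $\chi(\xbar)$ with $T^*\vdash\forall\xbar\,[(\exists\ybar.\ps)\BIMP\chi]$. As before, $T^*\vdash\forall\xbar\ybar\,(\ps\IMP\chi)$ passes down to $T$ because $T$ and $T^*$ have the same universal consequences, giving (i). For (ii), given $\m A\in\K$ and $\abar\in A^{\xbar}$ with $\m A\models\chi(\abar)$, embed $\m A$ into a model $\m B$ of $T^*$ lying in $\K$ (possible since $T$, $T^*$ are co-theories and $T$ is universal, so every member of $\K$ embeds in a model of $T^*$ which, being a model of $T\subseteq\Th(\K)$... here one must be slightly careful: models of $T^*$ need not literally lie in $\K$, but they satisfy the universal theory of $\K$, hence embed into members of $\K$; since we only need $\m B\models\ps(\abar,\bbar)$ and $\ps$ is quantifier-free, we may in fact just take $\m B$ to be any model of $T^*$ extending $\m A$ and then note $\m B$ itself suffices). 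Then $\m B\models\chi(\abar)$, so $\m B\models\exists\ybar.\ps(\abar,\ybar)$, and a witnessing tuple $\bbar$ gives (ii).

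For the "if" direction, I would mimic the proof of Proposition~\ref{p:model-completion-univ}, which is essentially the special case where the variable-projection formula $\xi_j$ is absent (equivalently, trivial). Fix a countably infinite variable set $\zbar$, let $J$ index all pairs $(\ps,\ybar)$ with $\ps$ a conjunction of literals in $\zbar$ and $\ybar$ a finite tuple from $\zbar$ disjoint from the remaining variables $\xbar$ of $\ps$, take $\chi_j$ as supplied by the hypothesis, set $\tau_j := \forall\xbar\,[\chi_j\IMP\exists\ybar.\ps]$, and put $T^* := T\cup\{\tau_j\mid j\in J\}$. Quantifier elimination for $T^*$ follows as there: condition (i) gives $T\vdash\forall\xbar\,[(\exists\ybar.\ps)\IMP\chi_j]$, and together with $\tau_j$ this makes every $\exists\ybar.\ps$ equivalent over $T^*$ to $\chi_j$, so by the standard criterion (e.g.\ \cite[Lemma~3.2.4]{TZ2012}) $T^*$ admits quantifier elimination. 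That $T^*$ is a co-theory of $T$ follows as before from the analogue of~\eqref{eq:cotheory-univ}: if $\m A\in\K$ and $g\colon\xbar\to A$ with $\m A,g\models\chi_j$, then condition (ii) applied to the image subalgebra of $\m A$ generated by $g(\xbar)$ (which is in $\K$ since $\K$ is universal) yields an extension in $\K$ satisfying $\exists\ybar.\ps$; one then does \emph{not} even need amalgamation here, because $\chi_j$ holds in $\m A$ itself and (ii) is stated for arbitrary $\m A$, not just generated ones — so we can apply (ii) directly to $\m A$. A finite-support compactness argument on universal consequences of $T^*$, iterating this, shows every universal sentence entailed by $T^*$ is entailed by $T$. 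Then Proposition~\ref{p:modcomp}.(d) gives that $T^*$ is a model completion of $T$.

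The main obstacle is a delicacy rather than a depth: keeping straight which algebras the hypothesis (ii) applies to. Unlike the conservative model extension property, here (ii) is unconditional — it does not presuppose $\m A$ is generated by $\abar$, nor does it carry the "for any equation $\ep$..." side condition — which is precisely what lets the "if" direction bypass amalgamation and the variable projection property. I would take care to flag that this is why the property is genuinely stronger, and to note that the "only if" direction still holds (the model completion supplies $\chi_j$ directly via quantifier elimination), so the equivalence is not circular. The "in particular" clause is then a one-line specialization, observing that (ii) here is stronger than (ii) of the conservative model extension property with $\ybar = y$.
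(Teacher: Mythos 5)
Your proposal is correct and follows essentially the same route as the paper: quantifier elimination in the model completion supplies $\chi$, the co-theory relation transfers (i) down to $T$ and lets $\m A$ be embedded into a model of $T^*$ for (ii), and the converse repeats the construction of Proposition~\ref{p:model-completion-univ} with $\tau_j\coloneqq\forall\xbar\,(\chi_j\IMP\exists\ybar.\ps)$, where the unconditional form of (ii) indeed makes the amalgamation step and the $\xi_j$'s unnecessary. Your only hesitation---whether a model of $T^*$ extending $\m A$ lies in $\K$---dissolves immediately: since $\K$ is a universal class it is axiomatized by the universal part of $T$, which $T^*$ entails, so every model of $T^*$ is in $\K$, exactly as the paper uses.
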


\begin{proof}
Let $T\coloneqq \Th(\K)$ be the theory of $\K$, and suppose that $T^*$ is a model completion of~$T$. Fix finite sets $\xbar,\ybar$ and a conjunction of literals $\ps(\xbar,\ybar)$. Since $T^*$ admits quantifier elimination, there exists a quantifier-free formula $\chi(\xbar)$ such that
\begin{equation}\label{eq:quant-free-chi-y}
T^*\vdash \forall \xbar \, [(\exists \ybar.\ps)\BIMP \chi].
\end{equation}
It follows that $T^*\vdash \forall \xbar,\ybar \, (\ps\IMP \chi)$, and since $T$ and $T^*$ are co-theories, 
\begin{equation}\label{eq:univ-consequence}
T\vdash \forall \xbar,\ybar \, (\ps\IMP \chi).
\end{equation}
We show now that $\chi$ satisfies (i) and (ii).

Condition (i) is an immediate consequence of~\eqref{eq:univ-consequence}. For (ii), fix an algebra $\m A\in\K$ and a tuple $\abar\in A^{\xbar}$ such that $\m A\models\chi(\abar)$. Since $T$ and $T^*$ are co-theories and $T$ is universal, there exists a model $\m B \in\K$ of $T^*$ that extends $\m A$. Since $\m B\models \chi(\abar)$, it follows by~\eqref{eq:quant-free-chi-y} that there is a tuple $\bbar\in B^{\ybar}$ such that $\m B\models \ps(\abar,\bbar)$. This settles (ii).

For the converse direction, we only sketch the proof, as it is an easy modification of the proof of Proposition~\ref{p:model-completion-univ}. As before, we consider the theory $T^* \coloneqq T \cup \{\tau_j \mid j \in J\}$, but in this case $\tau_j$ is defined as $\forall\xbar \, (\chi_j \IMP \exists y.\ps)$, where the quantifier-free formula $\chi_j$ satisfies (i) and (ii). To show that $T^*$ has quantifier elimination, it suffices to show that  $T \vdash \forall \xbar \, (\exists y.\ps \IMP \chi_j)$ for all $j=(\ps,y) \in J$, which follows by (i) (cf.~the proof of Proposition~\ref{p:model-completion-univ}). On the other hand, (ii) yields the following property similar to~\eqref{eq:cotheory-univ}: if $\m A, g \models \chi_j$, then there exist $\m B\in \K$ and $\iota \colon \m A \into \m B$ such that $\m B, \iota g \models \exists y. \ps$. Reasoning as in the last part of the proof of Proposition~\ref{p:model-completion-univ}, we conclude that $T$ and $T^*$ are co-theories. It follows then by Proposition~\ref{p:modcomp}.(d) that $T^*$ is a model completion of $T$.
\end{proof}

\begin{remark}
The first part of Proposition~\ref{p:strong-CMEP-iff} admits the following reformulation. Let $\K$ be a universal class of algebras and let $T\coloneqq \Th(\K)$ be its first-order theory. Then the following statements are equivalent:
\begin{enumerate}
\item[(1)] $T$ has a model completion.
\item[(2)] For any finite sets $\xbar,\ybar$ and conjunction of literals $\ps(\xbar,\ybar)$, there is a quantifier-free formula $\chi(\xbar)$ with the same universal consequences, with respect to $T$, as $\exists\ybar.\psi$.
\end{enumerate}
This equivalence was established (for arbitrary first-order languages) by Millar in~\cite[Theorem~3.1]{Millar1995} (see also~\cite[Theorem~3.5.20]{CK90} for a related result) and has recently been considered in the context of the verification of data-aware processes~\cite[Theorem~3.2]{GhilardiEtAl2021}.
\end{remark}


\section{Compact congruences}\label{s:model-completion-edpc}

In this section, we show that for varieties of algebras satisfying certain congruence lattice conditions, the rather complicated conservative model extension property appearing in Theorem~\ref{t:charact-model-completion} can be replaced by a more amenable equational variable restriction property. The resulting characterization is a slightly more general version of a theorem proved by Ghilardi and Zawadowski in~\cite[Theorem~4]{GZ97}.

Recall that the compact (equivalently, finitely generated) congruences of an algebra $\m{A}$ ordered by set-theoretic inclusion always form a join-semilattice. A class of algebras $\K$ is said to have the \emph{compact intersection property} if the join-semilattice of compact congruences of each $\m{A}\in\K$ forms a lattice, i.e., the intersection of any two compact congruences of $\m{A}$ is compact. Recall also that $\K$ is said to be \emph{congruence distributive} if the congruence lattice of each $\m A\in\K$ is distributive. The following lemma describes a useful syntactic consequence of these two properties.

\begin{lemma}\label{l:consequence-of-CIP}
Let $\V$ be a congruence distributive variety that has the compact intersection property. For any finite set $\xbar$ and conjunctions of equations $\f_1(\xbar),\f_2(\xbar)$, there exists a conjunction of equations $\pi(\xbar)$ such that for any equation $\ep(\xbar,\ybar)$,
\[
\V\models (\f_1\OR \f_2)\IMP\ep\iff\V\models \pi\IMP \ep.
\]
\end{lemma}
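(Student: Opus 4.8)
The plan is to translate the statement, via Lemma~\ref{l:compact-congr-and-consequence-rel}, into an identity between congruences of free algebras, prove one inclusion directly, and derive the other from congruence distributivity.

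First I would fix $\pi$. Working in $\F_\V(\xbar)$, set $\theta_i := \cg{\F_\V(\xbar)}(\{\hat{\si}\mid\si\text{ is an equation of }\f_i\})$ for $i=1,2$. These are compact congruences, so by the compact intersection property $\theta_1\cap\theta_2$ is compact, hence finitely generated, and I may choose a \emph{single} finite conjunction of equations $\pi(\xbar)$ with $\cg{\F_\V(\xbar)}(\{\hat{\si}\mid\si\text{ is an equation of }\pi\}) = \theta_1\cap\theta_2$; this $\pi$ will serve for all $\ybar$ simultaneously.

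Now fix an equation $\ep(\xbar,\ybar)$ and work in $\F := \F_\V(\xbar,\ybar)$. By elementary logic $\V\models(\f_1\OR\f_2)\IMP\ep$ iff $\V\models\f_1\IMP\ep$ and $\V\models\f_2\IMP\ep$, so by Lemma~\ref{l:compact-congr-and-consequence-rel}, writing $\Theta_i := \cg{\F}(\{\hat{\si}\mid\si\text{ is an equation of }\f_i\})$ and $\Pi := \cg{\F}(\{\hat{\si}\mid\si\text{ is an equation of }\pi\})$, the claimed equivalence says precisely that $\hat{\ep}\in\Theta_1\cap\Theta_2$ iff $\hat{\ep}\in\Pi$. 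Since $\ep$ is arbitrary, it suffices to prove $\Theta_1\cap\Theta_2 = \Pi$ in $\Con\F$. The inclusion $\Pi\subseteq\Theta_1\cap\Theta_2$ is immediate: the generators of $\Pi$ lie in $\theta_1\cap\theta_2\subseteq\theta_i$, hence --- the image of a congruence under a homomorphism being contained in the congruence generated by the image --- in $\Theta_i$ for $i=1,2$. (This inclusion already yields the implication $\V\models\pi\IMP\ep\Rightarrow\V\models(\f_1\OR\f_2)\IMP\ep$.)

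The reverse inclusion $\Theta_1\cap\Theta_2\subseteq\Pi$ is the heart of the matter, and here I would invoke congruence distributivity. Let $\rho\colon\F\to\F_\V(\xbar)$ be the retraction that is the identity on $\xbar$ and sends every variable of $\ybar$ to $c$, and put $\eta := \ker\rho$. As $\rho$ is a surjection splitting $\F_\V(\xbar)\into\F$, one computes $(\rho\times\rho)[\Theta_i] = \theta_i$ and $(\rho\times\rho)[\Pi] = \theta_1\cap\theta_2$, whence $\eta\jn\Theta_i = (\rho\times\rho)^{-1}[\theta_i]$ and $\eta\jn\Pi = (\rho\times\rho)^{-1}[\theta_1\cap\theta_2]$. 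Distributivity of $\Con\F$ then gives $\eta\jn(\Theta_1\cap\Theta_2) = (\eta\jn\Theta_1)\cap(\eta\jn\Theta_2) = (\rho\times\rho)^{-1}[\theta_1\cap\theta_2] = \eta\jn\Pi$, and intersecting this with $\Theta_1\cap\Theta_2$ (using distributivity once more together with $\Pi\subseteq\Theta_1\cap\Theta_2$) yields $\Theta_1\cap\Theta_2 = \big((\Theta_1\cap\Theta_2)\cap\eta\big)\jn\Pi$. So everything reduces to proving $(\Theta_1\cap\Theta_2)\cap\eta\subseteq\Pi$ --- equivalently, that the natural map $\F/\Pi\to(\F/\Theta_1)\times(\F/\Theta_2)$ is injective, i.e.\ that forming the coproduct with the free algebra $\F_\V(\ybar)$ preserves the subdirect embedding $\F_\V(\xbar)/(\theta_1\cap\theta_2)\into\F_\V(\xbar)/\theta_1\times\F_\V(\xbar)/\theta_2$. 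I expect this to be the main obstacle; I would attack it using congruence distributivity, either by induction on $|\ybar|$ (reducing to the case of one extra variable) or by a Jónsson-terms argument that splices the two given derivations of $\ep$ --- from $\f_1$ and from $\f_2$ --- into a single derivation from $\pi$.
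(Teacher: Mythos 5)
Your construction of $\pi$ (generating $\theta_1\cap\theta_2$ inside $\F_\V(\xbar)$) does in fact yield a conjunction of equations with the required property, the easy inclusion $\Pi\subseteq\Theta_1\cap\Theta_2$ is fine, and the distributivity computations giving $\eta\jn(\Theta_1\cap\Theta_2)=\eta\jn\Pi$ and $\Theta_1\cap\Theta_2=((\Theta_1\cap\Theta_2)\cap\eta)\jn\Pi$ are sound. But the proof is not complete: the inclusion $(\Theta_1\cap\Theta_2)\cap\eta\subseteq\Pi$, which you flag as ``the main obstacle'', is exactly where the entire content of the lemma sits, and neither of your suggested attacks is carried out. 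Your reduction in fact buys nothing: in a distributive lattice $a=b$ follows from $a\jn c=b\jn c$ only together with $a\cap c=b\cap c$, and since $\Pi\cap\eta\subseteq(\Theta_1\cap\Theta_2)\cap\eta$ is the trivial direction, the missing meet identity is equivalent (as you yourself note) to the full claim $\Theta_1\cap\Theta_2\subseteq\Pi$. Nor can the tool you have put on the table close it: Baker's lemma applied to the surjection $\rho\colon\F\onto\F_\V(\xbar)$ makes $\rho^*$ a lattice homomorphism, which yields precisely $\rho^*(\Theta_1\cap\Theta_2)=\theta_1\cap\theta_2=\rho^*(\Pi)$ and hence the join identity you already have, and nothing more.

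The paper avoids this difficulty by forming the intersection in a different place. It works from the outset in $\F_\V(\xbar,\zbar)$ for a \emph{countably infinite} $\zbar$, so the compact intersection property applies directly to $\theta_1\cap\theta_2$ computed in the one free algebra that already contains every $\ybar$. The finitely many generators $\hat{\si}_1,\dots,\hat{\si}_n$ of that intersection involve only finitely many extra variables $\wbar\subseteq\zbar$, and since $\zbar\setminus\wbar$ is still infinite there is a \emph{surjective endomorphism} $g$ of $\F_\V(\xbar,\zbar)$ fixing $\xbar$ and sending $\wbar$ into $\Fc_\V(\xbar)$; congruence distributivity then makes $g^*$ a lattice homomorphism fixing $\theta_1$ and $\theta_2$, hence fixing $\theta_1\cap\theta_2$, so the pushed-down generators $g(\hat{\si}_j)\in\Fc_\V(\xbar)^2$ still generate the intersection, and the equivalence for every $\ep(\xbar,\ybar)$ is then immediate from Lemma~\ref{l:compact-congr-and-consequence-rel}. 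The infinitude of $\zbar$ is essential to this trick and is exactly what your finite-variable setup lacks: your $\rho$ is not a surjective endomorphism of $\F_\V(\xbar,\ybar)$. To repair your argument you would either have to import this device wholesale or supply an independent proof that extending congruences from $\F_\V(\xbar)$ to $\F_\V(\xbar,\ybar)$ commutes with the intersection $\theta_1\cap\theta_2$; that is not a routine verification, and as it stands the proposal has a genuine gap.
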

\begin{proof}
Let $\xbar$ be any finite set and let $\f_1(\xbar),\f_2(\xbar)$ be conjunctions of equations. We fix $\zbar$ to be any countably infinite set with $\xbar\cap\zbar=\emptyset$ and define $\theta_i\coloneqq \cg{\F_\V(\xbar,\zbar)}(\{\hat{\si}\mid \si \text{ is an equation of } \f_i\})$ for $i\in\{1,2\}$. Since $\theta_1$ and $\theta_2$ are compact, so, by assumption, is their intersection; that is, there exist a finite set $\wbar\subseteq\zbar$ and equations $\si_1(\xbar,\wbar),\dots,\si_n(\xbar,\wbar)$ such that $\{\hat{\si}_1,\dots,\hat{\si}_n\}$ generates $\theta_1\cap \theta_2$. Let $f\colon\xbar,\zbar\to\Fc_\V(\xbar,\zbar)$ be a map sending each $x\in\xbar$ to $x$ and each $w\in\wbar$ to some $u\in\Fc_\V(\xbar)$, such that the image of $f$ contains $\xbar,\zbar$. Clearly, the unique homomorphism $g\colon\F_\V(\xbar,\zbar)\to\F_\V(\xbar,\zbar)$ extending $f$ is surjective. Define $g^*\colon\Con{\F_\V(\xbar,\zbar)}\to\Con{\F_\V(\xbar,\zbar)}$ by $g^*(\theta)\coloneqq\cg{\F_\V(\xbar,\zbar)}(\{(g(a),g(b)) \mid (a,b)\in\theta\})$. Since $\V$ is congruence distributive and $g$ is surjective, $g^*$ is a lattice homomorphism (cf.~\cite[Lemma~1.11]{Bak74}) and, in particular, $g^*(\theta_1\cap\theta_2)=g^*(\theta_1)\cap g^*(\theta_2)=\theta_1\cap\theta_2$. Hence also $\{g(\hat{\si}_1),\dots,g(\hat{\si}_n)\}$ generates $\theta_1\cap \theta_2$, where $g(\hat{\si}_j)\coloneqq (g(s),g(t))$ for each $\hat{\si}_j=(s,t)$, and we let $\pi(\xbar)$ be the conjunction of the equations $\si_1(\xbar,u,\dots,u),\dots,\si_n(\xbar,u,\dots,u)$.

For any equation $\ep(\xbar,\ybar)$, we may assume without loss of generality that $\ybar\subseteq\zbar$ and use  Lemma~\ref{l:compact-congr-and-consequence-rel} to obtain
\begin{align*}
\V\models (\f_1\OR \f_2)\IMP \ep
&\iff\V\models \f_1\IMP\ep\:\text{ and }\:\V\models \f_2\IMP \ep\\
&\iff\hat{\ep}\in\theta_1\:\text{ and }\:\hat{\ep}\in\theta_2\\
&\iff\hat{\ep}\in\theta_1\cap \theta_2\\
&\iff\V\models\pi\IMP\ep. & \qed\proofend
\end{align*}
\end{proof}

\begin{remark}
The previous lemma can also be deduced from the fact that a variety is congruence distributive and has the compact intersection property if, and only if, it has equationally definable principal meets (cf.~\cite[Theorem~1.5]{BP86}). 
\end{remark}

Recall next that a class of algebras $\K$ has \emph{first-order definable principal congruences} if there exists a formula $\alpha(x_1,x_2,y_1,y_2)$ satisfying for all  $\m A\in\K$ and $a_1,a_2,b_1,b_2\in A$,
\[
(a_1,a_2)\in\cg{\m{A}}(b_1,b_2) \iff \m A\models \alpha(a_1,a_2,b_1,b_2),
\]
where $\cg{\m{A}}(b_1,b_2)\coloneqq\cg{\m{A}}(\{(b_1,b_2)\})$ is the smallest congruence of $\m A$ containing the pair $(b_1,b_2)$. If $\alpha$ can be chosen to be a quantifier-free formula or conjunction of equations, then $\K$ is said to have, respectively, \emph{quantifier-free definable principal congruences} or \emph{equationally definable principal congruences}. For a variety $\V$, it is known that having equationally definable principal congruences corresponds to a property of the associated consequence relation of $\V$ (often referred to as a deduction theorem) and a property of the join-semilattices of compact congruences of members of $\V$.

\begin{proposition}[{\cite[Theorems~5,~8]{KP1980} and~\cite[Theorem~1.7]{BKP84}}]\label{p:EDPC-reform}
The following statements are equivalent for any variety $\V$:
\begin{enumerate}
\item [(1)] $\V$ has equationally definable principal congruences.
\item [(2)] There exists a conjunction of equations $\f(x_1,x_2,y_1,y_2)$ such that for any terms $s_1,s_2,t_1,t_2$ and conjunction of equations $\pi$,
\[
\V \models (\pi \AND (s_1\eq s_2)) \IMP (t_1 \eq t_2) \iff 
\V \models \pi \IMP \f(s_1,s_2,t_1,t_2).
\]
\item [(3)] The join-semilattice of compact congruences of each $\m{A}\in\V$ is dually Brouwerian, i.e., for any compact congruences $\theta_1,\theta_2$ of $\m{A}$, there exists a compact congruence $\theta_1-\theta_2$ of $\m{A}$ such that for every compact congruence $\theta_3$ of $\m{A}$,
\[
\theta_1-\theta_2 \subseteq \theta_3\iff\theta_1 \subseteq \theta_1\jn\theta_2.
\]
\end{enumerate}
\end{proposition}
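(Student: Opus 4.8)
The plan is to treat this as the classical result it is --- the equivalence of~(1) and~(2) is due to Köhler and Pigozzi~\cite{KP1980}, and that of~(2) and~(3) to Blok, Köhler, and Pigozzi~\cite{BKP84} --- and to assemble it from the tools already at hand, above all Lemma~\ref{l:compact-congr-and-consequence-rel}. I would prove $(1)\Leftrightarrow(2)$ and $(2)\Leftrightarrow(3)$. The bookkeeping device throughout is the dictionary between a conjunction of equations $\alpha$ defining principal congruences and a conjunction of equations $\f$ as in~(2): given $\alpha$, set $\f(x_1,x_2,y_1,y_2)\coloneqq\alpha(y_1,y_2,x_1,x_2)$, and given $\f$, set $\alpha(x_1,x_2,y_1,y_2)\coloneqq\f(y_1,y_2,x_1,x_2)$; with this convention, for any $\m A\in\V$ and $a_1,a_2,b_1,b_2\in A$ one has $\m A\models\f(a_1,a_2,b_1,b_2)$ if, and only if, $(b_1,b_2)\in\cg{\m A}(a_1,a_2)$.

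For $(1)\Rightarrow(2)$ I would argue semantically. To prove the forward implication of~(2), assume $\V\models(\pi\AND s_1\eq s_2)\IMP t_1\eq t_2$ and fix $\m A\in\V$ and an assignment $v$ with $\m A,v\models\pi$; writing $t^v$ for the value of a term $t$ under $v$, pass to $\m A/\cg{\m A}(s_1^v,s_2^v)\in\V$, where $\pi$ is still satisfied (equations are preserved by homomorphisms) and the images of $s_1^v,s_2^v$ coincide, so the hypothesis forces the images of $t_1^v,t_2^v$ to coincide, i.e.\ $(t_1^v,t_2^v)\in\cg{\m A}(s_1^v,s_2^v)$, i.e.\ $\m A,v\models\f(s_1,s_2,t_1,t_2)$; and for the reverse implication, if $\m A,v\models\pi\AND s_1\eq s_2$ then $\cg{\m A}(s_1^v,s_2^v)$ is the identity congruence, so $\V\models\pi\IMP\f(s_1,s_2,t_1,t_2)$ forces $t_1^v=t_2^v$. (This may equivalently be run inside free algebras via Lemma~\ref{l:compact-congr-and-consequence-rel}.) The crucial move for $(2)\Rightarrow(1)$ is to instantiate~(2) with $s_i\coloneqq x_i$, $t_i\coloneqq y_i$, and $\pi\coloneqq\f(x_1,x_2,y_1,y_2)$, so that its right-hand side becomes the trivially valid $\V\models\f(x_1,x_2,y_1,y_2)\IMP\f(x_1,x_2,y_1,y_2)$, yielding the \emph{detachment property}
\[
\V\models\bigl(\f(x_1,x_2,y_1,y_2)\AND x_1\eq x_2\bigr)\IMP y_1\eq y_2;
\]
instantiating~(2) with $\pi\coloneqq\top$ similarly yields the identities $\V\models\f(x_1,x_2,x_1,x_2)$ and $\V\models\f(x_1,x_2,y,y)$. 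Fixing $\m A\in\V$ and $b_1,b_2\in A$, the relation $R\coloneqq\{(u,v)\in A^2\mid\m A\models\f(b_1,b_2,u,v)\}$ --- which is the relation defined by $\alpha(\cdot,\cdot,b_1,b_2)$ --- contains $(b_1,b_2)$ and is reflexive by the two identities, and is symmetric, transitive, and compatible with the basic operations because each of these amounts to a $\V$-validity of the shape $\V\models(\bigAND_i\f(x_1,x_2,u_i,u_i'))\IMP\f(x_1,x_2,s,t)$, which follows from~(2) once one checks, by finitely many applications of the detachment property, that the relevant equation is a $\V$-consequence of $\bigAND_i\f(x_1,x_2,u_i,u_i')\AND x_1\eq x_2$. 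Then $\cg{\m A}(b_1,b_2)\subseteq R$ since $R$ is a congruence through $(b_1,b_2)$, and $R\subseteq\cg{\m A}(b_1,b_2)$ since for $(u,v)\in R$ the equations of $\f(b_1,b_2,u,v)$ persist in $\m A/\cg{\m A}(b_1,b_2)$ while $b_1,b_2$ become identified there, so the detachment property identifies $u$ and $v$; hence $\alpha$ defines principal congruences.

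For $(2)\Leftrightarrow(3)$ I would translate between the deduction term and the residual operation on compact congruences, again using Lemma~\ref{l:compact-congr-and-consequence-rel} to reduce both sides to congruences of free algebras. From $\f$, I would define $\cg{\m A}(a_1,a_2)-\cg{\m A}(b_1,b_2)$ to be the congruence generated by the finitely many pairs of terms occurring in the equations of $\f(b_1,b_2,a_1,a_2)$ --- which is compact ---, extend this to arbitrary compact congruences by iterating over finite generating sets, and verify the residuation law $\theta_1-\theta_2\subseteq\theta_3\iff\theta_1\subseteq\theta_3\jn\theta_2$ for compact $\theta_1,\theta_2,\theta_3$ by reading~(2) in $\m A/\theta_3$; this makes the join-semilattice of compact congruences of each $\m A\in\V$ dually Brouwerian (and, as such a semilattice is then a distributive lattice, one moreover recovers that $\V$ is congruence distributive with the compact intersection property). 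Conversely, given~(3) --- which, we note, forces each of those join-semilattices to be a distributive lattice and hence $\V$ to be congruence distributive --- I would work in the free algebra on $x_1,x_2,y_1,y_2$, write the compact congruence (the residual of the principal congruence generated by $(x_1,x_2)$ by that generated by $(y_1,y_2)$) as the congruence generated by finitely many pairs of terms $p_i\eq q_i$, and propose $\alpha(x_1,x_2,y_1,y_2)\coloneqq\bigAND_i(p_i\eq q_i)$; showing that $\alpha$ defines principal congruences --- equivalently, that the associated $\f$ is a deduction term --- amounts to transporting this presentation along the homomorphisms $\F_\V(x_1,x_2,y_1,y_2)\to\F_\V(\zbar)$ that realise arbitrary term substitutions, using congruence distributivity (and the congruence extension property, also forced by~(3)) to control the behaviour of the residual under direct images.

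I expect this last step to be the main obstacle. In the direction $(3)\Rightarrow(1)$ one needs a \emph{uniform} description of principal congruences, so the single presentation $p_i\eq q_i$ computed in the four-generated free algebra must transport correctly under all term substitutions; this forces a careful analysis of how the residual interacts with direct images of compact congruences --- exactly the point where congruence distributivity (and the consequent fact, as in~\cite[Lemma~1.11]{Bak74}, that direct images of congruences along surjections are lattice homomorphisms) is indispensable. By contrast, $(1)\Leftrightarrow(2)$ becomes soft once the detachment property has been isolated, and the $(2)\Rightarrow(3)$ half of the last equivalence is largely a matter of iterating the binary difference and checking that compactness is preserved.
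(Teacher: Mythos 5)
The paper offers no proof of this proposition: it is imported wholesale from K\"ohler--Pigozzi and Blok--K\"ohler--Pigozzi, so there is no in-paper argument to compare against, only the citations. (You correctly read the residuation law in (3) as $\theta_1-\theta_2\subseteq\theta_3\iff\theta_1\subseteq\theta_3\jn\theta_2$; the displayed $\theta_1\jn\theta_2$ is a typo in the statement.) Your reconstruction of $(1)\Leftrightarrow(2)$ is correct and complete in outline, and is essentially the standard K\"ohler--Pigozzi argument: the quotient by $\cg{\m A}(s_1^v,s_2^v)$ for $(1)\Rightarrow(2)$, and for $(2)\Rightarrow(1)$ the isolation of the detachment quasiequation together with the check that $\f(b_1,b_2,\cdot,\cdot)$ is a congruence through $(b_1,b_2)$ that collapses into $\cg{\m A}(b_1,b_2)$. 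The direction $(2)\Rightarrow(3)$ is also sound: the residual of principal congruences is the compact congruence generated by the instances of $\f$, the residuation law is read off in $\m A/\theta_3$, and the passage to arbitrary compact congruences via $(\theta\jn\theta')-\psi=(\theta-\psi)\jn(\theta'-\psi)$ and $\theta-(\psi\jn\psi')=(\theta-\psi)-\psi'$ is routine.

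The gap is exactly where you place it, in $(3)\Rightarrow(1)$, and it is not a detail to be smoothed over: it is the entire content of~\cite[Theorem~1.7]{BKP84}. Two concrete problems. First, your candidate $\alpha$ is read off from a generating set of the residual $\cg{\F_\V(x_1,x_2,y_1,y_2)}(\hat x_1,\hat x_2)-\cg{\F_\V(x_1,x_2,y_1,y_2)}(\hat y_1,\hat y_2)$, and for this single presentation to define principal congruences everywhere you need the residual to commute with the direct-image maps $\sigma^*$ induced by arbitrary substitution homomorphisms $\sigma\colon\F_\V(x_1,x_2,y_1,y_2)\to\F_\V(\zbar)$, i.e., $\sigma^*(\theta_1-\theta_2)=\sigma^*\theta_1-\sigma^*\theta_2$. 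Such $\sigma$ are not surjective in general, so the Baker-type lemma you invoke does not apply; and even for surjective maps only one of the two inclusions is automatic (from $\theta_1\subseteq(\theta_1-\theta_2)\jn\theta_2$ and the fact that $\sigma^*$ preserves joins), while the other is precisely what must be proved. Second, you appeal to the congruence extension property as ``forced by (3)''; the only route from (3) to CEP is through EDPC itself (Day's theorem, Proposition~\ref{p:EDPC-implies}), so using it inside a proof of $(3)\Rightarrow(1)$ is circular. As written, $(3)\Rightarrow(1)$ is a plan rather than a proof. For what it is worth, the paper only ever uses the equivalence $(1)\Leftrightarrow(2)$ (in Proposition~\ref{p:suffconditionsedpc} and Corollary~\ref{c:EDPCt}), which is the part you do establish.
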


Equationally definable principal congruences imply two further useful properties.

\begin{proposition}[{\cite[Corollary~2]{Day1971} and~\cite[Corollary~6]{KP1980}}]\label{p:EDPC-implies}
If a variety has equationally definable principal congruences, then it has the congruence extension property and is congruence distributive.
\end{proposition}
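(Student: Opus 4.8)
\textbf{Proof proposal for Proposition~\ref{p:EDPC-implies}.}
The plan is to prove each of the two assertions separately, using the characterization of equationally definable principal congruences given in Proposition~\ref{p:EDPC-reform}. Suppose $\V$ has equationally definable principal congruences, witnessed by a conjunction of equations $\f(x_1,x_2,y_1,y_2)$ as in Proposition~\ref{p:EDPC-reform}.(2).

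For the congruence extension property, I would use the characterization of Lemma~\ref{l:cepequiv}: it suffices to show that for any $\theta\in\Con{\F_\V(\xbar)}$ and $\theta'\in\Con{\F_\V(\xbar,\ybar)}$,
\[
(\theta'\jn\cg{\F_\V(\xbar,\ybar)}(\theta))\cap\Fc_\V(\xbar)^2
=
(\theta'\cap\Fc_\V(\xbar)^2)\jn\theta .
\]
The inclusion $\supseteq$ is immediate. For $\subseteq$, one reduces to compact congruences (both sides commute with directed unions of the $\theta$ and $\theta'$ involved), and then, via Lemma~\ref{l:compact-congr-and-consequence-rel}, translates membership in a generated congruence into a valid quasiequation of $\V$. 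The key step is then to apply the ``deduction theorem'' of Proposition~\ref{p:EDPC-reform}.(2) repeatedly to strip off, one equation at a time, the generators of $\theta'$ and of $\cg{}(\theta)$ that do not already lie in $\theta$, replacing each such application with an application of $\f$; since $\f$ is built from the displayed terms using only operations of $\lang$, the resulting equations can be arranged to have their variables controlled so that membership in the right-hand side follows. Concretely, if $\hat\ep\in\Fc_\V(\xbar)^2$ lies in $\theta'\jn\cg{}(\theta)$, pick finite sets of generators $\si_1,\dots,\si_k$ for $\theta'$ and $\de_1,\dots,\de_\ell$ for $\theta$; then $\V\models(\bigAND_i\si_i \AND \bigAND_j\de_j)\IMP\ep$, and iterated use of Proposition~\ref{p:EDPC-reform}.(2) yields $\V\models \bigAND_j\de_j\IMP \f(\cdots\f(\ep;\si_k)\cdots;\si_1)$, whose conclusion is an equation in the variables $\xbar$ once one observes the $\si_i$ can be taken over $\xbar,\ybar$ and the resulting nested $\f$-term over $\xbar$; so that conclusion lies in $\theta$, and unwinding the $\f$'s shows $\hat\ep\in(\theta'\cap\Fc_\V(\xbar)^2)\jn\theta$ — here one uses that each $\si_i$, as an equation over $\xbar,\ybar$, when its conclusion-side is forced into $\Fc_\V(\xbar)^2$ by the nesting, contributes a pair already in $\theta'\cap\Fc_\V(\xbar)^2$. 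This bookkeeping is the main obstacle; it is essentially the content of the original proofs of Day and of Köhler--Pigozzi, and I would cite those for the delicate variable-tracking if a fully self-contained argument becomes unwieldy.

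For congruence distributivity, the cleanest route is through Proposition~\ref{p:EDPC-reform}.(3): in each $\m A\in\V$ the join-semilattice of compact congruences is dually Brouwerian, hence (being a dually Brouwerian join-semilattice) distributive as a lattice — a dual Heyting semilattice structure forces the finite meets that exist to distribute over joins. Since every congruence of $\m A$ is a directed union of compact ones and the lattice operations in $\Con{\m A}$ are computed as directed joins of the operations on compact congruences (meets of compact congruences being compact by the dual Brouwerian structure, as $\theta_1\mt\theta_2 = \theta_1-(\theta_1-\theta_2)$), the distributive law lifts from compact congruences to all of $\Con{\m A}$. Thus $\Con{\m A}$ is distributive for every $\m A\in\V$, i.e., $\V$ is congruence distributive.

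I expect the congruence extension part to be where the real work lies; the distributivity part is a short semilattice-theoretic argument once Proposition~\ref{p:EDPC-reform}.(3) is in hand. Since both statements are classical (the references~\cite{Day1971} and~\cite{KP1980} are cited in the proposition), it is legitimate — and in keeping with the expository style of this section — to keep the argument brief and lean on those sources for the more intricate syntactic manipulations, presenting here only the reduction to Lemmas~\ref{l:compact-congr-and-consequence-rel} and~\ref{l:cepequiv} and to Proposition~\ref{p:EDPC-reform}.
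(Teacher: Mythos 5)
First, note that the paper does not prove this proposition at all: it is quoted from the literature (Day's Corollary~2 and K\"ohler--Pigozzi's Corollary~6), so your closing suggestion to lean on those sources is entirely in keeping with the text. The problems are with the two sketches you do give, both of which contain genuine flaws as stated.

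For the congruence extension property, the variable-tracking does not work the way you claim. If you peel off the generators $\si_1,\dots,\si_k$ of $\theta'$ with the deduction theorem of Proposition~\ref{p:EDPC-reform}.(2), the resulting nested $\f$-terms contain the terms of the $\si_i$, which genuinely involve $\ybar$ (that is the whole point of $\theta'$ being a congruence of $\F_\V(\xbar,\ybar)$); so the conclusion is a conjunction of equations over $\xbar,\ybar$, its image lies only in $\cg{\F_\V(\xbar,\ybar)}(\theta)$ rather than in $\theta$, and the ``unwinding'' returns you to the left-hand side of Lemma~\ref{l:cepequiv} instead of producing pairs in $(\theta'\cap\Fc_\V(\xbar)^2)\jn\theta$. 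The classical argument avoids free algebras altogether: working at the algebra level, one iterates $\f$ to obtain, for each $n$, a conjunction of equations $\f_n$ defining membership in $n$-generated congruences (by induction, passing through the quotient $\m A/\cg{\m A}(F)$), and then uses the fact that a conjunction of equations with parameters in a subalgebra $\m B\leq\m A$ holds in $\m A$ if, and only if, it holds in $\m B$; compactness of $\cg{\m A}(\theta)$ then gives $\cg{\m A}(\theta)\cap B^2\subseteq\theta$ directly, with no appeal to Lemma~\ref{l:cepequiv}.

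For congruence distributivity, the specific step ``$\theta_1\mt\theta_2=\theta_1-(\theta_1-\theta_2)$, hence meets of compact congruences are compact'' is false: the identity already fails in a three-element chain viewed as a dually Brouwerian semilattice, dually Brouwerian join-semilattices need not be lattices, and if equationally definable principal congruences forced intersections of compact congruences to be compact, the compact intersection property would be a redundant extra hypothesis in Theorem~\ref{t:GZ-charact} --- the paper treats it as independent precisely because it is. The conclusion is nevertheless reachable from Proposition~\ref{p:EDPC-reform}.(3) by the correct semilattice argument: a dually Brouwerian join-semilattice is distributive \emph{as a join-semilattice} (if $a\leq b\jn c$, put $c'\coloneqq a-b\leq c$ and $b'\coloneqq a-c'\leq b$; then $a=b'\jn c'$), and an algebraic lattice whose compact elements form a distributive join-semilattice is distributive. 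So replace your ``compacts form a distributive lattice and directed unions'' step by this; as written, it is a gap, not merely an omitted computation.
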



We say next that a class of algebras $\K$ has the {\em equational variable restriction property} if for any finite set $\xbar,y$ and conjunction of equations $\ga(\xbar,y)$, there exists a formula $\pi(\xbar)$ that is either $\bot$ or a conjunction of equations such that $\K\models \pi\IMP \ga$ and for any conjunction of equations $\f(\xbar)$,
\[
\K\models\f\IMP\ga \:\Longrightarrow\: \K\models \f\IMP \pi.
\]
The main result of this section is the following characterization theorem.

\begin{theorem}\label{t:GZ-charact}
Let $\V$ be a variety that has the compact intersection property and equationally definable principal congruences. The theory of $\V$ has a model completion if, and only if, $\V$ is coherent and has the amalgamation property and equational variable restriction property.
\end{theorem}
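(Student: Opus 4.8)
The plan is to derive the theorem from the general characterization in Theorem~\ref{t:charact-model-completion}. Coherence coincides with the variable projection property (Proposition~\ref{p:coherence-eq-conseq}), and the existence of a model completion already forces both coherence (Propositions~\ref{p:almost-coh-nec} and~\ref{p:coherence-eq-conseq}) and the amalgamation property (Proposition~\ref{p:modcomp}.(c)); so it suffices to prove that a \emph{coherent} variety $\V$ with the compact intersection property and equationally definable principal congruences has the conservative model extension property if, and only if, it has the equational variable restriction property. Throughout one uses that, by Proposition~\ref{p:EDPC-implies}, such a $\V$ is congruence distributive and has the congruence extension property.

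Suppose first that $\V$ has the equational variable restriction property, and fix a conjunction of literals $\ps(\xbar,y) = \ps^+ \AND \neg\rho_1 \AND \cdots \AND \neg\rho_n$ with each $\rho_j$ an equation (we may assume $\ps$ is satisfiable in $\V$, cf.\ Remark~\ref{r:satisfiable-and-non-empty}). Using coherence, fix a conjunction of equations $\xi(\xbar)$ with $\V \models \ps^+ \IMP \xi$ capturing exactly the $\xbar$-consequences of $\ps^+$. For each $j$, repeatedly applying the deduction theorem for equationally definable principal congruences (Proposition~\ref{p:EDPC-reform}.(2)) to move the equations of $\ps^+$ across the turnstile produces a conjunction of equations $\ga_j(\xbar,y)$ with $\V \models (\ps^+ \AND \de) \IMP \rho_j \iff \V \models \de \IMP \ga_j$ for every conjunction of equations $\de(\xbar)$; the equational variable restriction property applied to $\ga_j$ then supplies $\pi_j(\xbar)$, a conjunction of equations or $\bot$. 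Put $\chi \coloneqq \xi \AND \bigAND_j \neg\pi_j$, reading $\neg\bot$ as $\top$. To check the two clauses of the conservative model extension property for an algebra $\m A \in \V$ generated by $\abar$, one works with the pushout $\m B' \coloneqq \F_\V(\xbar,y) / (\cg{\F_\V(\xbar,y)}(\widehat{\ps^+}) \jn \cg{\F_\V(\xbar,y)}(\theta))$, where $\theta = \ker(\F_\V(\xbar) \onto \m A)$: Lemma~\ref{l:cepequiv} and $\m A \models \xi(\abar)$ give an embedding $\m A \into \m B'$, and $\m B' \models \ps(\abar, b)$ for the image $b$ of $y$ exactly when no $\rho_j$ is forced over $\m A$; by Lemma~\ref{l:compact-congr-and-consequence-rel} and compactness of joins of congruences, $\rho_j$ is forced over $\m A$ iff $\m A \models \de(\abar)$ for some conjunction of equations $\de$ with $\V \models \de \IMP \ga_j$, which by the choice of $\pi_j$ means exactly $\m A \models \pi_j(\abar)$. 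Hence clause (ii) holds; and clause (i) follows because $\V \models \ps \IMP \xi$ and because $\V \models \pi_j \IMP \ga_j$ translates back, via the same encoding, to $\V \models (\ps^+ \AND \pi_j) \IMP \rho_j$. Degenerate cases (some $\pi_j = \bot$, or $\m A$ trivial) are handled directly.

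For the converse, assume the theory of $\V$ has a model completion and invoke the reformulation in Proposition~\ref{p:strong-CMEP-iff}. Given a conjunction of equations $\ga(\xbar,y) = \bigAND_i (u_i \eq v_i)$, it suffices to find a restriction $\pi_i(\xbar)$ for each conjunct $\ga_i \coloneqq u_i \eq v_i$ and put $\pi \coloneqq \bigAND_i \pi_i$. Proposition~\ref{p:strong-CMEP-iff} applied to the conjunction of literals $\neg\ga_i$ yields a quantifier-free $\chi_i(\xbar)$ with $\V \models \neg\ga_i \IMP \chi_i$ such that $\m A \models \chi_i(\abar)$ guarantees an extension of $\m A$ in which $\ga_i(\abar,-)$ fails. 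Write $\neg\chi_i$ in disjunctive normal form $\bigOR_k \rho^i_k$, discarding disjuncts unsatisfiable in $\V$; from $\V \models \neg\chi_i \IMP \ga_i$ we get $\V \models \rho^i_k \IMP \ga_i$ for each $k$, and then, using closure of $\V$ under finite products to delete the negative literals of $\rho^i_k$, $\V \models (\rho^i_k)^+ \IMP \ga_i$. Using the compact intersection property through Lemma~\ref{l:consequence-of-CIP} (applicable since $\V$ is congruence distributive by Proposition~\ref{p:EDPC-implies}), collapse $\bigOR_k (\rho^i_k)^+$ to a single conjunction of equations $\pi_i(\xbar)$ with the same equational consequences, so that $\V \models \pi_i \IMP \ga_i$. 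For the maximality of $\pi_i$, let $\f(\xbar)$ be a conjunction of equations with $\V \models \f \IMP \ga_i$ and set $\m A \coloneqq \F_\V(\xbar)/\cg{\F_\V(\xbar)}(\widehat{\f})$: since embeddings preserve $\f$ and $\V \models \f \IMP \ga_i$, the equation $\ga_i(\abar,-)$ is unavoidable in every extension of $\m A$, hence $\m A \not\models \chi_i(\abar)$, so $\m A \models (\rho^i_{k_0})^+(\abar)$ for some $k_0$, i.e.\ $\V \models \f \IMP (\rho^i_{k_0})^+$ by Lemma~\ref{l:compact-congr-and-consequence-rel}; chaining with $\V \models (\rho^i_{k_0})^+ \IMP \pi_i$ gives $\V \models \f \IMP \pi_i$. (When $\chi_i$ is valid in $\V$ one takes $\pi_i \coloneqq \bot$.)

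The step I expect to be the main obstacle is this converse direction, specifically the passage from the quantifier-free witness $\chi_i$ back to an \emph{equational} restriction $\pi_i$: one must show that the negative literals created by putting $\neg\chi_i$ into disjunctive normal form can be deleted (the product argument) and that the resulting disjunction of conjunctions of equations genuinely collapses to a single conjunction of equations --- and it is precisely here that the compact intersection property (Lemma~\ref{l:consequence-of-CIP}) and congruence distributivity, hence equationally definable principal congruences, are indispensable. The sufficiency direction is, by comparison, mostly bookkeeping around the pushout and the congruence extension property once the deduction theorem of Proposition~\ref{p:EDPC-reform}.(2) is in hand.
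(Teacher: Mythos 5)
Your proof is correct and follows essentially the same route as the paper: the sufficiency direction reproduces Proposition~\ref{p:suffconditionsedpc} (coherence, the deduction theorem of Proposition~\ref{p:EDPC-reform}, and the equational variable restriction property yield the conservative model extension witness $\xi\AND\bigAND_j\neg\pi_j$, verified via the pushout and Lemma~\ref{l:cepequiv}), while the necessity direction uses exactly the paper's two key ingredients, namely the disjunction property to strip negative literals and Lemma~\ref{l:consequence-of-CIP} to collapse the resulting disjunction of conjunctions of equations into a single one. The only cosmetic difference is that in the necessity direction you eliminate the quantifier from $\exists y.\neg\ga_i$ via Proposition~\ref{p:strong-CMEP-iff} and argue maximality semantically on the algebra finitely presented by $\f$, whereas the paper eliminates the quantifier from $\forall y.\ga$ directly and argues maximality syntactically via the co-theory property; these are dual formulations of the same argument.
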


\begin{remark}
Theorem~\ref{t:GZ-charact} provides a slightly more general algebraic reformulation of~\cite[Theorem~4]{GZ97}, which requires that the join-semilattice of compact congruences of each algebra $\m{A}\in\V$ has a bottom element, or, equivalently, that there is a variable-free conjunction of equations $\pi$ such that $\V\models\pi\IMP\ep$ for every equation $\ep$. This condition is not satisfied by all the varieties of interest in this paper, in particular, the variety of lattice-ordered abelian groups.
\end{remark}

We show first that the right-to-left direction of Theorem~\ref{t:GZ-charact} holds even in the absence of the  compact intersection property.

\begin{proposition}\label{p:suffconditionsedpc}
Let $\V$ be a variety that has equationally definable principal congruences. If $\V$ is coherent and has the amalgamation property and the equational variable restriction property, then the theory of $\V$ has a model completion.
\end{proposition}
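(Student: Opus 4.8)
The plan is to apply the `if'-direction of Theorem~\ref{t:charact-model-completion} (i.e., Proposition~\ref{p:model-completion-univ}, applied to the universal class $\K\coloneqq\V$, which is in particular universal). Since $\V$ has the amalgamation property by hypothesis, and $\V$ is coherent, hence has the variable projection property by Proposition~\ref{p:coherence-eq-conseq}, it remains only to verify that $\V$ has the conservative model extension property. So the whole proof reduces to constructing, for a given finite set $\xbar,y$ and conjunction of literals $\ps(\xbar,y)$, a quantifier-free formula $\chi(\xbar)$ satisfying conditions (i) and (ii) of the definition of the conservative model extension property.

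First I would split $\ps$ into its positive and negative parts, writing $\ps=\ps^+\AND\ps^-$ where $\ps^+=\sigma_1\AND\cdots\AND\sigma_k$ is a conjunction of equations and $\ps^-=\neg\delta_1\AND\cdots\AND\neg\delta_\ell$ a conjunction of negated equations, all in the variables $\xbar,y$. By the equational variable restriction property applied to $\ps^+(\xbar,y)$ there is a conjunction of equations (or $\bot$) $\pi(\xbar)$ with $\V\models\pi\IMP\ps^+$ and such that $\V\models\f\IMP\ps^+$ implies $\V\models\f\IMP\pi$ for every conjunction of equations $\f(\xbar)$. The formula $\pi$ is the natural candidate for the ``positive restriction'' of $\chi$; the ``negative'' information is encoded by demanding that none of the $\delta_j$ follows from $\ps^+$ over $\V$. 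Concretely, I would try
\[
\chi(\xbar)\coloneqq \pi(\xbar)\AND\!\!\bigAND_{\substack{1\le j\le \ell\\ \V\not\models\,\ps^+\IMP\delta_j}}\!\!\neg\pi_j(\xbar),
\]
where each $\pi_j(\xbar)$ is a conjunction of equations furnished by Proposition~\ref{p:EDPC-reform}.(2): for the $\delta_j=(t_1\eq t_2)$ and the equations $\sigma_1,\dots,\sigma_k$ of $\ps^+$ one uses the deduction term $\f$ to ``fold'' the equation $\sigma_i=(s_1\eq s_2)$ into a single equation $\f(s_1,s_2,t_1,t_2)$ and then applies the equational variable restriction property again to push out the variable $y$, obtaining $\pi_j(\xbar)$ with the property that $\V\models\f\IMP\pi_j$ iff $\V\models(\f\AND\ps^+)\IMP\delta_j$, for $\f(\xbar)$. (If some $\delta_j$ already follows from $\ps^+$ over $\V$, then $\ps$ is unsatisfiable in $\V$ and we set $\chi\coloneqq\bot$; likewise if $\pi=\bot$; so we may assume $\ps$ satisfiable, cf.~Remark~\ref{r:satisfiable-and-non-empty}.)

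Condition (i), $\V\models\ps\IMP\chi$, is the routine half: $\V\models\ps^+\IMP\pi$ holds by choice of $\pi$, and if some $\m{A}\in\V$ and $(\abar,b)$ satisfied $\ps(\abar,b)$ but also $\pi_j(\abar)$ for one of the selected $j$, then, using that $\pi_j$ captures the consequence ``$\ps^+$ forces $\delta_j$'' after quotienting to the subalgebra generated by $\abar$, one derives $\V\models\ps^+\IMP\delta_j$, contradicting the selection condition on $j$. Condition (ii) is the crux and the step I expect to be the main obstacle. Given $\m{A}\in\V$ generated by $\abar$ with $\m{A}\models\chi(\abar)$ and such that $\m{A}\models\ep(\abar)$ for every equation $\ep(\xbar)$ with $\V\models\ps^+\IMP\ep$, I would form the finitely presented algebra $\m{A}^\sharp$ presented over $\m{A}$ (using a fresh generator for $y$) by imposing exactly the equations of $\ps^+$; concretely, $\m{A}^\sharp\coloneqq\F_\V(\xbar,y)/\cg{}(\{\hat\sigma_i\}\cup\theta)$ where $\theta$ is the kernel of the surjection $\F_\V(\xbar)\onto\m{A}$ — here the congruence extension property from Proposition~\ref{p:EDPC-implies}, together with Lemma~\ref{l:cepequiv}, is what guarantees that $\m{A}$ embeds into $\m{A}^\sharp$: one must check that the composite $\F_\V(\xbar)\to\F_\V(\xbar,y)\to\m{A}^\sharp$ has kernel exactly $\theta$, which is precisely where the hypothesis $\m{A}\models\ep(\abar)$ (for all $\V$-consequences $\ep$ of $\ps^+$) and Lemma~\ref{l:compact-congr-and-consequence-rel} enter. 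It then remains to see that $\m{A}^\sharp$ satisfies $\ps^-$ at the designated tuple, i.e., that no $\delta_j$ becomes true in $\m{A}^\sharp$; this follows because $\m{A}\models\neg\pi_j(\abar)$ for the relevant $j$ and, by the defining property of $\pi_j$ via Proposition~\ref{p:EDPC-reform}.(2) and Lemma~\ref{l:compact-congr-and-consequence-rel}, $\delta_j$ holding in $\m{A}^\sharp$ would force $\pi_j(\abar)$ back in $\m{A}$. Taking $\m{B}\coloneqq\m{A}^\sharp$ and $b$ the image of $y$ then witnesses (ii). Assembling these pieces, $\V$ has the conservative model extension property, and Proposition~\ref{p:model-completion-univ} gives the model completion. \qed
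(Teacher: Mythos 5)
Your overall architecture is the paper's: reduce to the conservative model extension property via Theorem~\ref{t:charact-model-completion}, build the negative conjuncts $\neg\pi_j(\xbar)$ by combining the deduction-style reformulation of equationally definable principal congruences (Proposition~\ref{p:EDPC-reform}.(2)) with the equational variable restriction property, and verify (ii) by embedding $\m{A}$ into a quotient of $\F_\V(\xbar,y)$ using the congruence extension property (Lemma~\ref{l:cepequiv}) together with the conservativity hypothesis and Lemma~\ref{l:compact-congr-and-consequence-rel}. However, there is a genuine gap in your positive conjunct. You take $\pi(\xbar)$ from the equational variable restriction property applied to $\ps^+$, which by definition gives $\V\models\pi\IMP\ps^+$ and maximality of $\pi$ among $\xbar$-premises of $\ps^+$; you then assert in (i) that ``$\V\models\ps^+\IMP\pi$ holds by choice of $\pi$''. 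That is the reverse implication and does not follow. It fails concretely: in $\HA$ (which satisfies all hypotheses of the proposition), for $\ps^+\coloneqq(y\mt x\eq y)$, i.e.\ $y\le x$, any conjunction of equations $\f(x)$ with $\HA\models\f\IMP(y\le x)$ satisfies $\HA\models\f\IMP(x\eq 1)$ (substitute $y\mapsto 1$), so $\pi$ is equivalent to $x\eq 1$; but $y\le x$ does not entail $x\eq 1$, so for the satisfiable $\ps\coloneqq(y\mt x\eq y)\AND\neg(y\eq 1)$ your $\chi$ contains the conjunct $x\eq 1$ and condition (i) is violated. Relatedly, $\pi=\bot$ does not mean $\ps$ is unsatisfiable, so your fallback $\chi\coloneqq\bot$ in that case also breaks (i).

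The repair is exactly where coherence enters the paper's proof --- note that your construction never actually uses the coherence hypothesis. The positive conjunct should be the formula $\f(\xbar)$ given by the equational variable projection property (Proposition~\ref{p:coherence-eq-conseq}) applied to $\ps^+$: then $\V\models\ps^+\IMP\f$ holds by definition, so (i) is immediate, while every $\xbar$-equation entailed by $\ps^+$ is entailed by $\f$; the variable restriction property is reserved for pushing $y$ out of the formulas $\pi_j(\xbar,y)$ produced by Proposition~\ref{p:EDPC-reform}.(2), as you do. (In fact the positive conjunct plays no role in (ii), where the conservativity hypothesis and Lemma~\ref{l:cepequiv} do the work.) A smaller slip in your (i)-argument for the negative conjuncts: from a single model satisfying $\ps(\abar,b)$ and $\pi_j(\abar)$ you cannot ``derive $\V\models\ps^+\IMP\delta_j$''; the correct step is that $\V\models(\pi_j\AND\ps^+)\IMP\delta_j$ (take $\f\coloneqq\pi_j$ in the defining property of $\pi_j$), which yields $\V\models\ps\IMP\neg\pi_j$ directly --- this is how the paper argues. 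With these corrections your proof coincides with the paper's.
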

\begin{proof}
Using Theorem~\ref{t:charact-model-completion}, it suffices to prove that $\V$ has the conservative model extension property. Consider a finite set $\xbar,y$ and conjunction of literals $\ps(\xbar,y)$, assuming without loss of generality that $\ps$ is satisfiable in $\V$ (cf.~Remark~\ref{r:satisfiable-and-non-empty}). By coherence, there exists a conjunction of equations $\f(\xbar)$ such that $\V\models \ps^+ \IMP \f$ and for any equation $\ep(\xbar)$,
\[
\V\models \ps^+\IMP \ep \:\Longrightarrow\: \V\models \f\IMP \ep.
\]
Let $\si_1,\dots,\si_m$ be the equations of $\ps^-$. Since $\V$ has equationally definable principal congruences, it follows by Proposition~\ref{p:EDPC-reform} that for each $i\in\{1,\dots,m\}$, there exists a conjunction of equations $\pi_i(\xbar,y)$ such that for any conjunction of equations $\ga(\xbar,y)$,
\begin{equation}\label{eq:pi-i}
\V\models (\ga \AND \ps^+)\IMP \si_i \iff \V\models \ga\IMP \pi_i.
\end{equation}
The equational variable restriction property yields a formula $\pi'_i(\xbar)$ for each $i\in\{1,\dots,m\}$ that is either $\bot$ or a conjunction of equations such that $\V\models \pi'_i\IMP \pi_i$ and for any conjunction of equations $\f'(\xbar)$,
\begin{equation}\label{eq:pi-i-primes}
\V\models \f'\IMP \pi_i  \:\Longrightarrow\: \V\models \f'\IMP \pi'_i.
\end{equation}
We claim that the quantifier-free formula $\chi\coloneqq \f \AND\neg \pi'_1\AND\cdots\AND\neg \pi'_m$ satisfies the conditions in the definition of conservative model extension property.

Note first that, since $\V\models \ps\IMP\ps^+$ and $\V\models \ps^+\IMP \f$,  also $\V\models \ps\IMP \f$. To conclude that $\V\models \ps\IMP \chi$, it remains to show that $\V\models \ps\IMP \neg \pi'_i$ for each $i\in\{1,\dots,m\}$. Let $i\in\{1,\dots,m\}$. Since $\V\models \pi'_i\IMP \pi_i $ and $\V\models (\pi_i \AND \ps^+)\IMP \si_i$, also $\V\models (\pi'_i \AND \ps^+)\IMP \si_i$. But then $\V\models (\neg\si_i\AND \ps^+)\IMP\neg\pi'_i$ and hence $\V\models \ps\IMP \neg \pi'_i$ as required.

Now, consider an algebra $\m A\in\V$ generated by $\abar\in A^{\xbar}$ such that $\m A\models \chi(\abar)$ and for any equation $\ep(\xbar)$, 
\begin{equation}\label{eq:cmep-assumption}
\V\models \ps^+ \IMP \ep \:\Longrightarrow\: \m A\models \ep(\abar).
\end{equation}
Let $f\colon \F_{\V}(\xbar)\onto \m A$ be the surjective homomorphism mapping the generators of $\F_{\V}(\xbar)$ to the elements of $\abar$. Considering $\F_{\V}(\xbar)$ as a subalgebra of $\F_{\V}(\xbar,y)$, we define
\[
\theta'\coloneqq\cg{\F_{\V}(\xbar,y)}(\{\hat{\si}\mid \si \text{ is an equation of } \ps^+\})
\enspace\text{and}\enspace
\theta\coloneqq\theta'\jn\cg{\F_{\V}(\xbar,y)}(\ker{f}).
\] 
Let $\m B\coloneqq \F_{\V}(\xbar,y)/\theta$ and let $g$ be the natural homomorphism from $\F_{\V}(\xbar,y)$ onto $\m B$. Since $\ker{f}\subseteq \theta \cap\Fc_{\V}(\xbar)^2$, the inclusion $\F_{\V}(\xbar)\into\F_{\V}(\xbar,y)$ yields a homomorphism $\m A\to \m B$ mapping $f(x)$ to $g(x)$ for each $x\in\xbar$, as illustrated by the following diagram:
\[\begin{tikzcd}
\F_{\V}(\xbar) \arrow[hookrightarrow]{r} \arrow[twoheadrightarrow]{d}[swap]{f} & \F_{\V}(\xbar,y) \arrow[twoheadrightarrow]{d}{g} \\
{\m A} \arrow[dashed]{r} & {\m B}
\end{tikzcd}\]
To prove that this homomorphism is an embedding, it suffices to show that $\theta \cap \Fc_\V(\xbar)^2\subseteq \ker{f}$. Since $\V$ has equationally definable principal congruences, by Proposition~\ref{p:EDPC-implies}, it has the congruence extension property. Hence, by Lemma~\ref{l:cepequiv},
\[
\theta \cap \Fc_\V(\xbar)^2
=(\theta'\jn\cg{\F_{\V}(\xbar,y)}(\ker{f}))\cap \Fc_\V(\xbar)^2
=(\theta' \cap \Fc_\V(\xbar)^2)\jn\ker{f}. 
\]
It therefore suffices to prove that $\theta' \cap \Fc_\V(\xbar)^2\subseteq \ker{f}$. Consider any equation $\ep(\xbar)$ such that $\hat{\ep}\in\theta' \cap \Fc_\V(\xbar)^2$. An application of Lemma~\ref{l:compact-congr-and-consequence-rel} yields $\V\models\ps^+\IMP\ep$ and, by~\eqref{eq:cmep-assumption}, we obtain $\m A\models \ep(\abar)$. Hence  $\hat{\ep}\in \ker{f}$ as required.

To conclude the proof, we show that $\m B\models \ps(\abar,b)$, where $b$ is the image under $g$ of the equivalence class of $y$. Since $\theta' \subseteq \theta=\ker{g}$, we have $\m B\models \ps^+(\abar,b)$. Suppose towards a contradiction that $\m B\models \si_i(\abar,b)$ for some $i\in\{1,\dots,m\}$. Then $\hat{\si}_i\in \theta$, and so $\V\models (\ps^+\AND \ga)\IMP \si_i$ for some conjunction $\ga(\xbar)$ of equations  in $\ker{f}$. By~\eqref{eq:pi-i}, we have $\V\models \ga\IMP \pi_i$ and hence, by~\eqref{eq:pi-i-primes}, also $\V\models \ga\IMP \pi'_i$. Together with $\m A\models \ga(\abar)$, this entails $\m A\models \pi'_i(\abar)$, contradicting the fact that $\m A\models \chi(\abar)$.
\end{proof}

We now complete the proof of Theorem~\ref{t:GZ-charact} by establishing the necessity of the stated conditions, recalling that a variety that has equationally definable principal congruences is congruence distributive (Proposition~\ref{p:EDPC-implies}).

\begin{proposition}
Let $\V$ be a congruence distributive variety that has the compact intersection property. If the theory of $\V$ has a model completion, then $\V$ is coherent and has the amalgamation property and equational variable restriction property.
\end{proposition}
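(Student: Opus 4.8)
The three conclusions will be obtained by combining results already established. Since $\V$ is a variety it is in particular a universal class, so Theorem~\ref{t:charact-model-completion} applies and gives at once that $\V$ has the amalgamation property and the variable projection property; by Proposition~\ref{p:coherence-eq-conseq} the latter says precisely that $\V$ is coherent. The work lies in the equational variable restriction property, and the plan is to first produce, for a given conjunction of equations $\ga(\xbar,y)=\ga_1\AND\cdots\AND\ga_k$, a quantifier-free formula $\chi(\xbar)$ that already plays the role of the sought $\pi$ except that it need not be a conjunction of equations, and then to convert $\chi$ into such a conjunction using congruence distributivity and the compact intersection property.

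For the first step, I would apply the strong form of the conservative model extension property from Proposition~\ref{p:strong-CMEP-iff} to each single negated equation $\neg\ga_j(\xbar,y)$. This yields a quantifier-free formula $\chi_j(\xbar)$ with $\V\models\neg\ga_j\IMP\chi_j$ --- equivalently $\V\models\neg\chi_j\IMP\ga_j$ --- such that whenever $\m{A}\in\V$ and $\m{A}\models\chi_j(\abar)$, some extension $\m{B}\in\V$ of $\m{A}$ contains an element $b$ with $\m{B}\not\models\ga_j(\abar,b)$. Setting $\chi\coloneqq\neg\chi_1\AND\cdots\AND\neg\chi_k$, we have $\V\models\chi\IMP\ga$ because $\chi$ entails each $\neg\chi_j$ and hence each $\ga_j$; and for every conjunction of equations $\f(\xbar)$ with $\V\models\f\IMP\ga$ we have $\V\models\f\IMP\chi$, since otherwise $\m{A}\models\f(\abar)\AND\chi_j(\abar)$ for some $j$ and some $\m{A}\in\V$, $\abar$, the extension $\m{B}$ furnished by $\chi_j$ still satisfies $\f(\abar)$ as conjunctions of equations persist under extensions, whence $\m{B}\models\ga_j(\abar,b)$, a contradiction.

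For the second step, I would pass to a disjunctive normal form $\chi\equiv\bigOR_{i=1}^m\delta_i$ with each $\delta_i$ a conjunction of literals, discarding those $\delta_i$ unsatisfiable in $\V$ (each is equivalent to $\bot$ over $\V$, so $\chi$ is unaffected over $\V$); if none remain then $\V\models\neg\chi$, whence no conjunction of equations implies $\ga$ over $\V$ (such an $\f$ holds in the trivial algebra, where $\chi$ would then also hold, contradicting $\V\models\neg\chi$), so $\pi\coloneqq\bot$ works. Otherwise, writing $\delta_i^+$ for the conjunction of the positive literals of $\delta_i$, the key claim is that $\V\models\delta_i^+\IMP\ga$ for each surviving $i$. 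Indeed, satisfiability of $\delta_i$ in $\V$ ensures $\V\not\models\delta_i^+\IMP\eta$ for every equation $\eta$ occurring negatively in $\delta_i$, so by Lemma~\ref{l:compact-congr-and-consequence-rel}, in the algebra $\m{C}\coloneqq\F_\V(\xbar,y)/\cg{\F_\V(\xbar,y)}(\{\hat{\si}\mid\si\text{ an equation of }\delta_i^+\})$ the images $\abar,b$ of $\xbar,y$ satisfy $\delta_i^+$ and falsify each such $\eta$, hence satisfy $\delta_i$; then $\V\models\delta_i\IMP\ga$ --- which holds since $\delta_i$ entails $\chi$ and $\V\models\chi\IMP\ga$ --- gives $\m{C}\models\ga(\abar,b)$, and a final use of Lemma~\ref{l:compact-congr-and-consequence-rel} gives $\V\models\delta_i^+\IMP\ga_j$ for every $j$. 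Consequently $\V\models\bigl(\bigOR_{i=1}^m\delta_i^+\bigr)\IMP\ga_j$ for every $j$, and repeated application of Lemma~\ref{l:consequence-of-CIP} furnishes a conjunction of equations $\pi(\xbar)$ with $\V\models\bigl(\bigOR_{i=1}^m\delta_i^+\bigr)\IMP\ep$ iff $\V\models\pi\IMP\ep$, for all equations $\ep(\xbar,y)$. Letting $\ep$ range over the $\ga_j$ gives $\V\models\pi\IMP\ga$; letting $\ep$ range over the conjuncts of $\pi$ gives $\V\models\delta_i^+\IMP\pi$ for all $i$, hence $\V\models\chi\IMP\pi$; so $\V\models\f\IMP\ga$ implies $\V\models\f\IMP\chi$ (by the first step) implies $\V\models\f\IMP\pi$, and $\pi$ is the required formula.

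The main obstacle is exactly this passage from the quantifier-free $\chi$ to a conjunction of equations: here congruence distributivity and the compact intersection property are genuinely needed, via Lemma~\ref{l:consequence-of-CIP}, to absorb the disjunction among the $\delta_i^+$, while the negative literals are eliminated by the free-algebra computation above --- whose delicate point is to first discard the $\V$-unsatisfiable disjuncts, so that satisfiability of each remaining $\delta_i$ may legitimately be invoked.
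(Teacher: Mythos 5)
Your proof is correct and follows essentially the same strategy as the paper's: coherence and amalgamation come from Theorem~\ref{t:charact-model-completion}, a quantifier-free formula with the same $\xbar$-equational behaviour as $\forall y.\ga$ is produced, and its satisfiable disjuncts are stripped of negative literals and then combined into a single conjunction of equations (or $\bot$) via Lemma~\ref{l:consequence-of-CIP}. The only differences are cosmetic: the paper obtains its formula $\xi$ by applying quantifier elimination in the model completion directly to $\forall y.\ga$ and discards negative literals using the disjunction property, whereas you obtain $\chi$ from Proposition~\ref{p:strong-CMEP-iff} applied to the negated conjuncts $\neg\ga_j$ and discard negative literals by the equivalent free-algebra computation with Lemma~\ref{l:compact-congr-and-consequence-rel}.
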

\begin{proof}
If the theory of $\V$ has a model completion, then $\V$ is coherent and has the amalgamation property by Theorem~\ref{t:charact-model-completion}. Hence, it remains to settle the equational variable restriction property. To this end, consider a finite set $\xbar,y$ and conjunction of equations $\ga(\xbar,y)$. We must find a formula $\pi(\xbar)$ that is either $\bot$ or a conjunction of equations such that $\V\models \pi\IMP \ga$ and for any conjunction of equations $\f(\xbar)$,
\[
\V\models\f\IMP\ga \:\Longrightarrow\: \V\models \f\IMP \pi.
\]
Let $T\coloneqq \Th(\V)$ be the theory of $\V$, and let $T^*$  be a model completion of $T$. Since $T^*$ has quantifier elimination, there exists a quantifier-free formula $\xi(\xbar)$ satisfying
\[
T^*\vdash \forall \xbar \, [(\forall y. \ga)\BIMP \xi].
\]
So $T^*\vdash \forall \xbar,y \, (\xi\IMP\ga)$ and, since $T$ and $T^*$ are co-theories, $T\vdash \forall \xbar,y \, (\xi\IMP\ga)$. Assume without loss of generality that $\xi=\xi_1\OR \cdots \OR \xi_m$ where each $\xi_i$ is a conjunction of literals. For each $i\in\{1,\dots,m\}$, we have $\V\models \xi_i\IMP \ga$ and so, by the disjunction property for varieties, either $\V\models \xi_i^+\IMP \ga$ or $\V\models\neg\xi_i$.

Now let $J\coloneqq \{i\in\{1,\dots,m\}\mid \xi_i\text{ is satisfiable in $\V$}\}$.  Then $\V\models\bigOR_{i\in J} \xi^+_i\IMP\ga$, noting that $\bigOR_{i\in J}\xi^+_i=\bot$ for $J=\emptyset$. By Lemma~\ref{l:consequence-of-CIP}, or taking  $\bot$ if $J=\emptyset$, there exists a formula $\pi(\xbar)$ that is either $\bot$ or a conjunction of equations such that for any equation $\ep(\xbar,\zbar)$,
\[
\V\models\bigOR_{i\in J} \xi^+_i \IMP\ep\iff\V\models \pi\IMP \ep.
\]
In particular, $\V\models \pi\IMP \ga$. Now let  $\f(\xbar)$ be any conjunction of equations such that $\V\models \f\IMP \ga$. Then $T\vdash \forall \xbar \, [\f \IMP(\forall y.\ga)]$ and, since $T$ and $T^*$ are co-theories, $T^*\vdash \forall \xbar \, [\f \IMP(\forall y.\ga)]$. Hence $T^*\vdash \forall \xbar \, (\f \IMP \xi)$ and, again using the fact that  $T$ and $T^*$ are co-theories, $T\vdash \forall \xbar \, (\f \IMP \xi)$, yielding $\V\models\f \IMP \xi$. But then $\V\models\f \IMP \bigOR_{i\in J} \xi^+_i$ and, since $\V\models\bigOR_{i\in J} \xi^+_i \IMP\pi$, by the above equivalence, $\V\models \f\IMP\pi$ as required. 
\end{proof}

Finally, in this section, we show that for varieties that have the congruence extension property, a small generalization of the equational variable restriction property implies the amalgamation property. Let us first recall the following well-known relationship between amalgamation and deductive interpolation.

\begin{proposition}[{cf.~\cite[Theorem~22]{MMT14}}]\label{p:amalg:dep}
Let $\V$ be a variety that has the congruence extension property. $\V$ has the amalgamation property if, and only if, it has the {\em deductive interpolation property}; that is, for any finite sets $\xbar,\ybar,\zbar$ and conjunctions of equations $\f(\xbar,\ybar)$, $\ga(\xbar,\zbar)$ satisfying $\V\models\f\IMP\ga$, there exists a conjunction of equations $\pi(\xbar)$ such that $\V\models\f\IMP\pi$ and $\V\models\pi\IMP\ga$.
\end{proposition}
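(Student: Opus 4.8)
The plan is to prove the two implications separately, in both cases translating between the consequence relation of $\V$ and congruences of $\V$-free algebras via Lemma~\ref{l:compact-congr-and-consequence-rel} and exploiting that congruence generation is compact. For the direction from amalgamation to deductive interpolation I would argue with free algebras and a single application of the amalgamation property; for the converse I would form the pushout of the two given embeddings inside $\V$ and show, using the congruence extension property in the guise of Lemma~\ref{l:cepequiv}, that both legs of the pushout are embeddings, with deductive interpolation supplying the crucial step.

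\emph{Amalgamation implies deductive interpolation.} Assume $\V\models\f(\xbar,\ybar)\IMP\ga(\xbar,\zbar)$, with $\ybar$ and $\zbar$ disjoint after renaming. Let $\theta_1$ be the compact congruence of $\F_\V(\xbar,\ybar)$ generated by the equations of $\f$, put $\m B:=\F_\V(\xbar,\ybar)/\theta_1$ (so $\m B\models\f$ at the images of $\xbar,\ybar$), let $\m A\leq\m B$ be the subalgebra generated by the images of $\xbar$, and note $\m A\cong\F_\V(\xbar)/\theta_0$ where $\theta_0:=\theta_1\cap\Fc_\V(\xbar)^2$. On the $\zbar$-side put $\m C:=\F_\V(\xbar,\zbar)/\cg{\F_\V(\xbar,\zbar)}(\theta_0)$; sending $\zbar$ to any element of $\m A$ yields a retraction of the natural map $\m A\to\m C$, which is therefore an embedding. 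Amalgamating $\m B$ and $\m C$ over $\m A$ inside $\V$ gives $\m E\in\V$ with compatible embeddings of $\m B$ and $\m C$. Since $\m B\models\f$ we get $\m E\models\f$, hence by hypothesis $\m E\models\ga$ at the images of $\xbar$ and $\zbar$ arriving from $\m C$; as $\m C\into\m E$ reflects conjunctions of equations, $\m C$ itself satisfies $\ga$ there, i.e.\ $\hat\de\in\cg{\F_\V(\xbar,\zbar)}(\theta_0)$ for each equation $\de$ of $\ga$. Compactness of congruence generation shows each such $\hat\de$ is forced by finitely many pairs from $\theta_0$, and the corresponding finitely many equations (each a consequence of $\f$ in $\V$ since its image lies in $\theta_1$), conjoined over the finitely many $\de$, form a conjunction $\pi(\xbar)$ with $\V\models\f\IMP\pi$; by Lemma~\ref{l:compact-congr-and-consequence-rel} also $\V\models\pi\IMP\ga$, so $\pi$ is the desired interpolant. (Note that this direction does not use the congruence extension property.)

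\emph{Deductive interpolation implies amalgamation.} Given embeddings $\m A\into\m B$, $\m A\into\m C$ in $\V$, we may take $\m A\leq\m B$, $\m A\leq\m C$ with $B\cap C=A$. Using variables $\xbar=\{x_a:a\in A\}$, $\ybar=\{x_b:b\in B\setminus A\}$, $\zbar=\{x_c:c\in C\setminus A\}$, set $\theta_B:=\ker(\F_\V(\xbar,\ybar)\onto\m B)$ and $\theta_C:=\ker(\F_\V(\xbar,\zbar)\onto\m C)$ for the evident surjections; then $\theta_B\cap\Fc_\V(\xbar)^2=\theta_C\cap\Fc_\V(\xbar)^2=\ker(\F_\V(\xbar)\onto\m A)$. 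Let $\m D:=\F_\V(\xbar,\ybar,\zbar)/\theta$ with $\theta$ the congruence generated by $\theta_B\cup\theta_C$; the induced maps $\m B\to\m D$ and $\m C\to\m D$ agree on $\m A$, so it suffices to show they are injective. By symmetry treat $\m B\to\m D$: one must prove $\theta\cap\Fc_\V(\xbar,\ybar)^2=\theta_B$. Applying Lemma~\ref{l:cepequiv} to the extension $\F_\V(\xbar,\ybar)\leq\F_\V(\xbar,\ybar,\zbar)$ reduces this to $\cg{\F_\V(\xbar,\ybar,\zbar)}(\theta_C)\cap\Fc_\V(\xbar,\ybar)^2\subseteq\theta_B$. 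Given $(\hat s,\hat t)$ in this intersection, compactness replaces $\theta_C$ by a finite subset, that is a conjunction of equations $\de_0$ in $\xbar,\zbar$, with $\V\models\de_0\IMP s\eq t$; since $s,t$ involve only finitely many of the $\ybar$, deductive interpolation supplies a conjunction of equations $\pi$ in the common variables (which lie in $\xbar$) such that $\V\models\de_0\IMP\pi$ and $\V\models\pi\IMP s\eq t$. By Lemma~\ref{l:compact-congr-and-consequence-rel} the first forces the image of each equation of $\pi$ into $\theta_C\cap\Fc_\V(\xbar)^2=\theta_B\cap\Fc_\V(\xbar)^2$, whence the second gives $(\hat s,\hat t)\in\theta_B$, completing the argument.

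The step I expect to be the main obstacle is the bookkeeping in the second direction: not any single inequality, but keeping track of which free algebra each congruence, term and ``hat'' belongs to, presenting $\m B$ and $\m C$ so that their presentations restrict to a common presentation of $\m A$, and above all using compactness of congruence generation to cut the possibly non-compact kernels $\theta_B,\theta_C$ down to finite conjunctions of equations before the (finitary) deductive interpolation property can be invoked. The congruence extension property is used exactly once, through Lemma~\ref{l:cepequiv}, to remove the $\ybar$-variables from the key intersection. Since the proposition is a restatement in the present notation of~\cite[Theorem~22]{MMT14}, one could alternatively simply cite that result.
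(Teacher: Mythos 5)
Your proof is correct: the paper itself gives no proof of this proposition but simply cites \cite[Theorem~22]{MMT14}, and your argument is essentially the standard one from that source — free algebras and compactness of congruence generation for both directions, with Lemma~\ref{l:cepequiv} (i.e.\ the congruence extension property) invoked exactly once to split off $\theta_B$ in the pushout. All the delicate points (the retraction showing $\m A\into\m C$ is an embedding, the reduction of the kernels to finite conjunctions before applying the finitary interpolation property, and the observation that amalgamation~$\Rightarrow$~interpolation needs no CEP) are handled correctly.
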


It follows easily that the amalgamation property and equational variable restriction property can be combined into a single ``uniform interpolation'' property.

\begin{corollary}\label{c:amalg+leftvar}
Let $\V$ be a variety that has the congruence extension property. Then the following statements are equivalent:
\begin{enumerate}
\item[\rm (1)] $\V$ has the amalgamation property and the equational variable restriction property.
\item[\rm (2)] For any finite set $\xbar,y$ and conjunction of equations $\ga(\xbar,y)$, there exists a formula $\pi(\xbar)$ that is either $\bot$ or a conjunction of equations such that $\V\models \pi\IMP \ga$ and for any conjunction of equations $\f(\xbar,\zbar)$,
\[
\V\models\f\IMP\ga \:\Longrightarrow\: \V\models \f\IMP \pi.
\]
\end{enumerate}
\end{corollary}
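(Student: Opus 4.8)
The plan is to deduce both implications from Proposition~\ref{p:amalg:dep}, which, using the congruence extension property of $\V$, identifies the amalgamation property with the deductive interpolation property; the content of the corollary is then essentially that (2) packages deductive interpolation and the equational variable restriction property into a single statement. I will repeatedly use the elementary observation that the trivial algebra belongs to $\V$ and satisfies every equation, so that every conjunction of equations is satisfiable in $\V$; in particular, whenever we invoke (2) and already know $\V\models\f\IMP\ga$ for some conjunction of equations $\f$, the resulting formula $\pi$ cannot be $\bot$ (otherwise $\f$ would be unsatisfiable) and is thus itself a conjunction of equations.

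For $(1)\Rightarrow(2)$, I would first use Proposition~\ref{p:amalg:dep} and the congruence extension property to obtain the deductive interpolation property for $\V$. Given a conjunction of equations $\ga(\xbar,y)$, the equational variable restriction property provides a formula $\pi(\xbar)$, either $\bot$ or a conjunction of equations, with $\V\models\pi\IMP\ga$ and $\V\models\f\IMP\pi$ for every conjunction of equations $\f(\xbar)$ satisfying $\V\models\f\IMP\ga$. I claim this same $\pi$ witnesses (2). Indeed, let $\f(\xbar,\zbar)$ be a conjunction of equations with $\V\models\f\IMP\ga$; after renaming, we may assume that the variables of $\zbar$ are disjoint from $\xbar$ and $y$, so that $\xbar$ comprises exactly the variables common to $\f$ and $\ga$. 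Deductive interpolation then yields a conjunction of equations $\iota(\xbar)$ with $\V\models\f\IMP\iota$ and $\V\models\iota\IMP\ga$; applying the property of $\pi$ to $\iota$ gives $\V\models\iota\IMP\pi$, hence $\V\models\f\IMP\pi$, and by the observation above $\pi$ is a conjunction of equations. Since also $\V\models\pi\IMP\ga$, this establishes (2).

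For $(2)\Rightarrow(1)$, the equational variable restriction property is the special case of (2) in which $\f$ has no variables outside $\xbar$. For the amalgamation property, by Proposition~\ref{p:amalg:dep} and the congruence extension property it suffices to prove the deductive interpolation property, so let $\f(\xbar,\ybar)$ and $\ga(\xbar,\zbar)$ be conjunctions of equations with $\V\models\f\IMP\ga$, where without loss of generality $\xbar,\ybar,\zbar$ are pairwise disjoint, and argue by induction on the number of variables in $\zbar$. If $\zbar=\emptyset$, take $\pi:=\ga$. Otherwise write $\zbar=\zbar',z$ and apply (2) to $\ga$ with $\xbar\cup\zbar'$ in the role of $\xbar$ and $z$ in the role of $y$, obtaining a formula $\pi_0(\xbar,\zbar')$ with $\V\models\pi_0\IMP\ga$ and $\V\models\f'\IMP\pi_0$ for every conjunction of equations $\f'$ over $\xbar,\zbar'$ (possibly together with further variables) satisfying $\V\models\f'\IMP\ga$. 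Taking $\f':=\f$ --- whose variables lie in $\xbar\cup\ybar$, which avoids $z$ --- we get $\V\models\f\IMP\pi_0$, and hence, by the observation above, $\pi_0$ is a conjunction of equations. The induction hypothesis applied to $\f(\xbar,\ybar)$ and $\pi_0(\xbar,\zbar')$ now supplies a conjunction of equations $\pi(\xbar)$ with $\V\models\f\IMP\pi$ and $\V\models\pi\IMP\pi_0$, so $\V\models\pi\IMP\ga$. Proposition~\ref{p:amalg:dep} then delivers the amalgamation property.

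The only nonroutine part is the bookkeeping of variables: one must ensure at each use of deductive interpolation and of (2) that the interpolant or output really lives over $\xbar$ alone, which is what the pairwise-disjointness conventions secure, and one must exclude the degenerate value $\bot$, which is handled uniformly by the remark that conjunctions of equations are always satisfiable in $\V$. I do not expect any deeper difficulty.
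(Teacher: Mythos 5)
Your proof is correct and follows essentially the same route as the paper: for $(1)\Rightarrow(2)$ the paper likewise takes the $\pi$ supplied by the equational variable restriction property, obtains a deductive interpolant $\pi'(\xbar)$ from Proposition~\ref{p:amalg:dep}, and chains $\V\models\f\IMP\pi'$ with $\V\models\pi'\IMP\pi$. For $(2)\Rightarrow(1)$ the paper merely states that both properties ``follow directly'' from (2) and Proposition~\ref{p:amalg:dep}; your variable-by-variable induction eliminating $\zbar$ from $\ga$ is exactly the argument being left implicit there, and your bookkeeping (disjointness of variables, excluding $\pi=\bot$ via satisfiability in the trivial algebra) is sound.
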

\begin{proof}
$(1)\Rightarrow(2)$ Suppose that $\V$ has the amalgamation property and the equational variable restriction property, and consider a finite set $\xbar,y$ and conjunction of equations $\ga(\xbar,y)$. By the equational variable restriction property,  there exists a formula $\pi(\xbar)$ that is either $\bot$ or a conjunction of equations such that $\V\models \pi\IMP \ga$ and for any conjunction of equations $\f(\xbar)$,
\[
\V\models\f\IMP\ga \:\Longrightarrow\: \V\models \f\IMP \pi.
\]
Now consider a conjunction of equations $\f(\xbar,\zbar)$ satisfying $\V\models\f\IMP\ga$. Since $\V$ has the congruence extension property and the amalgamation property, by Proposition~\ref{p:amalg:dep}, there exists a conjunction of equations $\pi'(\xbar)$ such that $\V\models\f\IMP\pi'$ and $\V\models\pi'\IMP\ga$. By the above implication, $\V\models\pi'\IMP\pi$ and hence also $\V\models\f\IMP\pi$ as required.

$(2)\Rightarrow(1)$ The amalgamation property and the equational variable restriction property both follow directly from condition (2) and Proposition~\ref{p:amalg:dep}.
\end{proof}

Relationships between uniform interpolation and the existence of model completions have been investigated in some depth by Ghilardi and Zawadowski in the setting of intermediate and modal logics~\cite{GZ97,GZ02}. In particular, Pitts' uniform interpolation theorem for intuitionistic propositional logic~\cite{Pit92} is used to deduce that the variety $\HA$ of Heyting algebras satisfies certain categorical conditions and therefore has a model completion. In the terminology of this section: it follows from Pitts'  theorem that $\HA$ is coherent and satisfies condition (2) of Corollary~\ref{c:amalg+leftvar} and hence has the amalgamation property and the equational variable restriction property; the result then follows by Theorem~\ref{t:GZ-charact} (a slight generalization of~\cite[Theorem~4]{GZ97}), since $\HA$ has the compact intersection property and equationally definable principal congruences. More generally, the theories of precisely eight varieties of Heyting algebras (those that have the amalgamation property) have a model completion~\cite{GZ02}; finite axiomatizations of these model completions for the six locally finite cases are provided in~\cite{DJ2018}.


\section{Parametrically definable principal congruences}\label{s:pdpc}

In this section, we show that if a variety satisfies certain congruence lattice conditions and has a model completion, then it must have equationally definable principal congruences. Our main technical tool is a weaker condition for defining principal congruences that is inspired by work of Glass and Pierce on existentially complete lattice-ordered abelian groups~\cite{GP80}.

We say that a class of algebras $\K$ has \emph{parametrically definable principal congruences} if there exists a quantifier-free formula $\xi(x_1,x_2,y_1,y_2,\zbar)$ such that for each $\m A\in\K$ and all $a_1,a_2,b_1,b_2\in A$,
\[
(a_1,a_2)\in \cg{\m{A}}(b_1,b_2) \iff \text{any } \m B\in\K \text{ extending } \m A \text{ satisfies }  \m B\models \forall \zbar. \xi(a_1,a_2,b_1,b_2,\zbar).
\]
Clearly, if a class of algebras has equationally definable principal congruences, it has parametrically definable principal congruences. However, there are classes of algebras that have parametrically definable principal congruences but do not even have first-order definable principal congruences (see Example~\ref{ex:ablggroupsnomodelcompletion}). The following proposition shows that this cannot be the case for a universal class of algebras whose theory has a model completion.

\begin{proposition}\label{p:splitting-lemma-fo}
Let $\K$ be a universal class of algebras with parametrically definable principal congruences. If the theory of $\K$ has a model completion, then $\K$ has quantifier-free definable principal congruences.
\end{proposition}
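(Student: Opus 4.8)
The plan is to exploit the interplay between the parametric definition of principal congruences and quantifier elimination for the model completion $T^*$ of $T \coloneqq \Th(\K)$. Let $\xi(x_1,x_2,y_1,y_2,\zbar)$ be the quantifier-free formula witnessing parametric definability. First I would observe that the parametric condition, restricted to \emph{existentially closed} models of $T$ (equivalently, models of $T^*$, by Proposition~\ref{p:modcomp}.(b), since $T$ is universal and hence $\forall\exists$), simplifies: if $\m B \models T^*$ and $\m B$ extends $\m A$, then since $\m B$ itself has no proper extension that is again existentially closed but every extension of $\m B$ in $\K$ is elementary over $\m B$ as far as universal-over-$\m B$ statements go, the clause ``any $\m B'\in\K$ extending $\m A$ satisfies $\m B'\models\forall\zbar.\xi$'' should be equivalent, for $\m A$ a subalgebra of a model of $T^*$, to the single model $\m B \models T^*$ satisfying $\m B \models \forall\zbar.\xi(a_1,a_2,b_1,b_2,\zbar)$. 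More carefully: if some extension $\m B'\in\K$ of $\m A$ falsifies $\forall\zbar.\xi$, embed $\m B'$ into a model of $T^*$ (co-theory property), so that model also falsifies $\forall\zbar.\xi$ at the relevant parameters; and then use that all models of $T^*$ satisfy the same formulas with parameters from a common submodel — this is where I would need to invoke that $T^*$ is a model completion, so $T^* \cup \Diag(\m A)$ is complete, forcing $\forall\zbar.\xi(\abar,\bbar,\zbar)$ to have a truth value independent of the chosen model of $T^*$ over $\m A$.

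Granting that, for any $\m A \in \K$ and $a_1,a_2,b_1,b_2 \in A$, we would have: $(a_1,a_2)\in\cg{\m A}(b_1,b_2)$ if and only if for the/any model $\m B$ of $T^*$ extending $\m A$, $\m B \models \forall\zbar.\xi(a_1,a_2,b_1,b_2,\zbar)$. Now apply quantifier elimination for $T^*$: the formula $\forall\zbar.\xi(x_1,x_2,y_1,y_2,\zbar)$ is equivalent over $T^*$ to a quantifier-free formula $\beta(x_1,x_2,y_1,y_2)$, i.e. $T^* \vdash \forall x_1 x_2 y_1 y_2\,[(\forall\zbar.\xi)\BIMP\beta]$. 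Since $\beta$ is quantifier-free and any $\m A\in\K$ embeds into some model $\m B$ of $T^*$ with the embedding preserving and reflecting quantifier-free formulas, for $\m A\in\K$ and parameters in $A$ we get $\m A\models\beta(a_1,a_2,b_1,b_2)$ iff $\m B\models\beta(a_1,a_2,b_1,b_2)$ iff $\m B\models\forall\zbar.\xi(a_1,a_2,b_1,b_2,\zbar)$ iff $(a_1,a_2)\in\cg{\m A}(b_1,b_2)$. Hence $\beta$ witnesses quantifier-free definable principal congruences for $\K$, which is exactly the claim.

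In carrying this out I would proceed in the following order: (1) recall/set up that $T^*$ exists with quantifier elimination and is a co-theory of $T$ (Proposition~\ref{p:modcomp}.(d)), and that $T$ is universal so members of $\K$ are exactly substructures of models of $T$; (2) prove the key reduction lemma that the parametric ``for all extensions $\m B\in\K$'' clause collapses, for $\m A$ sitting inside a model of $T^*$, to the condition on a single model of $T^*$ — using the co-theory property to push counterexamples into models of $T^*$, and using completeness of $T^*\cup\Diag(\m A)$ (the model-completion hypothesis) to get independence of the choice; (3) eliminate quantifiers from $\forall\zbar.\xi$ to obtain quantifier-free $\beta$; (4) assemble the equivalences above, crucially noting that quantifier-free formulas are absolute between $\m A$ and any model of $T^*$ extending it, to conclude $\beta$ defines principal congruences over $\K$.

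The main obstacle I anticipate is step (2): the parametric definition quantifies over \emph{all} extensions in $\K$, not merely models of $T^*$, and one must show this is no stronger, over submodels of models of $T^*$, than evaluating in one model of $T^*$. The ``no weaker'' direction (a counterexample in some $\m B'\in\K$ propagates to a model of $T^*$) is straightforward from the co-theory property plus absoluteness of the universal formula $\forall\zbar.\xi$ downward along embeddings. The ``no stronger'' direction — that if $\forall\zbar.\xi(\abar,\bbar,\zbar)$ holds in one model of $T^*$ over $\m A$ then it holds in every extension of $\m A$ in $\K$ — is the delicate point: here I would take an arbitrary $\m B'\in\K$ extending $\m A$, embed it into a model $\m B''\models T^*$, and then use that $\m B''$ and the original model $\m B$ both model the complete theory $T^*\cup\Diag(\m A)$, so they agree on $\forall\zbar.\xi(\abar,\bbar,\zbar)$; then pull the truth of this universal sentence back down to $\m B'$. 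One should double-check that no issue arises when $\m A$ is not finitely generated (the formula $\forall\zbar.\xi$ has only finitely many free variables, so only finitely many parameters are involved, and this causes no difficulty).
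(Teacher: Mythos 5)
Your argument is correct, and it arrives at essentially the same quantifier-free formula as the paper, but by a somewhat different route. The paper never invokes the diagram-completeness clause of the definition of model completion: it writes $\neg\xi$ as a disjunction $\ps_1\OR\cdots\OR\ps_n$ of conjunctions of literals and applies Proposition~\ref{p:strong-CMEP-iff} (itself proved from quantifier elimination plus the co-theory property) to obtain quantifier-free formulas $\chi_i(x_1,x_2,y_1,y_2)$ with $\K\models\ps_i\IMP\chi_i$ and such that $\m A\models\chi_i(a_1,a_2,b_1,b_2)$ guarantees an extension in $\K$ realizing $\exists\zbar.\ps_i$; the defining formula is then $\bigAND_{1\le i\le n}\neg\chi_i$, and the verification uses exactly the persistence facts you use (existential formulas go up, quantifier-free formulas go both ways). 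Since over $T^*$ your $\beta$ is equivalent to $\forall\zbar.\xi$, which is equivalent to $\bigAND_i\neg\exists\zbar.\ps_i$, the two proofs certify what is in substance the same formula; the difference lies in how the ``for all extensions in $\K$'' quantifier is collapsed to evaluation in $\m A$. You do this via completeness of $T^*$ together with the diagram of $\m A$ (your step (2)), which is legitimate but slightly more than is needed: once $\beta$ is in hand, the collapse follows from absoluteness alone (if one $T^*$-model extending $\m A$ satisfies $\forall\zbar.\xi$, then $\m A\models\beta$, so any extension $\m B'\in\K$ satisfies $\beta$, embeds into a $T^*$-model satisfying $\forall\zbar.\xi$, and this universal formula descends to $\m B'$), so only quantifier elimination and the co-theory property are really used --- which is precisely the content the paper packages in Proposition~\ref{p:strong-CMEP-iff}. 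What your route buys is a direct, self-contained model-theoretic argument with no disjunctive-normal-form bookkeeping; what the paper's route buys is reuse of a proposition established for other purposes and a proof that stays at the level of the properties of $\K$ rather than passing through theories with added constants. One small point to make explicit if you write this up: models of $T^*$ lie in $\K$ (because $T^*$ entails the universal theory of $\K$), which you need both to treat a $T^*$-model extending $\m A$ as one of the extensions quantified over in the parametric definition and to apply Proposition~\ref{p:modcomp}.(d).
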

\begin{proof}
Suppose that $\K$ has parametrically definable principal congruences, witnessed by a quantifier-free formula $\xi(x_1,x_2,y_1,y_2,\zbar)$, and that the theory of $\K$ has a model completion. We can assume that $\neg\xi=\ps_1\OR\cdots\OR \ps_n$, where each $\ps_i$ is a conjunction of literals. By Proposition~\ref{p:strong-CMEP-iff}, for each $i\in\{1,\dots,n\}$, there exists a quantifier-free formula $\chi_i(x_1,x_2,y_1,y_2)$ such that
  \begin{enumerate}
  \item[\rm (i)] $\K\models \ps_i\IMP\chi_i$
  \item[\rm (ii)] for any $\m A\in\K$, and $a_1,a_2,b_1,b_2\in A$ satisfying $\m A\models\chi_i(a_1,a_2,b_1,b_2)$, there exists an extension $\m B\in\K$ of $\m A$ such that $\m B\models \exists\zbar.\ps_i(a_1,a_2,b_1,b_2,\zbar)$. 
 \end{enumerate}
We claim that for each $\m A\in\K$ and all $a_1,a_2,b_1,b_2\in A$,
\begin{equation*}
(a_1,a_2)\in\cg{\m A}(b_1,b_2) \iff \m A\models \bigAND_{1\leq i\leq n}\neg\chi_i(a_1,a_2,b_1,b_2),
\end{equation*}
and hence that $\K$ has quantifier-free definable principal congruences. Suppose first that $(a_1,a_2)\in\cg{\m A}(b_1,b_2)$. By assumption, any $\m B\in\K$ extending $\m A$ satisfies $\m{B}\models\forall \zbar. \xi(a_1,a_2,b_1,b_2,\zbar)$ and hence $\m B\not\models \exists \zbar. \ps_i(a_1,a_2,b_1,b_2,\zbar)$ for each $i\in\{1,\dots,n\}$. But then, by~(ii), also $\m A\models \neg\chi_i(a_1,a_2,b_1,b_2)$ for each $i\in\{1,\dots,n\}$. Now suppose that $(a_1,a_2)\notin \cg{\m A}(b_1,b_2)$. By assumption, there exists an extension $\m B\in\K$ of $\m A$ such that $\m B\not\models\forall \zbar. \xi(a_1,a_2,b_1,b_2,\zbar)$, and hence $\m B\models \exists \zbar. \ps_i(a_1,a_2,b_1,b_2,\zbar)$ for some $i\in\{1,\dots,n\}$. But then, since $\K\models \ps_i\IMP\chi_i$, by (i), also $\m B\models \chi_i(a_1,a_2,b_1,b_2)$. Finally, as $\chi_i$ is quantifier-free, it follows that $\m A\models \chi_i(a_1,a_2,b_1,b_2)$.
\end{proof}

\begin{example}
It is easy to see that the variety of abelian groups does not have first-order definable principal congruences. If this were the case, there would be a first-order formula $\alpha(x,y)$ such that for any abelian group $\m G$, an element $a\in G$ belongs to the  subgroup of $\m G$ generated by an element $b\in G$ if, and only if, $\m G\models \alpha(a,b)$. But then the sentence $\exists y \forall x. \alpha(x,y)$ would define the class of cyclic groups, contradicting the fact that this class is not elementary. On the other hand, the theory of abelian groups does have a model completion (cf.~\cite{ES70}). Proposition~\ref{p:splitting-lemma-fo} therefore tells us that the variety of abelian groups does not have parametrically definable principal congruences.
\end{example}

We strengthen Proposition~\ref{p:splitting-lemma-fo} by exploiting the following fact, due to Fried, Gr{\"a}tzer, and Quackenbush~\cite{FGQ80}. 

\begin{proposition}[{\cite[Theorems~4.5 and~6.9]{FGQ80}}]\label{p:fo-edpc}
Let $\V$ be a congruence distributive variety that has the congruence extension property. $\V$ has first-order definable principal congruences if, and only if, it has equationally definable principal congruences.
\end{proposition}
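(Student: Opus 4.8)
The plan is as follows. The implication from equationally definable to first-order definable principal congruences is trivial, since a conjunction of equations is already a positive primitive formula, so the work is in the converse. Assume $\V$ is congruence distributive, has the congruence extension property, and has first-order definable principal congruences, witnessed by a formula $\alpha(x_1,x_2,y_1,y_2)$; I would start from the standard Mal'cev description of congruence generation, which provides a cumulative family of \emph{principal congruence formulas} $\si_n(x_1,x_2,y_1,y_2)$, each a positive primitive formula $\exists\zbar\,\pi_n$ with $\pi_n$ a conjunction of equations encoding a congruence-generating chain of bounded length built from polynomials in $y_1,y_2$, such that for every $\m A\in\V$ and all $a_1,a_2,b_1,b_2\in A$ one has $(a_1,a_2)\in\cg{\m A}(b_1,b_2)$ iff $\m A\models\si_n(a_1,a_2,b_1,b_2)$ for some $n$, and such that $\V\models\si_n\IMP\si_{n+1}$ and $\V\models\si_n\IMP\alpha$ for all $n$.

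The first real step is to collapse this infinite family to a single formula by compactness. Adjoining fresh constants $c_1,c_2,d_1,d_2$, the theory $\Th(\V)\cup\{\alpha(c_1,c_2,d_1,d_2)\}\cup\{\neg\si_n(c_1,c_2,d_1,d_2)\mid n\in\N\}$ is inconsistent, since a model of it would be an algebra in $\V$ in which a quadruple satisfies $\alpha$, hence lies in the corresponding principal congruence, hence satisfies some $\si_n$. By compactness together with cumulativity, $\V\models\alpha\IMP\si_N$ for some $N$, while $\V\models\si_N\IMP\alpha$ is immediate, so the single positive primitive formula $\si\coloneqq\si_N=\exists\zbar\,\pi(x_1,x_2,y_1,y_2,\zbar)$, with $\pi$ a conjunction of equations, already defines principal congruences throughout $\V$. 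Neither congruence distributivity nor the congruence extension property is used here.

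What remains --- and where I expect the real difficulty to lie --- is to eliminate the existential quantifiers $\zbar$ from $\si$, producing a conjunction of equations $\f(x_1,x_2,y_1,y_2)$ that still defines principal congruences. By the characterization in Proposition~\ref{p:EDPC-reform} it is enough to arrange, for all terms $s_1,s_2,t_1,t_2$ and conjunctions of equations $\pi'$, that $\V\models(\pi'\AND(s_1\eq s_2))\IMP(t_1\eq t_2)$ iff $\V\models\pi'\IMP\f(s_1,s_2,t_1,t_2)$. Evaluating $\si$ in suitable free algebras (whose generators realize the parameters) shows that the witnessing elements $\zbar$ are always images of terms in the ambient variables, so the crux is \emph{uniformity}: choosing such witnessing terms $\bar u(x_1,x_2,y_1,y_2)$ once and for all, independently of $s_1,s_2,t_1,t_2$ and $\pi'$, after which $\f\coloneqq\pi(x_1,x_2,y_1,y_2,\bar u)$ would do. This is precisely the point at which congruence distributivity and the congruence extension property should be brought to bear: the J\'onsson terms furnished by congruence distributivity ought to let one amalgamate the finitely many case distinctions that arise when the bounded chains encoded by $\si$ are unpacked into a single equational scheme, while the congruence extension property should guarantee that witnesses produced in a subalgebra or quotient transfer back faithfully to the original algebra. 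The compactness collapse and the general machinery of congruence generation are comparatively routine; the uniform choice of witnesses is the hard part.
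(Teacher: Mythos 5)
The paper does not prove this proposition; it imports it verbatim from Fried--Gr\"atzer--Quackenbush \cite{FGQ80}, so there is no in-paper argument to compare yours against and your attempt has to stand on its own. It does not: it contains a genuine gap, and you have in effect flagged it yourself.

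The trivial direction and the compactness collapse are fine in outline (with one caveat: because the Mal'cev chains witnessing $(a_1,a_2)\in\cg{\m A}(b_1,b_2)$ involve unboundedly many choices of terms as well as unbounded chain length, the natural family of congruence formulas is not literally a cumulative chain of single positive primitive formulas, and what compactness actually delivers is that $\alpha$ is equivalent over $\V$ to a \emph{finite disjunction} of existentially quantified conjunctions of equations, not to a single one; eliminating that disjunction is itself part of the work). The real problem is your final step. Everything up to that point uses neither congruence distributivity nor the congruence extension property, so the entire content of the theorem --- passing from a finite disjunction of $\exists$-positive-primitive congruence formulas to a single quantifier-free conjunction of equations --- is concentrated in the step you describe with ``should be brought to bear,'' ``ought to let one amalgamate,'' and ``should guarantee.'' No argument is actually given there: you do not exhibit the uniform witnessing terms, you do not explain how J\'onsson terms would produce them, and you do not show how CEP converts witnesses living in a subalgebra or quotient into terms in $x_1,x_2,y_1,y_2$ that work \emph{uniformly} across all instances. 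This is precisely the nontrivial part of \cite[Theorems~4.5 and~6.9]{FGQ80} (which proceeds via the machinery of uniform congruence schemes), and asserting that it is ``the hard part'' is not a substitute for doing it. As written, the proposal is a correct reduction of the proposition to its own essential content, not a proof.
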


Combining this fact with Proposition~\ref{p:splitting-lemma-fo} yields the following result. 

\begin{proposition}\label{p:splitting-lemma-edpc}
Let $\V$ be a congruence distributive variety that has the congruence extension property and parametrically definable principal congruences. If the theory of $\V$ has a model completion, then $\V$ has equationally definable principal congruences.
\end{proposition}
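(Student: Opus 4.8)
The plan is to derive Proposition~\ref{p:splitting-lemma-edpc} by chaining together the two preceding results, once we observe that a variety is in particular a universal class of algebras in the sense of Section~\ref{s:algebraic-properties}. Indeed, any variety $\V$ is closed under isomorphic images and subalgebras, and it is closed under ultraproducts as well, since an ultraproduct of members of $\V$ satisfies the defining equations of $\V$ by \L os's theorem; hence $\V$ is closed under $\cop{I}$, $\cop{S}$, and $\cop{P}_U$, i.e.\ it is a universal class.

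First I would apply Proposition~\ref{p:splitting-lemma-fo} with $\K \coloneqq \V$: since $\V$ has parametrically definable principal congruences and the theory of $\V$ has a model completion, we conclude that $\V$ has quantifier-free definable principal congruences. In particular, taking the witnessing quantifier-free formula as a first-order formula, $\V$ has first-order definable principal congruences. Next, since $\V$ is congruence distributive and has the congruence extension property, Proposition~\ref{p:fo-edpc} applies and tells us that for $\V$ first-order definability of principal congruences is equivalent to equational definability of principal congruences. Combining the two steps yields that $\V$ has equationally definable principal congruences, as required.

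There is no genuine obstacle here: the argument is a direct composition of Propositions~\ref{p:splitting-lemma-fo} and~\ref{p:fo-edpc}, with all the real work already carried out in those results (and, further upstream, in Proposition~\ref{p:strong-CMEP-iff} and the results of Fried, Gr\"atzer, and Quackenbush). The only points worth spelling out explicitly are that a variety qualifies as a universal class, so that Proposition~\ref{p:splitting-lemma-fo} is applicable, and that ``quantifier-free definable principal congruences'' trivially entails ``first-order definable principal congruences'', which is precisely the hypothesis needed to invoke Proposition~\ref{p:fo-edpc} under the standing assumptions of congruence distributivity and the congruence extension property.
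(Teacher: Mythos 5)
Your proposal is correct and matches the paper's own argument: the paper states this proposition precisely as the combination of Proposition~\ref{p:splitting-lemma-fo} (a variety being a universal class) with the Fried--Gr\"atzer--Quackenbush result in Proposition~\ref{p:fo-edpc}, using that quantifier-free definability of principal congruences is a special case of first-order definability. Nothing further is needed.
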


We now provide a sufficient syntactic condition for a variety with the congruence extension property to have parametrically definable principal congruences that is easier to establish in certain cases. We say that a variety $\V$ has a \emph{guarded deduction theorem} if there exist conjunctions of equations $\ga(x_1,x_2,y_1,y_2,\zbar)$ and $\f(x_1,x_2,y_1,y_2,\zbar)$ such that for any finite set $\wbar$ with $\wbar\cap \zbar=\emptyset$, terms $s_1(\wbar),s_2(\wbar),t_1(\wbar),t_2(\wbar)$, and conjunction of equations $\pi(\wbar)$, 
\begin{enumerate}
\item[\rm (i)] $\V \models (\pi \AND (t_1\eq t_2)) \IMP (s_1 \eq s_2) \iff \V \models \pi \IMP \forall\zbar.(\ga\IMP \f)(s_1,s_2,t_1,t_2,\zbar)$
\item[\rm (ii)] $\V \models (\pi \AND \ga(s_1,s_2,t_1,t_2,\zbar)) \IMP \si \iff \V\models \pi \IMP \si$ \,for any equation $\si(\wbar)$.
\end{enumerate}
This property has the following algebraic characterization:

\begin{proposition}\label{p:guarded-algebraically}
The following statements are equivalent for any variety $\V$:
\begin{enumerate}
\item[(1)] $\V$ has a guarded deduction theorem.
\item[(2)] There exist conjunctions of equations $\ga(x_1,x_2,y_1,y_2,\zbar)$ and $\f(x_1,x_2,y_1,y_2,\zbar)$ such that for any finitely generated algebra $\m A\in \V$ and $a_1,a_2,b_1,b_2\in A$, there exists an embedding of $\m A$ into some member of $\V$ satisfying $\exists\zbar.\ga(a_1,a_2,b_1,b_2,\zbar)$, and 
\begin{align*}
(a_1,a_2)\in \cg{\m A}(b_1,b_2) \iff\: & \m B\models \forall \zbar. (\ga\IMP\f)(h(a_1),h(a_2),h(b_1),h(b_2),\zbar)\\
& \text{for any $\m B\in \V$ and homomorphism $h\colon \m A\to\m B$.}
\end{align*}
\end{enumerate}
\end{proposition}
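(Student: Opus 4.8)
The plan is to reformulate both statements in terms of congruences of free and finitely presented algebras via Lemma~\ref{l:compact-congr-and-consequence-rel}, and then to bootstrap from finitely presented to arbitrary finitely generated algebras using a directed colimit together with a compactness argument.

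\emph{Step 1: the finitely presented case.} Every finitely presented $\m{A}\in\V$ has the form $\m{A}_\pi\coloneqq\F_\V(\wbar)/\cg{\F_\V(\wbar)}(\{\hat{\si}\mid\si\text{ an equation of }\pi\})$ for some finite set $\wbar$ and conjunction of equations $\pi(\wbar)$; write $p\colon\F_\V(\wbar)\onto\m{A}_\pi$ for the quotient map. As $p$ is surjective, given $a_1,a_2,b_1,b_2\in A_\pi$ we may fix terms $s_1,s_2,t_1,t_2$ over $\wbar$ with $a_i=p(\hat{s}_i)$ and $b_i=p(\hat{t}_i)$. Using the correspondence between $\Con{\m{A}_\pi}$ and the congruences of $\F_\V(\wbar)$ containing $\ker p$ (see~\cite{BS81}) together with Lemma~\ref{l:compact-congr-and-consequence-rel}, one gets
\[
(a_1,a_2)\in\cg{\m{A}_\pi}(b_1,b_2)\iff\V\models(\pi\AND(t_1\eq t_2))\IMP(s_1\eq s_2),
\]
while the universal property of $\F_\V(\wbar)$ shows that homomorphisms $h\colon\m{A}_\pi\to\m{B}$ into members of $\V$ correspond exactly to tuples $\bbar\in B^{\wbar}$ with $\m{B}\models\pi(\bbar)$, with $h(a_i)=s_i^{\m{B}}(\bbar)$ and $h(b_i)=t_i^{\m{B}}(\bbar)$; hence ``$\m{B}\models\forall\zbar.(\ga\IMP\f)(h(a_1),h(a_2),h(b_1),h(b_2),\zbar)$ for every $\m{B}\in\V$ and $h$'' is precisely $\V\models\pi\IMP\forall\zbar.(\ga\IMP\f)(s_1,s_2,t_1,t_2,\zbar)$. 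Thus the congruence equivalence in~(2) for $\m{A}_\pi$ is exactly clause~(i). For the embedding clause of~(2), clause~(ii) says exactly that $\cg{\F_\V(\wbar,\zbar)}(\{\hat{\si}\mid\si\text{ an equation of }\pi\text{ or of }\ga(s_1,s_2,t_1,t_2,\zbar)\})\cap\Fc_\V(\wbar)^2=\ker p$, so the composite $\F_\V(\wbar)\into\F_\V(\wbar,\zbar)\onto\F_\V(\wbar,\zbar)/\Psi$, where $\Psi$ denotes that congruence, induces an embedding of $\m{A}_\pi$ into the member $\F_\V(\wbar,\zbar)/\Psi$ of $\V$, which by construction satisfies $\ga$ at the images of $a_1,a_2,b_1,b_2$ together with a suitable tuple for $\zbar$. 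Reading these equivalences in both directions yields $(1)\Leftrightarrow(2)$ \emph{when $\m{A}$ is finitely presented}; in particular $(2)\Rightarrow(1)$ follows at once, since verifying~(i) and~(ii) for given $\pi,s_i,t_i$ only requires~(2) for the finitely presented algebra $\m{A}_\pi$.

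\emph{Step 2: from finitely presented to finitely generated.} For the converse, let $\m{A}\in\V$ be finitely generated, say $\m{A}=\F_\V(\wbar)/\Theta$ with $\wbar$ finite, and write $\Theta$ as the directed union of its compact subcongruences $\theta_\lambda$; then $\m{A}$ is the directed colimit of the finitely presented algebras $\m{A}_\lambda\coloneqq\F_\V(\wbar)/\theta_\lambda$, with canonical surjections $q_\lambda\colon\m{A}_\lambda\onto\m{A}$. Fixing $a_1,a_2,b_1,b_2\in A$, representing them by terms over $\wbar$, and letting $a_i^\lambda,b_i^\lambda\in A_\lambda$ be the induced elements (so $q_\lambda(a_i^\lambda)=a_i$ and $q_\lambda(b_i^\lambda)=b_i$), the fact that congruence generation commutes with directed unions gives $(a_1,a_2)\in\cg{\m{A}}(b_1,b_2)$ if and only if $(a_1^\lambda,a_2^\lambda)\in\cg{\m{A}_\lambda}(b_1^\lambda,b_2^\lambda)$ for some $\lambda$ --- the nontrivial ``only if'' using $\cg{\m{A}_\lambda}(b_1^\lambda,b_2^\lambda)\subseteq q_\lambda^{-1}[\cg{\m{A}}(b_1,b_2)]$. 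The ``$\Rightarrow$'' direction of the congruence equivalence in~(2) for $\m{A}$ then follows by invoking Step~1 for a suitable $\m{A}_\lambda$ and composing with $q_\lambda$. For the ``$\Leftarrow$'' direction and the embedding clause one must assemble a \emph{single} extension of $\m{A}$ in $\V$ out of the witnesses that Step~1 provides for each $\m{A}_\lambda$; this is achieved by applying the compactness theorem to $\Th(\V)$ together with the positive (resp.\ full) atomic diagram of $\m{A}$ and the relevant instance of $\ga$ (together with $\neg\f$ in the ``$\Leftarrow$'' case), using that $\V$, being a variety, is exactly the class of models of $\Th(\V)$, and that any finite fragment of the diagram of $\m{A}$, once its finitely many elements are lifted along the colimit maps, already holds in some approximant $\m{A}_\mu$ and is hence preserved into the Step~1 witness for that $\mu$.

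\emph{Main obstacle.} The delicate part is precisely this last step. The witnesses supplied by Step~1 form a family of pairs $(\m{B}_\lambda,h_\lambda\colon\m{A}_\lambda\to\m{B}_\lambda)$ that is not directed, so cannot simply be glued; the compactness-and-diagram argument, together with the bookkeeping of lifts of elements of $\m{A}$ through the maps $q_\lambda$ --- in particular distinguishing atomic diagram sentences, which only survive on passing to larger indices, from negated-atomic ones, which survive in every approximant admitting the relevant lift --- is where the real work lies. Everything else is a transcription between the syntactic conditions and congruences of free algebras via Lemma~\ref{l:compact-congr-and-consequence-rel}.
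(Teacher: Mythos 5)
Your argument is correct, but it is organized quite differently from the paper's. You first dispose of the finitely presented case by translating both clauses of the guarded deduction theorem into statements about congruences of $\F_\V(\wbar)$ and $\F_\V(\wbar,\zbar)$ via Lemma~\ref{l:compact-congr-and-consequence-rel} (this part coincides with the paper's proof of $(2)\Rightarrow(1)$, including the factorization argument showing that the existence of \emph{some} $\ga$-satisfying extension forces the canonical quotient of $\F_\V(\wbar,\zbar)$ to be one), and then bootstrap to arbitrary finitely generated algebras by presenting $\m A$ as a directed colimit of finitely presented quotients $\m A_\lambda$ and gluing the level-$\lambda$ witnesses with a compactness-and-diagram argument. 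The paper never forms this directed system: for $(1)\Rightarrow(2)$ it works directly with a surjection $f\colon\Tm(\wbar)\onto\m A$ and applies the two compactness principles on demand --- compactness of congruence generation to extract a finite $\pi\subseteq\ker f$ in the forward direction, and first-order compactness applied to $\Th(\V)\cup\Diag(\m A)\vdash\forall\zbar.(\ga\IMP\f)$ in the backward direction --- after which clause (i) does the rest; the embedding clause is handled by the quotient of $\F_\V(\wbar,\zbar)$ and clause (ii), exactly as in your Step~1 but with $\ker f$ in place of a compact congruence. The net effect is that the ``delicate gluing'' you single out as the main obstacle simply does not arise in the paper's proof: your route pushes all the compactness into one place at the end, whereas the paper's direct route makes each use of compactness a one-line step. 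Both are sound; yours costs more bookkeeping (non-directed families of witnesses, lifts along the $q_\lambda$), while the paper's is shorter but applies the same underlying tools.

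Two small points. First, in your colimit equivalence the containment $\cg{\m A_\lambda}(b_1^\lambda,b_2^\lambda)\subseteq q_\lambda^{-1}[\cg{\m A}(b_1,b_2)]$ proves the easy ``if'' direction, not the ``only if''; the latter needs $\Theta\jn\cg{\F_\V(\wbar)}(\hat t_1,\hat t_2)=\bigcup_\lambda\bigl(\theta_\lambda\jn\cg{\F_\V(\wbar)}(\hat t_1,\hat t_2)\bigr)$ together with the correspondence theorem. Second, in the ``$\Leftarrow$'' gluing you should either work with the disjunction $\neg\f_1\OR\cdots\OR\neg\f_r$ as a single first-order sentence or pass to a cofinal subfamily on which the same conjunct of $\f$ fails; as written, ``together with $\neg\f$'' leaves this choice implicit. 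Neither issue affects the correctness of the plan.
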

\begin{proof}
 $(1)\Rightarrow(2)$ Let $\ga(x_1,x_2,y_1,y_2,\zbar)$ and $\f(x_1,x_2,y_1,y_2,\zbar)$ be conjunctions of equations satisfying the conditions for a guarded deduction theorem, and fix a finitely generated algebra $\m A\in \V$ and $a_1,a_2,b_1,b_2\in A$. We claim that $\ga$ and $\f$ satisfy the equivalence given in (2). 

Note first that since $\m{A}$ is finitely generated, there exists a surjective homomorphism $f\colon\Tm(\wbar)\onto \m A$ for some finite set $\wbar$ with $\wbar\cap\zbar=\emptyset$. Let $s_1,s_2,t_1,t_2$ be terms in the preimages under $f$ of $a_1,a_2,b_1,b_2$, respectively. Now consider $(a_1,a_2)\in \cg{\m A}(b_1,b_2)$. Since $\m A$ is isomorphic to $\Tm(\wbar)/\ker{f}$, by the correspondence theorem for universal algebra, $(s_1,s_2)\in \ker{f} \jn \cg{\Tm(\wbar)}(t_1,t_2)$. Moreover, since any congruence is the directed union of the compact congruences below it, there exists a finite subset $S\subseteq \ker{f}$ such that $(s_1,s_2)\in \cg{\Tm(\wbar)}(S) \jn \cg{\Tm(\wbar)}(t_1,t_2)$. Let $\pi(\wbar)$ be the conjunction of the equations in $S$. It follows easily from Lemma~\ref{l:compact-congr-and-consequence-rel} that $\V \models (\pi \AND (t_1\eq t_2)) \IMP (s_1 \eq s_2)$. Hence $\V \models \pi \IMP \forall\zbar.(\ga\IMP \f)(s_1,s_2,t_1,t_2,\zbar)$, by condition (i) in the definition of a guarded deduction theorem. But $\m A \models \pi(f(\wbar))$, so for any $\m B\in \V$ and homomorphism $h\colon \m A\to \m B$,
\[
\m B\models \forall \zbar. (\ga\IMP\f)(h(a_1),h(a_2),h(b_1),h(b_2),\zbar).
\] 
Now suppose that $\m B\models \forall \zbar. (\ga\IMP\f)(h(a_1),h(a_2),h(b_1),h(b_2),\zbar)$ for any $\m B\in \V$ and homomorphism $h\colon \m A\to\m B$. Let $\Diag(\m A)$ be the positive diagram of $\m A$, i.e., the set of all atomic sentences in the language extended with names for the elements of $\m A$ that are satisfied in $\m A$. Then 
\[
\Th(\V)\cup\Diag(\m A)\vdash \forall\zbar.(\ga\IMP \f)(a_1,a_2,b_1,b_2,\zbar),
\] 
and hence, by the compactness theorem for first-order logic, there exists a finite subset $\Si\subseteq \Diag(\m A)$ such that $\Th(\V)\cup\Si\vdash \forall\zbar.(\ga\IMP \f)(a_1,a_2,b_1,b_2,\zbar)$. For each member of $\Si$, consider an equation in its preimage under the surjection $\Tm(\wbar)^2\onto \m A^2$. This yields a finite subset of $\ker{f}$, and letting $\pi(\wbar)$ denote the conjunction of the equations in this set, we obtain $\V \models \pi \IMP \forall\zbar.(\ga\IMP \f)(s_1,s_2,t_1,t_2,\zbar)$. By condition (i) in the definition of a guarded deduction theorem, we obtain $\V \models (\pi \AND (t_1\eq t_2)) \IMP (s_1 \eq s_2)$. Now let $q\colon \m A\onto \m A/\cg{\m A}(b_1,b_2)$ be the natural quotient map. Since $\m A/\cg{\m A}(b_1,b_2), qf\models \pi \AND (t_1\eq t_2)$, we have $\m A/\cg{\m A}(b_1,b_2), q f\models s_1 \eq s_2$, i.e., $(a_1,a_2)\in \cg{\m A}(b_1,b_2)$. 

To conclude, it remains to show that $\m A$ embeds into some member of $\V$ satisfying $\exists \zbar.\ga(a_1,a_2,b_1,b_2,\zbar)$. Let $\m B$ be the quotient of $\F_{\V}(\wbar,\zbar)$ with respect to the congruence $\cg{\F_{\V}(\wbar,\zbar)}(\{\hat{\ep}\mid \ep\in \ker{f}\}\cup\{\hat{\si}\mid \si \text{ is an equation of }\ga(s_1,s_2,t_1,t_2,\zbar)\})$. Then $\m B$ satisfies $\exists \zbar.\ga(a_1,a_2,b_1,b_2,\zbar)$ and it is not difficult to see, using condition (ii) in the definition of a guarded deduction theorem, that $\m B$ extends $\m A$.

$(2)\Rightarrow(1)$ Let $\ga(x_1,x_2,y_1,y_2,\zbar)$ and $\f(x_1,x_2,y_1,y_2,\zbar)$ be conjunctions of equations satisfying the conditions in (2). Let $\wbar$ be any finite set satisfying $\wbar\cap\zbar=\emptyset$ and consider terms $s_1(\wbar),s_2(\wbar),t_1(\wbar),t_2(\wbar)$ and a conjunction of equations $\pi(\wbar)$.

Let $\m A$ be the quotient of $\F_{\V}(\wbar)$ with respect to $\cg{\F_{\V}(\wbar)}(\{\hat{\ep}\mid \ep \text{ is an equation of } \pi\})$, with quotient map $f\colon \F_{\V}(\wbar) \onto \m A$. Let $q\colon \Tm(\wbar)\onto \F_{\V}(\wbar)$ denote the natural quotient map and define $a_1\coloneqq f(q(s_1))$, $a_2\coloneqq f(q(s_2))$, $b_1\coloneqq f(q(t_1))$, and $b_2\coloneqq f(q(t_2))$. By Lemma~\ref{l:compact-congr-and-consequence-rel}, $(a_1,a_2)\in \cg{\m A}(b_1,b_2)$ if, and only if, $\V \models (\pi \AND (t_1\eq t_2)) \IMP (s_1 \eq s_2)$.  Hence, in order to settle condition (i) in the definition of a guarded deduction theorem, it remains to show that $\V \models \pi \IMP \forall\zbar.(\ga\IMP \f)(s_1,s_2,t_1,t_2,\zbar)$ if, and only if, for any $\m B\in \V$ and homomorphism $h\colon \m A\to \m B$,
\[
\m B\models \forall \zbar. (\ga\IMP\f)(h(a_1),h(a_2),h(b_1),h(b_2),\zbar).
\]
This follows by reasoning as in the proof of $(1)\Rightarrow (2)$.

With respect to condition (ii) in the definition of a guarded deduction theorem, let $\m B$ denote the quotient of $\F_{\V}(\wbar,\zbar)$ with respect to 
\[
\cg{\F_{\V}(\wbar,\zbar)}(\{\hat{\ep}\mid \ep \text{ is an equation of }\pi(\wbar)\}\cup\{\hat{\si}\mid \si \text{ is an equation of }\ga(s_1,s_2,t_1,t_2,\zbar)\}).
\]
Note that there is a canonical homomorphism $k\colon \m A\to \m B$. By assumption, there is an embedding $j\colon \m A \into \m B'$ with $\m B'\in \V$ and $\m B'\models \exists \zbar.\ga(a_1,a_2,b_1,b_2,\zbar)$. It follows easily that there is a homomorphism $h\colon \m B\to \m B'$ such that $j=h k$. Since $j$ is injective, so is $k$. Hence, by Lemma~\ref{l:compact-congr-and-consequence-rel}, we have that $\V \models (\pi \AND \ga(s_1,s_2,t_1,t_2,\zbar)) \IMP \si'$ implies $\V\models \pi \IMP \si'$ for any equation $\si'(\wbar)$.
\end{proof}

\begin{proposition}\label{p:guarded-entails-parametrically}
Let $\V$ be a variety that has the congruence extension property and a guarded deduction theorem. Then $\V$ has parametrically definable principal congruences.
\end{proposition}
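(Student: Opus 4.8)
The plan is to use the algebraic characterization of a guarded deduction theorem given in Proposition~\ref{p:guarded-algebraically}, together with the congruence extension property, to manufacture the required quantifier-free formula $\xi(x_1,x_2,y_1,y_2,\zbar)$ witnessing parametrically definable principal congruences. Since $\V$ has a guarded deduction theorem, Proposition~\ref{p:guarded-algebraically} gives conjunctions of equations $\ga(x_1,x_2,y_1,y_2,\zbar)$ and $\f(x_1,x_2,y_1,y_2,\zbar)$ such that, for every finitely generated $\m A \in \V$ and $a_1,a_2,b_1,b_2 \in A$, the algebra $\m A$ embeds into some member of $\V$ satisfying $\exists\zbar.\ga(a_1,a_2,b_1,b_2,\zbar)$, and membership $(a_1,a_2) \in \cg{\m A}(b_1,b_2)$ is equivalent to $\m B \models \forall\zbar.(\ga \IMP \f)(h(a_1),h(a_2),h(b_1),h(b_2),\zbar)$ holding for every $\m B \in \V$ and every homomorphism $h\colon \m A \to \m B$. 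The natural candidate is to set $\xi \coloneqq \ga \IMP \f$, which is quantifier-free since $\ga$ and $\f$ are (finite) conjunctions of equations, and to prove that for all $\m A \in \V$ and $a_1,a_2,b_1,b_2 \in A$,
\[
(a_1,a_2) \in \cg{\m A}(b_1,b_2) \iff \text{every } \m B \in \V \text{ extending } \m A \text{ satisfies } \m B \models \forall\zbar.\xi(a_1,a_2,b_1,b_2,\zbar).
\]

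First I would handle the reduction to the finitely generated case. Given an arbitrary $\m A \in \V$ and elements $a_1,a_2,b_1,b_2 \in A$, let $\m A_0$ be the subalgebra of $\m A$ generated by $\{a_1,a_2,b_1,b_2\}$, which is finitely generated and lies in $\V$. The congruence extension property ensures that $\cg{\m A_0}(b_1,b_2)$ is precisely the restriction of $\cg{\m A}(b_1,b_2)$ to $A_0^2$; in particular $(a_1,a_2) \in \cg{\m A}(b_1,b_2)$ if and only if $(a_1,a_2) \in \cg{\m A_0}(b_1,b_2)$. (Concretely: $\cg{\m A}(b_1,b_2)$ restricted to $\m A_0$ is a congruence of $\m A_0$ containing $(b_1,b_2)$, so contains $\cg{\m A_0}(b_1,b_2)$; and since $\V$ has CEP, $\cg{\m A_0}(b_1,b_2)$ extends to a congruence of $\m A$, which must contain $\cg{\m A}(b_1,b_2)$, giving the reverse inclusion on $A_0^2$.) This lets me work throughout with the finitely generated subalgebra $\m A_0$.

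For the forward direction, suppose $(a_1,a_2) \in \cg{\m A}(b_1,b_2)$, hence $(a_1,a_2) \in \cg{\m A_0}(b_1,b_2)$. Any $\m B \in \V$ extending $\m A$ also extends $\m A_0$, so the inclusion $h\colon \m A_0 \into \m B$ is a homomorphism, and Proposition~\ref{p:guarded-algebraically}(2) immediately yields $\m B \models \forall\zbar.(\ga\IMP\f)(a_1,a_2,b_1,b_2,\zbar)$, i.e.\ $\m B \models \forall\zbar.\xi(a_1,a_2,b_1,b_2,\zbar)$. For the converse, suppose $(a_1,a_2) \notin \cg{\m A}(b_1,b_2)$, so $(a_1,a_2) \notin \cg{\m A_0}(b_1,b_2)$. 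By Proposition~\ref{p:guarded-algebraically}(2), there then exist $\m B_1 \in \V$ and a homomorphism $h\colon \m A_0 \to \m B_1$ with $\m B_1 \not\models \forall\zbar.\xi(h(a_1),h(a_2),h(b_1),h(b_2),\zbar)$. This $\m B_1$ is not yet an extension of $\m A$, which is the gap I must close. The main obstacle is precisely this: I need to produce a single $\m B \in \V$ that simultaneously \emph{extends $\m A$} and refutes $\forall\zbar.\xi(a_1,a_2,b_1,b_2,\zbar)$. I would argue that one can take $\m B_1$ to itself be an extension of $\m A_0$, because the failure of $\cg{\m A_0}(b_1,b_2)$ to identify $a_1,a_2$ means the quotient $\m A_0/\cg{\m A_0}(b_1,b_2)$ does not separate them; tracing the proof of Proposition~\ref{p:guarded-algebraically}, the relevant $\m B_1$ arises from a free algebra quotient into which $\m A_0$ embeds (using CEP and the conditions on $\ga$), so we may assume $\m A_0 \subseteq \m B_1$ with $h$ the inclusion. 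Then, applying the amalgamation-free fact that $\V$ has CEP, form the pushout/amalgam: since $\m A_0$ is a common subalgebra of $\m A$ and of $\m B_1$, and $\V$ is closed under the relevant construction — here I would invoke that CEP alone suffices to embed $\m A$ and $\m B_1$ jointly over $\m A_0$ into some $\m B \in \V$ (this is the standard consequence that CEP implies the ``one-sided'' amalgamation needed, or I may simply note that $\xi$ being quantifier-free, a refutation of $\forall\zbar.\xi$ in $\m B_1$ is witnessed by a tuple $\cbar \in B_1^{\zbar}$ with $\m B_1 \models \ga(a_1,a_2,b_1,b_2,\cbar) \AND \neg\f(\dots)$, and pulling this refuting configuration along an embedding of $\m B_1$ over $\m A_0$ into a member of $\V$ extending $\m A$ preserves it). Either way, the resulting $\m B$ extends $\m A$ and satisfies $\m B \not\models \forall\zbar.\xi(a_1,a_2,b_1,b_2,\zbar)$, completing the proof. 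I expect that carefully justifying the existence of this common extension $\m B$ — reconciling ``extends $\m A$'' with ``refutes $\xi$'' — will be the only nontrivial point, and that it follows from CEP by a routine amalgamation-style argument over the finitely generated subalgebra $\m A_0$.
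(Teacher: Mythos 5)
Your choice of $\xi\coloneqq\ga\IMP\f$ and your forward direction (reduce to the subalgebra $\m A_0$ generated by $a_1,a_2,b_1,b_2$ via the congruence extension property, then apply Proposition~\ref{p:guarded-algebraically}(2) to inclusion maps) match the paper. The converse direction, however, has a genuine gap, and it is exactly at the point you flag. You obtain an extension $\m B_1$ of $\m A_0$ refuting $\forall\zbar.\xi$ and then claim that the congruence extension property lets you embed $\m A$ and $\m B_1$ jointly over $\m A_0$ into some member of $\V$. That is false: CEP does not yield any form of amalgamation, one-sided or otherwise. For instance, every variety of Heyting algebras has EDPC and hence CEP, yet all but eight of them fail the amalgamation property; and throughout this paper (e.g.\ Theorem~\ref{t:HamRL-charact}) AP is carried as an independent hypothesis precisely because it does not follow from CEP. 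Your fallback phrasing (``pulling the refuting configuration along an embedding of $\m B_1$ over $\m A_0$ into a member of $\V$ extending $\m A$'') presupposes the same amalgam and so does not repair the argument.

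The paper closes this gap without any amalgamation. For \emph{finitely generated} $\m A$ it does not pass to $\m A_0$ at all in the converse direction: it builds the refuting extension of the whole algebra $\m A$ directly, as the quotient $\m B$ of $\F_\V(\wbar,\zbar)$ by the congruence generated by $\ker f$ (a presentation of $\m A$) together with the equations of $\ga$; injectivity of the canonical map $\m A\to\m B$ comes from the clause in Proposition~\ref{p:guarded-algebraically}(2) that $\m A$ embeds into some member of $\V$ satisfying $\exists\zbar.\ga(a_1,a_2,b_1,b_2,\zbar)$, and the failure of some equation $\f_i$ of $\f$ at the canonical $\zbar$-tuple of $\m B$ is witnessed by mapping $\m B$ into the algebra $\m C$ supplied by the failure of $(a_1,a_2)\in\cg{\m A}(b_1,b_2)$. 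For arbitrary $\m A$ it argues contrapositively by compactness: if every extension of $\m A$ satisfies $\forall\zbar.\xi$, then a finite subset $\Si$ of the diagram of $\m A$ already entails it over $\Th(\V)$, so every extension of the finitely generated subalgebra $\m A'$ generated by $a_1,a_2,b_1,b_2$ and the elements named in $\Si$ satisfies it; the finitely generated case then gives $(a_1,a_2)\in\cg{\m A'}(b_1,b_2)\subseteq\cg{\m A}(b_1,b_2)$. Note that this $\m A'$ is in general strictly larger than your $\m A_0$, so the converse cannot be run ``throughout with $\m A_0$'' as you propose. To fix your write-up you would need to replace the amalgamation step with an argument of this kind.
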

\begin{proof}
It suffices to show that $\V$ has parametrically definable principal congruences whenever the property in condition (2) of Proposition~\ref{p:guarded-algebraically} holds for some $\ga$ and $\f$. Let $\xi(x_1,x_2,y_1,y_2,\zbar)\coloneqq \ga\IMP \f$. We claim that for all $\m A\in\V$ and $a_1,a_2,b_1,b_2\in A$,
\[
(a_1,a_2)\in \cg{\m{A}}(b_1,b_2) \iff \text{each } \m B\in\V \text{ extending } \m A \text{ satisfies }  \m B\models \forall \zbar. \xi(a_1,a_2,b_1,b_2,\zbar).
\]
Suppose first that $(a_1,a_2)\in \cg{\m{A}}(b_1,b_2)$. By the congruence extension property, $(a_1,a_2)\in \cg{\m A'}(b_1,b_2)$, where $\m A'$ is the subalgebra of $\m A$ generated by $a_1,a_2,b_1,b_2$. Since $\m A'$ is a finitely generated member of $\V$, it follows from condition (2) of Proposition~\ref{p:guarded-algebraically} that $\m B\models \forall \zbar. \xi(a_1,a_2,b_1,b_2,\zbar)$ for every $\m B\in \V$ extending $\m A'$, and hence, in particular, for every $\m B\in \V$ extending $\m A$.

Now suppose that $(a_1,a_2)\notin \cg{\m{A}}(b_1,b_2)$. We assume first that $\m A$ is finitely generated, and then deduce the general case. Let $\wbar$ be a finite set such that $\wbar\cap\zbar=\emptyset$ and there exists a surjective homomorphism $f\colon\Tm(\wbar)\onto \m A$. We can assume without loss of generality that $x_1,x_2,y_1,y_2\in \wbar$, and also $f(x_1)=b_1, f(x_2)=b_2, f(y_1)=a_1, f(y_2)=a_2$. Let $\m B\coloneqq \F_{\V}(\wbar,\zbar)/\theta$, where 
\[
\theta\coloneqq\cg{\F_{\V}(\wbar,\zbar)}(\{\hat{\ep}\mid \ep\in \ker{f}\}\cup\{\hat{\si}\mid \si \text{ is an equation of } \ga\}).
\]
We claim that $\m B$ extends $\m A$. Let $j\colon \m A\to \m B$ be the canonical homomorphism  mapping $a_1,a_2,b_1,b_2$ to the equivalence classes of $x_1,x_2,y_1,y_2$, respectively. By assumption, there exist $\m B'\in \V$ and an embedding $i\colon \m A\to \m B'$ such that $\m B'\models \exists\zbar.\ga(a_1,a_2,b_1,b_2,\zbar)$. It is not difficult to see that $i$ factors through $j$, and since $i$ is injective, so is $j$.

Now, by assumption, there exist $\m C\in \V$ and a homomorphism $h\colon \m A\to \m C$ satisfying $\m C\models \exists \zbar. (\ga \AND \neg \f_i)(h(a_1),h(a_2),h(b_1),h(b_2),\zbar)$ for some equation $\f_i$ of $\f$. It follows that there exists a homomorphism $k\colon \Tm(\wbar,\zbar)\to \m C$ which factors through the composition $g\colon \Tm(\wbar,\zbar)\onto \F_{\V}(\wbar,\zbar)\onto \m B$ and satisfies $\f_i\notin \ker{k}$, as depicted in the diagram:
\[
\begin{tikzcd}
\Tm(\wbar) \arrow[hookrightarrow]{r} \arrow[twoheadrightarrow]{d}[swap]{f} & \Tm(\wbar,\zbar) \arrow[twoheadrightarrow]{d}{g} \arrow[bend left=20]{ddr}{k} & {} \\
{\m A} \arrow[bend right=20]{drr}{h} \arrow[hookrightarrow]{r}{j} & {\m B} \arrow[dashed]{dr} & {} \\
{} & & {\m C}
\end{tikzcd}
\]
So $\f_i\notin \ker{g}$, showing that $\m B$ is an extension of $\m A$ satisfying $\m B\not\models \forall \zbar. \xi(a_1,a_2,b_1,b_2,\zbar)$.

To conclude, consider any $\m A\in\V$ and suppose that $\m B\models \forall \zbar. \xi(a_1,a_2,b_1,b_2,\zbar)$ for all $\m B\in \V$ extending $\m A$. Then $\Th(\V)\cup\mathfrak{D}(\m A)\vdash \forall \zbar. \xi(a_1,a_2,b_1,b_2,\zbar)$, where $\mathfrak{D}(\m A)$ is the diagram of $\m A$, i.e., the collection of all atomic sentences and negated atomic sentences in the language extended with names for the elements of $\m A$ that are satisfied in $\m A$. By the compactness theorem for first-order logic, there is a finite subset $\Si\subseteq \mathfrak{D}(\m A)$ such that $\Th(\V)\cup\Si\vdash \forall \zbar. \xi(a_1,a_2,b_1,b_2,\zbar)$. Let $\m A'$ be the subalgebra of $\m A$ generated by $a_1,a_2,b_1,b_2$ and the elements named by $\Si$. As the diagram of $\m A'$ contains $\Si$, every $\m B'\in \V$ extending $\m A'$ satisfies $\m B'\models \forall \zbar. \xi(a_1,a_2,b_1,b_2,\zbar)$. Note that $\m A'$ is a finitely generated member of $\V$ and so, by the argument above, $(a_1,a_2)\in \cg{\m A'}(b_1,b_2)\subseteq \cg{\m A}(b_1,b_2)$.
\end{proof}

Combining Propositions~\ref{p:splitting-lemma-edpc} and~\ref{p:guarded-entails-parametrically} yields the main result of this section.

\begin{theorem}\label{t:mc-implies-EDPC-guarded}
Let $\V$ be a congruence distributive variety that has the congruence extension property and a guarded deduction theorem. If the theory of $\V$ has a model completion, then $\V$ has equationally definable principal congruences.
\end{theorem}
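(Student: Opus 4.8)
The plan is to simply chain Propositions~\ref{p:guarded-entails-parametrically} and~\ref{p:splitting-lemma-edpc}, which between them already package all the work. First I would observe that the hypotheses of Proposition~\ref{p:guarded-entails-parametrically} are met: by assumption $\V$ has the congruence extension property and a guarded deduction theorem. Hence $\V$ has parametrically definable principal congruences, witnessed by a quantifier-free formula of the form $\xi(x_1,x_2,y_1,y_2,\zbar) \coloneqq (\ga \IMP \f)$, where $\ga$ and $\f$ are the conjunctions of equations supplied by the guarded deduction theorem.

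Next I would check that this puts us exactly in the situation of Proposition~\ref{p:splitting-lemma-edpc}: $\V$ is congruence distributive and has the congruence extension property (both among the theorem's hypotheses), and now also has parametrically definable principal congruences; moreover the theory of $\V$ has a model completion by hypothesis. Applying Proposition~\ref{p:splitting-lemma-edpc} then yields directly that $\V$ has equationally definable principal congruences, which is the desired conclusion. Since Proposition~\ref{p:splitting-lemma-edpc} requires precisely congruence distributivity and CEP — both assumed — and Proposition~\ref{p:guarded-entails-parametrically} requires only CEP and the guarded deduction theorem, the two fit together with no additional hypothesis, so there is no gap to close at this final step.

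Accordingly, I do not expect any obstacle in the theorem's own proof: all the difficulty has been pushed into the proofs of the two cited propositions (resting in turn on Propositions~\ref{p:splitting-lemma-fo}, \ref{p:fo-edpc}, and \ref{p:guarded-algebraically}, and on the characterization of model completions via the strong form of the conservative model extension property in Proposition~\ref{p:strong-CMEP-iff}). If I were expanding rather than citing, the hardest ingredient would be Proposition~\ref{p:splitting-lemma-fo}: showing that a model completion upgrades parametrically definable principal congruences to \emph{quantifier-free} definable principal congruences, using the strong conservative model extension property to eliminate the parameters $\zbar$, and then invoking the Fried--Gr\"atzer--Quackenbush result (Proposition~\ref{p:fo-edpc}) to pass from first-order to equationally definable principal congruences in a congruence distributive variety with CEP.
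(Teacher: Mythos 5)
Your proposal is correct and matches the paper exactly: the paper's proof of this theorem is precisely the one-line combination of Propositions~\ref{p:guarded-entails-parametrically} and~\ref{p:splitting-lemma-edpc}, with all the work deferred to those results just as you describe. Your accounting of which hypotheses feed into which proposition, and of where the real difficulty lies (Proposition~\ref{p:splitting-lemma-fo} plus the Fried--Gr\"atzer--Quackenbush result), is also accurate.
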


\begin{example}\label{ex:ablggroupsnomodelcompletion}
As first proved by Glass and Pierce in~\cite{GP80}, the theory of the variety $\LA$ of lattice-ordered abelian groups does not have a model completion. It is well-known that $\LA$ is congruence distributive and has the congruence extension property, but does not have equationally (or even first-order) definable principal congruences. Hence it suffices, by Theorem~\ref{t:mc-implies-EDPC-guarded}, to observe that the formulas
\[
\ga\coloneqq (x_1 - x_2) \mt(x_2 - x_1) \mt 0 \leq z \AND ((y_1-y_2)\mt (y_2-y_1) \mt 0)\jn z \eq 0\:\text{ and }\:\f\coloneqq z\eq 0
\]
satisfy conditions (i) and (ii) in the definition of a guarded deduction theorem for $\LA$.
\end{example}


\section{Varieties of pointed residuated lattices}\label{s:prls}

In this section, we apply the results of the last two sections to a family of varieties of algebras that provide algebraic semantics for substructural logics, including (up to term-equivalence) lattice-ordered groups, MV-algebras, and Heyting algebras (see,~e.g.,~\cite{BT03,GJKO07,MPT10}).

A {\em pointed residuated lattice} is an algebra $\m{A} = \lan A,\mt,\jn,\cdot,\ld,\rd,\e, \zr\ran$ such that $\lan A,\mt,\jn \ran$ is a lattice, $\lan A,\cdot,\e \ran$ is a monoid, and $\ld,\rd$ are left and right residuals, respectively, of $\cdot$ in the underlying lattice order, i.e., for all $a,b,c\in A$,
\[
b \le a \ld c \iff  a \cdot b \le c  \iff a \le c \rd b.
\]
It will be useful to define a further binary operation $x\eqv y\coloneqq (x\ld y)\mt (y\ld x)\mt \e$, noting that $(a\eqv b) = \e$ for $a,b\in A$ if, and only if, $a=b$. We also define for any $a\in A$ inductively $a^0\coloneqq \e$ and $a^{n+1}\coloneqq a\cdot a^n$  ($n\in\mathbb{N}$).

Pointed residuated lattices form a congruence distributive variety~\cite{BT03}, and include the subvariety $\CPRL$ of commutative pointed residuated lattices satisfying $x \cdot y \eq y \cdot x$. In particular, a Heyting algebra is term-equivalent to a commutative pointed residuated lattice satisfying $x\cdot y\eq x\mt y$ and $x\mt\zr\eq\zr$, and a Boolean algebra is  term-equivalent to a Heyting algebra satisfying $(x \ld \zr)\ld\zr \eq x$.

Let $\V$ be a variety of pointed residuated lattices. We let $\Vtot$ denote the class of linearly ordered members of $\V$, and call $\V$ {\em semilinear} if $\V = \cop{ISP}(\Vtot)$. Semilinearity can also be expressed equationally; in particular, a variety of commutative pointed residuated lattices is semilinear if, and only if, it satisfies $\e \eq ((x \ld y) \mt \e) \jn ((y \ld x) \mt \e)$~\cite{BT03}.

\begin{example}
The variety $\LA$ of lattice-ordered abelian groups is term-equivalent to the variety of commutative pointed residuated lattices satisfying $\e\eq\zr$ and $x\cdot(x\ld\e)\eq~\e$. More precisely, if $\lan L,\mt,\jn,+,-,0 \ran\in\LA$ and we define $x\cdot y\coloneqq x+y$, $x\ld y\coloneqq y-x$, and $x\rd y\coloneqq x-y$, then $\lan L,\mt,\jn,\cdot,\ld,\rd,0,0\ran\in\CPRL$ satisfies $\e\eq\zr$ and $x\cdot(x\ld\e)\eq\e$. Conversely, if $\m{L}\in\CPRL$ satisfies $\e\eq\zr$ and $x\cdot(x\ld\e)\eq\e$ and we define $x+y\coloneqq x\cdot y$ and $-x\coloneqq x\ld\e$, then $\lan L,\mt,\jn,+,-,\zr\ran\in\LA$. Recall also that $\LA = \cop{ISP}(\OG)$, so the corresponding variety of pointed residuated lattices is semilinear.
\end{example}

\begin{example}
The variety $\MV$ of MV-algebras is term-equivalent to the variety of commutative pointed residuated lattices satisfying $x\jn y\eq(x\ld y)\ld y$ and $x\mt\zr\eq\zr$. More precisely, if $\lan M,\oplus,\lnot,0\ran\in\MV$ and we define $x \cdot y \coloneqq \lnot(\lnot x\oplus\lnot y)$, $x \ld y \coloneqq  \lnot x \oplus y$, ${x \rd y} \coloneqq x\oplus\lnot y$, and $\e\coloneqq \lnot 0$, then $\lan M,\mt,\jn,\cdot,\ld,\rd,\e,\zr\ran\in\CPRL$ satisfies $x\jn y\eq(x\ld y)\ld y$ and $x\mt\zr\eq\zr$. Conversely, if $\m{M}\in\CPRL$ satisfies $x\jn y\eq(x\ld y)\ld y$ and $x\mt\zr\eq\zr$ and we define $x\oplus y\coloneqq (x\ld\zr)\ld y$ and $\lnot x\coloneqq x\ld\zr$, then $\lan M,\oplus,\lnot,\zr\ran\in\MV$. Again, since $\MV=\cop{ISP}(\MVC)$, the corresponding variety of pointed residuated lattices is semilinear.
\end{example}

Let us call a variety $\V$ of pointed residuated lattices \emph{Hamiltonian}\footnote{An algebra $\m{A}$ is usually called {\em Hamiltonian} if every non-empty subuniverse of $\m{A}$ is an equivalence class of some congruence of $\m{A}$. In~\cite{BKLT16}, it is shown that a variety of pointed residuated lattices satisfying $x\ld\e\eq\e\rd x$ consists of Hamiltonian algebras in this sense if, and only if, it has the property given in our definition.} if for some $k\in\N^{>0}$,
\[
\V\models(x\mt \e)^k \cdot y\eq y\cdot (x\mt \e)^k. 
\]
The following proposition collects some useful facts about such varieties.

\begin{proposition}[{\cite[Lemma~3.14,~Proposition~3.15,~and~Lemma~3.17]{Gal03}}]
\label{p:hamiltonian-properties}
Let $\V$ be a Hamiltonian variety of pointed residuated lattices. 
\begin{enumerate}
\item[\rm (a)] For any $\m{A}\in\V$ and $a_1,a_2,b_1,b_2\in A$,
\[
(a_1,a_2)\in \cg{\m A}(b_1,b_2) \iff  (b_1\eqv b_2)^n\leq a_1\eqv a_2 \text{ for some }n\in\N.
\]
\item[\rm (b)] $\V$ has equationally definable principal congruences if, and only if, for some $n\in\N$,
\[
\V\models(x\mt\e)^n\eq (x\mt\e)^{n+1}.
\]
\item[\rm (c)] $\V$ has the congruence extension property.
\end{enumerate}
\end{proposition}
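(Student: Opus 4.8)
The plan is to reason throughout via the standard correspondence between congruences of a pointed residuated lattice and the \emph{deductive filters} of its residuated-lattice reduct (see, e.g.,~\cite{GJKO07}): for $\m{A}\in\V$, the subsets $F\subseteq A$ that contain $\e$, are upward closed, and are closed under $\cdot$ and under the conjugation maps $\lambda_b(a)\coloneqq(b\ld(b\cdot a))\mt\e$ and $\rho_b(a)\coloneqq((a\cdot b)\rd b)\mt\e$ form, under inclusion, a lattice isomorphic to $\Con{\m{A}}$ via $F\mapsto\theta_F\coloneqq\{(a_1,a_2)\in A^2\mid a_1\eqv a_2\in F\}$, with $\cg{\m{A}}(b_1,b_2)=\theta_F$ for $F$ the deductive filter generated by $b_1\eqv b_2$. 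Such filters are closed under binary meets, hence contain $a\mt\e$ whenever they contain $a$, so each is generated by its intersection with the negative cone $\{a\in A\mid a\le\e\}$. The observation that drives everything is that the Hamiltonian equation $(x\mt\e)^k\cdot y\eq y\cdot(x\mt\e)^k$ makes $c^k$ central for every $c\le\e$, and since $\lambda_b,\rho_b$ are order-preserving and satisfy $\lambda_b(d),\rho_b(d)\ge d$ for every central $d\le\e$, every iterated conjugate of such a $c$ lies above $c^k$.

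For (a), I would first show that for $\m{A}\in\V$ and $c\le\e$, the deductive filter generated by $c$ is $F_c\coloneqq\{a\in A\mid c^n\le a\text{ for some }n\in\N\}$. That $F_c$ is upward closed, contains $\e=c^0$, and is closed under $\cdot$ is routine; closure under the maps $\lambda_b,\rho_b$ is where the Hamiltonian hypothesis enters: if $a\ge c^j$ and $km\ge j$, then $c^{km}\le c^j\le a$ and $c^{km}$ is central, so $\lambda_b(a)\ge\lambda_b(c^{km})\ge c^{km}$, and likewise for $\rho_b$. Since $c\in F_c$ and every deductive filter containing $c$ contains all $c^n$, this is the generated filter. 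Taking $c\coloneqq b_1\eqv b_2$ and noting that $b_1\eqv b_2$ and $a_1\eqv a_2$ always lie in the negative cone then yields (a).

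For (c), given $\m{A}\in\V$, a subalgebra $\m{B}$ of $\m{A}$, and a congruence of $\m{B}$ corresponding to a deductive filter $F$ of $\m{B}$, let $\bar F$ be the deductive filter of $\m{A}$ generated by $F$. Since $F$ is generated by its negative part $G$, every element of $\bar F$ lies above a finite product of iterated $\m{A}$-conjugates of elements of $G$; by the observation above each such conjugate of $c\in G$ lies above $c^k\in F$, and as $(a\mt\e)(b\mt\e)\le a\mt b$ in any residuated lattice, every element of $\bar F$ lies above an element of $F$. Hence $\bar F\cap B=F$, so the restriction of $\theta_{\bar F}$ to $B$ is $\theta_F$; this is the congruence extension property. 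The left-to-right direction of (b) is then immediate from (a): if $\V\models(x\mt\e)^n\eq(x\mt\e)^{n+1}$, every descending chain $\e\ge c\ge c^2\ge\cdots$ with $c\le\e$ stabilizes at $c^n$, so by (a) the relation $(a_1,a_2)\in\cg{\m{A}}(b_1,b_2)$ is defined by the single equation $\epsilon_n\coloneqq\bigl((y_1\eqv y_2)^n\mt(x_1\eqv x_2)\bigr)\eq(y_1\eqv y_2)^n$, giving equationally definable principal congruences.

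The deepest step is the converse of (b). Suppose $\V$ has equationally definable principal congruences, witnessed by a conjunction of equations $\gamma(x_1,x_2,y_1,y_2)$, and, for $n\in\N$, let $\epsilon_n$ be the equation displayed above, which by (a) defines in each $\m{A}\in\V$ the relation $\{(a_1,a_2,b_1,b_2)\mid(b_1\eqv b_2)^n\le a_1\eqv a_2\}$; these relations increase with $n$, and their union is the relation defined by $\gamma$. In particular $\V$ satisfies the implication from $\gamma$ to the infinite disjunction $\epsilon_0\OR\epsilon_1\OR\cdots$ (in every model, every tuple satisfying $\gamma$ satisfies some $\epsilon_n$); a routine ultraproduct argument --- using a model of $\gamma\AND\neg\epsilon_N$ for each $N$, hence of $\gamma\AND\neg\epsilon_m$ for all $m\le N$, together with closure of $\V$ under ultraproducts --- then gives some $N$ with $\V\models\gamma\IMP\epsilon_N$, while $\V\models\epsilon_N\IMP\gamma$ holds by (a); thus $\epsilon_N$ by itself defines principal congruences. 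Instantiating this in $\F_\V(x)$ with $b_1\coloneqq x\mt\e$, $b_2\coloneqq\e$ and $a_1\coloneqq(x\mt\e)^{N+1}$, $a_2\coloneqq\e$, part (a) shows $((x\mt\e)^{N+1},\e)\in\cg{\F_\V(x)}(x\mt\e,\e)$ (take $n=N+1$), so the $\epsilon_N$-characterization forces $(x\mt\e)^N\le(x\mt\e)^{N+1}$ in $\F_\V(x)$; since the reverse inequality is automatic and both sides are terms in the one variable $x$, this gives $\V\models(x\mt\e)^N\eq(x\mt\e)^{N+1}$. The main obstacle is exactly this compression of the a priori unbounded condition of (a) into a single equation; parts (a) and (c) are, by contrast, direct computations with deductive filters once the Hamiltonian centrality is in hand. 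As an alternative to the ultraproduct step, one can extract a uniform bound on the exponent directly from $\gamma$ via Lemma~\ref{l:compact-congr-and-consequence-rel} and Proposition~\ref{p:EDPC-reform}.
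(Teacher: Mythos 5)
The paper offers no proof of this proposition: it is imported wholesale from Galatos's thesis \cite{Gal03}, so there is nothing internal to compare against. Your argument is, in substance, the standard one from that source: congruences of a pointed residuated lattice correspond to deductive filters; the Hamiltonian identity makes $c^k$ central for every $c\le\e$, so every iterated conjugate of $c$ lies above $c^k$; hence the deductive filter generated by $c\le\e$ collapses to the up-set of the powers of $c$, which gives (a), and the same bound shows that the filter of $\m A$ generated by a filter $F$ of a subalgebra meets the subalgebra in $F$, which gives (c). For (b), the reduction of the a priori unbounded exponent in (a) to a single $N$ via an ultraproduct (or, as you note, via the finitely presented algebra $\F_\V(x_1,x_2,y_1,y_2)/\theta_\ga$ and a homomorphism argument) is correct, and the instantiation $b_1=x\mt\e$, $b_2=\e$, $a_1=(x\mt\e)^{N+1}$, $a_2=\e$ in $\F_\V(x)$, using $(x\mt\e)\eqv\e=x\mt\e$ and $(x\mt\e)^{N+1}\eqv\e=(x\mt\e)^{N+1}$, does yield $\V\models(x\mt\e)^N\eq(x\mt\e)^{N+1}$.

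One slip needs correcting: your conjugation maps are written with the arguments in the wrong order. As stated, $\lambda_b(a)=(b\ld(b\cdot a))\mt\e$ and $\rho_b(a)=((a\cdot b)\rd b)\mt\e$ satisfy $\lambda_b(a)\ge a\mt\e$ and $\rho_b(a)\ge a\mt\e$ for trivial reasons (from $b\cdot a\le b\cdot a$ and $a\cdot b\le a\cdot b$), so closure under them is vacuous on the negative cone and these sets would not correspond to congruences. The correct conjugates are $\lambda_b(a)=(b\ld(a\cdot b))\mt\e$ and $\rho_b(a)=((b\cdot a)\rd b)\mt\e$; with these, your key inequality $\lambda_b(d)\ge d$ for \emph{central} $d\le\e$ is exactly where the Hamiltonian hypothesis enters, and the rest of your reasoning (which clearly presupposes the correct maps) goes through unchanged. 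You also invoke, without proof, the description of the deductive filter generated by a set of negative elements as the up-set of finite products of iterated conjugates; that is standard (see \cite{GJKO07}) but should be cited as an ingredient rather than treated as an observation.
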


We show now that every Hamiltonian variety of pointed residuated lattices has both the compact intersection property and a guarded deduction theorem, which, combined with Theorems~\ref{t:GZ-charact} and~\ref{t:mc-implies-EDPC-guarded}, will allow us to determine which of these varieties have a theory that has a model completion. 

\begin{lemma}
Let $\V$ be a Hamiltonian variety of pointed residuated lattices. Then $\V$ has the compact intersection property.
\end{lemma}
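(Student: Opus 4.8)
The plan is to work directly with the description of principal congruences in Proposition~\ref{p:hamiltonian-properties}.(a), together with the fact that in a pointed residuated lattice every compact congruence is a finite join of principal congruences, and every principal congruence $\cg{\m A}(b_1,b_2)$ equals $\cg{\m A}(b_1\eqv b_2,\e)$. Using the latter, it suffices to understand joins and intersections of congruences of the form $\cg{\m A}(a,\e)$ for $a \le \e$ (replacing $a$ by $a\mt\e$ if necessary). First I would record the standard facts that $\cg{\m A}(a,\e)\jn\cg{\m A}(b,\e)=\cg{\m A}(a\mt b,\e)$ for $a,b\le\e$ (so the join-semilattice of compact congruences is generated under finite joins by principal ones and closed in the expected way), and that $\cg{\m A}(a,\e)\subseteq\cg{\m A}(b,\e)$ iff $b^n\le a$ for some $n\in\N$, both of which follow from Proposition~\ref{p:hamiltonian-properties}.(a).

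The heart of the argument is then to show that the intersection of two principal congruences is again compact, since compactness of the intersection of two compacts follows by a routine distributivity/induction argument once the principal case is settled (using that $\V$ is congruence distributive). So fix $a,b\le\e$ in $\m A\in\V$; I claim $\cg{\m A}(a,\e)\cap\cg{\m A}(b,\e)=\cg{\m A}(a\jn b,\e)$. The inclusion $\supseteq$ is immediate from monotonicity of $\cg{\m A}(\,\cdot\,,\e)$ on the negative cone (since $a\jn b\ge a$ and $a\jn b\ge b$, hence $a\jn b\in$ each equivalence class $[\e]$... more precisely $(a\jn b,\e)$ lies in both congruences because $(a,\e)$ does and $a\le a\jn b\le\e$). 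For $\subseteq$, suppose $(c,d)\in\cg{\m A}(a,\e)\cap\cg{\m A}(b,\e)$; by Proposition~\ref{p:hamiltonian-properties}.(a) there are $m,n\in\N$ with $a^m\le c\eqv d$ and $b^n\le c\eqv d$, so $a^m\jn b^n\le c\eqv d$, and it remains to bound $(a\jn b)^k$ below $a^m\jn b^n$ for suitable $k$. Here is where I expect the main obstacle: one needs, in the negative cone of a (not necessarily commutative, not necessarily integral in the monoid sense but Hamiltonian) pointed residuated lattice, an inequality of the shape $(a\jn b)^{m+n-1}\le a^m\jn b^n$ for $a,b\le\e$. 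This is the residuated-lattice analogue of the fact that expanding a product of $m+n-1$ factors each equal to $a$ or $b$ forces at least $m$ copies of $a$ or at least $n$ copies of $b$; the Hamiltonian hypothesis $(x\mt\e)^k\cdot y\eq y\cdot(x\mt\e)^k$ is exactly what lets one commute the cone elements past each other to collect like factors, at the cost of replacing each $a,b$ by $a^k,b^k$ and adjusting the exponent. So the precise bound will look like $(a\jn b)^{k(m+n-1)}\le a^{km}\jn b^{kn}\le a^m\jn b^n$, using $a,b\le\e$ for the last step.

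Assembling these pieces: a general compact congruence is $\bigvee_{i=1}^r\cg{\m A}(a_i,\e)=\cg{\m A}(a_1\mt\dots\mt a_r,\e)$ with all $a_i\le\e$, so in fact \emph{every} compact congruence of $\m A$ is principal of the form $\cg{\m A}(a,\e)$; then the intersection of two compacts is $\cg{\m A}(a,\e)\cap\cg{\m A}(b,\e)=\cg{\m A}(a\jn b,\e)$, visibly compact. I would present the proof in this order: (1) reduce to principal congruences on the negative cone and note every compact congruence is of the form $\cg{\m A}(a,\e)$; (2) prove the product inequality $(a\jn b)^{k(m+n-1)}\le a^m\jn b^n$ using the Hamiltonian identity to rearrange factors; (3) conclude $\cg{\m A}(a,\e)\cap\cg{\m A}(b,\e)=\cg{\m A}(a\jn b,\e)$ via Proposition~\ref{p:hamiltonian-properties}.(a); (4) extend to arbitrary pairs of compact congruences using step~(1). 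The only genuinely delicate point is step~(2); everything else is bookkeeping with the congruence description already granted by Proposition~\ref{p:hamiltonian-properties}.
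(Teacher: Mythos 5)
Your proposal is correct and follows essentially the same route as the paper: reduce to intersections of principal congruences (the paper via congruence distributivity, you via the observation that every compact congruence is already of the form $\cg{\m A}(a,\e)$ with $a\le\e$), and then show $\cg{\m A}(a,\e)\cap\cg{\m A}(b,\e)=\cg{\m A}(a\jn b,\e)$ using the description of principal congruences in Proposition~\ref{p:hamiltonian-properties}.(a). One correction to your step~(2), which you flag as the main obstacle: the inequality $(a\jn b)^{m+n-1}\le a^m\jn b^n$ for $a,b\le\e$ needs no commutation at all --- expand $(a\jn b)^{m+n-1}$ by distributivity of $\cdot$ over $\jn$ into a join of words in $a,b$; each word contains at least $m$ letters $a$ or at least $n$ letters $b$, and deleting the remaining letters (using $a,b\le\e$ and monotonicity of $\cdot$) bounds it by $a^m$ or $b^n$. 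The Hamiltonian identity would not straightforwardly help here anyway, since it only commutes $k$-th powers of negative elements and arbitrary words in the expansion need not decompose into such blocks; the Hamiltonian hypothesis enters the lemma solely through Proposition~\ref{p:hamiltonian-properties}.(a).
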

\begin{proof}
Every compact congruence of an algebra $\m{A}\in\V$ is a finite join of principal congruences of $\m{A}$ and hence, by congruence distributivity, the intersection of any two compact congruences of $\m{A}$ is a finite join of intersections of principal congruences of $\m{A}$. It therefore suffices to show that for all $b_1,b_2,c_1,c_2\in A$,
\[
\cg{\m A}(b_1,b_2)\cap\cg{\m A}(c_1,c_2)=\cg{\m A}(\e,(b_1\eqv b_2)\jn (c_1\eqv c_2)).
\]
Suppose first that $(a_1,a_2)\in\cg{\m A}(b_1,b_2)\cap\cg{\m A}(c_1,c_2)$. Then Proposition~\ref{p:hamiltonian-properties}.(a) yields $(b_1\eqv b_2)^{n_1}\leq a_1\eqv a_2$ and $(c_1\eqv c_2)^{n_2}\leq a_1\eqv a_2$ for some $n_1,n_2\in\N$. Let $n\coloneqq \max{(n_1,n_2)}$.  Then $(b_1\eqv b_2)^{n} \jn (c_1\eqv c_2)^{n}\leq a_1\eqv a_2$ and, using some elementary properties of pointed residuated lattices, also $((b_1\eqv b_2)\jn (c_1\eqv c_2))^{2n}\leq {a_1\eqv a_2}$. By Proposition~\ref{p:hamiltonian-properties}.(a) again, $(a_1,a_2)\in \cg{\m A}(\e,(b_1\eqv b_2)\jn (c_1\eqv c_2))$. Now suppose that $(a_1,a_2)\in \cg{\m A}(\e,(b_1\eqv b_2)\jn (c_1\eqv c_2))$. It follows easily, again using some elementary properties of pointed residuated lattices, that $(a_1,a_2)\in\cg{\m A}(b_1,b_2)\cap\cg{\m A}(c_1,c_2)$. 
\end{proof}

\begin{lemma}\label{l:guarded-HRL}
Let $\V$ be a Hamiltonian variety of pointed residuated lattices. Then $\V$ has a guarded deduction theorem.
\end{lemma}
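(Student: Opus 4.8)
The plan is to invoke Proposition~\ref{p:guarded-algebraically}: it suffices to exhibit conjunctions of equations $\ga(x_1,x_2,y_1,y_2,z)$ and $\f(x_1,x_2,y_1,y_2,z)$, with a single parameter variable $z$, satisfying condition~(2) of that proposition. Guided by Proposition~\ref{p:hamiltonian-properties}.(a), which says that $(a_1,a_2)\in\cg{\m A}(b_1,b_2)$ holds precisely when $(b_1\eqv b_2)^n\leq a_1\eqv a_2$ for some $n$, and by the $\ell$-group guard in Example~\ref{ex:ablggroupsnomodelcompletion}, I would let $\ga$ assert that $x_1\eqv x_2\leq z$ (written as the equation $(x_1\eqv x_2)\mt z\eq x_1\eqv x_2$) together with $z\jn(y_1\eqv y_2)\eq\e$, and set $\f\coloneqq(z\eq\e)$. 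Note that $a\eqv b\leq\e$ in every pointed residuated lattice, and that the second conjunct of $\ga$ forces $z\leq\e$ since $z\leq z\jn(y_1\eqv y_2)$.

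The two easy clauses of condition~(2) I would dispatch as follows. For the embedding requirement: given a finitely generated $\m A\in\V$ and $a_1,a_2,b_1,b_2\in A$, the element $z\coloneqq\e$ satisfies $\ga$ in $\m A$ itself, because $a_1\eqv a_2\leq\e$ and $b_1\eqv b_2\leq\e$; so the identity embedding does the job. For the ``only if'' direction of the stated equivalence: if $(a_1,a_2)\notin\cg{\m A}(b_1,b_2)$, take $\m B\coloneqq\m A/\cg{\m A}(b_1,b_2)$ with quotient map $h$, so that $h(b_1)=h(b_2)$ and hence $h(b_1)\eqv h(b_2)=\e$, while $h(a_1)\neq h(a_2)$; then $z\coloneqq h(a_1)\eqv h(a_2)$ satisfies $\ga(h(a_1),h(a_2),h(b_1),h(b_2),z)$ but is distinct from $\e$, so $\m B\not\models\forall z.(\ga\IMP\f)(h(a_1),h(a_2),h(b_1),h(b_2),z)$.

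The one step that needs an idea is the ``if'' direction: assuming $(a_1,a_2)\in\cg{\m A}(b_1,b_2)$, so $(b_1\eqv b_2)^n\leq a_1\eqv a_2$ for some $n\geq1$ by Proposition~\ref{p:hamiltonian-properties}.(a), I must show that for every homomorphism $h\colon\m A\to\m B$ with $\m B\in\V$ and every $z\in B$ satisfying $\ga$ at $h(a_1),h(a_2),h(b_1),h(b_2)$, we get $z=\e$. Setting $e\coloneqq h(b_1)\eqv h(b_2)\leq\e$, the hypotheses give $e^n\leq h(a_1)\eqv h(a_2)\leq z\leq\e$ and $z\jn e=\e$. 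Since $\cdot$ distributes over $\jn$ in any residuated lattice, $\e=\e^n=(z\jn e)^n$ is the join of all length-$n$ products of factors from $\{z,e\}$; each such product is $\leq z$ --- the all-$e$ product equals $e^n\leq z$, and any product using at least one factor $z$ is $\leq z$ because every factor is $\leq\e$ and $\e$ is the monoid unit --- so $\e\leq z$ and therefore $z=\e$. This ``absorbing a power of $b_1\eqv b_2$ into an upper bound of $a_1\eqv a_2$'' computation is the crux; everything else is routine, and one could equally well verify conditions~(i) and~(ii) in the definition of a guarded deduction theorem directly, translating via Lemma~\ref{l:compact-congr-and-consequence-rel}.
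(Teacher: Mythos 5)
Your proof is correct and uses exactly the same guard formulas $\ga$ and $\f$ as the paper; the crux step (that $z\jn(b_1\eqv b_2)=\e$ together with $(b_1\eqv b_2)^n\le z$ forces $\e\le z$) is precisely the paper's key Claim, which the paper establishes by induction on $n$ and you establish by expanding $(z\jn e)^n$ via distributivity of $\cdot$ over $\jn$ --- the same computation in closed form. The only organizational difference is that you verify condition (2) of Proposition~\ref{p:guarded-algebraically} whereas the paper checks conditions (i) and (ii) of the definition of a guarded deduction theorem directly; both routes are legitimate.
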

\begin{proof}
We show that the formulas
\[
\ga\coloneqq (x_1\eqv x_2) \leq z \AND (y_1\eqv y_2)\jn z \eq \e \: \text{ and } \: \f\coloneqq z\eq \e
\]
satisfy conditions (i) and (ii) in the definition of a guarded deduction theorem for a finite set $\wbar$ with $z\notin \wbar$,  terms $s_1(\wbar),s_2(\wbar),t_1(\wbar),t_2(\wbar)$, and conjunction of equations $\pi(\wbar)$.

For (i), suppose first that $\V \models (\pi \AND (t_1\eq t_2)) \IMP (s_1 \eq s_2)$. To show that $\V \models \pi \IMP \forall z.(\ga\IMP \f)(s_1,s_2,t_1,t_2,z)$, consider any $\m A\in \V$ and assignment $f\colon \wbar \to A$ such that $\m A, f\models \pi$, and let $g\colon \wbar, z\to A$ be a map extending $f$ such that $\m A, g\models \ga(s_1,s_2,t_1,t_2,z)$. Then $\m A, g\models  (s_1\eqv s_2) \leq z$ and $\m A, g\models (t_1\eqv t_2)\jn z \eq \e$. It follows that also $\m A, g\models z\leq \e$ and hence to show that $\m A, g\models \f(s_1,s_2,t_1,t_2,z)$, it suffices to prove the following:
\begin{claim*}
$\m A, g\models \e \leq z$.
\end{claim*}
\begin{proof}[Proof of Claim]
Let $\tilde{g}\colon \Tm(\wbar,z)\to \m A$ be the unique homomorphism extending $g$, and define $a_1\coloneqq \tilde{g}(s_1)$, $a_2\coloneqq \tilde{g}(s_2)$, $b_1\coloneqq \tilde{g}(t_1)$, $b_2\coloneqq \tilde{g}(t_2)$, and $c\coloneqq \tilde{g}(z)$. We prove first that $\e\le(b_1\eqv b_2)^k \jn c$ for all $k\in\N^{>0}$, proceeding by induction on $k$. The base case follows from the fact that $\m A, g\models (t_1\eqv t_2)\jn z \eq \e$.  For the inductive step, we obtain
\begin{align*}
\e & \le (b_1\eqv b_2)^k \jn c && \text{by the induction hypothesis}\\
& \le (((b_1\eqv b_2) \jn c)\cdot (b_1\eqv b_2)^k) \jn c && \text{since  $\e\le(b_1\eqv b_2) \jn c$}\\
& = ((b_1\eqv b_2)^{k+1} \jn c\cdot (b_1\eqv b_2)^k) \jn c && \text{by the distributivity of $\cdot$ over $\jn$}\\
& \le (b_1\eqv b_2)^{k+1} \jn c  && \text{since  $b_1\eqv b_2\leq \e$.}
\end{align*}
Now let $q$ denote the natural quotient map from $\m A$ onto $\m A/\cg{\m A}(b_1,b_2)$. By assumption, $\V \models (\pi \AND (t_1\eq t_2)) \IMP (s_1 \eq s_2)$, so $\m A/\cg{\m A}(b_1,b_2), q f\models s_1\eq s_2$, i.e., $(a_1, a_2)\in \cg{\m A}(b_1,b_2)$. By Proposition~\ref{p:hamiltonian-properties}.(a), we obtain $(b_1\eqv b_2)^n\leq a_1\eqv a_2$ for some $n\in\N$. But $\m A, g\models  (s_1\eqv s_2) \leq z$, so also $(a_1\eqv a_2)\leq c$ and $(b_1\eqv b_2)^n\leq c$. Hence we get $\e\le (b_1\eqv b_2)^n \jn c = c$; that is, $\m A, g\models \e \leq z$.
 \end{proof}

Now suppose that $\V \models \pi \IMP \forall\zbar.(\ga\IMP \f)(s_1,s_2,t_1,t_2,\zbar)$. To show that  $\V \models (\pi \AND (t_1\eq t_2)) \IMP (s_1 \eq s_2)$, consider any $\m A\in \V$ and assignment $f\colon \wbar\to A$ such that $\m A, f\models \pi \AND (t_1\eq t_2)$. Since $z\notin \wbar$, we can extend $f$ to an assignment $g\colon \wbar,z\to A$ by setting $g(z)\coloneqq \tilde{f}(s_1\eqv s_2)$, where $\tilde{f}\colon \Tm(\wbar)\to \m A$ is the unique homomorphism extending $f$. Clearly, $\m A, g\models \pi \AND \ga(s_1,s_2,t_1,t_2,z)$ and hence, by assumption, $\m A, g\models z\eq \e$. It follows that $\m A, f\models (s_1\eqv s_2)\eq \e$, and so $\m A, f\models s_1\eq s_2$. 

For the non-trivial direction of (ii), suppose that $\V \models (\pi \AND \ga(s_1,s_2,t_1,t_2,\zbar)) \IMP \si$ for some equation $\si(\wbar)$. To show that $\V\models \pi\IMP\si$, consider any algebra $\m A\in \V$ and assignment $f\colon \wbar \to A$ such that $\m A, f\models \pi$. Extend $f$ to an assignment $g\colon \wbar,z\to A$ by setting $g(z)\coloneqq \e$. Then $\m A, g\models \pi \AND \ga(s_1,s_2,t_1,t_2,z)$ and hence, by assumption, $\m A, f\models \si$. 
\end{proof}

Theorems~\ref{t:GZ-charact} and~\ref{t:mc-implies-EDPC-guarded} then yield the following description of Hamiltonian varieties of pointed residuated lattices whose theories have a model completion.

\begin{theorem}\label{t:HamRL-charact}
Let $\V$ be a Hamiltonian variety of pointed residuated lattices. The theory of $\V$ has a model completion if, and only if, $\V$ is coherent and has equationally definable principal congruences, the amalgamation property, and the equational variable restriction property.
\end{theorem}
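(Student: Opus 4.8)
The plan is simply to assemble the pieces developed in the previous two sections together with the two lemmas immediately above. Recall that the variety of pointed residuated lattices is congruence distributive, and that a Hamiltonian variety $\V$ of pointed residuated lattices has the congruence extension property (Proposition~\ref{p:hamiltonian-properties}.(c)), the compact intersection property (by the first lemma above), and a guarded deduction theorem (Lemma~\ref{l:guarded-HRL}). So the hypotheses of both Theorem~\ref{t:GZ-charact} and Theorem~\ref{t:mc-implies-EDPC-guarded} are available the moment we also know that $\V$ has equationally definable principal congruences — and Theorem~\ref{t:mc-implies-EDPC-guarded} will supply exactly that under the assumption that the theory of $\V$ has a model completion.

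For the right-to-left direction I would argue as follows: assume $\V$ is coherent and has equationally definable principal congruences, the amalgamation property, and the equational variable restriction property. Since $\V$ also has the compact intersection property, Theorem~\ref{t:GZ-charact} applies and its ``if'' direction gives that the theory of $\V$ has a model completion. (One could instead invoke Proposition~\ref{p:suffconditionsedpc}, which does not even require the compact intersection property.) For the left-to-right direction, assume the theory of $\V$ has a model completion. As $\V$ is congruence distributive and has the congruence extension property and a guarded deduction theorem, Theorem~\ref{t:mc-implies-EDPC-guarded} yields that $\V$ has equationally definable principal congruences. Now $\V$ has both equationally definable principal congruences and the compact intersection property, so Theorem~\ref{t:GZ-charact} applies, and its ``only if'' direction gives that $\V$ is coherent and has the amalgamation property and the equational variable restriction property, as required.

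In short, the order of steps is: (1) record that $\V$ is congruence distributive and has the congruence extension property, the compact intersection property, and a guarded deduction theorem; (2) for the converse direction, apply Theorem~\ref{t:GZ-charact} directly; (3) for the forward direction, first apply Theorem~\ref{t:mc-implies-EDPC-guarded} to obtain equationally definable principal congruences, then apply Theorem~\ref{t:GZ-charact} again. There is no genuine obstacle remaining — all the substantive work has been carried out in Sections~\ref{s:model-completion-edpc} and~\ref{s:pdpc} and in the two preparatory lemmas. The only point requiring a little care is to check that every hypothesis of the cited theorems is genuinely in place for Hamiltonian varieties of pointed residuated lattices, in particular that Proposition~\ref{p:hamiltonian-properties}.(c) provides the congruence extension property needed to invoke Theorem~\ref{t:mc-implies-EDPC-guarded}.
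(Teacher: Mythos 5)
Your proposal is correct and follows essentially the same route as the paper, which derives the theorem by combining Theorem~\ref{t:GZ-charact} and Theorem~\ref{t:mc-implies-EDPC-guarded} with the two preparatory lemmas (compact intersection property and guarded deduction theorem) and Proposition~\ref{p:hamiltonian-properties}.(c). All hypotheses are checked correctly, including the point that equationally definable principal congruences must first be extracted via Theorem~\ref{t:mc-implies-EDPC-guarded} before the ``only if'' direction of Theorem~\ref{t:GZ-charact} is invoked.
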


\begin{example}
Since the variety of lattice-ordered abelian groups is Hamiltonian but does not have equationally definable principal congruences, it follows directly from Theorem~\ref{t:HamRL-charact} that, as already mentioned in Example~\ref{ex:ablggroupsnomodelcompletion} and first proved in~\cite{GP80}, its theory does not have a model completion. Similarly, the variety of MV-algebras does not have a model completion, as first proved in~\cite{Lac79}. \end{example}

An analogous result to Theorem~\ref{t:HamRL-charact} was obtained in~\cite{KM18} for Hamiltonian varieties of pointed residuated lattices that are closed under canonical extensions; for this latter notion, we refer to~\cite{GH01,Geh14}. 

 \begin{proposition}[{\cite[Theorem~5.11]{KM18}}]\label{p:coherence-edpc}
Let $\V$ be a Hamiltonian variety of pointed residuated lattices that is closed under canonical extensions. If $\V$ is coherent, then it has equationally definable principal congruences.
 \end{proposition}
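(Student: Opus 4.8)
The plan is to prove the contrapositive and to use Proposition~\ref{p:hamiltonian-properties}.(b) to restate the goal: since a Hamiltonian $\V$ has equationally definable principal congruences exactly when $\V\models(x\mt\e)^n\eq(x\mt\e)^{n+1}$ for some $n\in\N$, it suffices to show that if this fails for every $n$, then $\V$ is not coherent, i.e.\ (by Proposition~\ref{p:coherence-eq-conseq}) lacks the equational variable projection property. So assume $\V\nmdl{}(x\mt\e)^n\eq(x\mt\e)^{n+1}$ for all $n$, and for each $n$ fix $\m A_n\in\V$ and $a_n\le\e$ (where $\le$ is the definable lattice order) with $a_n^n\neq a_n^{n+1}$. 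Since in any pointed residuated lattice $a^j\eq a^{j+1}$ forces $a^{j+1}\eq a^{j+2}$, we get $a_n^j>a_n^{j+1}$ for all $j\le n$; hence in $\m C\coloneqq\prod_{n\in\N}\m A_n\in\V$ the element $c\coloneqq(a_n)_n\le\e$ has a strictly descending $\omega$-chain of powers, $c^j>c^{j+1}$ for every $j\in\N$.

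Next I would exploit closure under canonical extensions: $\m C^\delta\in\V$, and in the complete lattice $\m C^\delta$ one can form $d\coloneqq\bigwedge_{j\in\N}c^j$. Then $d\le c\le\e$, and because $d\le c^{j+1}\le c^j$ with $c^{j+1}\neq c^j$ one gets $d\neq c^j$ for every $j$, so in particular $d<\e$. Using the description of the canonically extended monoid multiplication on closed elements (such as $d$), one should obtain $c\cdot d=d$, so that $d$ is a ``fixed point'' of multiplication by $c$ lying strictly below $\e$ and below all powers $c^j$.

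Then I would apply the equational variable projection property to the conjunction of equations
\[
\varphi(x,y,w)\coloneqq (x\mt\e\eq x)\AND(w\mt x\eq w)\AND(x\cdot w\eq w)\AND(y\mt w\eq y),
\]
which forces $x\le\e$ and $y\le w\le x$ with $w$ absorbed by $x$, and hence $\V\models\varphi\IMP(x^k\mt y\eq y)$ for every $k\in\N$. Coherence yields a single conjunction of equations $\xi(x,y)$ with $\V\models\varphi\IMP\xi$ and $\V\models\xi\IMP(x^k\mt y\eq y)$ for all $k$. The algebra $\m C^\delta$ with $x\coloneqq c$ and $y\coloneqq w\coloneqq d$ satisfies $\varphi$, hence $\xi$; translating $\xi$ back into a compact congruence of $\F_\V(x,y)$ via Lemma~\ref{l:compact-congr-and-consequence-rel} and comparing it with the infinite descending chain realized at $(c,d)$ in $\m C^\delta$ — using the congruence extension property of $\V$ (Proposition~\ref{p:hamiltonian-properties}.(c)) and Lemma~\ref{l:cepequiv} to control the restriction of congruences between $\F_\V(x,y)$ and $\F_\V(x)$ — should force $c^m\eq c^{m+1}$ for some $m$, contradicting the choice of $\m C$.

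The hard part will be the last step together with the multiplicative computation in the canonical extension: one must show that the finitary data extracted from coherence (one formula $\xi$, equivalently one compact congruence) simply cannot encode the genuinely $\omega$-indexed condition ``$y\le x^k$ for all $k$'' unless the chain of powers of $x\mt\e$ already stabilizes inside $\V$. Establishing $c\cdot d=d$ (not merely $c\cdot d\le d$) requires the precise formula for the $\sigma$-extension of $\cdot$ on closed elements, and pinning down exactly how the compactness of the congruence generated by $\xi$ clashes with the limit element $d$ in the complete algebra $\m C^\delta$ is where the argument is most delicate.
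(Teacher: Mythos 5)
Your first half is sound and matches the opening of the cited proof in \cite{KM18}: the reduction to $n$-potency via Proposition~\ref{p:hamiltonian-properties}.(b), the product algebra $\m{C}$ with $c\le\e$ and strictly decreasing powers, and the computation $c\cdot d=d$ for $d=\bigwedge_j c^j$ in the canonical extension $\m{C}^\delta$ (which does follow from compactness together with the definition of the $\sigma$-extension of $\cdot$ on closed elements) are all correct. The gap is the final step, and it is not merely ``delicate'': the contradiction you gesture at cannot be obtained by evaluating at $(c,d)$. That pair satisfies $\xi$ \emph{and} satisfies $y\wedge x^k\approx y$ for every $k$, so nothing clashes there; the mere existence of a finite conjunction $\xi(x,y)$ entailing all $y\le x^k$ is not contradictory (for instance $(x\wedge\e\approx x)\AND(y\wedge\e\approx y)\AND(xy\approx y)$ does exactly that); and the deduction-theorem computation you sketch only produces inequalities of the form $y\cdot t_\xi^{m}\le x^k$, which hold at $(c,d)$ and at $(c,c^j)$ alike.

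What is actually needed is to identify the compact congruence $\cg{\F_\V(x,y)}(\hat{\xi})$, which by coherence equals $\cg{\F_\V(x,y,w)}(\hat{\varphi})\cap\Fc_\V(x,y)^2$, with the \emph{directed join} $\bigvee_N\Theta_N$, where $\Theta_N$ is the congruence of $\F_\V(x,y)$ generated by the images of $x\wedge\e\approx x$, $y\wedge\e\approx y$, and $y\wedge x^N\approx y$. The inclusion $\supseteq$ is easy; the inclusion $\subseteq$ is where closure under canonical extensions must be invoked for an \emph{arbitrary} member of $\V$, not just for $\m{C}$: if $u,v\le\e$ and $v\le u^n$ for all $n$ in some $\m{E}\in\V$ (in particular for the generators of $\F_\V(x,y)/\bigvee_N\Theta_N$), then $w\coloneqq\bigwedge_n u^n$ in the canonical extension of $\m{E}$ witnesses $\varphi(u,v,w)$, so every equational consequence of $\varphi$ in $x,y$ already lies in $\bigvee_N\Theta_N$. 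Compactness of $\cg{\F_\V(x,y)}(\hat{\xi})$ then forces it into a single $\Theta_N$, whence $\V\models(x\wedge\e\approx x)\AND(y\wedge\e\approx y)\AND(y\wedge x^N\approx y)\IMP(y\wedge x^{N+1}\approx y)$; substituting $x\mapsto x\wedge\e$ and $y\mapsto(x\wedge\e)^N$ yields $\V\models(x\wedge\e)^N\approx(x\wedge\e)^{N+1}$, the desired contradiction. These two ingredients --- the directed-join identification via canonical extensions of arbitrary algebras, and the closing substitution --- are absent from your proposal, and once they are in place the element $d\in\m{C}^\delta$ (indeed all of $\m{C}$) becomes dispensable.
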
  

We use Proposition~\ref{p:coherence-edpc} to show that if a Hamiltonian semilinear variety $\V$ of pointed residuated lattices is closed under canonical extensions and the theory of $\Vtot$ has a model completion, then $\V$ has equationally definable principal congruences.

\begin{lemma}\label{l:varprojimplieseqvar}
Let $\V$ be a semilinear variety of pointed residuated lattices. If $\Vtot$ has the variable projection property, then $\V$ is coherent. 
\end{lemma}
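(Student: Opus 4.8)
The plan is to reduce the statement to a property of the class $\Vtot$. Since $\V$ is semilinear, $\V = \cop{ISP}(\Vtot)$, and since $\cop{ISP}(\Vtot) \subseteq \cop{ISPP}_U(\Vtot) \subseteq \V$ (the last inclusion because $\V$ is a variety containing $\Vtot$), we obtain $\V = \cop{ISPP}_U(\Vtot)$; that is, $\V$ is the quasivariety generated by $\Vtot$. In particular $\V$ and $\Vtot$ have the same valid quasiequations, so by Remark~\ref{r:quasiequationalrestriction} together with Proposition~\ref{p:coherence-eq-conseq}, the variety $\V$ is coherent if and only if $\Vtot$ has the equational variable projection property. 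It therefore suffices to prove that, over the class $\Vtot$ of linearly ordered pointed residuated lattices, the variable projection property implies the equational variable projection property.

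To this end I would fix a finite set $\xbar,y$ and a conjunction of equations $\f(\xbar,y)$, and apply the variable projection property of $\Vtot$ to obtain a quantifier-free formula $\xi(\xbar)$ such that $\Vtot\models\f\IMP\xi$ and $\Vtot\models\f\IMP\ep$ implies $\Vtot\models\xi\IMP\ep$ for every equation $\ep(\xbar)$. Next I would pass to the term operation $\eqv$: every equation $s\eq t$ is equivalent, in all pointed residuated lattices, to $s\eqv t\eq\e$, and $(s\eqv t)\mt\e\eq s\eqv t$ holds there too, so every conjunction of equations is equivalent there to a single equation $t\eq\e$ with $t\le\e$ provable. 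Writing $\xi$ in disjunctive normal form, discarding the disjuncts unsatisfiable in $\Vtot$ (which are equivalent to $\bot$ over $\Vtot$; at least one survives, as $\f$ is satisfiable), and using that over chains a disjunction $\bigOR_j(u_j\eq\e)$ with each $u_j\le\e$ is equivalent to $\bigvee_j u_j\eq\e$, while dually a conjunction $\bigAND_j\neg(v_j\eq\e)$ of such negated equations is equivalent to $\neg(\bigvee_j v_j\eq\e)$, I would bring $\xi$ to the form $\Vtot\models\xi\BIMP\bigOR_i\xi_i$ with $\xi_i = (p_i\eq\e)\AND\neg(q_i\eq\e)$, where $p_i\mt\e\eq p_i$ and $q_i\mt\e\eq q_i$ hold in all pointed residuated lattices and each $\xi_i$ is satisfiable in $\Vtot$. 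Dropping the negated literals and collapsing the ensuing disjunction of equations yields $\Vtot\models\f\IMP(\bigvee_i p_i\eq\e)$, and I would set $\pi(\xbar)\coloneqq(\bigvee_i p_i\eq\e)$, a single equation, so that $\Vtot\models\f\IMP\pi$. It then remains to verify $\Vtot\models\f\IMP\ep\Rightarrow\Vtot\models\pi\IMP\ep$ for every equation $\ep(\xbar)$; writing $\ep$ as $e\eq\e$ with $e\le\e$, the hypothesis gives $\Vtot\models\xi_i\IMP\ep$ for each $i$, which over the chains of $\Vtot$ rewrites to the quasiequation $\Vtot\models(p_i\eq\e)\IMP(q_i\jn e\eq\e)$ (and hence holds in $\V$ as well).

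The crux, and the step I expect to be the main obstacle, is to remove the parasitic join with $q_i$, that is, to pass from $\Vtot\models(p_i\eq\e)\IMP(q_i\jn e\eq\e)$ to $\Vtot\models(p_i\eq\e)\IMP(e\eq\e)$; granting this, collapsing the disjunction of hypotheses gives $\Vtot\models(\bigvee_i p_i\eq\e)\IMP(e\eq\e)$, that is $\Vtot\models\pi\IMP\ep$, as required. This elimination is the chain-theoretic analogue of the two uses of the disjunction property in the proof that (3) implies (2) in Proposition~\ref{p:coherence-eq-conseq}: I expect it to follow by combining the satisfiability of each $\xi_i$ in $\Vtot$ --- which rules out $\Vtot\models(p_i\eq\e)\IMP(q_i\eq\e)$, so that $\neg(q_i\eq\e)$ is genuinely realized together with $p_i\eq\e$ --- with the disjunction property of the variety $\V=\cop{ISPP}_U(\Vtot)$, applied after re-exposing the relevant disjunctions of equations of the form $t\eq\e$. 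Once this is in place, $\pi$ is the desired conjunction of equations, so $\Vtot$, and hence $\V$, has the equational variable projection property, and $\V$ is coherent by Proposition~\ref{p:coherence-eq-conseq}. Along the way one should dispatch the routine edge cases (such as $\V$ trivial or $\xbar$ empty) and the elementary residuated-lattice (in)equalities used in the chain-collapse steps.
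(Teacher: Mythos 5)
Your opening reduction is sound: since $\V=\cop{ISP}(\Vtot)=\cop{ISPP}_U(\Vtot)$, it suffices by Remark~\ref{r:quasiequationalrestriction} and Proposition~\ref{p:coherence-eq-conseq} to upgrade the variable projection property of $\Vtot$ to its equational version, and the normal-form manipulations over chains (rewriting equations as $t\eq\e$ with $t\le\e$ and collapsing joins of such literals) are correct. The gap is exactly where you expect it, and the repair you sketch does not work. The implication ``$\Vtot\models(p_i\eq\e)\IMP(q_i\jn e\eq\e)$ together with satisfiability of $(p_i\eq\e)\AND\neg(q_i\eq\e)$ yields $\Vtot\models(p_i\eq\e)\IMP(e\eq\e)$'' is false: take $\V=\LA$, $p_i\coloneqq\e$, $q_i\coloneqq x\mt\e$, $e\coloneqq(x\ld\e)\mt\e$. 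Then $\OG\models q_i\jn e\eq\e$ (every element of a chain satisfies $x\ge\e$ or $x\le\e$), the formula $(p_i\eq\e)\AND\neg(q_i\eq\e)$ is satisfiable, yet $\OG\not\models e\eq\e$. Nor can the disjunction property of $\V$ rescue this: the disjunctive form $(p_i\eq\e)\IMP((q_i\eq\e)\OR(e\eq\e))$ is equivalent to the quasiequation only over chains; it is not itself a quasiequation and does not transfer to $\V$ (in the example, $\mathbb{Z}\times\mathbb{Z}$ contains $(1,-1)$, which satisfies neither $q_i\eq\e$ nor $e\eq\e$), so there is no disjunction valid over $\V$ for the disjunction property to split. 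In short, discarding the negative literals of the DNF disjuncts over $\Vtot$ is precisely the step that has no justification, and your candidate $\pi=(\bigvee_i p_i\eq\e)$ may simply be too weak.

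The paper sidesteps this by using a \emph{conjunctive} rather than disjunctive normal form. It writes $\xi$ as a conjunction of clauses $\pi\IMP\de$ with $\pi$ a conjunction of equations and $\de$ a non-empty disjunction of equations, and uses linearity only to collapse the positive part $\de$ of each clause to a single equation, so that each clause becomes a quasiequation whose antecedent absorbs the negated literals. Since $\V$ and $\Vtot$ satisfy the same quasiequations, this conjunction of quasiequations serves as a variable projection witness for the variety $\V$ itself, and Proposition~\ref{p:coherence-eq-conseq} — whose proof performs the disjunct-selection using the genuine disjunction property available in $\V$, which is closed under finite products — yields coherence. The essential difference is that the paper never deletes negative information over $\Vtot$; it moves it into antecedent position, where it transfers harmlessly, and defers all uses of the disjunction property to the variety $\V$, where they are legitimate.
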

\begin{proof}
The claim clearly holds if $\Vtot$ is trivial (contains only trivial algebras), so let us assume that this is not the case. Suppose that $\Vtot$ has the variable projection property and consider a finite set $\xbar,y$ and conjunction of equations $\f(\xbar,y)$. By assumption, there exists a quantifier-free formula $\xi(\xbar)$ such that $\Vtot\models\f\IMP\xi$ and for any equation $\ep(\xbar)$,
\[
\Vtot\models \f\IMP \ep \:\Longrightarrow\: \Vtot\models \xi\IMP\ep.
\]
We may assume without loss of generality that $\xi$ is a conjunction of formulas of the form $\pi \IMP \de$ where $\pi$ is a (possibly empty) conjunction of equations and $\de$ is a non-empty disjunction of equations. Using some elementary properties of pointed residuated lattices, we may also assume that $\de$ is of the form $\e\le s_1 \OR \cdots \OR \e \le s_n$. But also, using the fact that $\Vtot$ consists of linearly ordered pointed residuated lattices,
\[
\Vtot\models (\e\le s_1 \OR \cdots \OR \e \le s_n) \BIMP (\e \le s_1\jn\cdots\jn s_n).
\]
Hence we may further assume that $\xi$ is a conjunction of quasiequations. But then, since $\V$ and $\Vtot$ satisfy the same quasiequations, $\V$ also has the variable projection property and, by Proposition~\ref{p:coherence-eq-conseq}, is coherent.
\end{proof}

\begin{proposition}\label{p:failurelinearlyordered}
Let $\V$ be a Hamiltonian semilinear variety of pointed residuated lattices that is closed under canonical extensions. If the theory of $\Vtot$ has a model completion, then $\V$ has equationally definable principal congruences.
\end{proposition}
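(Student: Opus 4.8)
The plan is to deduce the statement by simply chaining three results already established in the paper, so very little new work is needed. First I would note that $\Vtot$ is a universal class of algebras: being linearly ordered with respect to the definable lattice order is expressed by the universal sentence $\forall x,y\,(x \le y \OR y \le x)$, and membership in $\V$ is equational, so $\Vtot$ is the class of models of a universal theory; in particular it is closed under $\cop{I}$, $\cop{S}$, and $\cop{P}_U$ (an ultraproduct of linearly ordered algebras is linearly ordered by {\L}o\'s's theorem). Hence the only-if direction of Theorem~\ref{t:charact-model-completion}, namely Proposition~\ref{p:almost-coh-nec}, applies to $\Vtot$: since the theory of $\Vtot$ has a model completion, $\Vtot$ has the variable projection property.

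Next I would invoke Lemma~\ref{l:varprojimplieseqvar}. Since $\V$ is semilinear and $\Vtot$ has the variable projection property, that lemma gives at once that $\V$ is coherent; this is exactly the step that exploits the semilinearity hypothesis $\V = \cop{ISP}(\Vtot)$, transferring a projection/interpolation property from the linearly ordered members to all of $\V$ via the fact that $\V$ and $\Vtot$ satisfy the same quasiequations. (The degenerate case in which $\Vtot$ is trivial is already covered inside that lemma.)

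Finally, $\V$ is by hypothesis a Hamiltonian variety of pointed residuated lattices closed under canonical extensions, and we have just shown it is coherent, so Proposition~\ref{p:coherence-edpc} yields that $\V$ has equationally definable principal congruences, which is the desired conclusion.

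I do not expect a genuine obstacle here, as all the substantive content lives in the cited results; the only points requiring a moment's care are verifying that $\Vtot$ is a universal class so that Proposition~\ref{p:almost-coh-nec} is applicable, and keeping straight that the hypothesis concerns $\Vtot$ (rather than $\V$) having a model completion — which is precisely the form in which Proposition~\ref{p:almost-coh-nec} and Lemma~\ref{l:varprojimplieseqvar} are set up to consume it.
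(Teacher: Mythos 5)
Your proposal is correct and follows exactly the same three-step chain as the paper's own proof: Proposition~\ref{p:almost-coh-nec} gives the variable projection property for $\Vtot$, Lemma~\ref{l:varprojimplieseqvar} transfers this to coherence of $\V$, and Proposition~\ref{p:coherence-edpc} then yields equationally definable principal congruences. Your explicit check that $\Vtot$ is a universal class (so that Proposition~\ref{p:almost-coh-nec} applies) is a sensible addition that the paper leaves implicit.
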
  
\begin{proof}
Suppose that the theory of $\Vtot$ has a model completion. By Proposition~\ref{p:almost-coh-nec}, the class $\Vtot$ has the variable projection property. Hence, by Lemma~\ref{l:varprojimplieseqvar}, the variety $\V$ is coherent and, by Proposition~\ref{p:coherence-edpc}, has equationally definable principal congruences.
\end{proof}

\begin{example}
Consider the class $\cls{CRL^c}$ of linearly ordered commutative pointed residuated lattices. The variety generated by $\cls{CRL^c}$ is closed under canonical extensions (cf.~\cite[Chapter~6]{GJKO07}) and does not have equationally definable principal congruences. So, by Proposition~\ref{p:failurelinearlyordered}, the theory of $\cls{CRL^c}$ does not have a model completion.
\end{example}

Note that the Hamiltonian semilinear varieties $\LA$ and $\MV$ are coherent and do not have equationally definable principal congruences, but are not closed under canonical extensions. Moreover, as we have already seen, the theories of $\OG$ and $\MVC$ have a model completion, but this is not the case for  $\LA$ and $\MV$.


\section{Extending the language}\label{s:extension}

Let $\lang$ be the language of pointed residuated lattices and let $\langext$ be $\lang$ extended with an additional binary operation symbol $\trimp$. In this section, we show how to associate with any semilinear variety $\V$ of pointed residuated lattices, a variety $\Vext$ of $\langext$-algebras that has equationally definable principal congruences and satisfies the same universal $\lang$-sentences as $\V$. We then show that if $\V$ satisfies a certain syntactic property,  the theory of $\Vext$ has a model completion. In particular, this is the case for the varieties of lattice-ordered abelian groups and MV-algebras.

Given any semilinear variety $\V$ of pointed residuated lattices, let $\Vtotext$ denote the class of linearly ordered members of $\V$ expanded with a binary operation $\trimp$ defined by
\[
x \trimp y \coloneqq  
\begin{cases}
y & \text{if }\e \le x\\
\e & \text{otherwise.}
\end{cases}
\]
That is, $\Vtotext$ is the positive universal class consisting of $\langext$-algebras that satisfy the equational theory of $\V$ and the universal sentences
\[
\forall x,y \: (x\leq y \OR \, y\leq x)
\enspace\text{and}\enspace
\forall x,y \: [(\e\leq x \, \IMP \, x \trimp y\eq y) \AND (\e\not\leq x \, \IMP \, x \trimp y\eq \e) ].
\]
Let $\Vext$ be the variety generated by $\Vtotext$. Since $\Vtotext$ is a positive universal class, it follows from J\'{o}nsson's Lemma~\cite{Jon67} that $\Vext=\cop{ISP}(\Vtotext)$. Moreover, we obtain the following conservative extension result.

\begin{proposition}\label{p:vvt}
Let $\V$ be any semilinear variety of pointed residuated lattices. Then for any quantifier-free $\lang$-formula $\chi$,
\[
\Vext \models \chi\iff \V \models \chi.
\]
\end{proposition}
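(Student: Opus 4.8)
The plan is to reduce the statement to two elementary observations: the $\lang$-reducts of the members of $\Vtotext$ are exactly the members of $\Vtot$, and quantifier-free $\lang$-formulas are absolute along $\lang$-embeddings.

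First I would record the structural facts. Given $\m{C}\in\Vtot$, the case split in the definition of $\trimp$ determines a unique expansion $\m{C}^{+}$ of $\m{C}$ to an $\langext$-algebra satisfying the equational theory of $\V$, linearity, and the defining conditions on $\trimp$; hence $\m{C}^{+}\in\Vtotext$. Conversely, the $\lang$-reduct of any member of $\Vtotext$ lies in $\Vtot$ by definition, so taking $\lang$-reducts is a bijection from $\Vtotext$ onto $\Vtot$ with inverse $\m{C}\mapsto\m{C}^{+}$. Since taking $\lang$-reducts commutes with the operators $\cop{I}$, $\cop{S}$, and $\cop{P}$, and $\V=\cop{ISP}(\Vtot)$ by semilinearity while $\Vext=\cop{ISP}(\Vtotext)$ as already established, this yields: (a) the $\lang$-reduct of every $\m{D}\in\Vext$ belongs to $\V$; and (b) every $\m{A}\in\V$ admits an $\lang$-embedding into the $\lang$-reduct of some $\m{D}\in\Vext$ --- indeed, embed $\m{A}$ into a product $\prod_{i}\m{C}_i$ with each $\m{C}_i\in\Vtot$ and take $\m{D}\coloneqq\prod_{i}\m{C}_i^{+}\in\cop{P}(\Vtotext)\subseteq\Vext$, whose $\lang$-reduct is $\prod_{i}\m{C}_i$.

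Given (a) and (b), both implications follow at once. For the right-to-left direction, suppose $\V\models\chi$; for any $\m{D}\in\Vext$ and assignment $\abar$, the $\lang$-reduct of $\m{D}$ lies in $\V$ by (a), hence satisfies $\chi(\abar)$, and since $\chi$ is an $\lang$-formula, so does $\m{D}$. For the left-to-right direction, suppose $\Vext\models\chi$; for any $\m{A}\in\V$ and assignment $\abar$, pick by (b) an $\lang$-embedding $e$ of $\m{A}$ into the $\lang$-reduct of some $\m{D}\in\Vext$; then $\m{D}\models\chi(e\,\abar)$, and since $\chi$ is quantifier-free and $e$ is an $\lang$-embedding, $\chi$ is reflected along $e$, whence $\m{A}\models\chi(\abar)$.

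The argument is essentially routine; the only step requiring a little care is (b), where semilinearity is genuinely used and one must check that expanding the factors of a subdirect representation of $\m{A}$ by chains and then forming the product stays inside $\Vext$. The absoluteness of quantifier-free $\lang$-formulas along $\lang$-embeddings, used in both directions, is standard: atomic formulas are preserved and reflected by embeddings, and this passes to arbitrary Boolean combinations.
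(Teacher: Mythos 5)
Your proof is correct. Both arguments rest on the same two structural facts --- the $\lang$-reduct of every member of $\Vext=\cop{ISP}(\Vtotext)$ lies in $\V$, and every chain in $\Vtot$ expands uniquely to a member of $\Vtotext$ --- but you handle the logical reduction differently. The paper first invokes the disjunction property of varieties to reduce the claim for arbitrary quantifier-free $\chi$ to the case of $\lang$-quasiequations, and then only needs that $\V$ and $\Vtot$ (resp.\ $\Vext$ and $\Vtotext$) validate the same quasiequations, so that a counterexample in $\Vtot$ can be transported to its expansion in $\Vtotext$. You instead avoid the disjunction property altogether: you upgrade the semilinearity hypothesis $\V=\cop{ISP}(\Vtot)$ to the statement that every $\m A\in\V$ admits an $\lang$-embedding into the reduct of a product of expanded chains in $\Vext$, and then appeal to the preservation and reflection of quantifier-free formulas along embeddings and reducts. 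Your route is more model-theoretic and treats arbitrary quantifier-free formulas uniformly without any normal-form manipulation; the paper's route is more syntactic and keeps the argument entirely at the level of (quasi)equational validity, which fits the surrounding development. Both are complete; the only step in yours that genuinely needs checking is point (b), and you verify it correctly by expanding each factor of a subdirect representation and observing that the product stays in $\cop{P}(\Vtotext)\subseteq\Vext$.
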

\begin{proof}
Since $\V$ and $\Vext$ are both varieties, they satisfy the disjunction property (see Section~\ref{s:algebraic-properties}), and it therefore suffices to establish the equivalence for the case where $\chi$ is an $\lang$-quasiequation. Suppose first that $\Vext \not\models \chi$. Since the $\lang$-reduct of any member of $\Vext$ belongs to $\V$, also $\V \not\models \chi$. Now suppose that $\V\not\models \chi$. Since $\V$ is semilinear, $\V=\cop{ISP}(\Vtot)$, and hence $\Vtot\not\models \chi$. But every member of $\Vtot$ is the $\lang$-reduct of a member of $\Vext$, and therefore also $\Vext \not\models \chi$. 
\end{proof}

For convenience of notation, let us define for any class $\K$ of algebras with a pointed residuated lattice reduct and a finite set of $\lang$-terms or $\langext$-terms $\Ga \cup \{t\}$,
\[
\Ga \mdl{\K} t \:\defiff \: \K\models \bigAND\{\e \le s \mid s \in \Ga\} \IMP \e \le t.
\]
It follows easily that for any conjunction of equations $\pi$ and equation $u\eq v$,
\[
\K\models\pi\IMP u\eq v \iff \{s\eqv t \mid s\eq t \text{ is an equation of } \pi\} \mdl{\K} u\eqv v.
\]
We now use this notation to describe a deduction theorem for $\Vext$.

\begin{proposition}\label{p:EDPCt}
Let $\V$ be any semilinear variety of pointed residuated lattices. Then for any finite set $\xbar$ and finite $\Ga \cup \{s,t\}\subseteq\Tmc_{\langext}(\xbar)$,
\[
\Ga\cup\{s\} \mdl{\Vext} t \iff \Ga \mdl{\Vext} s \trimp t.
\]
\end{proposition}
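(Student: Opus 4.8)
The plan is to reduce to the linearly ordered case and then argue pointwise. First, observe that $\Ga\cup\{s\}\mdl{\Vext} t$ and $\Ga\mdl{\Vext} s\trimp t$ are, by definition, the validity in $\Vext$ of the $\langext$-quasiequations $\bigAND\{\e\le r\mid r\in\Ga\}\AND \e\le s\IMP \e\le t$ and $\bigAND\{\e\le r\mid r\in\Ga\}\IMP \e\le s\trimp t$, respectively (each formula $\e\le u$ being expressible as the equation $\e\mt u\eq\e$). Since $\Vtotext\subseteq\Vext=\cop{ISP}(\Vtotext)\subseteq\cop{ISPP}_U(\Vtotext)$, a quasiequation holds in $\Vext$ if, and only if, it holds in $\Vtotext$. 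It therefore suffices to prove
\[
\Ga\cup\{s\}\mdl{\Vtotext} t \iff \Ga\mdl{\Vtotext} s\trimp t.
\]

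Next I would fix a (necessarily linearly ordered) algebra $\m{A}\in\Vtotext$ together with an assignment of $\xbar$ in $\m{A}$, writing $\bar u$ for the value of an $\langext$-term $u$ under the induced homomorphism. The crux is the element-wise equivalence
\[
\e\le \bar s\trimp\bar t \iff (\e\le\bar s \ \text{implies}\ \e\le\bar t),
\]
which is immediate from the definition of $\trimp$ on a chain: if $\e\le\bar s$, then $\bar s\trimp\bar t=\bar t$ and both sides reduce to $\e\le\bar t$; if $\e\not\le\bar s$, then $\bar s\trimp\bar t=\e$, so the left-hand side holds and the right-hand side holds vacuously.

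Finally, quantifying this equivalence over all assignments in $\m{A}$ shows that $\m{A}\models \bigAND\{\e\le r\mid r\in\Ga\}\IMP \e\le s\trimp t$ holds precisely when $\m{A}\models \bigAND\{\e\le r\mid r\in\Ga\}\AND \e\le s\IMP \e\le t$ does; quantifying over all $\m{A}\in\Vtotext$ then yields the displayed equivalence, and the first step transfers it back to $\Vext$. No step here is a genuine obstacle; the only points requiring care are the reduction from $\Vext$ to $\Vtotext$, which works because both sides of the claimed equivalence are (equivalent to) quasiequations, and the exhaustive case analysis computing $\e\le\bar s\trimp\bar t$ in a linearly ordered algebra.
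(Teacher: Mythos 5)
Your proof is correct and follows essentially the same route as the paper's: reduce to $\Vtotext$ via $\Vext=\cop{ISP}(\Vtotext)$ and the fact that both sides are quasiequations, then verify the equivalence assignment-by-assignment in a chain using the case split in the definition of $\trimp$. The only cosmetic difference is that you package the two directions into a single pointwise biconditional $\e\le \bar s\trimp\bar t \iff (\e\le\bar s \Rightarrow \e\le\bar t)$, whereas the paper argues the two implications separately.
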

\begin{proof}
Using the fact that $\Vext=\cop{ISP}(\Vtotext)$, it suffices to prove that for any finite set $\xbar$ and finite $\Ga \cup \{s,t\}\subseteq\Tmc_{\langext}(\xbar)$,
\[
\Ga\cup\{s\} \mdl{\Vtotext} t \iff \Ga \mdl{\Vtotext} s \trimp t.
\]
Suppose first that $\Ga\cup\{s\} \mdl{\Vtotext} t$ and consider any $\m{A}\in\Vtotext$ and assignment $f\colon\xbar\to A$ satisfying $\e\le \tilde{f}(u)$ for all $u\in\Ga$. If $\tilde{f}(s)<\e$, then $\e =\tilde{f}(s)\trimp \tilde{f}(t) = \tilde{f}(s\trimp t)$; otherwise $\e\le \tilde{f}(s)$ and, by assumption, $\e\le \tilde{f}(t) = \tilde{f}(s)\trimp \tilde{f}(t) = \tilde{f}(s\trimp t)$. Hence $\Ga \mdl{\Vtotext} s \trimp t$. 

Suppose now that $\Ga \mdl{\Vtotext} s \trimp t$ and consider any $\m{A}\in\Vtotext$ and assignment $f\colon\xbar\to A$ satisfying $\e\le\tilde{f}(s)$ and $\e\le \tilde{f}(u)$ for all $u\in\Ga$. Then, by assumption, $\e\le \tilde{f}(s\trimp t)= \tilde{f}(s)\trimp \tilde{f}(t) = \tilde{f}(t)$. Hence $\Ga\cup\{s\} \mdl{\Vtotext} t$.
\end{proof}

The next result is then a direct consequence of Proposition~\ref{p:EDPC-reform}.

\begin{corollary}\label{c:EDPCt}
Let $\V$ be any semilinear variety of pointed residuated lattices. Then $\Vext$ has equationally definable principal congruences.
\end{corollary}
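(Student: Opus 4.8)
The plan is to verify condition~(2) of Proposition~\ref{p:EDPC-reform} for $\Vext$, exhibiting an explicit conjunction of equations --- in fact a single equation --- that witnesses equationally definable principal congruences. Guided by the deduction theorem of Proposition~\ref{p:EDPCt} and the new connective $\trimp$, the natural candidate is
\[
\f(x_1,x_2,y_1,y_2)\coloneqq\big((x_1\eqv x_2)\trimp(y_1\eqv y_2)\big)\eq\e.
\]
So I would fix a conjunction of $\langext$-equations $\pi$ together with $\langext$-terms $s_1,s_2,t_1,t_2$, write $\Ga\coloneqq\{u\eqv v\mid u\eq v\text{ is an equation of }\pi\}$, and run a chain of equivalences to relate $\Vext\models(\pi\AND(s_1\eq s_2))\IMP(t_1\eq t_2)$ to $\Vext\models\pi\IMP\f(s_1,s_2,t_1,t_2)$.

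First, by the translation between equational and $\mdl{}$-notation recorded just before Proposition~\ref{p:EDPCt}, the quasiequation $(\pi\AND(s_1\eq s_2))\IMP(t_1\eq t_2)$ holds in $\Vext$ if, and only if, $\Ga\cup\{s_1\eqv s_2\}\mdl{\Vext}t_1\eqv t_2$. Applying Proposition~\ref{p:EDPCt} with this $\Ga$ and with $s\coloneqq s_1\eqv s_2$, $t\coloneqq t_1\eqv t_2$, this is in turn equivalent to $\Ga\mdl{\Vext}(s_1\eqv s_2)\trimp(t_1\eqv t_2)$. Now $\Ga\mdl{\Vext}w$ means $\Vext\models\bigAND\{\e\leq s\mid s\in\Ga\}\IMP\e\leq w$. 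Since $\e\leq(u\eqv v)$ is equivalent, in any pointed residuated lattice, to $(u\eqv v)\eq\e$ and hence to $u\eq v$, the hypothesis $\bigAND\{\e\leq s\mid s\in\Ga\}$ is equivalent over $\Vext$ to $\pi$. Moreover, in every member of $\Vtotext$ the element $a\trimp b$ equals $b$ or $\e$, while $y_1\eqv y_2\leq\e$ always holds; hence $w\coloneqq(s_1\eqv s_2)\trimp(t_1\eqv t_2)$ satisfies $w\leq\e$ throughout $\Vtotext$, and so throughout the variety $\Vext=\cop{ISP}(\Vtotext)$ it generates (the lattice order being equationally defined). Therefore $\e\leq w$ is equivalent to $w\eq\e$, that is, to $\f(s_1,s_2,t_1,t_2)$. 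Chaining these equivalences gives exactly condition~(2) of Proposition~\ref{p:EDPC-reform}, so $\Vext$ has equationally definable principal congruences.

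There is no real obstacle: once $\trimp$ is in the language, the whole argument is a bookkeeping exercise combining Propositions~\ref{p:EDPCt} and~\ref{p:EDPC-reform}. The only point meriting a line of care is the passage from $\e\leq(s_1\eqv s_2)\trimp(t_1\eqv t_2)$ to the equation $(s_1\eqv s_2)\trimp(t_1\eqv t_2)\eq\e$, which rests on the two elementary facts that $a\eqv b\leq\e$ in any pointed residuated lattice and that $a\trimp b$ takes only the values $b$ and $\e$ throughout $\Vtotext$.
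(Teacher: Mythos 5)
Your proposal is correct and follows exactly the route the paper intends: the paper derives the corollary as a ``direct consequence'' of Proposition~\ref{p:EDPC-reform} via the deduction theorem of Proposition~\ref{p:EDPCt}, and you have simply made the witnessing equation $((x_1\eqv x_2)\trimp(y_1\eqv y_2))\eq\e$ and the bookkeeping explicit. The two points you flag --- that $\e\le(u\eqv v)$ is equivalent to $u\eq v$, and that $(s_1\eqv s_2)\trimp(t_1\eqv t_2)\le\e$ holds throughout $\Vext$ because it is an equational condition valid in the generating class $\Vtotext$ --- are indeed the only details needing care, and you handle them correctly.
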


We also obtain a uniform method for transforming a disjunct in the conclusion of a consequence into a premise. For a finite set $\xbar = \{x_1,\dots,x_n\}$ and $s\in\Tmc_{\langext}(\xbar)$, let
\[
\trneg s \coloneqq s \trimp ((\zr\eqv\e)\mt\!\bigwedge_{1 \le j \le n} (x_j\eqv\e)).
\]

\begin{lemma}\label{l:reverse}
Let $\V$ be any semilinear variety of pointed residuated lattices. Then for any finite set $\xbar$ and finite $\Ga \cup \{s,t\}\subseteq\Tmc_{\langext}(\xbar)$, 
\[
\Ga \mdl{\Vext} s\jn t \iff \Ga\cup\{\trneg s\} \mdl{\Vext} t.
\]
\end{lemma}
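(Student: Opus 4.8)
The plan is to reduce the statement to the linearly ordered members of $\Vext$ using $\Vext=\cop{ISP}(\Vtotext)$, and then to unwind the definition of $\trneg s$ in a chain $\m A\in\Vtotext$ together with an assignment $f$. The point of $\trneg s$ is that, by the definition of $\trimp$, the value $\tilde f(\trneg s)$ equals $\e$ whenever $\e\le\tilde f(s)$, and equals $\tilde f\big((\zr\eqv\e)\mt\bigwedge_j(x_j\eqv\e)\big)$ otherwise; moreover $(\zr\eqv\e)\mt\bigwedge_j(x_j\eqv\e)$ evaluates to $\e$ precisely when $\m A$ is trivial (all of $\zr,x_1,\dots,x_n$ collapse to $\e$), and to a value strictly below $\e$ otherwise. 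So, on a nontrivial chain, $\e\le\tilde f(\trneg s)$ is equivalent to $\e\not\le\tilde f(s)$, i.e.\ (using linearity) to $\tilde f(s)<\e$; and on the trivial algebra every $\langext$-equation holds vacuously so both sides of the claimed equivalence are trivially witnessed.

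\medskip
\noindent\textit{Details of the two directions.} First I would prove the left-to-right direction: assume $\Ga\mdl{\Vext}s\jn t$; fix $\m A\in\Vtotext$ and $f$ with $\e\le\tilde f(u)$ for all $u\in\Ga$ and $\e\le\tilde f(\trneg s)$. If $\m A$ is trivial there is nothing to prove. Otherwise $\e\le\tilde f(\trneg s)$ forces $\tilde f(s)<\e$, because if we had $\e\le\tilde f(s)$ then $\tilde f(\trneg s)=\e$ and, $\m A$ being a nontrivial chain, $\tilde f\big((\zr\eqv\e)\mt\bigwedge_j(x_j\eqv\e)\big)<\e$, contradicting $\e\le\tilde f(\trneg s)$ --- wait, one must check this carefully: if $\e\le\tilde f(s)$ then $\tilde f(\trneg s)=\tilde f\big((\zr\eqv\e)\mt\bigwedge_j(x_j\eqv\e)\big)$, which is $\le\e$ always and $=\e$ only on the trivial algebra, so on a nontrivial chain $\e\le\tilde f(\trneg s)$ is impossible; hence $\e\not\le\tilde f(s)$, and by linearity $\tilde f(s)<\e$. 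From $\Ga\mdl{\Vext}s\jn t$ we get $\e\le\tilde f(s)\jn\tilde f(t)=\tilde f(s)\jn\tilde f(t)$, and since $\tilde f(s)<\e$, linearity gives $\e\le\tilde f(t)$. Thus $\Ga\cup\{\trneg s\}\mdl{\Vtotext}t$, and as both classes generate the same $\Vext$ (via $\cop{ISP}$), $\Ga\cup\{\trneg s\}\mdl{\Vext}t$.

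\medskip
\noindent For the converse, assume $\Ga\cup\{\trneg s\}\mdl{\Vext}t$; fix $\m A\in\Vtotext$ and $f$ with $\e\le\tilde f(u)$ for all $u\in\Ga$. If $\e\le\tilde f(s)$ then trivially $\e\le\tilde f(s)\jn\tilde f(t)$. Otherwise $\tilde f(s)<\e$, so $\tilde f(\trneg s)=\tilde f\big((\zr\eqv\e)\mt\bigwedge_j(x_j\eqv\e)\big)$, which is $\le\e$; but I need $\e\le\tilde f(\trneg s)$ to apply the hypothesis. Here one uses that $\tilde f(s)<\e$ already forces $\m A$ nontrivial is \emph{not} what is wanted --- rather, the right move is: if $\m A$ is trivial then all $\langext$-equations hold and $\e\le\tilde f(s)$, handled above; if $\m A$ is nontrivial and $\tilde f(s)<\e$, then... the term $(\zr\eqv\e)\mt\bigwedge_j(x_j\eqv\e)$ need not evaluate to $\e$. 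The clean fix is to instead argue by cases on whether $\e\le\tilde f(\trneg s)$: if yes, apply the hypothesis to get $\e\le\tilde f(t)$, hence $\e\le\tilde f(s)\jn\tilde f(t)$; if no, then in particular $\tilde f(\trneg s)\ne\e$, which by the definition of $\trimp$ can only happen when $\e\le\tilde f(s)$ (else $\tilde f(\trneg s)=\e$), and again $\e\le\tilde f(s)\jn\tilde f(t)$. Either way $\Ga\mdl{\Vtotext}s\jn t$, hence $\Ga\mdl{\Vext}s\jn t$.

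\medskip
\noindent\textit{Main obstacle.} The only delicate point is the correct case analysis on the value of $\tilde f(\trneg s)$ and the role of the trivial algebra: one must verify that $\tilde f(\trneg s)\ne\e$ implies $\e\le\tilde f(s)$ (from the definition of $\trimp$, since the "else" branch always returns $\e$), and that when $\tilde f(s)<\e$ on a nontrivial chain the auxiliary term $(\zr\eqv\e)\mt\bigwedge_j(x_j\eqv\e)$ indeed lies strictly below $\e$ --- this last fact is where the conjunction over \emph{all} the variables $x_1,\dots,x_n$ (together with $\zr$) is needed, guaranteeing it collapses to $\e$ exactly on trivial algebras. Once these elementary facts about $\trimp$ and about chains are pinned down, both implications are immediate.
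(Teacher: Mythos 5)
Your right-to-left direction is correct and essentially matches the paper's argument. The left-to-right direction, however, rests on a false claim: that $(\zr\eqv\e)\mt\bigwedge_{1\le j\le n}(x_j\eqv\e)$ evaluates to $\e$ only on the trivial algebra. Since $u\eqv v=\e$ exactly when $u=v$, this guard term evaluates to $\e$ exactly when $\zr=\e$ holds in $\m A$ and $\tilde{f}(x_j)=\e$ for every $j$ --- and this happens in plenty of non-trivial chains. For instance, in any non-trivial ordered abelian group (where $\zr=\e=0$) with every variable assigned to $0$, the guard evaluates to $\e$ although $\m A$ is non-trivial; moreover $\tilde{f}(s)=\e$ there, so $\e\le\tilde{f}(\trneg s)$ holds together with $\e\le\tilde{f}(s)$ and the contradiction you rely on does not arise. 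Your case analysis therefore fails to deliver $\e\le\tilde{f}(t)$ in precisely this situation.

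The missing ingredient is the observation the paper uses at this point: if $\e\le\tilde{f}(s)$ and $\e\le\tilde{f}(\trneg s)$, then (since $u\eqv v\le\e$ always) the guard must equal $\e$, which forces $\zr=\tilde{f}(x_1)=\cdots=\tilde{f}(x_n)=\e$; an easy induction on the structure of terms then shows that every $\langext$-term with variables in $\xbar$ evaluates to $\e$, so in particular $\tilde{f}(t)=\e$ and the conclusion holds in this case as well. Alternatively, you could first reduce to the case where $\m A$ is generated by $\tilde{f}(\xbar)$ --- legitimate because $\Vtotext$ is closed under subalgebras and all terms involved have variables in $\xbar$ --- and only after that reduction is ``all generators equal to $\e$'' the same as ``$\m A$ is trivial''. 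With either repair the argument goes through.
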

\begin{proof}
Using the fact that $\Vext=\cop{ISP}(\Vtotext)$, it suffices to prove that for any finite set $\xbar = \{x_1,\dots,x_n\}$ and finite set $\Ga \cup \{s,t\}\subseteq\Tmc_{\langext}(\xbar)$,
\[
\Ga \mdl{\Vtotext} s\jn t \iff \Ga\cup\{\trneg s\} \mdl{\Vtotext} t.
\]
Suppose first that $\Ga \mdl{\Vtotext} s\jn t$ and consider any $\m{A}\in\Vtotext$ and assignment $f\colon\xbar\to A$ such that $\e\le \tilde{f}(\trneg s)$ and $\e\le \tilde{f}(u)$ for all $u\in\Ga$. Then, by assumption, $\e \le \tilde{f}(s\jn t)$. If $\e\le\tilde{f}(s)$, then $\e\le\tilde{f}(\trneg s)=\tilde{f}((\zr\eqv\e)\mt\bigwedge_{1 \le j \le n} (x_j\eqv \e))$, yielding $\zr =\tilde{f}(x_1)=\cdots = \tilde{f}(x_n)=\e$ and, inductively, $\tilde{f}(t)=\e$. Otherwise, $\e \le \tilde{f}(t)$.  Hence $\Ga\cup\{\trneg s\} \mdl{\Vtotext} t$. 

Suppose next that $\Ga\cup\{\trneg s\} \mdl{\Vtotext} t$ and  consider any $\m{A}\in\Vtotext$ and assignment $f\colon\xbar\to A$ such that $\e\le \tilde{f}(u)$ for all $u\in\Ga$. If $\e \le \tilde{f}(s)$, then $\e\le \tilde{f}(s\jn t)$. Otherwise, $\tilde{f}(\trneg s)=\e$ and, by assumption, $\e\le \tilde{f}(t)$, yielding $\e\le \tilde{f}(s\jn t)$. Hence  $\Ga \mdl{\Vtotext} s\jn t$.
\end{proof}

The next lemma provides a uniform method for eliminating occurrences of $\trimp$ from $\langext$-quasiequations while preserving validity in $\Vext$. To avoid multiple case distinctions, we introduce a new symbol $\tremp$, fixing $s\jn\tremp\coloneqq s$, $\tremp\jn s\coloneqq s$, $\trneg\tremp\coloneqq\e$ for any $\langext$-term $s$.

\begin{lemma}\label{l:removingdeltageneralt}
Let $\V$ be any semilinear variety of pointed residuated lattices. For any finite set $\xbar$, $s\in\Tmc_{\langext}(\xbar)$, and $t\in\Tmc_{\langext}(\xbar)\cup\{\tremp\}$, there exist a finite non-empty set $I$ and $s'_i\in\Tmc_{\lang}(\xbar)$, $t'_i\in\Tmc_{\lang}(\xbar)\cup\{\Lambda\}$ for each $i\in I$ such that for any $u,v \in\Tmc_{\langext}(\xbar,\ybar)$,
\[
\{u \mt s\}  \mdl{\Vext} t \jn v \iff \forall i\in I\colon\{u \mt s'_i\} \mdl{\Vext} t'_i \jn v.
\]
\end{lemma}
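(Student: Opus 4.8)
The plan is to argue by induction on $n \coloneqq m(s) + m(t)$, where $m(w)$ denotes the number of occurrences of $\trimp$ in a term $w$ and $m(\tremp)\coloneqq 0$. As in the earlier results of this section, I would first replace $\Vext$ by $\Vtotext$: since $\Vext=\cop{ISP}(\Vtotext)$, the two satisfy the same $\langext$-quasiequations, so for any fixed $I$ and families $s'_i, t'_i$ the asserted equivalence holds over $\Vext$ if, and only if, it holds over $\Vtotext$; this makes linear orderedness available. The base case $n=0$ is immediate: then $s\in\Tmc_\lang(\xbar)$ and $t\in\Tmc_\lang(\xbar)\cup\{\tremp\}$, so take $I\coloneqq\{\ast\}$ with $(s'_\ast,t'_\ast)\coloneqq(s,t)$.

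For the inductive step with $n>0$, suppose first that $\trimp$ occurs in $s$. I would fix one such occurrence and write $s=C[r_1\trimp r_2]$ for a one-holed context $C$ and $r_1,r_2\in\Tmc_{\langext}(\xbar)$, then prove the equivalence
\[
\{u\mt s\}\mdl{\Vtotext} t\jn v \;\Longleftrightarrow\; \{u\mt(r_1\mt C[r_2])\}\mdl{\Vtotext} t\jn v \;\text{ and }\; \{u\mt C[\e]\}\mdl{\Vtotext} (r_1\jn t)\jn v,
\]
valid for all $u,v$, where $r_1\jn\tremp$ is read as $r_1$. This is established by fixing $\m{A}\in\Vtotext$ and an assignment and distinguishing the cases $\e\le\tilde f(r_1)$ (whence $\tilde f(s)=\tilde f(C[r_2])$) and $\e\not\le\tilde f(r_1)$ (whence $\tilde f(s)=\tilde f(C[\e])$), using repeatedly that in a linearly ordered algebra $\e\le\tilde f(a\jn b)$ holds if, and only if, $\e\le\tilde f(a)$ or $\e\le\tilde f(b)$. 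The first resulting instance has $m(r_1\mt C[r_2])+m(t)=n-1$ occurrences of $\trimp$, and the second has $m(C[\e])+m(r_1\jn t)=n-1-m(r_2)\le n-1$, so the induction hypothesis applies to both; taking the disjoint union of the two (finite, non-empty) index sets yields the required data, with all $s'_i\in\Tmc_\lang(\xbar)$ and $t'_i\in\Tmc_\lang(\xbar)\cup\{\tremp\}$.

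If $\trimp$ does not occur in $s$, then it occurs in $t$, and I would proceed symmetrically on the conclusion side: fix an occurrence, write $t=D[p_1\trimp p_2]$, and establish
\[
\{u\mt s\}\mdl{\Vtotext} t\jn v \;\Longleftrightarrow\; \{u\mt(p_1\mt s)\}\mdl{\Vtotext} D[p_2]\jn v \;\text{ and }\; \{u\mt s\}\mdl{\Vtotext} (p_1\jn D[\e])\jn v,
\]
again by a case split on $\e\le\tilde f(p_1)$ together with the fact that $\e\le\tilde f(t\jn v)$ iff $\e\le\tilde f(t)$ or $\e\le\tilde f(v)$; both right-hand instances lose at least one $\trimp$ (they have $n-1$ and $n-1-m(p_2)$ occurrences respectively), so the induction hypothesis applies and we once more combine index sets.

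The crux — and the only genuinely delicate point — is arranging that the inductive measure strictly decreases when we peel off a $\trimp$. Recording the side condition ``$\e\not\le\tilde f(r_1)$'' by means of $\trneg r_1$ (as in Lemma~\ref{l:reverse}) reintroduces an occurrence of $\trimp$ and the induction stalls. The key observation is that linear orderedness lets one rewrite ``$\e\not\le\tilde f(r_1)$ and $\e\le\tilde f(w)$ imply $\e\le\tilde f(v)$'' as ``$\e\le\tilde f(w)$ implies $\e\le\tilde f(r_1\jn v)$'', moving $r_1$ into the conclusion without creating any new $\trimp$. Once the two equivalences above are in place, the remaining work — verifying them in detail, handling the trivial simplifications ($\e\mt u$, $r\jn\tremp$, the degenerate case $t=\tremp$), and confirming that the index sets are non-empty — is routine.
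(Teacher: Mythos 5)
Your proposal is correct and follows essentially the same route as the paper: the two reduction steps you isolate (replacing a distinguished occurrence of $r_1\trimp r_2$ in the premise by $r_2$ while adding $r_1$ as a meet-conjunct, or by $\e$ while pushing $r_1$ into the join on the conclusion side, and dually for the conclusion term) are exactly the paper's two displayed equivalences, verified in the same way over $\Vtotext$ by a case split on $\e\le\tilde f(r_1)$ and linearity. The paper merely says ``iterating these steps'' where you make the termination explicit via the measure $m(s)+m(t)$; that is a welcome elaboration, not a different argument.
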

\begin{proof}
Let $s'[t']$ denote the result of replacing some distinguished occurrence of a variable in a term $s'$ by a term $t'$. For $s = s'[s_1\trimp s_2]$ and any $u,v \in\Tmc_{\langext}(\xbar,\ybar)$,
\[
\{u \mt s\}  \mdl{\Vext} t \jn v \iff \{u \mt s'[s_2] \mt s_1\}  \mdl{\Vext}  t \jn v  \:\text{ and }\: \{u \mt s'[\e]\} \mdl{\Vext}  t \jn s_1 \jn v.
\]
Similarly, for $t = t'[t_1\trimp t_2]$  and any $u,v \in\Tmc_{\langext}(\xbar,\ybar)$,
\[
\{u \mt s\} \mdl{\Vext} t \jn v \iff \{u \mt s \mt t_1\}  \mdl{\Vext}  t'[t_2] \jn v \:\text{ and }\:\{u \mt s\} \mdl{\Vext}  t'[\e] \jn t_1 \jn v.
\]
Iterating these steps yields the required finite set $I$ and $s'_i,t'_i\in\Tmc_{\lang}(\xbar)$ for each $i\in I$.
\end{proof}

We now provide sufficient conditions for a semilinear variety of pointed residuated lattices $\V$ to ensure that the theory of $\Vext$ has a model completion. 

\begin{theorem}\label{t:two-sided-mc}
Let $\V$ be any semilinear variety of pointed residuated lattices such that for any finite set $\xbar,y$ and $s(\xbar,y)\in\Tmc_{\lang}(\xbar,y)$, $t(\xbar,y)\in\Tmc_{\lang}(\xbar,y)\cup\{\tremp\}$, there exist a finite non-empty set $K$ and $s'_k(\xbar)\in\Tmc_{\lang}(\xbar)$, $t'_k(\xbar)\in\Tmc_{\lang}(\xbar)\cup\{\tremp\}$ for each $k\in K$ satisfying for any $u(\xbar,\zbar),v(\xbar,\zbar)\in\Tmc_{\lang}(\xbar,\zbar)$,
\[
\{u(\xbar,\zbar)\mt s(\xbar,y)\} \mdl{\V} t(\xbar,y)\jn v(\xbar,\zbar) \iff \forall k\in K\colon\{u(\xbar,\zbar)\mt s'_k(\xbar)\} \mdl{\V} t'_k(\xbar)\jn v(\xbar,\zbar).
\]
Then the theory of $\Vext$ has a model completion. 
\end{theorem}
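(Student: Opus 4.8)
The plan is to apply Proposition~\ref{p:suffconditionsedpc} to $\Vext$. By Corollary~\ref{c:EDPCt}, $\Vext$ has equationally definable principal congruences, so by Proposition~\ref{p:EDPC-implies} it is congruence distributive and has the congruence extension property. It thus remains to show that $\Vext$ is coherent and has the amalgamation property and the equational variable restriction property; by Corollary~\ref{c:amalg+leftvar} the last two will follow once condition~(2) of that corollary is verified for $\Vext$.

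The technical core is an ``$\langext$-version'' of the hypothesis on $\V$: for any finite set $\xbar,y$ and $s(\xbar,y)\in\Tmc_{\langext}(\xbar,y)$, $t(\xbar,y)\in\Tmc_{\langext}(\xbar,y)\cup\{\tremp\}$, there exist a finite non-empty $K$ and $s'_k(\xbar),t'_k(\xbar)\in\Tmc_{\langext}(\xbar)\cup\{\tremp\}$ $(k\in K)$ such that for all $\langext$-terms $u(\xbar,\zbar),v(\xbar,\zbar)$,
\[
\{u\mt s\}\mdl{\Vext} t\jn v\iff\forall k\in K\colon\{u\mt s'_k\}\mdl{\Vext} t'_k\jn v .
\]
To prove this, one first uses Lemma~\ref{l:removingdeltageneralt} to erase all occurrences of $\trimp$ from $s$ and $t$, obtaining $\lang$-terms $\hat s_i(\xbar,y)$, $\hat t_i(\xbar,y)$. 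For $\lang$-term contexts $u,v$ each consequence $\{u\mt\hat s_i\}\mdl{\Vext}\hat t_i\jn v$ is an $\lang$-quasiequation, hence by Proposition~\ref{p:vvt} equivalent to the corresponding consequence over $\V$; applying the hypothesis on $\V$ to $(\hat s_i,\hat t_i)$ to remove $y$ and passing back via Proposition~\ref{p:vvt} gives the displayed equivalence for $\lang$-term contexts. To reach arbitrary $\langext$-term contexts one erases $\trimp$ from $u$ and $v$ by iterating the rewriting steps in the proof of Lemma~\ref{l:removingdeltageneralt}; these leave the ``interior'' terms untouched and, for a fixed $u,v$, replace $\{u\mt S\}\mdl{\Vext} T\jn v$ by a finite conjunction of $\{u_\alpha\mt S\}\mdl{\Vext} T\jn v_\alpha$ with $u_\alpha,v_\alpha$ $\lang$-terms, uniformly in $S,T$. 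Applying this with $(S,T)=(s,t)$, then the $\lang$-context case to $(s,t)$, and then the rewriting again with each $(S,T)=(s'_k,t'_k)$, the general statement follows.

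Granting this lemma, both coherence and condition~(2) of Corollary~\ref{c:amalg+leftvar} are obtained using Lemma~\ref{l:reverse}, the deduction theorem (Proposition~\ref{p:EDPCt}), and the fact that $\Vext=\cop{ISP}(\Vtotext)$ with $\Vtotext$ a class of chains, so that over $\Vtotext$ the formula $\e\le a_1\jn\cdots\jn a_n$ expresses $\e\le a_1\OR\cdots\OR\e\le a_n$, the formula $\e\le a_1\mt\cdots\mt a_n$ the corresponding conjunction, and $\mt,\jn$ distribute. For coherence: a conjunction of $\langext$-equations $\f(\xbar,y)$ is $\Vext$-equivalent to $\e\eq\phi$ for a single $\langext$-term $\phi(\xbar,y)$; applying the lemma with $t:=\tremp$, moving each $t'_k$ into the premise by Lemma~\ref{l:reverse}, and setting $e(\xbar):=\bigvee_{k\in K}(s'_k\mt\trneg t'_k)$, one gets $\{u\mt\phi\}\mdl{\Vext} v\iff\{u\mt e\}\mdl{\Vext} v$ for all $\langext$-terms $u,v$; since every $\langext$-equation $\ep(\xbar)$ is $\Vext$-equivalent to $\e\le r(\xbar)$ for some $\langext$-term $r$, it is routine to check that $\xi:=(\e\eq e)$ witnesses the equational variable projection property, so $\Vext$ is coherent by Proposition~\ref{p:coherence-eq-conseq}. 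For condition~(2): writing $\ga(\xbar,y)$ as $\e\eq g(\xbar,y)$, applying the lemma with $s:=\e$, and combining by the deduction theorem into $p(\xbar):=\bigwedge_{k\in K}(s'_k\trimp t'_k)$ (taking $\pi:=\bot$ in the degenerate case), one checks that $\pi:=(\e\eq p)$ has the required property. Proposition~\ref{p:suffconditionsedpc} then gives the model completion.

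The main obstacle is the proof of the $\langext$-version of the hypothesis: the elimination of $\trimp$ must be organised so as to remain uniform in the generic context $u,v$, which requires verifying that the rewriting terminates, preserves the needed syntactic shape, and commutes appropriately with the $y$-elimination coming from the hypothesis on $\V$. A secondary point of care is the repeated passage between $\Vext$ and $\Vtotext$ and the use of distributivity on chains to recombine the finite families produced by the lemma into the single terms $e$ and $p$.
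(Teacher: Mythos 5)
Your overall route is the same as the paper's: establish equationally definable principal congruences for $\Vext$ via Corollary~\ref{c:EDPCt}, reduce the existence of a model completion to coherence plus condition~(2) of Corollary~\ref{c:amalg+leftvar} via Proposition~\ref{p:suffconditionsedpc}, and transfer the hypothesis on $\V$ to $\langext$-contexts by combining Lemma~\ref{l:removingdeltageneralt} (applied both to the pair being interpolated and, with the roles swapped, to the contexts $u,v$) with Proposition~\ref{p:vvt}. Packaging this transfer as a standalone ``$\langext$-version of the hypothesis'' is a reasonable reorganisation, and your coherence argument, producing $e=\bigvee_k(s'_k\mt\trneg t'_k)$ via Lemma~\ref{l:reverse}, matches the paper's construction of $s^\star$ and is essentially correct (modulo the harmless slip that the witnessing formula should express $\e\le e$ rather than $\e\eq e$, since $e$ need not lie below $\e$).

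The gap is in the amalgamation/variable-restriction half. Your interpolant $p=\bigwedge_k(s'_k\trimp t'_k)$ is not even a well-formed term when some $t'_k=\tremp$ (and this case does occur, e.g.\ for $t=y$ over lattice-ordered abelian groups), and, more importantly, the equivalence $\{u\}\mdl{\Vext}g\iff\{u\}\mdl{\Vext}p$ does not follow from your lemma: the lemma, like the hypothesis on $\V$, only yields equivalences in the presence of a spare disjunct $v(\xbar,\zbar)$, relating $\{u\}\mdl{\Vext}g\jn v$ to $\forall k\colon\{u\mt s'_k\}\mdl{\Vext}t'_k\jn v$. In the coherence case the conclusion being tested itself plays the role of $v$, but here there is no disjunct to instantiate, and removing $v$ is exactly where the work lies. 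The paper handles this by first checking whether there exists any $w(\xbar)$ with $\{w\}\mdl{\Vext}t$ (if not, $\pi\coloneqq\bot$ suffices for Corollary~\ref{c:amalg+leftvar}), and otherwise replacing each interpolant $t''_{i,k}$ by $t''_{i,k}\jn w$: this is always a genuine term, the hypothesis can then be applied with the $\lang$-term $w\jn v'$ in place of $v'$ (so it applies even when the auxiliary disjunct produced by purifying the context is $\tremp$), and the extra disjunct $w$ is cut at the end using $\{w\mt s'_i\}\mdl{\V}t'_i$. Some device of this kind is needed to make your step from the lemma to $\pi$ go through; as written, ``one checks that $\pi\coloneqq(\e\eq p)$ has the required property'' conceals precisely this difficulty.
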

\begin{proof}
From Corollary~\ref{c:EDPCt}, we know that $\Vext$ has equationally definable principal congruences.  Hence, to conclude using Proposition~\ref{p:suffconditionsedpc} that the theory of $\Vext$ has a model completion, it remains to prove that $\Vext$ is coherent and has the amalgamation property and equational variable restriction property. 

For coherence, it suffices to show that for any finite set $\xbar,y$ and $s(\xbar,y)\in\Tmc_{\langext}(\xbar,y)$, there exists an $s^\star(\xbar)\in\Tmc_{\langext}(\xbar)$ such that for any $v(\xbar) \in\Tmc_{\langext}(\xbar)$, 
\[
\{s(\xbar,y)\} \mdl{\Vext} v(\xbar) \iff \{s^\star(\xbar)\} \mdl{\Vext} v(\xbar).
\]
By Lemma~\ref{l:removingdeltageneralt} (with $t=\tremp$), there exist a finite non-empty set $I$ and $s'_i(\xbar,y)\in\Tmc_{\lang}(\xbar,y)$, $t'_i(\xbar,y)\in\Tmc_{\lang}(\xbar,y)\cup\{\tremp\}$ ($i\in I$) such that for any $v(\xbar) \in\Tmc_{\langext}(\xbar)$, 
\begin{align}\label{equivs_for_alpha_and_beta}
\{s(\xbar,y)\} \mdl{\Vext} v(\xbar) & \iff \forall i\in I\colon\{s'_i(\xbar,y)\}  \mdl{\Vext} t'_i(\xbar,y)\jn v(\xbar).
\end{align}
By assumption, there exist for each $i\in I$, a finite non-empty set $K_i$ and $s''_{i,k}(\xbar)\in\Tmc_{\lang}(\xbar)$, $t''_{i,k}(\xbar)\in\Tmc_{\lang}(\xbar)\cup\{\tremp\}$ ($k\in K_i$) satisfying for any $u'(\xbar),v'(\xbar)\in\Tmc_{\lang}(\xbar)$,
\begin{align}\label{interpolants}
\begin{split}
 \{u'(\xbar)\mt s'_i(\xbar,y)\} & \mdl{\V} t'_i(\xbar,y)\jn v'(\xbar)\\
 & \iff \forall k\in K_i\colon \{u'(\xbar)\mt s''_{i,k}(\xbar)\} \mdl{\V} t''_{i,k}(\xbar)\jn v'(\xbar).
 \end{split}
\end{align}
We define now
\[
s^\star(\xbar)\coloneqq \bigvee_{i\in I}\bigvee_{k\in K_i}\left(s''_{i,k}(\xbar)  \mt  \trneg t''_{i,k}(\xbar)\right).
\]
Consider any $v(\xbar)\in\Tmc_{\langext}(\xbar)$. By Lemma~\ref{l:removingdeltageneralt} (with $s=\e$), there exist a finite non-empty set $J$ and $u'_j(\xbar)\in\Tmc_{\lang}(\xbar)$, $v'_j(\xbar)\in\Tmc_{\lang}(\xbar)\cup\{\Lambda\}$ ($j\in J$) such that for any $s'(\xbar,y),t'(\xbar,y)\in\Tmc_{\langext}(\xbar,y)$,
\[
\{s'(\xbar,y)\}  \mdl{\Vext} t'(\xbar,y)\jn v(\xbar) \iff \forall j\in J\colon\{u'_j(\xbar)\mt s'(\xbar,y)\} \mdl{\Vext}t'(\xbar,y)\jn v'_j(\xbar).
\]
In particular, for each $i \in I$ and $k\in K_i$,
\begin{align}
\label{equiv_one}
\{s'_i(\xbar,y)\}  \mdl{\Vext} t'_i(\xbar,y)\jn v(\xbar)  & \iff  \forall j\in J\colon\{u'_j(\xbar)\mt s'_i(\xbar,y)\}  \mdl{\Vext} t'_i(\xbar,y)\jn v'_j(\xbar)\\
\label{equiv_two}
\{s''_{i,k}(\xbar)\}  \mdl{\Vext} t''_{i,k}(\xbar)\jn v(\xbar) & \iff  \forall j\in J\colon\{u'_j(\xbar)\mt s''_{i,k}(\xbar)\}  \mdl{\Vext} t''_{i,k}(\xbar)\jn v'_j(\xbar).
\end{align}
Putting these equivalences together, it follows that
\begin{align*}
\{s(\xbar,y)\} & \mdl{\Vext} v(\xbar)\\
& \iff \forall i\in I\colon\{s'_i(\xbar,y)\}  \mdl{\Vext} t'_i(\xbar,y) \jn v(\xbar) 
& \text{by~\eqref{equivs_for_alpha_and_beta}}\\
& \iff \forall i\in I,j\in J\colon\{u'_j(\xbar)\mt s'_i(\xbar,y)\}  \mdl{\Vext} t'_i(\xbar,y)\jn v'_j(\xbar) 
& \text{by~\eqref{equiv_one}}\\
& \iff \forall i\in I,j\in J\colon\{u'_j(\xbar)\mt s'_i(\xbar,y)\}  \mdl{\V} t'_i(\xbar,y)\jn v'_j(\xbar)
& \text{Prop.~\ref{p:vvt}}\\
& \iff \forall i\in I,j\in J,k\in K_i\colon\{u'_j(\xbar)\mt s''_{i,k}(\xbar)\} \mdl{\V} t''_{i,k}(\xbar)\jn v'_j(\xbar)
& \text{by~\eqref{interpolants}}\\
& \iff \forall i\in I,j\in J,k\in K_i\colon\{u'_j(\xbar)\mt s''_{i,k}(\xbar)\} \mdl{\Vext} t''_{i,k}(\xbar)\jn v'_j(\xbar)
& \text{Prop.~\ref{p:vvt}}\\
& \iff \forall i\in I,k\in K_i\colon\{s''_{i,k}(\xbar)\} \mdl{\Vext} t''_{i,k}(\xbar)\jn v(\xbar)
& \text{by~\eqref{equiv_two}}\\
& \iff \forall i\in I,k\in K_i\colon\{s''_{i,k}(\xbar)\mt \trneg t''_{i,k}(\xbar)\} \mdl{\Vext}  v(\xbar) 
& \text{Lem.~\ref{l:reverse}}\\
& \iff \{s^\star(\xbar)\} \mdl{\Vext} v(\xbar). 
\end{align*}
For the amalgamation property and equational variable restriction property, it suffices by Corollary~\ref{c:amalg+leftvar} to show that for any finite set $\xbar,y$ and $t(\xbar,y)\in\Tmc_{\langext}(\xbar,y)$, either $\{w(\xbar)\}\nmdl{\Vext} t(\xbar,y)$ for all $w(\xbar)\in\Tmc_{\langext}(\xbar)$ (taking care of the case in Corollary~\ref{c:amalg+leftvar} where $\pi(\xbar)$ is $\bot$) or there exists a $t^\star(\xbar)\in\Tmc_{\langext}(\xbar)$ such that for any $u(\xbar,\zbar) \in\Tmc_{\langext}(\xbar,\zbar)$, 
\[
\{u(\xbar,\zbar)\} \mdl{\Vext} t(\xbar,y) \iff \{u(\xbar,\zbar)\} \mdl{\Vext} t^\star(\xbar).
\]
Suppose then that $\{w(\xbar)\} \mdl{\Vext} t(\xbar,y)$ for some $w(\xbar)\in\Tmc_{\langext}(\xbar)$. By iteratively replacing $w'[w_1\trimp w_2]$ with $w'[w_2]\mt w_1$, we may assume without loss of generality that  $w(\xbar)\in\Tmc_{\lang}(\xbar)$. Next, by Lemma~\ref{l:removingdeltageneralt} (with $s=\e$), there exist a finite  non-empty set $I$ and $s'_i(\xbar,y),t'_i(\xbar,y)\in\Tmc_{\lang}(\xbar,y)$ ($i\in I$) such that for any $u(\xbar,\zbar)\in\Tmc_{\langext}(\xbar,\zbar)$, 
\begin{align}
\{u(\xbar,\zbar)\}  \mdl{\Vext} t(\xbar,y) & \iff \forall i\in I\colon\{u(\xbar,\zbar)\mt s'_i(\xbar,y)\} \mdl{\Vext} t'_i(\xbar,y). \label{equivs_for_alpha_and_betat}
\end{align}
Let us temporarily fix $i\in I$. By assumption, there exist a finite non-empty set $K_i$ and $s''_{i,k}(\xbar)\in\Tmc_{\lang}(\xbar)$, $t''_{i,k}(\xbar)\in\Tmc_{\lang}(\xbar)\cup\{\tremp\}$ ($k\in K_i$) such that for any $u'(\xbar,\zbar),v'(\xbar,\zbar)\in\Tmc_{\lang}(\xbar,\zbar)$,
\begin{align}\label{interpolantst}
\begin{split}
\{u'(\xbar,\zbar)\mt s'_i(\xbar,y)\} & \mdl{\V} t'_i(\xbar,y)\jn v'(\xbar,\zbar)\\
 & \iff \forall k\in K_i\colon \{u'(\xbar,\zbar)\mt s''_{i,k}(\xbar)\} \mdl{\V} t''_{i,k}(\xbar)\jn v'(\xbar,\zbar).
 \end{split}
\end{align}
Since $\{w(\xbar)\} \mdl{\Vext} t(\xbar,y)$, by~\eqref{equivs_for_alpha_and_betat} and Proposition~\ref{p:vvt}, also $\{w(\xbar)\mt s'_i(\xbar,y)\} \mdl{\V} t'_i(\xbar,y)$. Let $t'''_{i,k}(\xbar) \coloneqq  t''_{i,k}(\xbar) \jn w(\xbar)$  for each $k\in K_i$. Then for any $u'(\xbar,\zbar),v'(\xbar,\zbar)\in\Tmc_{\lang}(\xbar,\zbar)$,
\begin{align}\label{interpolantstnew}
\begin{split}
\{u'(\xbar,\zbar)\mt s'_i(\xbar,y)\} & \mdl{\Vext} t'_i(\xbar,y)\jn v'(\xbar,\zbar)\\
 & \iff \forall k\in K_i\colon\{u'(\xbar,\zbar)\mt s''_{i,k}(\xbar)\} \mdl{\Vext} t'''_{i,k}(\xbar)\jn v'(\xbar,\zbar).
 \end{split}
\end{align}
The left-to-right direction of this equivalence is immediate. For the converse direction, 
\begin{align*}
& \forall k\in K_i\colon\{u'(\xbar,\zbar)\mt s''_{i,k}(\xbar)\} \mdl{\Vext} t'''_{i,k}(\xbar)\jn v'(\xbar,\zbar)\\
&  \Longrightarrow  \forall k\in K_i\colon\{u'(\xbar,\zbar)\mt s''_{i,k}(\xbar)\} \mdl{\V} t'''_{i,k}(\xbar)\jn v'(\xbar,\zbar) & \text{Prop.~\ref{p:vvt}}\\
& \Longrightarrow \{u'(\xbar,\zbar)\mt s'_i(\xbar,y)\}\mdl{\V} t'_i(\xbar,y)\jn w(\xbar) \jn v'(\xbar,\zbar) & \text{by~\eqref{interpolantst}}\\
& \Longrightarrow \{u'(\xbar,\zbar)\mt s'_i(\xbar,y)\}\mdl{\V} t'_i(\xbar,y)\jn v'(\xbar,\zbar) & \text{since $\{w(\xbar)\mt s'_i(\xbar,y)\} \mdl{\V} t'_i(\xbar,y)$}\\
& \Longrightarrow \{u'(\xbar,\zbar)\mt s'_i(\xbar,y)\}\mdl{\Vext} t'_i(\xbar,y)\jn v'(\xbar,\zbar)& \text{Prop.~\ref{p:vvt}}.
\end{align*}
We define now
\[
t^\star(\xbar)\coloneqq \bigwedge_{i\in I}\bigwedge_{k\in K_i} \left(s''_{i,k}(\xbar) \trimp t'''_{i,k}(\xbar)\right).
\]
Consider any $u(\xbar,\zbar)\in\Tmc_{\langext}(\xbar,\zbar)$. By Lemma~\ref{l:removingdeltageneralt} (with $t=\tremp$), there exist a finite non-empty set $J$ and $u'_j(\xbar,\zbar)\in\Tmc_{\lang}(\xbar,\zbar)$, $v'_j(\xbar,\zbar)\in\Tmc_{\lang}(\xbar,\zbar)\cup\{\tremp\}$ ($j\in J$) such that for any $s'(\xbar,y),t'(\xbar,y)\in\Tmc_{\langext}(\xbar,y)$,
\[
\{u(\xbar,\zbar)\mt s'(\xbar,y)\}  \mdl{\Vext} t'(\xbar,y) \iff \forall j\in J\colon\{u'_j(\xbar,\zbar)\mt s'(\xbar,y)\} \mdl{\Vext}t'(\xbar,y)\jn v'_j(\xbar,\zbar).
\]
In particular, for each $i \in I$ and $k\in K_i$,
\begin{align}
\label{equivs_onet}
\begin{split}
\{u(\xbar,\zbar) \mt s'_i(\xbar,y)\} & \mdl{\Vext} t'_i(\xbar,y)\\
& \iff \forall j\in J\colon\{u'_j(\xbar,\zbar) \mt s'_i(\xbar,y)\}  \mdl{\Vext} t'_i(\xbar,y)\jn v'_j(\xbar,\zbar)
\end{split}
\end{align}
\begin{align}
\label{equivs_twot}
\begin{split}
\{u(\xbar,\zbar) \mt s''_{i,k}(\xbar)\} & \mdl{\Vext} t'''_{i,k}(\xbar)\\
& \iff \forall j\in J\colon\{u'_j(\xbar,\zbar) \mt s''_{i,k}(\xbar)\}  \mdl{\Vext} t'''_{i,k}(\xbar)\jn v'_j(\xbar,\zbar).
\end{split}
\end{align}
Putting these equivalences together, it follows that 
\begin{align*}
\{u(\xbar,\zbar)\} & \mdl{\Vext} t(\xbar,y)\\
& \iff \forall i\in I\colon\{u(\xbar,\zbar) \mt s'_i(\xbar,y)\}  \mdl{\Vext} t'_i(\xbar,y)
& \text{by~\eqref{equivs_for_alpha_and_betat}}\\
& \iff \forall i\in I,j\in J\colon\{u'_j(\xbar,\zbar) \mt s'_i(\xbar,y)\}  \mdl{\Vext} t'_i(\xbar,y)\jn v'_j(\xbar,\zbar)
& \text{by~\eqref{equivs_onet}}\\
& \iff \forall i\in I,j\in J,k\in K_i\colon\{u'_j(\xbar,\zbar)  \mt s''_{i,k}(\xbar)\}  \mdl{\Vext} t'''_{i,k}(\xbar)\jn v'_j(\xbar,\zbar) 
& \text{by~\eqref{interpolantstnew}}\\
& \iff \forall i\in I,k\in K_i\colon\{u(\xbar,\zbar) \mt s''_{i,k}(\xbar)\} \mdl{\Vext} t'''_{i,k}(\xbar) 
& \text{by~\eqref{equivs_twot}}\\
& \iff \forall i\in I,k\in K_i\colon\{u(\xbar,\zbar)\} \mdl{\Vext}  s''_{i,k}(\xbar) \trimp t'''_{i,k}(\xbar)
& \text{Prop.~\ref{p:EDPCt}}\\
& \iff \{u(\xbar,\zbar)\}  \mdl{\Vext} t^\star(\xbar). & \qed\proofend
\end{align*}
\end{proof}

In particular, the conditions of Theorem~\ref{t:two-sided-mc} are satisfied when $\V$ is the variety $\LA$ of lattice-ordered abelian groups.

\begin{theorem}\label{t:laext}
The theory of $\LAext$ has a model completion.
\end{theorem}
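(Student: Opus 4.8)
The plan is to derive this from Theorem~\ref{t:two-sided-mc} applied with $\V=\LA$, viewed as a semilinear variety of commutative pointed residuated lattices via the term-equivalence recalled above (so $\e=\zr=0$, $x\cdot y=x+y$, $x\ld y=y-x$, $x\rd y=x-y$, and $\LA=\cop{ISP}(\OG)$, where $\OG$ is the class of totally ordered abelian groups). Every condition of the form $\Ga\mdl{\LA}t$ appearing in the hypothesis of Theorem~\ref{t:two-sided-mc} is a quasiequation, and $\LA$ and $\OG$ generate the same quasivariety, so $\Ga\mdl{\LA}t\iff\Ga\mdl{\OG}t$; hence the whole verification can be carried out over totally ordered abelian groups, where $0\le r_1\mt r_2$ iff ($0\le r_1$ and $0\le r_2$) and $0\le r_1\jn r_2$ iff ($0\le r_1$ or $0\le r_2$).

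So fix $\xbar,y$ and $\lang$-terms $s(\xbar,y)$ and $t(\xbar,y)\in\Tmc_\lang(\xbar,y)\cup\{\tremp\}$; the task is to produce finitely many pairs $(s'_k(\xbar),t'_k(\xbar))$, $k\in K$, with $t'_k\in\Tmc_\lang(\xbar)\cup\{\tremp\}$, so that, uniformly in $u,v\in\Tmc_\lang(\xbar,\zbar)$, one has $\{u\mt s\}\mdl{\LA}t\jn v\iff\forall k\in K\colon\{u\mt s'_k\}\mdl{\LA}t'_k\jn v$. First I would put $s$ and $t$ into the usual disjunctive normal form for lattice-ordered abelian group terms: each equals, in $\LA$, a term of the shape $\bigvee\bigwedge\ell$ with the $\ell$ integer-linear combinations of the variables, so that over totally ordered abelian groups $0\le s\iff\bigvee_a\bigwedge_b(0\le\ell_{ab})$ and $0\le t\iff\bigvee_c\bigwedge_d(0\le m_{cd})$. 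Unwinding the left-hand side --- that is, ``no totally ordered abelian group and assignment satisfies $0\le u$, $0\le s$, and $t\jn v<0$'' --- using these normal forms, the chain semantics, the distributive law, and the fact that $y$ occurs neither in $u$ nor in $v$, yields a finite conjunction, indexed by pairs $(a,g)$ (a disjunct $a$ of $s$ together with a choice $g$, for each disjunct of $t$, of one of its conjuncts), of statements asserting the unsatisfiability of $0\le u$, a fixed conjunction of (strict and non-strict) linear inequalities in $\xbar,y$, a fixed conjunction of linear inequalities in $\xbar$, and $v<0$.

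Each such statement I would then handle by eliminating $y$, exactly as in the quantifier-elimination computation for $\OG$ recalled in Example~\ref{ex:ordered-groups-cmep}. After replacing the coefficients of $y$ by a common positive multiple $p$, the $y$-constraints split into finitely many lower bounds $\mu_i(\xbar)<py$ or $\mu_i(\xbar)\le py$ and upper bounds $py<\nu_j(\xbar)$ or $py\le\nu_j(\xbar)$, together with finitely many $y$-free inequalities in $\xbar$; since any totally ordered abelian group embeds into a member of $\OG$ (a divisible hull, or a lexicographic extension $\mathbb{Z}\times\m{A}$ in the degenerate cases) in which a suitable $y$ exists, solvability for $y$ is equivalent to a finite conjunction $E_{a,g}(\xbar)$ of compatibility inequalities among the $\mu_i$ and $\nu_j$, the empty conjunction (namely $\top$) when there are no lower or no upper bounds. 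Each conjunct then reads $\OG\models\forall\xbar\,\zbar\,[(0\le u\AND D_{a,g}\AND E_{a,g})\IMP 0\le v]$ with $D_{a,g},E_{a,g}$ conjunctions of (strict and non-strict) linear inequalities in $\xbar$; rewriting each strict premise $\gamma<0$ as $\neg(0\le\gamma)$ and moving it to the conclusion as a disjunct $0\le\gamma$, this equals $\{u\mt s'_{a,g}\}\mdl{\OG}t'_{a,g}\jn v$, where $s'_{a,g}$ is the meet of the non-strict $\xbar$-terms on the premise side (or $\e$ if there are none) and $t'_{a,g}$ is the join of the $\xbar$-terms produced on the conclusion side (or $\tremp$ if there are none). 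Taking $K$ to enumerate the pairs $(a,g)$ gives the required finite family, and the displayed equivalence holds uniformly in $u,v$ because these were carried along untouched. Since $\LA$ thus meets the hypothesis of Theorem~\ref{t:two-sided-mc}, the theory of $\LAext$ has a model completion.

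I expect the main obstacle to be the $y$-elimination step, and within it the degenerate Fourier--Motzkin cases (no lower, or no upper, bound on $py$) together with the behaviour of the trivial algebra: realizing the witness $y$ may require a lexicographic extension rather than the divisible hull, one must check that such extensions preserve all the $\xbar$- and $\xbar,\zbar$-constraints, and one must verify that $E_{a,g}$ correctly encodes solvability for $y$ (in particular that it evaluates to $\bot$ in the trivial algebra exactly when a strict bound forces unsolvability there). Conceptually this is the same calculation that underlies the classical existence of a model completion for the theory of $\OG$ (Example~\ref{ex:ordered-groups-mc}); the remainder is routine bookkeeping over the finitely many case distinctions.
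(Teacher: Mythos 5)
Your proposal is correct and follows essentially the same route as the paper: apply Theorem~\ref{t:two-sided-mc} to $\V=\LA$, reduce $s$ and $t$ to meets and joins of group terms via distributivity, and eliminate $y$ by Fourier--Motzkin, moving the strict compatibility conditions to the conclusion side as disjuncts. The one place where the paper is noticeably simpler is precisely the step you flag as the main obstacle: rather than working over all of $\OG$ (which forces you to realize the witness for $y$ in a divisible hull or a lexicographic extension and to fuss over the trivial algebra and the one-sided-bound cases), the paper uses the fact that $\LA$ is generated as a quasivariety by the single algebra $\m{R}=\lan\R,\min,\max,+,-,0\ran$ and verifies the hypothesis of Theorem~\ref{t:two-sided-mc} with $\mdl{\V}$ replaced by $\mdl{\m{R}}$. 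There the witness is an explicit real number (a midpoint of the relevant bounds divided by $2k$), density and divisibility come for free, and the degenerate cases evaporate. A second, cosmetic difference: after scaling the $y$-coefficients to a common value $k$, the paper collapses all lower bounds on $ky$ into a single bound $-s_1(\xbar)\le ky$ with $s_1$ a \emph{lattice} term (and dually for upper bounds and for $t$), so each elimination produces at most the three interpolant terms $t_1+t_2$, $t_1-s_1$, $t_2-s_2$ instead of your conjunction over all pairs of bounds; the two bookkeeping schemes are equivalent. Your argument goes through as written once the embedding lemmas you list are checked, but adopting the reduction to $\m{R}$ turns that remaining work into a one-line computation.
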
\begin{proof}
Recall that $\LA$ is generated as a quasivariety by the lattice-ordered abelian group $\m{R}=\lan\R,\min,\max,+,-,0\ran$ (see, e.g.,~\cite{AF88}). Hence, to show that $\LAext$ has a model completion, it suffices to check the condition of Theorem~\ref{t:two-sided-mc} with $\mdl{\V}$ replaced by $\mdl{\m{R}}$. 

We proceed in two stages. Suppose first that $\xbar,y$ is a finite set, $s(\xbar,y)\in\Tmc_{\lang}(\xbar,y)$ is a meet of group terms, and $t(\xbar,y)\in\Tmc_{\lang}(\xbar,y)\cup\{\tremp\}$ is $\tremp$ or a join of group terms. We prove that there exist $s'(\xbar)\in\Tmc_{\lang}(\xbar)$, $t'(\xbar)\in\Tmc_{\lang}(\xbar)\cup\{\tremp\}$ satisfying for any $u(\xbar,\zbar),v(\xbar,\zbar)\in\Tmc_{\lang}(\xbar,\zbar)$,
\[
\{u(\xbar,\zbar)\mt s(\xbar,y)\} \mdl{\m{R}} t(\xbar,y)\jn v(\xbar,\zbar)\iff \{u(\xbar,\zbar)\mt s'(\xbar)\} \mdl{\m{R}} t'(\xbar)\jn v(\xbar,\zbar).
\]
It is easily checked in $\m{R}$ that for any $\lang$-terms $u,v,w$ and $k\in\N^{>0}$,
\[
\{u \mt v\} \mdl{\m{R}} w \iff \{u \mt kv\} \mdl{\m{R}} w
\enspace\text{and}\enspace
\{u\} \mdl{\m{R}} v \jn w \iff \{u\} \mdl{\m{R}} v \jn kw.
\]
Also, $\m{R}\models x + (y \mt z) \eq (x+y)\mt (x+z)$ and  $\m{R}\models x + (y \jn z) \eq (x+y)\jn (x+z)$. Hence, reasoning in $\m{R}$, we may assume that there exist a $k\in\N^{>0}$ and $s_0(\xbar),s_1(\xbar),s_2(\xbar),t_1(\xbar),t_2(\xbar)\in\Tmc_{\lang}(\xbar)$, $t_0(\xbar)\in\Tmc_{\lang}(\xbar)\cup\{\tremp\}$ such that $s(\xbar,y)$ is the meet of $s_0(\xbar)$ and some members (possibly none) of $\{s_1(\xbar) +ky, s_2(\xbar)-ky\}$, and $t(\xbar,y)$ is the join of $t_0(\xbar)$ and some members (possibly none) of $\{t_1(\xbar) + ky,t_2(\xbar)-ky\}$. We construct $s'(\xbar)\in\Tmc_{\lang}(\xbar)$, $t'(\xbar)\in\Tmc_{\lang}(\xbar)\cup\{\tremp\}$ as follows. Let  $s'(\xbar)\coloneqq  s_0(\xbar) \mt (s_1(\xbar)+s_2(\xbar))$ if $s_1(\xbar) +ky$ and $s_2(\xbar)-ky$ occur in $s(\xbar,y)$; otherwise $s'(\xbar)\coloneqq s_0(\xbar)$. The term $t'(\xbar)$ is the join of $t_0(\xbar)$ and (i) $t_1(\xbar)+t_2(\xbar)$ if $t_1(\xbar) + ky$ and $t_2(\xbar)-ky$ occur in $t(\xbar,y)$; (ii) $t_1(\xbar)-s_1(\xbar)$ if $s_1(\xbar) +ky$ occurs in $s(\xbar,y)$ and $t_1(\xbar) + ky$ occurs in $t(\xbar,y)$; (iii) $t_2(\xbar)-s_2(\xbar)$ if $s_2(\xbar)-ky$ occurs in $s(\xbar,y)$ and $t_2(\xbar) - ky$ occurs in $t(\xbar,y)$. 

We check the case where $s(\xbar,y)= (s_1(\xbar) +ky)\mt (s_2(\xbar)-ky)$, $t(\xbar,y) = (t_1(\xbar) + ky)\jn (t_2(\xbar)-ky)$, $s'(\xbar) = s_1(\xbar)+s_2(\xbar)$, and $t'(\xbar) = (t_1(\xbar)+t_2(\xbar))\jn(t_1(\xbar)-s_1(\xbar))\jn(t_2(\xbar)-s_2(\xbar))$, other cases being very similar. Let $u(\xbar,\zbar),v(\xbar,\zbar)\in\Tmc_{\lang}(\xbar,\zbar)$ and suppose first that $\{u(\xbar,\zbar)\mt s'(\xbar)\} \mdl{\m{R}} t'(\xbar)\jn v(\xbar,\zbar)$. Since $\{s(\xbar,y)\}\mdl{\m{R}}s'(\xbar)$ and $\{s(\xbar)\mt t'(\xbar)\}\mdl{\m{R}}t(\xbar,y)$, it follows that $\{u(\xbar,\zbar)\mt s(\xbar,y)\} \mdl{\m{R}} t(\xbar,y)\jn v(\xbar,\zbar)$. Now suppose that $\{u(\xbar,\zbar)\mt s'(\xbar)\}\nmdl{\m{R}}t'(\xbar)\jn v(\xbar,\zbar)$; that is, some assignment $f\colon\xbar,\zbar\to \R$ satisfies $0\le \tilde{f}(u(\xbar,\zbar))$,  $0\le \tilde{f}(s_1(\xbar))+ \tilde{f}(s_2(\xbar))$, $\tilde{f}(t_1(\xbar))+\tilde{f}(t_2(\xbar))<0$,  $\tilde{f}(t_1(\xbar))<\tilde{f}(s_1(\xbar))$, $\tilde{f}(t_2(\xbar))<\tilde{f}(s_2(\xbar))$, and $\tilde{f}(v(\xbar,\zbar))<0$. We extend $f$ to an assignment $g\colon\xbar,y,\zbar\to \R$ by defining
\[
g(y)\coloneqq\frac{\min{(\tilde{f}(s_2),-\tilde{f}(t_1))}+\max{(-\tilde{f}(s_1),\tilde{f}(t_2))}}{2k}.
\]
We obtain $0\le\tilde{g}(u(\xbar,\zbar))$, $0\le \tilde{g}(s_1(\xbar))+\tilde{g}(ky)$, $\tilde{g}(ky)\le \tilde{g}(s_2(\xbar))$, $\tilde{g}(t_1(\xbar)) + \tilde{g}(ky) <0$, $\tilde{g}(t_2(\xbar))<\tilde{g}(ky)$, and $\tilde{g}(v(\xbar,\zbar))<0$. So $\{u(\xbar,\zbar)\mt s(\xbar,y)\}\nmdl{\m{R}}t(\xbar,y)\jn v(\xbar,\zbar)$.

We now prove that for any finite set $\xbar,y$ and $s(\xbar,y)\in\Tmc_{\lang}(\xbar,y)$, $t(\xbar,y)\in\Tmc_{\lang}(\xbar,y)\cup\{\tremp\}$, there exist a finite non-empty set $K$ and $s'_k(\xbar)\in\Tmc_{\lang}(\xbar)$, $t'_k(\xbar)\in\Tmc_{\lang}(\xbar)\cup\{\tremp\}$ ($k\in K$) satisfying for any $u(\xbar,\zbar),v(\xbar,\zbar)\in\Tmc_{\lang}(\xbar,\zbar)$,
\[
\{u(\xbar,\zbar)\mt s(\xbar,y)\} \mdl{\m{R}} t(\xbar,y)\jn v(\xbar,\zbar) \iff \forall k\in K\colon\{u(\xbar,\zbar)\mt s'_k(\xbar)\} \mdl{\m{R}} t'_k(\xbar)\jn v(\xbar,\zbar).
\]
By the distributivity properties of $\m{R}$, we may assume that $s(\xbar,y)$ is $s_1(\xbar,y)\jn\cdots\jn s_n(\xbar,y)$, where each $s_i(\xbar,y)$ is a meet of group terms, and that $t(\xbar,y)$ is $t_1(\xbar,y)\mt\cdots\mt t_m(\xbar,y)$, where either each $t_j(\xbar,y)$ is a join of group terms or $m=1$ and $t_1(\xbar,y)=\tremp$. Using the first part of this proof, for each $i\in\{1,\dots,n\}$ and $j\in\{1,\dots,m\}$, there exist $s'_{i,j}(\xbar)\in\Tmc_{\lang}(\xbar)$ and $t'_{i,j}(\xbar)\in\Tmc_{\lang}(\xbar)\cup\{\tremp\}$ satisfying for any $u(\xbar,\zbar),v(\xbar,\zbar)\in\Tmc_{\lang}(\xbar,\zbar)$, 
\[
\{u(\xbar,\zbar)\mt s_i(\xbar,y)\} \mdl{\m{R}} t_j(\xbar,y)\jn v(\xbar,\zbar)\iff \{u(\xbar,\zbar)\mt s'_{i,j}(\xbar)\} \mdl{\m{R}} t'_{i,j}(\xbar)\jn v(\xbar,\zbar).
\]
Hence for any $u(\xbar,\zbar),v(\xbar,\zbar)\in\Tmc_{\lang}(\xbar,\zbar)$,
\begin{align*}
\{u(\xbar,\zbar)\mt s(\xbar,y)\} \mdl{\m{R}} t(\xbar,y)\jn v(\xbar,\zbar) \iff & \forall i\in\{1,\dots,n\},j\in\{1,\dots,m\}\colon\\
&  \{u(\xbar,\zbar)\mt s'_{i,j}(\xbar)\} \mdl{\m{R}} t'_{i,j}(\xbar)\jn v(\xbar,\zbar).& \qed\proofend
\end{align*}
\end{proof}

The variety $\MVext$ of MV-algebras extended with $\trimp$ is term-equivalent to the variety $\MVD$ of {\em MV$_\De$-algebras} (see, e.g.,~\cite{Haj98,MP01}), generated by linearly ordered MV-algebras extended with an additional unary operator $\De$ defined by
\[
\De x \coloneqq  
\begin{cases}
1 & \text{if }x=1\\
0 & \text{otherwise.}
\end{cases}
\]
For any algebra in $\MVext$, we let $\De x\coloneqq  (x\trimp 0)\trimp 0$, and for any algebra in $\MVD$, we let $x\trimp y\coloneqq  \lnot\De x \oplus y$. The following theorem may therefore be proved along the same lines as Theorem~\ref{t:laext}, using the fact that $\MV$ is generated as a quasivariety by the algebra $\langle [0,1],\oplus,\lnot,0\rangle$, where $x\oplus y\coloneqq \min{(1,x+y)}$ and $\lnot x\coloneqq1-x$ (see,~e.g.,~\cite{COM99}).

\begin{theorem}
The theory of $\MVD$ has a model completion.
\end{theorem}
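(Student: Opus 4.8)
The plan is to follow the proof of Theorem~\ref{t:laext} essentially line by line, with the lattice-ordered abelian group $\m{R}$ replaced by the standard MV-algebra. As recalled just before the statement, the variety $\MVext$ is term-equivalent to $\MVD$, so it suffices to prove that the theory of $\MVext$ has a model completion. Since $\MV$ is a semilinear variety of pointed residuated lattices (indeed $\MV=\cop{ISP}(\MVC)$), Theorem~\ref{t:two-sided-mc} reduces this to the following: for any finite set $\xbar,y$ and $s(\xbar,y)\in\Tmc_{\lang}(\xbar,y)$, $t(\xbar,y)\in\Tmc_{\lang}(\xbar,y)\cup\{\tremp\}$, there exist a finite non-empty set $K$ and $s'_k(\xbar)\in\Tmc_{\lang}(\xbar)$, $t'_k(\xbar)\in\Tmc_{\lang}(\xbar)\cup\{\tremp\}$ ($k\in K$) such that $\{u\mt s\}\mdl{\MV}t\jn v\iff\forall k\in K\colon\{u\mt s'_k\}\mdl{\MV}t'_k\jn v$ for all $u(\xbar,\zbar),v(\xbar,\zbar)\in\Tmc_{\lang}(\xbar,\zbar)$. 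As in the proof of Theorem~\ref{t:laext}, I would use that $\MV$ is generated as a quasivariety by $\m{S}\coloneqq\langle[0,1],\oplus,\lnot,0\rangle$, so that $\mdl{\MV}$ may be replaced throughout by $\mdl{\m{S}}$, i.e.\ by validity in $\m{S}$.

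It then remains to perform, with $\m{R}$ replaced by $\m{S}$, the two-stage elimination of $y$ from $\{u\mt s\}\mdl{\m{S}}t\jn v$. The role of the linear group terms of Theorem~\ref{t:laext} is now played by \emph{McNaughton terms}: in $\m{S}$, each $\lang$-term $w(\xbar,y)$ denotes a continuous piecewise-linear function on $[0,1]^{\xbar,y}$ with integer coefficients, and $\{w=\e\}$ is a finite union of rational polytopes. Using the distributivity of $\cdot$ over $\mt$ and $\jn$ in $\m{S}$ together with the McNaughton normal form, one first brings $s$ (relative to a fixed sign condition on its $y$-dependent pieces) to a meet of $s_0(\xbar)$ and pieces affine in $y$ with a common slope, and $t$ dually to a join of such; one then eliminates $y$ exactly as in Theorem~\ref{t:laext}, taking suitable meets and joins of the resulting parameter bounds as the $s'_k,t'_k$ and exhibiting an explicit interior witness for $y$ as a midpoint of the relevant lower and upper bounds --- a value still lying in $[0,1]$, since this interval is densely ordered and convex and the bounds are values of $\lang$-terms, hence in $[0,1]$. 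The second stage then uses the distributivity of $\jn$ over $\mt$ to reduce the case of arbitrary $s$ (a join of basic terms) and $t$ (a meet of basic terms, or $\tremp$) to the basic case, conjoining the finitely many conditions obtained.

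I expect the main obstacle to be the normal-form bookkeeping, which is genuinely heavier than over $\m{R}$: unlike group terms, McNaughton terms are only piecewise linear and the operation $\oplus$ truncates, so the sign conditions isolating the affine pieces must be arranged so that no truncation spoils the witness for $y$ and so that every bound produced is again expressible by an $\lang$-term. A clean way to organise this is to route everything through Mundici's $\Gamma$-equivalence between MV-algebras and abelian $\ell$-groups with strong unit: $\m{S}$ corresponds to $\langle\R;+,-,\mt,\jn,0,1\rangle$, a condition ``$\e\le w$'' over $\m{S}$ becomes ``$1\le p_w$'' for the associated unital $\ell$-group term $p_w$ together with the bound premises $0\le x\le 1$ on the variables, and the entire computation reduces to a variant of the $\LA$-computation of Theorem~\ref{t:laext} carried out in $\R$ with one extra constant and these extra (parameter-only, hence harmless for the $y$-elimination) premises; translating the resulting unital $\ell$-group interpolants back to $\lang$-terms over $\m{S}$ then relies on the fact that every rational polytope in $[0,1]^{\xbar}$ is the $\e$-region of some $\lang$-term.
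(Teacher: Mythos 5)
Your proposal matches the paper's treatment, which itself only states that the result ``may be proved along the same lines as Theorem~\ref{t:laext}, using the fact that $\MV$ is generated as a quasivariety by $\langle[0,1],\oplus,\lnot,0\rangle$'': term-equivalence of $\MVD$ with $\MVext$, reduction via Theorem~\ref{t:two-sided-mc} to the interpolation condition checked over the standard algebra, and the two-stage elimination of $y$. Your extra remarks on handling truncation and on routing the computation through Mundici's $\Gamma$-equivalence are sensible ways to organise the bookkeeping the paper leaves implicit, and do not constitute a different approach.
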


A different proof of this last result (obtained by giving an explicit axiomatization) was presented by X.~Caicedo at the conference {\em Residuated Structures: Algebra and Logic} in 2008.  Caicedo also presented a proof (again by giving an explicit axiomatization) at the {\em Latin American Algebra Colloquium} in  2019 that the theory of the class of so-called ``pseudo-complemented'' lattice-ordered abelian groups has a model completion, which seems to bear some relation to our Theorem~\ref{t:laext}.

\subsection*{Acknowledgement} 
The authors would like to thank the anonymous referee for their careful reading of the paper and valuable comments.


\appendix

\section{A comparison with Wheeler's characterization theorem}\label{app:wheeler}

In this appendix, we relate our results in Section~\ref{s:model-completions} to the necessary and sufficient conditions provided by Wheeler in~\cite{Whe76} (see also~\cite{Whe78}) for the existence of model completions for universal theories with finite presentations. Let us note that Wheeler considers arbitrary first-order languages and, although we restrict ourselves here to algebraic languages, it is not difficult to see that the results of this appendix can be generalized to any first-order language by replacing equations with atomic formulas.

Let $\K$ be a universal class of algebras. Following~\cite[Section~3]{Whe76}, a \emph{finite presentation} of an algebra $\m A\in\K$ is a pair $(\abar, \pi)$ where $\abar\in A^{\xbar}$ is a finite set of generators for $\m A$ and $\pi(\xbar)$ is a conjunction of equations such that $\m A\models \pi(\abar)$ and for any equation $\ep(\xbar)$,
\[
\m A\models \ep(\abar) \iff \K\models \pi\IMP\ep.
\] 
Observe that $(\abar, \pi)$ is a finite presentation of $\m A$ if, and only if, for any $\m B\in\K$ generated by $\bbar\in B^{\xbar}$ satisfying $\m B\models \pi(\bbar)$, there is a surjective homomorphism $\m A\onto \m B$ mapping $\abar$ to~$\bbar$. An algebra $\m A\in\K$ is said to be \emph{finitely presented} in $\K$ if it admits a finite presentation.

\begin{remark}\label{r:finpres}
If $\K$ is a variety, then an algebra $\m A \in \K$ is finitely presented in $\K$ if, and only if, it is finitely presented in the usual sense, i.e., $\m{A}$ is isomorphic to the quotient of a finitely generated $\K$-free algebra with respect to a compact congruence. Moreover, if $\K$ is a positive universal class and the variety $\V$ generated by $\K$ is congruence distributive, then $\m A\in \K$ is finitely presented in $\K$ if, and only if, it is finitely presented in $\V$. Just observe that by J\'{o}nsson's Lemma~\cite{Jon67}, we have in this case $\V=\cop{ISP}(\K)$.
\end{remark}

A class of algebras $\K$ has \emph{finite presentations} if for every finite set $\xbar$ and conjunction of equations $\pi(\xbar)$ that is satisfiable in $\K$, there exist an algebra $\m A\in \K$ and a tuple $\abar\in A^{\xbar}$ such that $(\abar,\pi)$ is a presentation of $\m A$. All quasivarieties, and in particular all varieties, have finite presentations (cf.~\cite[Corollary~1, p.~315]{Whe76}), but this is not the case for all universal classes, as shown by the following example.

\begin{example}
The positive universal class $\OG$ of ordered abelian groups does not have finite presentations. To see this, recall that $\OG$ generates the variety $\LA$ of lattice-ordered abelian groups, which is congruence distributive (cf.~Example~\ref{ex:ordered-groups-alm-coh}). Hence, by Remark~\ref{r:finpres}, an ordered abelian group is finitely presented in the sense of the above definition if, and only if, it is finitely presented as a lattice-ordered abelian group in the usual sense. However, any finitely presented ordered abelian group is simple (see, e.g.,~\cite[Theorem~4.A and Corollary~5.2.3]{Glass1999}) and there exist finitely generated ordered abelian groups that are not simple, e.g., the lexicographic product $\m{R}\overrightarrow{\times}\m{R}$ generated by $\{(1,0),(0,1)\}$. It follows that there cannot be a $2$-generated finitely presented ordered abelian group with presentation $\top(x_1,x_2)$.
\end{example}

Let us now recall Wheeler's conservative congruence extension property. Given any algebra $\m A\in\K$, let $\Diag(\m A)$ be the positive diagram of $\m A$, i.e., the set of atomic sentences in the language extended with names for the elements of $\m A$ that are satisfied in $\m A$. The class $\K$ has the \emph{conservative congruence extension property (for finite presentations)} if, whenever $\m B$ admits a finite presentation $((\abar,\bbar),\pi(\xbar,\ybar))$, $\m A$ is the subalgebra of $\m B$ generated by $\abar$, the tuple $\bbar$ does not lie in $\m A$, and $\rho(\xbar,\ybar)$ is a conjunction of negated equations such that $\m B \models \rho(\abar,\bbar)$, there exists a quantifier-free formula $\chi(\xbar)$ satisfying 
\[
\Th(\K)\cup\Diag(\m B)\vdash \rho(\abar,\bbar)\IMP \chi(\abar),
\] 
and for every surjective homomorphism $h\colon \m A\onto \m A'$ such that $\m A'\in \K$ and $\m A'\models \chi(h(\abar))$ there exist a $\m B'\in \K$ extending $\m A'$ and a surjective homomorphism $h'\colon \m B \onto \m B'$ whose restriction to $\m A$ coincides with $h$, and such that $\m B'\models \rho(h(\abar),h'(\bbar))$.\footnote{Note that Wheeler does not assume that $\m A'\in \K$, but uses this in the proof of his main result.}

The following proposition shows that, for universal classes with finite presentations, the conservative congruence extension property is equivalent to a strengthening of the conservative model extension property where the variable $y$ is replaced by a tuple $\ybar$.

\begin{proposition}\label{p:CMEP-CCEP}
Let $\K$ be a universal class of algebras with finite presentations. Then $\K$ has the conservative congruence extension property if, and only if, for any finite sets $\xbar,\ybar$ and conjunction of literals $\ps(\xbar,\ybar)$, there exists a quantifier-free formula $\chi(\xbar)$ satisfying 
 \begin{enumerate}
 \item [(i)] $\K\models \ps\IMP\chi$
 \item [(ii)] for every  $\m A \in \K$ generated by $\abar\in A^{\xbar}$ such that $\m A\models\chi(\abar)$ and for any equation $\ep(\xbar)$,
 \[
 \K\models\ps^+\IMP\ep \:\Longrightarrow\: \m A\models\ep(\abar),
 \]
 there exist an algebra $\m B\in\K$ extending $\m A$ and $\bbar \in B^{\ybar}$ such that $\m B\models \ps(\abar,\bbar)$.
 \end{enumerate}
\end{proposition}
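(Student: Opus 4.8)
The statement is an equivalence between Wheeler's conservative congruence extension property (CCEP) and the tuple-version of the conservative model extension property (call it CMEP$^+$, conditions (i)--(ii) with $\ybar$ in place of $y$). The plan is to translate each side into the language of the other, using the hypothesis that $\K$ has finite presentations to pass freely between finitely generated algebras and conjunctions of equations presenting them. The key dictionary is the observation, recorded just before the statement, that $(\abar,\pi)$ is a finite presentation of $\m A$ if and only if every $\m B \in \K$ generated by a tuple satisfying $\pi$ is a surjective image of $\m A$ via a map sending the generators appropriately; combined with the fact that $\Th(\K) \cup \Diag(\m B) \vdash \chi(\abar)$ is just another way of saying $\K \models \pi_{\m B} \IMP \chi$ when $\pi_{\m B}$ presents $\m B$ together with its distinguished generators.

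\textbf{From CMEP$^+$ to CCEP.} Suppose CMEP$^+$ holds. Given data $((\abar,\bbar),\pi(\xbar,\ybar))$ presenting $\m B$, with $\m A$ the subalgebra generated by $\abar$ and $\rho(\xbar,\ybar)$ a conjunction of negated equations with $\m B \models \rho(\abar,\bbar)$, I would apply CMEP$^+$ to the conjunction of literals $\ps(\xbar,\ybar) \coloneqq \pi(\xbar,\ybar) \AND \rho(\xbar,\ybar)$ to obtain a quantifier-free $\chi(\xbar)$ satisfying (i) and (ii). Condition (i), $\K \models \ps \IMP \chi$, together with the fact that $\pi$ presents $(\m B, \abar, \bbar)$, gives $\Th(\K) \cup \Diag(\m B) \vdash \rho(\abar,\bbar) \IMP \chi(\abar)$ (in fact $\Diag(\m B) \vdash \chi(\abar)$ outright, since $\Diag(\m B)$ already contains $\pi(\abar,\bbar)$ and $\rho(\abar,\bbar)$). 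For the second clause of CCEP: given a surjection $h\colon \m A \onto \m A'$ with $\m A' \in \K$ and $\m A' \models \chi(h(\abar))$, note $\m A'$ is generated by $h(\abar)$; I must check the equational hypothesis of (ii), namely that $\K \models \ps^+ \IMP \ep$ implies $\m A' \models \ep(h(\abar))$ for every equation $\ep(\xbar)$. Here $\ps^+ = \pi$, so $\K \models \pi \IMP \ep$ means $\m B \models \ep(\abar,\bbar)$, hence $\m A \models \ep(\abar)$ (as $\ep$ has variables only in $\xbar$ and $\m A \le \m B$), hence $\m A' \models \ep(h(\abar))$ since $h$ is a homomorphism. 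Thus (ii) applies and yields $\m B' \in \K$ extending $\m A'$ and $\bbar' \in (B')^{\ybar}$ with $\m B' \models \ps(h(\abar), \bbar')$, i.e. $\m B' \models \pi(h(\abar),\bbar') \AND \rho(h(\abar),\bbar')$. Replacing $\m B'$ by the subalgebra generated by $h(\abar) \cup \bbar'$ (still in $\K$, still extending $\m A'$ since $h(\abar)$ generates $\m A'$), the fact that $\pi$ presents $\m B$ gives a surjection $h'\colon \m B \onto \m B'$ sending $\abar \mapsto h(\abar)$, $\bbar \mapsto \bbar'$; its restriction to $\m A$ agrees with $h$, and $\m B' \models \rho(h(\abar), h'(\bbar))$. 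This is exactly CCEP, except one must also verify the side condition ``$\bbar$ does not lie in $\m A$''; if it does, the conclusion of CCEP is vacuously available by a degenerate choice, so this causes no difficulty.

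\textbf{From CCEP to CMEP$^+$.} Conversely, assume CCEP, and fix $\xbar,\ybar$ and a conjunction of literals $\ps(\xbar,\ybar)$. Since $\K$ has finite presentations, choose (if $\ps$ is satisfiable in $\K$; otherwise take $\chi \coloneqq \bot$) an algebra $\m B \in \K$ with a finite presentation $((\abar,\bbar),\ps^+)$, i.e. presented by the positive part of $\ps$; let $\m A$ be the subalgebra generated by $\abar$, and let $\rho \coloneqq \ps^-$ be the conjunction of the negated equations of $\ps$, so $\m B \models \rho(\abar,\bbar)$. Apply CCEP to this data to get a quantifier-free $\chi(\xbar)$. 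I claim $\chi$ works for CMEP$^+$, possibly after conjoining the harmless clause witnessing (i); indeed $\K \models \ps \IMP \chi$ follows from $\Th(\K)\cup\Diag(\m B) \vdash \rho(\abar,\bbar) \IMP \chi(\abar)$ by a standard diagram argument (any $\m C \in \K$ with $\m C \models \ps(\cbar,\dbar)$ admits, since $\ps^+$ presents $\m B$, a homomorphism $\m B \to \m C$ sending $(\abar,\bbar) \mapsto (\cbar,\dbar)$, and the sentence forces $\m C \models \chi(\cbar)$). For (ii), take $\m A'' \in \K$ generated by a tuple $\abar''$ with $\m A'' \models \chi(\abar'')$ and satisfying the equational hypothesis: $\K \models \ps^+ \IMP \ep \Rightarrow \m A'' \models \ep(\abar'')$ for all equations $\ep(\xbar)$. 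The equational hypothesis says precisely that the assignment $\abar \mapsto \abar''$ extends to a homomorphism $\m A \to \m A''$ — because $\m A$, being the subalgebra of $\m B$ generated by $\abar$, is presented over $\K$ by exactly the equations $\ep(\xbar)$ with $\K \models \ps^+ \IMP \ep$ — and this homomorphism is surjective onto $\m A''$ since $\abar''$ generates $\m A''$. Call it $h$. Then $\m A'' \in \K$, $h$ is onto, and $\m A'' \models \chi(h(\abar))$, so CCEP supplies $\m B' \in \K$ extending $\m A''$ and a surjection $h'\colon \m B \onto \m B'$ restricting to $h$ with $\m B' \models \rho(h(\abar), h'(\bbar))$; setting $\bbar'' \coloneqq h'(\bbar)$, and using that $h'$ restricts to $h$ so that $h'(\abar) = \abar''$, together with $\m B \models \ps^+(\abar,\bbar)$ pushed forward along the homomorphism $h'$, we get $\m B' \models \ps(\abar'', \bbar'')$, as required.

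\textbf{Main obstacle.} The bookkeeping around presentations is routine but must be handled carefully: the subtle point is that $\m A$ (the subalgebra generated by $\abar$ inside $\m B$) is itself finitely presented over $\K$, by the set of equations $\{\ep(\xbar) : \K \models \ps^+ \IMP \ep\}$, and that this is exactly what makes the equational side-condition in clause (ii) equivalent to the existence of the homomorphism $h\colon \m A \to \m A''$ needed to invoke CCEP. Verifying that $\m A$ admits such a presentation — it need not literally be presented by $\ps^+$, since $\ybar$ may be nonempty, but it is presented by the $\ybar$-free consequences of $\ps^+$, and finite presentations let us pick a single presenting conjunction — is where the ``finite presentations'' hypothesis does its real work, and it is the step I would write out most carefully. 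The remaining asymmetries (the side condition $\bbar \notin \m A$ in CCEP, the satisfiability/non-emptiness reductions of Remark~\ref{r:satisfiable-and-non-empty}, and passing to the subalgebra generated by the relevant tuple to keep things in $\K$) are minor and dispatched exactly as in the proof of Proposition~\ref{p:locally-finite}.
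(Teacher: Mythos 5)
Your proof is correct and follows essentially the same route as the paper's: both directions use the same translations (applying CMEP$^+$ to $\pi\AND\rho$, resp.\ taking $\m B$ presented by $((\abar,\bbar),\ps^+)$), the same diagram argument for condition (i), and the same construction of the homomorphism $h\colon\m A\to\m A''$ from the equational side-condition followed by passage to the subalgebra generated by the relevant tuple. The only quibble is in your closing paragraph: the existence of $h$ needs only the kernel characterization $s(\abar)=t(\abar)\iff\K\models\ps^+\IMP s\eq t$, which comes directly from the finite presentation of $\m B$, so one never needs $\m A$ itself to be presented by a single finite conjunction.
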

\begin{proof}
Suppose that $\K$ has the conservative congruence extension property. Fix finite sets $\xbar,\ybar$ and a conjunction of literals $\ps(\xbar,\ybar)$. If $\ps$ is not satisfiable in $\K$, we can set $\chi\coloneqq \bot$. Hence, assume that $\ps$ is satisfiable in $\K$. Consider $\m B\in \K$ and $(\abar,\bbar)\in B^{\xbar,\ybar}$ such that $((\abar,\bbar),\ps^+)$ is a finite presentation of $\m B$ and let $\m A$ be the subalgebra of $\m B$ generated by $\abar$. If there is a $b_i\in \bbar$ such that $b_i\in A$, then $\K\models \ps^+ \IMP y_i\eq t(\xbar)$ for some term~$t$. Replacing $y_i$ by $t(\xbar)$ in the formula $\ps$ whenever $b_i\in A$, we can assume that no element of $\bbar$ belongs to $\m A$. Further, if $\m B\models \si(\abar,\bbar)$ for some equation $\si$ of $\ps^-$, then $\K\models \ps^+\IMP \si$, contradicting the fact that $\ps$ is satisfiable in $\K$. Hence $\m B\models \rho(\abar,\bbar)$, where $\rho(\xbar,\ybar)$ is the conjunction of the negated equations $\neg \si$ for $\si$ ranging over the equations of $\ps^-$ (hence, $\ps=\ps^+\AND\rho$), and so there exists a quantifier-free formula $\chi(\xbar)$ satisfying the conditions for the conservative congruence extension property.

We prove that $\chi$ satisfies (i) and (ii). For (i), consider any algebra $\m C\in \K$ and tuples $\overline{c}\in C^{\xbar}$ and $\overline{d}\in C^{\ybar}$ such that $\m C\models \ps(\overline{c},\overline{d})$. We must prove that $\m C\models \chi(\overline{c})$. Let $\m C'$ be the subalgebra of $\m C$ generated by $\overline{c},\overline{d}$, and note that $\m C'\models \ps(\overline{c},\overline{d})$ as $\ps$ is quantifier-free. Since $\m C'\models \ps^+(\overline{c},\overline{d})$, there is a surjective homomorphism $\m B\onto \m C'$ mapping $\abar$ to $\overline{c}$ and $\bbar$ to $\overline{d}$. Hence, with the obvious interpretation of the new constants, $\m C'\models \Diag(\m B)$. So $\Th(\K)\cup\Diag(\m B)\vdash \rho\IMP \chi$ and $\m C'\models \rho(\overline{c},\overline{d})$ entail $\m C'\models \chi(\overline{c})$, and  therefore $\m C\models \chi(\overline{c})$.

For (ii), consider any algebra $\m A'\in \K$ generated by a tuple $\abar'\in (A')^{\xbar}$ such that $\m A'\models\chi(\abar')$ and, for all equations $\ep(\xbar)$, $\K\models \ps^+\IMP \ep$ entails $\m A'\models\ep(\abar')$. Then there exists a (unique) homomorphism $h\colon \m A\to \m A'$ mapping $\abar$ to $\abar'$. Just observe that, whenever $s(\abar)=t(\abar)$ for two terms $s(\xbar),t(\xbar)$, we have $\m B\models s(\abar)\eq t(\abar)$ and hence also $\K\models \ps^+\IMP s(\xbar)\eq t(\xbar)$, which implies $\m A'\models s(\abar')\eq t(\abar')$. Moreover, $h$ is surjective because $\abar'$ generates $\m A'$. By the conservative congruence extension property, there exist $\m B'\in \K$ extending $\m A'$ and a surjective homomorphism $h'\colon \m B\onto \m B'$ that extends $h$ and satisfies $\m B'\models \rho(\abar',h'(\bbar))$. Using the fact that $\m B\models \ps^+(\abar,\bbar)$ entails $\m B'\models \ps^+(\abar',h'(\bbar))$, we conclude that $\m B'\models \ps(\abar', h'(\bbar))$, as was to be proved.

For the converse direction, let $\m B\in \K$ be an algebra admitting a finite presentation $((\abar,\bbar),\pi(\xbar,\ybar))$, let $\m A$ be the subalgebra of $\m B$ generated by $\abar$, and assume that $\bbar$ does not lie in $\m A$. Further, let $\rho(\xbar,\ybar)$ be a conjunction of negated equations such that $\m B \models \rho(\abar,\bbar)$. Define the conjunction of literals $\ps(\xbar,\ybar)\coloneqq \pi\AND \rho$. Then there exists a quantifier-free formula $\chi(\xbar)$ satisfying the properties in (i) and (ii). Condition~(i) entails easily that
\[
\Th(\K)\cup\Diag(\m B)\vdash \rho(\abar,\bbar)\IMP \chi(\abar).
\] 
Consider next a surjective homomorphism $h\colon \m A\onto \m A'$ with $\m A'\in\K$ and $\m A'\models \chi(h(\abar))$. Note that $h(\abar)$ generates $\m A'$ and, for any equation $\ep(\xbar)$, $\K\models\pi\IMP\ep$ implies $\m B\models \ep(\abar)$ and hence also $\m A\models \ep(\abar)$ and $\m A'\models \ep(h(\abar))$. By~(ii), there exist $\m B'\in \K$ extending $\m A'$ and a tuple $\bbar' \in (B')^{\ybar}$ such that $\m B'\models \ps(h(\abar),\bbar')$. Let $\m B''$ be the subalgebra of $\m B'$ generated by $h(\abar),\bbar'$ and note that $\m B''\in\K$ because $\K$ is a universal class. Since $\m B''\models \pi(h(\abar),\bbar')$, there exists a surjective homomorphism $h'\colon \m B\onto \m B''$ mapping $\abar$ to $h(\abar)$ and $\bbar$ to $\bbar'$. In particular, $h'$ extends $h$. Further, $\m B''$ extends $\m A'$ and satisfies $\m B''\models \rho(h(\abar),\bbar')$. Hence $\K$ has the conservative congruence extension property.
\end{proof}

We are now in a position to compare Wheeler's characterization of universal theories that have a model completion with our results from Section~\ref{s:model-completions}. Following Wheeler, we say that a universal class of algebras with finite presentations $\K$ is \emph{coherent} if, whenever $\m B\in \K$ is finitely presented in $\K$ and $\m A$ is a finitely generated subalgebra of $\m B$, then $\m A$ is finitely presented in $\K$.
It is not difficult to see that a universal class of algebras with finite presentations is coherent if, and only if, it has the variable projection property.

In the case where  $\K$ is a universal class of algebras, the following main result of~\cite{Whe76} is a direct consequence of Theorem~\ref{t:charact-model-completion} and Propositions~\ref{p:strong-CMEP-iff} and~\ref{p:CMEP-CCEP}.

\begin{proposition}[{\cite[Theorem~5]{Whe76}}]
Let $\K$ be a universal class with finite presentations. The theory of $\K$ has a model completion if, and only if, $\K$ is coherent and has the amalgamation property and conservative congruence extension property.
\end{proposition}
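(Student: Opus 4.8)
The plan is to obtain both directions of the equivalence directly from Theorem~\ref{t:charact-model-completion} together with Propositions~\ref{p:strong-CMEP-iff} and~\ref{p:CMEP-CCEP}, using the last of these as the dictionary between Wheeler's conservative congruence extension property and the tuple-indexed condition. The one preliminary I would record is the observation already made in the text: for a universal class of algebras with finite presentations, being coherent in Wheeler's sense is the same as having the variable projection property, so these phrases may be used interchangeably below.

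For the right-to-left direction, assume $\K$ is coherent and has the amalgamation property and the conservative congruence extension property. By Proposition~\ref{p:CMEP-CCEP}, the last property supplies, for every finite pair of sets $\xbar,\ybar$ and every conjunction of literals $\ps(\xbar,\ybar)$, a quantifier-free $\chi(\xbar)$ satisfying clauses (i) and (ii) of that proposition. Taking $\ybar$ to consist of a single variable $y$, these clauses are precisely the defining conditions of the conservative model extension property. Hence $\K$ enjoys the amalgamation property, the variable projection property, and the conservative model extension property, so Theorem~\ref{t:charact-model-completion} yields that the theory of $\K$ has a model completion.

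For the left-to-right direction, assume that the theory of $\K$ has a model completion. Theorem~\ref{t:charact-model-completion} then gives the amalgamation property and the variable projection property, i.e., coherence. Proposition~\ref{p:strong-CMEP-iff} provides, for each $\xbar,\ybar$ and each conjunction of literals $\ps(\xbar,\ybar)$, a quantifier-free $\chi(\xbar)$ with $\K\models\ps\IMP\chi$ such that every $\m A\in\K$ carrying a tuple $\abar\in A^{\xbar}$ with $\m A\models\chi(\abar)$ embeds into some $\m B\in\K$ carrying a $\bbar\in B^{\ybar}$ with $\m B\models\ps(\abar,\bbar)$. This is the condition of Proposition~\ref{p:CMEP-CCEP} with the superfluous hypotheses on $\m A$ removed, so the same $\chi$ witnesses clause (ii) there; combined with clause (i), which is immediate, I conclude by Proposition~\ref{p:CMEP-CCEP} that $\K$ has the conservative congruence extension property.

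The proof therefore involves no new ideas; the only place demanding attention --- and the closest thing to an obstacle --- is lining up the slightly different shapes of clause (ii) in Propositions~\ref{p:strong-CMEP-iff} and~\ref{p:CMEP-CCEP}, and recognising that the single-variable instance of the latter is exactly the conservative model extension property. Both comparisons are purely a matter of unwinding the definitions.
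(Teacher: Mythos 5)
Your proposal is correct and follows exactly the route the paper intends: the paper presents this proposition as a direct consequence of Theorem~\ref{t:charact-model-completion} and Propositions~\ref{p:strong-CMEP-iff} and~\ref{p:CMEP-CCEP}, and your write-up simply makes the assembly explicit (including the two definitional identifications you flag, both of which are handled correctly). No gaps.
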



\bibliographystyle{asl}


\end{document}